\newtheorem{thm}{Theorem}[section]
\newtheorem{prop}[thm]{Proposition}
\newtheorem{lem}[thm]{Lemma}
\newtheorem{cor}[thm]{Corollary}
\theoremstyle{definition}
\newtheorem{df}[thm]{Definition}
\newtheorem{rem}[thm]{Remark}
\newtheorem{ex}[thm]{Example}
\renewcommand{\phi}{\varphi}
\numberwithin{equation}{section}
\newcommand{\N}{\mathbb{N}}
\newcommand{\Z}{\mathbb{Z}}
\newcommand{\R}{\mathbb{R}}
\newcommand{\C}{\mathbb{C}}
\newcommand{\T}{\mathbb{T}}
\newcommand{\K}{\mathbb{K}}
\newcommand{\Hom}{\operatorname{Hom}}
\newcommand{\Aut}{\operatorname{Aut}}
\newcommand{\Out}{\operatorname{Out}}
\newcommand{\Ad}{\operatorname{Ad}}
\newcommand{\id}{\operatorname{id}}
\newcommand{\Ima}{\operatorname{Im}}
\newcommand{\Coker}{\operatorname{Coker}}
\newcommand{\Lip}{\operatorname{Lip}}
\newcommand{\Ext}{\operatorname{Ext}}
\newcommand{\ep}{\varepsilon}
\newcommand{\halpha}{\hat{\alpha}}
\newcommand{\hgamma}{\hat{\gamma}}
\title{$\Z^2$-actions on Kirchberg algebras}
\author{Masaki Izumi 
\thanks{Supported in part 
by the Grant-in-Aid for Scientific Research (C) 19540214, JSPS.} \\
Graduate School of Science \\
Kyoto University \\
Sakyo-ku, Kyoto 606-8502, Japan 
\and
Hiroki Matui 
\thanks{Supported in part 
by the Grant-in-Aid for Young Scientists (B) 18740085, JSPS.} \\
Graduate School of Science \\
Chiba University \\
Inage-ku, Chiba 263-8522, Japan}
\date{}
\begin{document}
\maketitle

\centerline{Dedicated to Professor Akitaka Kishimoto 
for his sixtieth birthday.}

\begin{abstract}
We classify a large class of $\Z^2$-actions on the Kirchberg algebras 
employing the Kasparov group $KK^1$ as the space of classification invariants. 
\end{abstract}

\section{Introduction}
Separable purely infinite simple nuclear $C^*$-algebras are 
said to be Kirchberg algebras. 
They form one of the most prominent classes of $C^*$-algebras 
from the viewpoint of classification, 
and they are completely classified by $KK$-theory 
(see \cite{Kir}, \cite{Phi}, and \cite{R}). 
In this respect, they are compared to AFD factors, 
whose classification result is already classics in operator algebras. 
For AFD factors, their symmetries are also well-understood, 
namely, a complete classification is known 
for actions of countable amenable groups on AFD factors. 
However, classification of group actions on $C^*$-algebras is 
still a far less developed subject, 
partly because of $K$-theoretical difficulties.  

For Kirchberg algebras, H. Nakamura \cite{N2} showed that 
aperiodic automorphisms are completely classified 
by their $KK$-classes up to, what we call, $KK$-trivial cocycle conjugacy. 
He followed a strategy developed by Kishimoto \cite{K98-1}, \cite{K98-2} 
in the case of AT-algebras, 
and one of the main ingredients of the proof is the Rohlin property 
(see the review paper \cite{I1} for the outline of the strategy). 
While Nakamura's result can be considered 
as classification of outer actions of the integer group $\Z$,  
the Rohlin property is also formulated for finite group actions. 
In \cite{I2} and \cite{I3}, the first-named author completely classified 
finite group actions with the Rohlin property on Kirchberg algebras. 
However, unlike the $\Z$ case where the Rohlin property is automatic, 
there are several outer finite group actions without the Rohlin property. 

One of the purposes of this paper is 
to develop classification theory of discrete amenable group actions 
on the Kirchberg algebras.  
However, we should admit that this is too ambitious a goal now. 
The difficulties in the finite group case, in contrast to the $\Z$ case, are 
rather common in topology. 
For example, it is well-known that 
the classifying space of a non-trivial finite group is 
never finite dimensional 
while that of $\Z$ is a nice space $\T=\R/\Z$. 
To avoid this kind of difficulties, 
in this paper we work on the $\Z^2$ case as a first step beyond the $\Z$ case. 
Indeed, the second-named author has already obtained successful results 
on classification of $\Z^2$-actions on the UHF algebras \cite{KM} and 
$\Z^N$-actions on the Cuntz algebra $\mathcal O_2$ \cite{M}. 

General outer actions of $\Z^2$ on the Kirchberg algebras are 
still out of reach, 
and we concentrate on locally $KK$-trivial actions in this paper; 
we assume that 
each automorphism appearing in the actions has the trivial $KK$-class. 
Then the only remaining classification invariant should be a global one. 
It turns out that such an invariant is identified with 
an element of the Kasparov group $KK^1$, and 
it is indeed a complete invariant (Theorem \ref{Z2unital}). 
For example, our main theorem says that 
there are exactly $n-1$ cocycle conjugacy classes of outer $\Z^2$-actions 
on the Cuntz algebra $\mathcal O_n$ for finite $n$. 

We briefly describe the basic idea of our approach now. 
For an outer $\Z^2$-action $\alpha$ on a Kirchberg algebra $A$, 
Nakamura's theorem says that the automorphism $\alpha_{(1,0)}$ is 
completely characterized by its $KK$-class. 
Therefore our task is to classify the other automorphism $\alpha_{(0,1)}$ 
commuting with the given fixed one $\alpha_{(1,0)}$. 
This problem is more or less equivalent to 
classifying the automorphisms of the crossed product 
$A\rtimes_{\alpha_{(1,0)}}\Z$ commuting with the dual $\T$-action. 
Roughly speaking, this means that for classification of $\Z^2$-actions 
it suffices to develop a $\T$-equivariant version of 
Kirchberg and Phillips' characterization of the $KK$-theory 
of Kirchberg algebras, and a $\T$-equivariant version of 
Nakamura's classification theorem of aperiodic automorphisms. 
Of course, this is not possible for a general $\T$-action. 
However, for an asymptotically representable action of 
a discrete amenable group $\Gamma$, 
we can prove the $\hat{\Gamma}$-equivariant versions 
for the crossed product by $\Gamma$ equipped with the dual coaction. 
As a byproduct, we can show a uniqueness result 
for outer asymptotically representable actions of $\Z^N$ on Kirchberg algebras 
(Theorem \ref{uniqueasymprepre}). 
For algebras with sufficiently simple $K$-theory 
such as the Cuntz algebras $\mathcal O_2$ and $\mathcal O_\infty$, 
every outer $\Z^N$-action turns out to be asymptotically representable, 
which implies the uniqueness of the cocycle conjugacy classes of 
outer $\Z^N$-actions on these algebras. 
This is a generalization of the main result in \cite{M}, 
and our proof is new even in the case of $\mathcal O_2$. 

The price we have to pay for working on the crossed product 
$A\rtimes_{\alpha_{(1,0)}}\Z$ is that 
we need to show a second cohomology vanishing theorem 
for cocycle $\Z^2$-actions (Theorem \ref{ocneanu4}). 
In the case of von Neumann algebras, 
this is known to be one of the standard steps toward classification results 
for group actions. 
To overcome the problem, 
we follow Ocneanu's idea in the von Neumann algebra case 
though we need a special care of $K$-theory in our setting. 

The authors would like to thank Akitaka Kishimoto, Sergey Neshveyev, 
and Toshihiko Masuda for stimulating discussions.

\section{Preliminaries}

We denote by $\K$ 
the $C^*$-algebra of all compact operators on $\ell^2(\Z)$. 
For a $C^*$-algebra $A$, 
we write the multiplier algebra of $A$ by $M(A)$. 
We let $U(A)$ denote the set of all unitaries of $M(A)$. 
For $u\in U(A)$, 
the inner automorphism induced by $u$ is written by $\Ad u$. 
An automorphism $\alpha\in\Aut(A)$ is called outer, 
when it is not inner. 
An automorphism $\alpha\in\Aut(A)$ is called aperiodic, 
when $\alpha^n$ is outer for all $n\in\N$. 
A single automorphism $\alpha$ is often identified 
with the $\Z$-action induced by $\alpha$. 
The quotient group of $\Aut(A)$ by the inner automorphism group 
is denoted by $\Out(A)$. 
For $a,b\in A$, we mean by $[a,b]$ the commutator $ab-ba$. 
For a Lipschitz continuous map $f$ between metric spaces, 
$\Lip(f)$ denotes the Lipschitz constant of $f$. 

Let $A$, $B$ and $C$ be $C^*$-algebras. 
For a homomorphism $\rho:A\to B$, 
$K_0(\rho)$ and $K_1(\rho)$ mean the induced homomorphisms on $K$-groups, 
and $KK(\rho)$ means the induced element in $KK(A,B)$. 
We write $KK(\id_A)=1_A$. 
For $x\in KK(A,B)$ and $i=0,1$, 
we let $K_i(x)$ denote the homomorphism from $K_i(A)$ to $K_i(B)$ 
induced by $x$. 
For $x\in KK(A,B)$ and $y\in KK(B,C)$, 
we denote the Kasparov product by $x\cdot y\in KK(A,C)$. 
When both $A$ and $B$ are unital, 
we denote by $\Hom(A,B)$ 
the set of all unital homomorphisms from $A$ to $B$. 
Two unital homomorphisms $\rho,\sigma\in\Hom(A,B)$ are said to be 
asymptotically unitarily equivalent, 
if there exists a continuous family of unitaries 
$\{u_t\}_{t\in[0,\infty)}$ in $B$ such that 
\[
\rho(a)=\lim_{t\to\infty}\Ad u_t(\sigma(a))
\]
for all $a\in A$. 
When there exists a sequence of unitaries $\{u_n\}_{n\in\N}$ in $B$ 
such that 
\[
\rho(a)=\lim_{n\to\infty}\Ad u_n(\sigma(a))
\]
for all $a\in A$, 
$\rho$ and $\sigma$ are said to be approximately unitarily equivalent. 

Let $G$ be a countable discrete amenable group. 
The canonical generators in $C^*(G)$ is denoted by $\{\lambda_g\}_{g\in G}$. 
The homomorphism $\delta_G:C^*(G)\to C^*(G)\otimes C^*(G)$ 
sending $\lambda_g$ to $\lambda_g\otimes\lambda_g$ is called the coproduct. 
Let $\alpha:G\curvearrowright A$ be 
an action of $G$ on a $C^*$-algebra $A$. 
When $\alpha_g$ is outer for all $g\in G$ except for the neutral element, 
the action $\alpha$ is called outer. 
We let $A^\alpha$ denote the fixed points subalgebra of $A$. 
The canonical implementing unitaries 
in the crossed product $C^*$-algebra $A\rtimes_\alpha G$ 
are written by $\{\lambda_g^\alpha\}_{g\in G}$. 
The dual coaction $\halpha$ of $\alpha$ is the homomorphism 
from $A\rtimes_\alpha G$ to $(A\rtimes_\alpha G)\otimes C^*(G)$ 
defined by 
\[
\halpha(a)=a\otimes 1
\ \text{ and } \ 
\halpha(\lambda^\alpha_g)=\lambda^\alpha_g\otimes\lambda_g
\]
for $a\in A$ and $g\in G$. 

\begin{df}
Let $\alpha:G\curvearrowright A$ and $\beta:G\curvearrowright B$ 
be actions of a discrete group $G$ on $C^*$-algebras $A$ and $B$. 
\begin{enumerate}
\item We say that $\alpha$ is locally $KK$-trivial, 
if $KK(\alpha_g)=1_A$ for all $g\in G$. 
\item The two actions $\alpha$ and $\beta$ are said to be conjugate, 
when there exists an isomorphism $\mu:A\to B$ such that 
$\alpha_g=\mu^{-1}\circ\beta_g\circ\mu$ for all $g\in G$. 
\item The two actions $\alpha$ and $\beta$ are said to be outer conjugate, 
when there exist an isomorphism $\mu:A\to B$ and 
a family of unitaries $\{u_g\}_{g\in G}$ in $M(A)$ such that 
$\Ad u_g\circ\alpha_g=\mu^{-1}\circ\beta_g\circ\mu$ for all $g\in G$. 
\item A family of unitaries $\{u_g\}_{g\in G}$ in $M(A)$ is called 
an $\alpha$-cocycle, 
if one has $u_g\alpha_g(u_h)=u_{gh}$ for all $g,h\in G$. 
When $\{u_g\}_g$ is an $\alpha$-cocycle, 
the perturbed action $\alpha^u:G\curvearrowright A$ is 
defined by $\alpha^u_g=\Ad u_g\circ\alpha_g$. 
\item The two actions $\alpha$ and $\beta$ are said to be cocycle conjugate, 
if there exists an $\alpha$-cocycle $\{u_g\}_{g\in G}$ in $M(A)$ such that 
$\alpha^u$ is conjugate to $\beta$. 
\item The two actions $\alpha$ and $\beta$ are said to be 
strongly cocycle conjugate, 
if there exist an $\alpha$-cocycle $\{u_g\}_{g\in G}$ in $M(A)$ 
and a sequence of unitaries $\{v_n\}_{n=1}^\infty$ in $M(A)$ such that 
$\alpha^u$ is conjugate to $\beta$ and 
$\lim_{n\to\infty}\lVert u_g-v_n\alpha_g(v_n^*)\rVert=0$ for all $g\in G$. 
\item Suppose $A$ equals $B$. 
The two actions $\alpha$ and $\beta$ are said to be 
$KK$-trivially cocycle conjugate, 
if there exist $\mu\in\Aut(A)$ with $KK(\mu)=1$ and 
an $\alpha$-cocycle $\{u_g\}_{g\in G}$ in $M(A)$ such that 
$\alpha^u_g=\mu^{-1}\circ\beta_g\circ\mu$ for all $g\in G$. 
\item Suppose $A$ equals $B$. 
The two actions $\alpha$ and $\beta$ are said to be 
strongly $KK$-trivially cocycle conjugate, 
if there exist $\mu$ and $\{u_g\}_{g\in G}$ as in (7) such that 
there exists a sequence of unitaries $\{v_n\}_{n=1}^\infty$ in $M(A)$ 
satisfying $\lim_{n\to\infty}\lVert u_g-v_n\alpha_g(v_n^*)\rVert=0$ 
for all $g\in G$. 
\end{enumerate}
\end{df}

Let $\alpha$ and $\beta$ be actions of a discrete group $G$ 
on unital $C^*$-algebras $A$ and $B$, respectively. 
We let $\Hom_G(A,B)$ denote the set of all $\rho\in\Hom(A,B)$ 
such that $\rho\circ\alpha_g=\beta_g\circ\rho$ for every $g\in G$. 
Two homomorphisms $\rho,\sigma\in\Hom_G(A,B)$ are said to be 
$G$-asymptotically unitarily equivalent, 
if there exists a continuous family of unitaries 
$\{u_t\}_{t\in[0,\infty)}$ in $B$ such that 
\[
\rho(a)=\lim_{t\to\infty}\Ad u_t(\sigma(a))
\]
for all $a\in A$ and 
\[
\lim_{t\to\infty}\lVert u_t-\beta_g(u_t)\rVert=0
\]
for all $g\in G$. 
In an analogous way, 
one can define $G$-approximately unitarily equivalence. 

We let $\Hom_{\hat{G}}(A\rtimes_\alpha G,B\rtimes_\beta G)$ denote 
the set of all $\rho\in\Hom(A\rtimes_\alpha G,B\rtimes_\beta G)$ 
such that 
\[
(\rho\otimes\id_{C^*(G)})\circ\halpha=\hat{\beta}\circ\rho
\]
and let $\Aut_{\hat{G}}(A\rtimes_\alpha G)
=\Hom_{\hat{G}}(A\rtimes_\alpha G,A\rtimes_\alpha G)
\cap\Aut(A\rtimes_\alpha G)$. 
Two homomorphisms 
$\rho,\sigma\in\Hom_{\hat{G}}(A\rtimes_\alpha G,B\rtimes_\beta G)$ 
are said to be $\hat{G}$-asymptotically unitarily equivalent, 
if there exists a continuous family of unitaries 
$\{u_t\}_{t\in[0,\infty)}$ in $B$ such that 
\[
\rho(x)=\lim_{t\to\infty}\Ad u_t(\sigma(x))
\]
for all $x\in A\rtimes_\alpha G$. 
In an analogous way, 
one can define $\hat{G}$-approximately unitarily equivalence. 

Next, we give the definition of asymptotic representability 
of group actions. 

\begin{df}\label{asymp}
Let $G$ be a countable discrete group and 
let $A$ be a unital $C^*$-algebra. 
An action $\alpha:G\curvearrowright A$ is said to be 
asymptotically representable, 
if there exists a continuous family of unitaries 
$\{v_g(t)\}_{t\in[0,\infty)}$ in $U(A)$ for each $g\in G$ 
such that 
\[
\lim_{t\to\infty}\lVert v_g(t)v_h(t)-v_{gh}(t)\rVert=0, 
\]
\[
\lim_{t\to\infty}\lVert\alpha_g(v_h(t))-v_{ghg^{-1}}(t)\rVert=0, 
\]
and 
\[
\lim_{t\to\infty}\lVert v_g(t)av_g(t)^*-\alpha_g(a)\rVert=0
\]
hold for all $g,h\in G$ and $a\in A$. 
\end{df}

Approximate representability is defined in an analogous way 
(see \cite[Definition 3.6]{I2}). 

We now recall the definition of cocycle actions. 
Let $A$ be a $C^*$-algebra and let $G$ be a discrete group. 
A pair $(\alpha,u)$ of 
a map $\alpha:G\to\Aut(A)$ and a map $u:G\times G\to U(A)$ 
is called a cocycle action of $G$ on $A$, 
if 
\[
\alpha_g\circ\alpha_h=\Ad u(g,h)\circ\alpha_{gh}
\]
and 
\[
u(g,h)u(gh,k)=\alpha_g(u(h,k))u(g,hk)
\]
hold for any $g,h,k\in G$. 
A cocycle action $(\alpha,u)$ is said to be outer, 
if $\alpha_g$ is outer for every $g\in G$ except for the neutral element. 
Two cocycle actions $(\alpha,u)$ and $(\beta,v)$ of $G$ 
on a $C^*$-algebra $A$ are said to be equivalent, 
if there exists a map $w:G\to U(A)$ such that 
\[
\alpha_g=\Ad w(g)\circ\beta_g
\]
and 
\[
u(g,h)=\alpha_g(w(h))w(g)v(g,h)w(gh)^*
\]
for every $g,h\in G$. 

Let $A$ be a separable $C^*$-algebra and 
let $\omega\in\beta\N\setminus\N$ be a free ultrafilter. 
We set 
\[
c^\omega(A)=\{(a_n)\in\ell^\infty(\N,A)\mid
\lim_{n\to\omega}\lVert a_n\rVert=0\}, 
\]
\[
A^\omega=\ell^\infty(\N,A)/c^\omega(A). 
\]
We identify $A$ with the $C^*$-subalgebra of $A^\omega$ 
consisting of equivalence classes of constant sequences. 
We let 
\[
A_\omega=A^\omega\cap A'. 
\]
When $\alpha$ is an automorphism on $A$ or 
a (cocycle) action of a discrete group on $A$, 
we can consider its natural extension on $A^\omega$ and $A_\omega$. 
We denote it by the same symbol $\alpha$. 

\bigskip

A simple $C^*$-algebra $A$ is said to be purely infinite, 
if for every nonzero elements $x,y\in A$, 
there exist $a,b\in A$ such that $y=axb$. 
There are various remarkable properties fulfilled 
by purely infinite $C^*$-algebras \cite{C}, 
and we use them freely in the sequel 
as far as they are found in \cite[Chapter 4]{R}. 
Note that if $A$ is purely infinite and simple, 
then so is $A^\omega$. 
A purely infinite simple unital $C^*$-algebra is said to be 
in the Cuntz standard form, 
if $[1]$ equals zero in $K_0(A)$. 
The following fact is also used frequently. 

\begin{thm}[{\cite[Lemma 10]{KK}}]\label{KKFields}
Let $\alpha:G\curvearrowright A$ be an outer action of 
a countable discrete group $G$ 
on a unital purely infinite simple $C^*$-algebra $A$. 
Then, the reduced crossed product $C^*$-algebra 
$A\rtimes_\alpha G$ is also purely infinite simple. 
\end{thm}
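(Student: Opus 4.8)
The plan is to reduce the whole statement to a single approximate rescaling estimate. Write $B=A\rtimes_\alpha G$ and let $E\colon B\to A$ be the canonical conditional expectation determined by $E(a)=a$ for $a\in A$ and $E(\lambda^\alpha_g)=0$ for $g\neq e$; for the reduced crossed product $E$ is faithful. I claim it suffices to prove the following statement $(\star)$: for every nonzero positive $x\in B$ and every $\ep\in(0,1)$ there is $c\in B$ with $\lVert c^*xc-1\rVert<\ep$. Indeed, $(\star)$ forces $B$ to be simple, since any nonzero closed two-sided ideal contains a nonzero positive element $x$, and then $c^*xc$ is an invertible element of that ideal, so the ideal is all of $B$. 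Moreover $(\star)$ yields pure infiniteness in the sense of the stated definition: given nonzero $x,y$, apply $(\star)$ to the positive element $x^*x$ with $\ep=1/2$ to obtain $c$ with $c^*x^*xc$ invertible, set $w=(c^*x^*xc)^{-1/2}$ so that $(wc^*x^*)\,x\,(cw)=1$, and then $y=(y\,wc^*x^*)\,x\,(cw)$ exhibits the required factorization.

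To prove $(\star)$, fix a nonzero positive $x$ with $\lVert x\rVert=1$ and put $d=E(x)\in A^+$, which is nonzero by faithfulness of $E$. Since finitely supported elements are dense in $B$, choose a finite set $F\subseteq G$ containing $e$ and elements $a_g\in A$ with $a_e=d$ so that $\lVert x-\sum_{g\in F}a_g\lambda^\alpha_g\rVert$ is as small as we like. Next fix $\delta\in(0,\lVert d\rVert)$, let $f=g_\delta(d)$ where $g_\delta$ is continuous, supported in $[\lVert d\rVert-\delta,\lVert d\rVert]$, with $g_\delta(\lVert d\rVert)=1$, and set $\kappa=\lVert d\rVert-\delta>0$. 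The key step is to invoke Kishimoto's proper outerness lemma: because $A$ is simple and each $\alpha_g$ with $g\neq e$ is outer, hence properly outer, there is a positive norm-one element $e\in\overline{fAf}$ such that $\lVert e\,a_g\,\alpha_g(e)\rVert<\ep'$ for every $g\in F\setminus\{e\}$. Using $\lambda^\alpha_g e=\alpha_g(e)\lambda^\alpha_g$, we compute
\[
e\Bigl(\sum_{g\in F}a_g\lambda^\alpha_g\Bigr)e
=e\,d\,e+\sum_{g\in F\setminus\{e\}}e\,a_g\,\alpha_g(e)\lambda^\alpha_g,
\]
so that $\lVert exe-ede\rVert$ can be made arbitrarily small by choosing the approximation of $x$ and $\ep'$ small enough. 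Since $e$ lies in the hereditary subalgebra cut out by the spectral part of $d$ above $\kappa$, one has $ede\geq\kappa e^2$, so $ede$ is a nonzero positive element of $A$.

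Finally I descend to $A$, where pure infiniteness holds by hypothesis: the nonzero positive element $ede$ can be rescaled, so there is $s\in A$ with $\lVert s^*(ede)s-1\rVert$ arbitrarily small. Setting $c=es$ and combining the estimates gives $\lVert c^*xc-1\rVert=\lVert s^*(exe)s-1\rVert<\ep$, which is exactly $(\star)$. The routine parts here are the density of finitely supported elements, the functional-calculus manipulations, and the spectral estimate $ede\geq\kappa e^2$. The genuine obstacle, and the analytic heart of the argument, is Kishimoto's lemma producing a single positive element $e$ that simultaneously almost annihilates all of the off-diagonal terms $e\,a_g\,\alpha_g(e)$ for $g\neq e$; this is where outerness of the action on the simple algebra $A$ enters, via proper outerness of each $\alpha_g$, and it is the step whose justification requires the full spectral analysis of automorphisms of simple $C^*$-algebras.
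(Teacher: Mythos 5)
The paper offers no proof of this theorem: it is quoted directly from \cite[Lemma 10]{KK}, so the only meaningful comparison is with the proof of the cited result, and your argument is essentially that proof --- the faithful canonical expectation $E$, Kishimoto's lemma (\cite[Lemma 1.1]{Kis}, which the paper itself invokes elsewhere) to almost annihilate the off-diagonal coefficients $e\,a_g\,\alpha_g(e)$, the spectral bound $ede\ge\kappa e^2$, and a rescaling inside $A$. Your reduction to $(\star)$ is correct, as is the derivation of simplicity and of the factorization $y=axb$ from it; the simultaneous (finitely-many-$g$) form of Kishimoto's lemma is also fine, being obtained from the single-automorphism statement by a routine induction over shrinking hereditary subalgebras. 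One cosmetic point: no detour through ``proper outerness'' is needed, since \cite[Lemma 1.1]{Kis} is stated for outer automorphisms of simple $C^*$-algebras directly.

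There is, however, one genuine gap, at the very end. You fix the approximation of $x$ and the Kishimoto tolerance $\ep'$ first, which fixes $\eta:=\lVert exe-ede\rVert$, and only afterwards choose $s$. But what you need is
\[
\lVert c^*xc-1\rVert\le\lVert s\rVert^2\,\lVert exe-ede\rVert
+\lVert s^*(ede)s-1\rVert<\ep,
\]
and nothing in your argument controls $\lVert s\rVert$: the element $s$ depends on $ede$, hence on $e$, hence on the already-chosen $\eta$, so ``combining the estimates'' does not by itself give $<\ep$. The repair uses exactly the bound $ede\ge\kappa e^2$ you established, because $\kappa=\lVert d\rVert-\delta$ is fixed \emph{before} $e$ is chosen and it guarantees $\lVert ede\rVert\ge\kappa$. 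In a unital purely infinite simple $C^*$-algebra, any positive $a$ with $\lVert a\rVert\ge\kappa$ admits $s$ with $s^*as=1$ and $\lVert s\rVert\le\sqrt{2/\kappa}$: take a nonzero projection $p$ in the hereditary subalgebra generated by $(a-\kappa/2)_+$, note that $pap\ge(\kappa/2)p$, set $t=(pap)^{-1/2}$ computed inside $pAp$ (so $\lVert t\rVert\le\sqrt{2/\kappa}$ and $tat=p$), and take an isometry $v$ with $vv^*\le p$ (possible since $p$ is properly infinite and full), so that $s=tv$ satisfies $s^*as=v^*pv=1$; all of these facts are in \cite[Chapter 4]{R}, which the paper allows itself to use freely. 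With the uniform bound $\lVert s\rVert\le\sqrt{2/\kappa}$ known in advance, you choose the approximation and $\ep'$ at the outset so that $(2/\kappa)\bigl(\eta+\lvert F\rvert\ep'\bigr)<\ep$, and the proof closes. In short: right strategy, the same as the cited argument, but the quantifiers in the last step must be reordered using a norm-controlled rescaling.
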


A simple purely infinite nuclear separable $C^*$-algebra 
is called a Kirchberg algebra. 
We recall several facts from the classification theory 
of Kirchberg algebras 
mainly due to Kirchberg, Phillips and R\o rdam. 

\begin{thm}[{\cite{Kir},\cite[Proposition 1.4]{KP}}]\label{Kir1}
Let $A$ be a unital purely infinite simple $C^*$-algebra and 
let $C\subset A$ be a unital separable subalgebra. 
For any nuclear unital completely positive map $\rho:C\to A$, 
there exists a sequence of nonunitary isometries 
$\{s_n\}_{n=1}^\infty$ in $A$ such that 
$\rho(x)=\lim_{n\to\infty}s_n^*xs_n$ for all $x\in C$. 
\end{thm}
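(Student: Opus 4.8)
The plan is to prove this cornerstone of Kirchberg--Phillips theory by combining the completely positive approximation property of the nuclear map $\rho$ with the abundance of isometries with orthogonal ranges supplied by pure infiniteness. First I would reduce to a local statement: it suffices to produce, for every finite subset $F\subset C$ and every $\ep>0$, a single nonunitary isometry $s\in A$ with $\lVert s^*xs-\rho(x)\rVert<\ep$ for all $x\in F$. Since $C$ is separable and each map $x\mapsto s^*xs$ is unital completely positive (hence norm-contractive), an exhaustion of $C$ by finite sets together with the usual diagonal $\ep/3$-argument then yields the desired sequence $\{s_n\}$ with $s_n^*xs_n\to\rho(x)$ for every $x\in C$.

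Next, invoking nuclearity of $\rho$, I would approximate it on $F$ by a composition $\psi\circ\phi$, where $\phi\colon C\to M_k$ and $\psi\colon M_k\to A$ are unital completely positive maps and $\lVert\psi(\phi(x))-\rho(x)\rVert$ is small on $F$. The task thus splits into two half-steps. For the map $\psi$ out of the matrix algebra I would use pure infiniteness twice: first to fix a unital embedding $\sigma\colon M_k\to A$, i.e.\ orthogonal isometries $t_1,\dots,t_k$ with $\sum_p t_pt_p^*=1$, which exist because $1_A$ is properly infinite; and then, reading $\psi$ off its Choi element in $M_k(A)$, to realize $\psi(a)=\sum_l a_l^*\,\sigma(a)\,a_l$ with $a_l\in A$ and $\sum_l a_l^*a_l=1$. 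Absorbing this finite sum into a single isometry by means of orthogonal isometries drawn from the relative commutant $\sigma(M_k)'\cap A$ (again purely infinite simple, hence rich in orthogonal isometries), I obtain a single isometry $S\in A$ with $\psi(a)=S^*\sigma(a)S$. Consequently $\rho(x)$ is close to $S^*\,\sigma(\phi(x))\,S$ on $F$, and it remains only to realize the finite-rank map $\sigma\circ\phi$ as an isometric compression $u^*xu$ of the inclusion $C\subset A$; for then $s:=uS$ does the job.

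This last half-step is the heart of the matter and the step I expect to be the main obstacle. Writing $\sigma(\phi(x))=\sum_{p,q}\phi(x)_{pq}\,t_pt_q^*$, I am reduced to finding orthogonal isometries $d_1,\dots,d_k\in A$, so that $d_p^*d_q=\delta_{pq}1$, satisfying $d_p^*xd_q\approx\phi(x)_{pq}\,1$ for $x\in F$; for then $u:=\sum_p d_pt_p^*$ is an isometry with $u^*xu\approx\sigma(\phi(x))$. The existence of such $d_p$ is exactly an excision statement for the matrix of states determined by $\phi$: the diagonal entries $x\mapsto\phi(x)_{pp}$ are states that must be excised by genuine isometries with mutually orthogonal ranges, coherently across the off-diagonal coefficients. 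This is a matrix-valued refinement of the Akemann--Anderson--Pedersen excision theorem, and here pure infiniteness is indispensable: it is what allows one to upgrade the norm-one positive elements coming from excision to isometries with orthogonal ranges, playing the role that compact perturbations play in the classical Voiculescu-type arguments.

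Finally, nonunitarity of the assembled $s$ is arranged using pure infiniteness once more---for instance by incorporating a nonunitary isometry, coming from the purely infinite simple algebra $A_\omega$, that nearly commutes with $F$---so that each $s_n$ may be taken nonunitary as required. The overall architecture is therefore: a soft diagonal reduction, a structural step realizing the matrix part $\psi$ by a single isometry via Choi theory and commuting orthogonal isometries, and the genuinely hard excision step realizing $\phi$ against the identity inclusion, with pure infiniteness entering decisively at each place where one must pass from approximate or sub-unital data to honest isometries.
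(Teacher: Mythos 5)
Your proposal cannot be checked against a proof in the paper, because the paper offers none: Theorem \ref{Kir1} is quoted as a known result of Kirchberg (\cite{Kir}, \cite[Proposition 1.4]{KP}), so the only meaningful comparison is with the Kirchberg--Phillips argument, whose outline (diagonal reduction, factorization through $M_k$ by nuclearity, realization of both halves by isometric compressions) your plan does reproduce. Within that outline, however, there is a false claim. A unital embedding $\sigma\colon M_k\to A$ --- in your formulation, isometries $t_1,\dots,t_k$ with mutually orthogonal ranges and $\sum_p t_pt_p^*=1$ --- does \emph{not} exist in a general unital purely infinite simple $A$, and proper infiniteness of $1_A$ does not produce one: it only yields $\sum_p t_pt_p^*\leq 1$. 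Exactness of the sum forces $(k-1)[1]=0$ in $K_0(A)$ (and a unital copy of $M_k$ forces $[1]\in k\,K_0(A)$), which fails, for instance, for $A=\mathcal{O}_\infty$, where $[1]$ generates $K_0(A)\cong\Z$, and for $A=\mathcal{O}_3$ with $k=2$. The error is repairable: with the non-unital embedding $\sigma(a)=\sum_{p,q}a_{pq}t_pt_q^*$ your Choi-matrix realization still gives $\psi(a)=S^*\sigma(a)S$ with $S^*\sigma(1_k)S=\psi(1_k)=1$, so that $s=uS$ with $u=\sum_p d_pt_p^*$ (now only a partial isometry) is still an isometry; but as written, both your construction of $S$ and your verification that $u$ is an isometry use the nonexistent unitality.

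The more serious gap is that the step you yourself call the heart --- producing orthogonal isometries $d_p$ with $d_p^*xd_q\approx\phi(x)_{pq}1$ --- is only named, not proved, and the name ``matrix-valued refinement of Akemann--Anderson--Pedersen excision'' omits the ideas that make it work. AAP excision applies only to \emph{pure} states, whereas an arbitrary u.c.p.\ map $\phi\colon C\to M_k$ corresponds, via the standard bijection with states on $M_k(C)$ (extended to $M_k(A)$), to an arbitrary state; the missing ingredient is Glimm's theorem that in an antiliminal $C^*$-algebra --- and purely infinite simple algebras are antiliminal --- pure states are weak$^*$-dense in the state space, which is what reduces the general case to the excisable one. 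One then excises a pure state of $M_k(A)$, uses pure infiniteness to upgrade the positive excising element to an isometry in $M_k(A)$, and recovers your $d_p$'s by compressing to a corner and rescaling, followed by a perturbation making the $d_p$ exactly orthogonal isometries; none of this reduction appears in your proposal, so the key step remains an assertion. Finally, your device for nonunitarity --- a nonunitary isometry in $A_\omega$ almost commuting with $F$ --- is not available at this level of generality: $A$ here is merely unital purely infinite simple (not separable or nuclear), pure infiniteness of the central sequence algebra is stated in the paper only for Kirchberg algebras (Theorem \ref{Kir2}), and in \cite{KP} that very statement is deduced \emph{from} the present proposition, so invoking it would be circular. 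The correct and simpler route is to arrange $\sum_p d_pd_p^*\neq 1$ in the excision step (for example, run it with $k+1$ orthogonal isometries and discard one); then $ss^*\leq\sum_p d_pd_p^*\lneq 1$, so every isometry produced is automatically nonunitary.
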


\begin{thm}[{\cite[Proposition 3.4]{KP}}]\label{Kir2}
When $A$ is a unital Kirchberg algebra, 
$A_\omega$ is purely infinite and simple. 
\end{thm}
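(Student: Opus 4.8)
The plan is to verify directly the working definition of a purely infinite simple algebra recalled above: for all nonzero $x,y\in A_\omega$ one must produce $a,b\in A_\omega$ with $y=axb$. First I would reduce this to a single cleaner statement: \emph{for every nonzero positive $z\in A_\omega$ there exists $a\in A_\omega$ with $a^*za=1$.} Indeed, granting this, apply it to $z=xx^*$ (which is nonzero since $x\neq0$) to obtain $a$ with $a^*xx^*a=1$; then $y=y(a^*xx^*a)=(ya^*)\,x\,(x^*a)$, which has exactly the required form, and simplicity is immediate because the closed ideal generated by any nonzero $z$ then contains $1$. So everything comes down to showing that every nonzero positive central sequence dominates the unit, with the dominating element again lying in $A_\omega$.

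Next I would set up the central sequence explicitly. Fix a dense sequence $\{c_k\}_{k\geq1}$ in $A$. Represent $z=(z_n)$ by a bounded sequence of positive elements with $\lVert z_n\rVert\leq1$, $\lim_{n\to\omega}\lVert z_n\rVert=\lVert z\rVert=1$, and $\lim_{n\to\omega}\lVert[z_n,c_k]\rVert=0$ for every $k$ (which is precisely the condition $z\in A_\omega$). Replacing $z_n$ by $(z_n-1/2)_+$ via functional calculus, I may assume each relevant $z_n$ is a nonzero positive element that still asymptotically commutes with every $c_k$. For each individual $n$, pure infiniteness and simplicity of $A$ itself already furnish an element $r_n\in A$ with $r_n^*z_nr_n=1$; the entire difficulty is that the naive choice of $r_n$ need not asymptotically commute with the $c_k$, so the sequence $(r_n)$ need not define an element of $A_\omega$.

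The main obstacle, and the heart of the proof, is therefore this \emph{centralization}: upgrading the pointwise witnesses to an $\omega$-central sequence. This is exactly where nuclearity of $A$ and Theorem \ref{Kir1} enter. For a fixed finite set $F=\{c_1,\dots,c_k\}$ and tolerance $\ep>0$, I would apply Theorem \ref{Kir1} to a suitable unital separable subalgebra containing $F$, together with a nuclear unital completely positive map encoding compression into the corner on which $z_n$ is invertible, obtaining nonunitary isometries that implement this compression while approximately commuting with $F$. Combining such an approximately central isometry with a local near-inverse of $z_n^{1/2}$ coming from pure infiniteness produces $a_n\in A$ with $\lVert a_n^*z_na_n-1\rVert<\ep$ and $\lVert[a_n,c_j]\rVert<\ep$ for $j\leq k$. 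A diagonal reindexing, letting $F$ exhaust $\{c_k\}$ and $\ep\to0$ along $\omega$, then assembles the $a_n$ into a genuine element $a=(a_n)\in A_\omega$ with $a^*za=1$, completing the reduction and hence the theorem. I expect the delicate point to be carrying out this centralization uniformly enough to survive the passage to the ultrapower; without nuclearity, that is, without Theorem \ref{Kir1}, the approximately central isometries would simply not be available.
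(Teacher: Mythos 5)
Your high-level strategy is the right one: reducing everything to the claim that every nonzero positive $z\in A_\omega$ admits $a\in A_\omega$ with $a^*za=1$, and then using nuclearity through Theorem \ref{Kir1} to manufacture approximately central witnesses, is exactly the mechanism behind \cite[Proposition 3.4]{KP} (which the paper cites for this statement without reproving it) and behind the paper's own proof of the equivariant generalization, Corollary \ref{Aomega=pis}. The reduction itself is correct. The genuine gap is in your centralization step: the nuclear unital completely positive map to which Theorem \ref{Kir1} is to be applied is never constructed, and constructing it is the whole difficulty. Theorem \ref{Kir1} yields isometries $s$ with $s^*xs\approx\rho(x)$ for a \emph{given} nuclear u.c.p.\ map $\rho$; these isometries approximately commute with $F$ only if $\rho$ restricts (approximately) to the identity on $C^*(F)$, and they compress $z_n$ towards $1$ only if $\rho(z_n)\approx 1$. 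A ``compression into the corner on which $z_n$ is invertible'' achieves neither simultaneously, since compressions move $F$ into the corner instead of fixing it. So at each finite stage you need a u.c.p.\ map on $C^*(F\cup\{z_n\})$ fixing $F$ and sending $z_n$ near $1$, and its existence cannot be taken for granted: if $c\in F$ is a unitary with $cz_nc^*$ orthogonal to $z_n$, no such map exists at all, because $\rho(c)=c$ puts $c$ in the multiplicative domain and $\rho(z_n)=1$ forces $1=c\rho(z_n)c^*=\rho(z_n\cdot cz_nc^*)=\rho(0)=0$. Thus the approximate commutation of $z_n$ with $F$ must enter quantitatively, and nothing in your outline uses it.

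The repair is to abandon the finite stages and work where the commutation is \emph{exact}, namely in $A^\omega$. For positive $z\in A_\omega$ with $\lVert z\rVert=1$, the subalgebra $C=C^*(A\cup\{z\})\subset A^\omega$ is isomorphic to $A\otimes C^*(z,1)$ because $z$ commutes with $A$ and $A$ is simple and nuclear; evaluation at $1\in\sigma(z)$ then gives an honest unital homomorphism $\rho:C\to A^\omega$ with $\rho|A=\id$ and $\rho(z)=1$ --- this is the nuclear u.c.p.\ map your argument is missing, and it comes for free from the tensor decomposition. Applying Theorem \ref{Kir1} with ambient algebra $A^\omega$ (which is purely infinite simple, as remarked in Section 2) and the separable nuclear subalgebra $C$, and reindexing along $\omega$, one gets a single isometry $s\in A^\omega$ with $s^*xs=\rho(x)$ exactly for all $x\in C$; the computation $\lVert[s,x]\rVert^2=\lVert x^*x-x^*s^*xs-s^*x^*sx+s^*x^*xs\rVert=0$ for $x\in A$ then shows $s\in A_\omega$, and $s^*zs=1$. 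This is precisely the proof of Corollary \ref{Aomega=pis} with trivial group (and of \cite[Proposition 3.4]{KP}): your diagonalization becomes unnecessary, and the tensor-product identification $C\cong A\otimes C^*(z,1)$ --- which your outline never invokes --- is the structural fact that closes the argument.
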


\begin{thm}[{\cite[Theorem 4.1.1]{Phi}}]\label{Phillips}
Let $A$ be a unital separable nuclear simple $C^*$-algebra and 
let $B$ be a unital separable $C^*$-algebra 
that satisfies $B\cong B\otimes\mathcal{O}_\infty$. 
\begin{enumerate}
\item For every $x\in KK(A,B)$ satisfying $K_0(x)([1])=[1]$, 
there exists $\rho\in\Hom(A,B)$ such that $KK(\rho)=x$. 
\item If $\rho,\sigma\in\Hom(A,B)$ satisfy $KK(\rho)=KK(\sigma)$, 
then $\rho$ and $\sigma$ are asymptotically unitarily equivalent. 
\end{enumerate}
\end{thm}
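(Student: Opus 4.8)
The two assertions both rest on Kirchberg's approximation theorem (Theorem~\ref{Kir1}) together with the absorption hypothesis $B\cong B\otimes\mathcal{O}_\infty$, which supplies an abundant family of isometries with pairwise orthogonal ranges and thus room for Eilenberg-swindle type cancellations.

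For the existence statement (1), the plan is to pass through the Cuntz--Kasparov picture of $KK$-theory. Any $x\in KK(A,B)$ is represented by a Cuntz pair, that is, a pair of homomorphisms $\phi,\psi\colon A\to M(B\otimes\K)$ with $\phi(a)-\psi(a)\in B\otimes\K$ for all $a\in A$. First I would use the $\mathcal{O}_\infty$-absorption to adjoin to $\phi$ a sufficiently large, $KK$-trivial ``absorbing'' homomorphism that dominates and cancels the negative part $\psi$; this realizes $x$ as the $KK$-class of a single homomorphism into $B\otimes\K$. Then I would de-stabilize: pure infiniteness produces a projection in $B\otimes\K$ that is Murray--von Neumann equivalent to $1_B$, and the hypothesis $K_0(x)([1])=[1]$ is exactly what guarantees that the corresponding compression can be taken unital with $KK$-class still equal to $x$, yielding the desired $\rho\in\Hom(A,B)$.

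For the uniqueness statement (2), I would proceed in three stages. First, a \emph{stable uniqueness} step: assuming $KK(\rho)=KK(\sigma)$, I would show that after adding a sufficiently large auxiliary homomorphism $\gamma$ (an amplification of a fixed full embedding), the two maps become approximately unitarily equivalent, $\rho\oplus\gamma\sim\sigma\oplus\gamma$. Here Theorem~\ref{Kir1} does the main work, letting one approximate the relevant completely positive maps by compressions $s_n^*(\,\cdot\,)s_n$ and thereby absorb the $KK$-data. Second, I would use $B\cong B\otimes\mathcal{O}_\infty$ to reabsorb the summand $\gamma$ back into $B$, so that $\rho$ and $\sigma$ themselves become approximately unitarily equivalent. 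Third, I would upgrade approximate to asymptotic unitary equivalence by an Elliott-style approximate-intertwining argument, concatenating and interpolating the approximating unitaries into a continuous path $\{u_t\}_{t\in[0,\infty)}$.

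The main obstacle will be the third stage. Approximate and asymptotic unitary equivalence differ by an obstruction living in a $KK^1$-type (mapping-cone) group, and the crux is precisely that equality of the \emph{full} $KK$-classes---not merely of the induced maps on $K$-theory---forces this obstruction to vanish, so that the stepwise unitaries can be joined continuously rather than only chosen one at a time. Organizing the $\mathcal{O}_\infty$-absorption in the second stage so that unitality is preserved throughout, and controlling the unitary cocycle along the intertwining, is the delicate bookkeeping on which the whole argument turns.
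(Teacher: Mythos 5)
First, a remark on the comparison itself: the paper offers no proof of this statement. It is imported verbatim as Phillips' theorem \cite[Theorem 4.1.1]{Phi}, so your proposal has to be measured against Phillips' original argument rather than anything in this paper. Your existence sketch (Cuntz pairs, absorption of the negative part, de-stabilization using $K_0(x)([1])=[1]$) and your stages 1--2 for uniqueness (stable uniqueness via Kirchberg's approximation theorem, then reabsorption of the auxiliary summand) do follow the genuine Kirchberg--Phillips strategy in outline. One caveat already there: Theorem \ref{Kir1} requires the ambient algebra to be purely infinite \emph{simple}, whereas the $B$ of this statement is only separable, unital and $\mathcal{O}_\infty$-absorbing (e.g.\ $B=\mathcal{O}_\infty\oplus\mathcal{O}_\infty$ is allowed); Phillips substitutes fullness of the relevant maps for simplicity of the target, which is automatic for unital homomorphisms out of the simple algebra $A$ but has to be threaded through the stable uniqueness machinery.

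The genuine gap is your stage 3. What stages 1--2 deliver is approximate unitary equivalence, and that datum only remembers the image of $KK(\rho)-KK(\sigma)$ in the quotient $KL(A,B)$ of $KK(A,B)$ by the closure of zero (for $A$ in the UCT class this kernel is the subgroup of pure extensions inside $\Ext^1_{\Z}(K_*(A),K_*(B))$). Asymptotic unitary equivalence, by contrast, is governed by the full $KK$-class, and the two notions genuinely differ: whenever $K_*(A)$ fails to be finitely generated this kernel can be nonzero, and then there exist unital homomorphisms between Kirchberg algebras that are approximately but \emph{not} asymptotically unitarily equivalent. Consequently no Elliott-style interpolation of the sequence $\{u_n\}$ can succeed in general; once you have retained only the sequence of unitaries, the $KK$-information needed to kill the obstruction to joining $u_n$ to $u_{n+1}$ through an almost-intertwining path has already been discarded, and it cannot be recovered. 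This is precisely why Phillips never discretizes: he formulates the whole theory in terms of asymptotic morphisms and continuous families of unitaries, proves the stable uniqueness theorem directly in the asymptotic category, and then uses $\mathcal{O}_\infty$-absorption to remove the stabilization, so that the hypothesis $KK(\rho)=KK(\sigma)$ enters in the construction of the path itself rather than as a patch applied to a discrete intertwining. To repair your proposal you would have to replace stage 3 by such a direct asymptotic argument (as in Phillips' paper, or in Dadarlat--Eilers' later work on asymptotic unitary equivalence in $KK$-theory), not upgrade an approximate equivalence after the fact.
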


\bigskip

We next summarize a few results of \cite{N2}. 

\begin{thm}\label{outer}
Let $A$ be a unital simple separable $C^*$-algebra 
and let $\alpha\in\Aut(A)$ be an outer automorphism. 
Then, the extensions of $\alpha$ to $A^\omega$ and $A_\omega$ 
are both outer. 
\end{thm}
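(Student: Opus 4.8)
The plan is to argue by contradiction in both cases, reducing everything to Kishimoto's lemma on outer automorphisms of simple $C^*$-algebras. Recall the content of that lemma: if $\alpha$ is outer on a simple unital $A$, then for every $x\in A$ and every $\delta>0$ there is a positive norm-one element $e\in A$ that is ``$\alpha$-orthogonal'' to $x$, in the sense that $\|e\,x\,\alpha(e)\|<\delta$. This is the only external input I would use; the rest is bookkeeping inside $A^\omega$.

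I would treat $A^\omega$ first. Suppose the extension of $\alpha$ were inner, say $\alpha=\Ad U$ on $A^\omega$ for a unitary $U\in A^\omega$. Since $A$ is unital, $U$ lifts to a sequence of genuine unitaries $(u_n)_n$ in $A$ (take any bounded lift and replace it by its polar part where invertible, which is $\omega$-almost everywhere). For each $n$ I apply Kishimoto's lemma to the single element $u_n^*$ with tolerance $1/n$, obtaining $e_n\in A$ with $e_n\ge 0$, $\|e_n\|=1$, and $\|e_n u_n^*\alpha(e_n)\|<1/n$. Set $E=(e_n)_n\in A^\omega$; then $E\ge 0$ and $\|E\|=1$. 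Now I compute $\|\alpha(E)\,U\,E\|$ two ways. From $UEU^*=\alpha(E)$ we get $UE=\alpha(E)U$, hence $\alpha(E)UE=\alpha(E)^2U$ and $\|\alpha(E)UE\|=\|\alpha(E)\|^2=1$. On the other hand $\alpha(E)UE=(\alpha(e_n)u_n e_n)_n$, so its norm is $\lim_{n\to\omega}\|\alpha(e_n)u_n e_n\|=\lim_{n\to\omega}\|e_n u_n^*\alpha(e_n)\|=0$. This contradiction forces the extension to $A^\omega$ to be outer.

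For $A_\omega$ I would run the identical argument inside the central sequence algebra. Assuming $\alpha|_{A_\omega}=\Ad U$ with $U\in A_\omega$, I lift $U$ to unitaries $(u_n)$, which are automatically asymptotically central. The only genuinely new requirement is that the element $E=(e_n)$ must lie in $A_\omega=A^\omega\cap A'$, so the $e_n$ must be approximately central in addition to being $\alpha$-orthogonal to $u_n^*$. I would therefore invoke the strengthened form of Kishimoto's lemma producing $e\in A$, $e\ge 0$, $\|e\|=1$, with $\|e\,x\,\alpha(e)\|<\delta$ and simultaneously $\|[e,a]\|<\delta$ for any prescribed finite set, applying it at stage $n$ to $x=u_n^*$ together with the first $n$ terms of a fixed dense sequence of $A$. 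Then $E\in A_\omega$, and since $\alpha$ preserves $A_\omega$ the very same two-way evaluation of $\|\alpha(E)UE\|$ yields $1=0$.

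The main obstacle is exactly this last point: upgrading Kishimoto's lemma so that the orthogonalising element $e$ can also be taken approximately central. The tension is structural, since the $\alpha$-orthogonality condition wants $e$ concentrated in a small corner of $A$, whereas centrality wants $e$ spread uniformly across $A$; reconciling the two is the real content of the $A_\omega$ assertion. I expect this to be where the work lies, the $A^\omega$ case following at once from the plain lemma.
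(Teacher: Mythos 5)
Your $A^\omega$ half is correct, and it is in substance the argument behind the result the paper cites: the paper's own ``proof'' of Theorem \ref{outer} is just a reference to \cite[Lemma 2]{N2}, whose $A^\omega$ part is exactly Kishimoto's lemma \cite[Lemma 1.1]{Kis} plus the two-sided evaluation of $\lVert\alpha(E)UE\rVert$ that you carry out (the lifting of $U$ to honest unitaries is also fine). So for $A^\omega$ there is nothing to object to.

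The $A_\omega$ half, however, contains a genuine gap, and you have pointed at it yourself: the ``strengthened form of Kishimoto's lemma'' you invoke --- a positive norm-one $e$ with $\lVert e\,x\,\alpha(e)\rVert<\delta$ that \emph{in addition} almost commutes with a prescribed finite set --- is nowhere proved in your proposal, and it is not an off-the-shelf refinement. Kishimoto's lemma allows one to place the orthogonalizing element in an arbitrary nonzero hereditary subalgebra, but hereditary localization is in tension with approximate centrality (as you note), and no soft perturbation argument bridges the two. Concretely, for a unitary $x=u^*$ one has $\lVert e\,u^*\alpha(e)\rVert=\lVert e\,(\Ad u^*\circ\alpha)(e)\rVert$, so what you need is precisely that for every outer automorphism $\gamma=\Ad u^*\circ\alpha$ there is a nonzero positive element of $A_\omega$ (almost) orthogonal to its $\gamma$-image. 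That is the single-automorphism, positive-element case of the paper's Lemma \ref{orthogonal}, i.e.\ of \cite[Lemma 3]{N2} --- a strictly deeper result than \cite[Lemma 1.1]{Kis}, which in the paper and in \cite{N2} is available only for Kirchberg algebras (its proof exploits pure infiniteness), not for the general unital simple separable $A$ of Theorem \ref{outer}. Worse, already its instance $x=1$ asserts that $\alpha$ moves some central sequence, which is exactly what must be shown to exclude even the trivial candidate $U=1$ for innerness of $\alpha$ on $A_\omega$. So the lemma you invoke is not an input one may cite; it \emph{is} the content of the $A_\omega$ statement, and assuming it makes that half of your argument circular in spirit rather than merely incomplete. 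A complete proof must actually manufacture approximately central elements that $\alpha$ displaces, which is the real work done in \cite{N2} and is what your proposal defers.
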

\begin{proof}
This follows from \cite[Lemma 2]{N2} and its proof. 
\end{proof}

\begin{thm}[{\cite[Theorem 1]{N2}}]\label{ZRohlin}
For an automorphism $\alpha$ of a unital Kirchberg algebra $A$, 
the following conditions are equivalent. 
\begin{enumerate}
\item $\alpha$ is aperiodic, that is, 
$\alpha^n$ is outer for every $n\in\Z\setminus\{0\}$. 
\item $\alpha$ has the Rohlin property. 
\end{enumerate}
\end{thm}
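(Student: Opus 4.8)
The plan is to prove the two implications separately; $(2)\Rightarrow(1)$ is elementary, while $(1)\Rightarrow(2)$ carries all the weight. For $(2)\Rightarrow(1)$ I would argue by contraposition. Suppose $\alpha^n=\Ad u$ for some $n\neq0$ and $u\in U(A)$. Since $A$ is unital, $u\in A$, and every element of $A_\omega=A^\omega\cap A'$ commutes with $u$; hence the extension of $\alpha^n$ to $A_\omega$ is the identity. On the other hand, the Rohlin property supplies, for a height $p$ with $p\nmid n$ (e.g.\ a prime $p>|n|$), mutually orthogonal nonzero projections $e_0,\dots,e_{p-1}\in A_\omega$ with $\alpha(e_i)=e_{i+1}$ (indices mod $p$), so that $\alpha^n(e_0)=e_{n\bmod p}\neq e_0$, contradicting $\alpha^n=\id$ on $A_\omega$. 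Thus $\alpha$ is aperiodic.

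For $(1)\Rightarrow(2)$, the first step is to transfer everything to $B:=A_\omega$. Applying Theorem~\ref{outer} to each power $\alpha^n$ with $n\neq0$ shows that the extension of $\alpha$ to $B$ is again aperiodic, and Theorem~\ref{Kir2} guarantees that $B$ is purely infinite and simple; moreover, by Theorem~\ref{KKFields}, the crossed product $B\rtimes_\alpha\Z$ is purely infinite simple as well, a fact I would keep available for the core construction. The Rohlin property then amounts to constructing, for each $p$, two cyclic towers of projections in $B$ of heights $p$ and $p+1$ that are mutually orthogonal, sum to $1$, and on which $\alpha$ acts as the cyclic shift. Since $\gcd(p,p+1)=1$, these coprime heights are exactly what prevents the construction from forcing $\alpha^p\approx\id$, and exact relations in $B$ will be produced from approximate relations in $A$ (for finite subsets and shrinking tolerances) by the standard reindexation argument.

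The heart of the matter is a single-tower Rohlin lemma for the outer action on the purely infinite simple algebra $B$: for every $N$ there is a nonzero projection $e\in B$ whose translates $e,\alpha(e),\dots,\alpha^{N-1}(e)$ are mutually orthogonal. This is where outerness of the powers of $\alpha$ is used essentially, and I expect it to be the \emph{main obstacle}. The natural route, following Kishimoto \cite{K98-1} and Nakamura \cite{N2}, is to exploit aperiodicity to produce an element $x\in B$ of norm one with $\lVert x\alpha^i(x)\rVert$ small for $1\le i<N$ and then extract a suitable spectral projection; the point is that the failure to find such an $x$ would force some $\alpha^i$ to be implemented by a unitary, contradicting outerness. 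Controlling approximate centrality simultaneously with the orthogonality of the translates is the delicate part.

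Granting this lemma, the remaining steps are organizational. Pure infinite simplicity of $B$ lets me enlarge a linear tower of height $N$ into a genuine paving: a long linear tower can be partitioned into consecutive blocks of lengths $p$ and $p+1$, and cutting and recombining the corresponding projections (absorbing the leftover projection $1-\sum_i\alpha^i(e)$ into the tower via the comparison theory available in a purely infinite simple algebra) yields two cyclic towers of heights $p$ and $p+1$ summing to $1$. Taking $N$ large relative to $p$ makes the recombination error negligible, and passing to the $\omega$-limit upgrades the approximate shift relations to the exact identities $\alpha(e_i)=e_{i+1}$ required for the Rohlin property. Assembling these pieces establishes $(1)\Rightarrow(2)$ and completes the equivalence.
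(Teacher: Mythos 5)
The first thing to note is that the paper does not prove this statement at all: Theorem \ref{ZRohlin} is imported verbatim from Nakamura, \cite[Theorem 1]{N2}, so what you are really attempting is a reconstruction of Nakamura's proof. Your direction (2)$\Rightarrow$(1) is correct, up to the harmless point that the Rohlin property gives \emph{two} cyclic towers and the one of height $p$ could a priori consist of zero projections; since the towers sum to $1$, at least one tower has all levels nonzero, and running your argument on that tower (its height is still $>|n|$) finishes it. Your reduction steps for (1)$\Rightarrow$(2) are also sound: Theorem \ref{outer} makes $\alpha$ aperiodic on $A_\omega$, Theorem \ref{Kir2} makes $A_\omega$ purely infinite simple, and the single-tower lemma you describe is available off the shelf --- it is exactly Lemma \ref{orthogonal} of the paper, i.e.\ \cite[Lemma 1.1]{Kis} together with \cite[Lemma 3]{N2}, applied to the countable family $\{\alpha^i\}_{i\neq 0}$. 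So the step you flag as the \emph{main obstacle} is in fact the part that was already known before \cite{N2}.

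The genuine gap is the step you dispatch in one sentence: passing from a linear tower $e,\alpha(e),\dots,\alpha^{N-1}(e)$ with leftover $r=1-\sum_i\alpha^i(e)$ to two \emph{exact} cyclic towers of heights $p$ and $p+1$ summing to $1$. The mechanism you propose does not work. First, Murray--von Neumann comparison in $A_\omega$ is not $\alpha$-equivariant: pure infiniteness gives a partial isometry $v$ with $v^*v=r$ and $vv^*\leq e$, but $v$ has no reason to intertwine $\alpha$, so enlarging the base by $vv^*$ (or distributing $r$ along the tower) destroys the shift relations --- not approximately, but outright, since the correcting projections have norm one. Second, the claim that ``taking $N$ large relative to $p$ makes the recombination error negligible'' is measure-theoretic intuition from the classical and von Neumann Rokhlin lemmas, where the wrap-around mismatch is an object of small trace; in the $C^*$-norm setting the mismatch is a projection, hence of norm $1$ for every $N$, and the $\omega$-limit cannot upgrade relations that are not even approximately satisfied in norm. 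Bridging exactly this gap --- converting Kishimoto-type orthogonality into towers summing precisely to $1$ --- is the actual content of Nakamura's proof of \cite[Theorem 1]{N2}, and it is absent from your sketch. It is telling that the present paper, even when it needs the equivariant refinement in Theorem \ref{equivRohlin}, only verifies outerness on $(A_\omega)^\alpha$ and then defers the entire tower construction with the words ``the argument is exactly the same as \cite[Theorem 1]{N2}'': that construction is the theorem, and your proposal reproduces everything around it but not it.
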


\begin{thm}[{\cite[Theorem 5]{N2}}]\label{Z}
Let $A$ be a unital Kirchberg algebra and 
let $\alpha,\beta\in\Aut(A)$ be aperiodic automorphisms. 
The following are equivalent. 
\begin{enumerate}
\item $KK(\alpha)=KK(\beta)$. 
\item $\alpha$ and $\beta$ are $KK$-trivially cocycle conjugate. 
\end{enumerate}
\end{thm}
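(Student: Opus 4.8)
The implication $(2)\Rightarrow(1)$ is the formal direction, and the plan is simply to apply the $KK$-functor to the defining identity of $KK$-trivial cocycle conjugacy. For $G=\Z$ a cocycle is a single unitary $u=u_{1}$ with $\alpha^{u}=\Ad u\circ\alpha$, so $(2)$ reads $\Ad u\circ\alpha=\mu^{-1}\circ\beta\circ\mu$ for some $\mu$ with $KK(\mu)=1_{A}$. Since inner automorphisms act trivially on $KK$, one has $KK(\Ad u\circ\alpha)=KK(\alpha)\cdot KK(\Ad u)=KK(\alpha)$, while $KK(\mu)=1_{A}$ forces $KK(\mu^{-1})=1_{A}$ and hence $KK(\mu^{-1}\circ\beta\circ\mu)=KK(\mu)\cdot KK(\beta)\cdot KK(\mu^{-1})=KK(\beta)$. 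Comparing the two expressions yields $KK(\alpha)=KK(\beta)$.

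For the substantial implication $(1)\Rightarrow(2)$ I would run an Evans--Kishimoto intertwining argument built on two inputs. First, since $A$ is a Kirchberg algebra we have $A\cong A\otimes\mathcal{O}_\infty$, so Phillips' uniqueness theorem (Theorem \ref{Phillips}(2)) applies to the unital endomorphisms $\alpha,\beta\in\Hom(A,A)$: from $KK(\alpha)=KK(\beta)$ we obtain a continuous family of unitaries $\{w_{t}\}_{t\in[0,\infty)}$ with $\beta(a)=\lim_{t\to\infty}\Ad w_{t}(\alpha(a))$ for all $a\in A$, i.e. $\alpha$ and $\beta$ are asymptotically unitarily equivalent. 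Second, by Theorem \ref{ZRohlin} both $\alpha$ and $\beta$, being aperiodic, enjoy the Rohlin property, which is the tool that converts the bare unitary path above into a genuine $\alpha$-cocycle.

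The intertwining itself proceeds by induction, alternately correcting $\alpha$ toward $\beta$ and $\beta$ toward $\alpha$ on an increasing sequence of finite subsets of $A$. At each step one uses the asymptotic unitary equivalence to produce a unitary implementing the required approximate intertwining, and then invokes the Rohlin property of $\alpha$ (resp. $\beta$) to replace this unitary by one whose $\alpha$-translates telescope in a controlled manner, so that the accumulated products converge in the point-norm topology. The limit produces an automorphism $\mu$ together with a unitary $u$ satisfying $\mu\circ(\Ad u\circ\alpha)\circ\mu^{-1}=\beta$, i.e. $\alpha^{u}=\mu^{-1}\circ\beta\circ\mu$. Crucially, $\mu$ is realized as a limit of inner automorphisms along a continuous path, hence is asymptotically inner; since asymptotic unitary equivalence to $\id_{A}$ forces equal $KK$-classes, this gives $KK(\mu)=1_{A}$, exactly the data required by the definition.

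The main obstacle is the cocycle-straightening performed inside the intertwining. At each stage one must estimate not only the discrepancy between the conjugated action and the target, but also the size of the correcting unitary \emph{and} of its $\alpha$-shifts, and arrange the Rohlin towers so that these corrections accumulate to a convergent $\alpha$-cocycle rather than an uncontrolled perturbation. Keeping the two bookkeeping tasks simultaneously under control --- convergence of the conjugating products (yielding $\mu$, which must stay asymptotically inner to secure $KK(\mu)=1_{A}$) and convergence of the cocycle corrections (yielding $u$) --- is the technical heart of the argument, and is precisely where the Rohlin property of an aperiodic automorphism is indispensable.
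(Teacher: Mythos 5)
The paper does not actually prove this theorem: it is quoted in the preliminaries directly from Nakamura (\cite[Theorem 5]{N2}), so the only meaningful comparison is with Nakamura's argument, whose strategy your outline reproduces correctly --- the formal direction by functoriality of $KK$ and triviality of inner automorphisms on $KK$, and the substantial direction by combining Phillips' uniqueness theorem (Theorem \ref{Phillips}(2), applicable since $A\cong A\otimes\mathcal{O}_\infty$) with the Rohlin property of aperiodic automorphisms (Theorem \ref{ZRohlin}) in an Evans--Kishimoto intertwining carried out with continuous paths of unitaries, so that the conjugating automorphism is asymptotically inner and hence $KK$-trivial. This is essentially the same approach as the cited proof (and as the paper's own equivariant re-run of that argument in Theorem \ref{equivNakamura}), so your proposal is correct in approach and consistent with the paper.
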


As an immediate consequence of the theorem above, 
we have the following. 

\begin{lem}\label{Zasymp}
Let $A$ be a unital Kirchberg algebra and 
let $\alpha$ be an aperiodic automorphism. 
The following are equivalent. 
\begin{enumerate}
\item $KK(\alpha)=1_A$. 
\item $\alpha$ is asymptotically representable. 
\end{enumerate}
\end{lem}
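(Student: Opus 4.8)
The plan is to treat the two implications separately, with (2)$\Rightarrow$(1) routine and (1)$\Rightarrow$(2) carrying all the content. Throughout I use that for $G=\Z$ an asymptotic representation is encoded by a single continuous unitary path: writing $v(t)=v_1(t)$ and setting $v_n(t)=v(t)^n$, the conditions of Definition \ref{asymp} reduce to
\[
\lim_{t\to\infty}\lVert \Ad v(t)(a)-\alpha(a)\rVert=0\quad(a\in A),\qquad
\lim_{t\to\infty}\lVert\alpha(v(t))-v(t)\rVert=0,
\]
since the multiplicativity relation then holds exactly, the commutation relation follows for fixed $n$ by telescoping $\alpha(v(t))^n-v(t)^n$, and the implementation relation follows from $\Ad v(t)^n\to\alpha^n$.

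For (2)$\Rightarrow$(1): the first displayed condition says exactly that $\alpha$ and $\id_A$ are asymptotically unitarily equivalent as elements of $\Hom(A,A)$. Since $KK$ is invariant under asymptotic unitary equivalence, $KK(\alpha)=KK(\id_A)=1_A$.

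For (1)$\Rightarrow$(2) I would first produce an implementing path and then correct it. As $KK(\alpha)=1_A=KK(\id_A)$ and $A\cong A\otimes\mathcal{O}_\infty$ is a unital Kirchberg algebra, Theorem \ref{Phillips}(2) applied to $\alpha,\id_A$ yields a continuous unitary path $u(t)$ with $\Ad u(t)\to\alpha$, so the first condition above already holds for $u$. Applying $\alpha$ shows $\Ad\alpha(u(t))\to\alpha$ as well, whence the defect $c(t):=\alpha(u(t))u(t)^*$ satisfies $\Ad c(t)\to\id_A$; that is, $c(t)$ is asymptotically central and defines a unitary $\bar c\in A_\omega$. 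Seeking $v(t)=u(t)z(t)$ with $z(t)$ asymptotically central, a short computation using centrality gives $\alpha(v(t))v(t)^*\approx c(t)\,\alpha(z(t))z(t)^*$, so the entire problem reduces to untwisting $c(t)$: one must solve, asymptotically and through asymptotically central unitaries, the coboundary equation $\alpha(z(t))z(t)^*=c(t)^*$.

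The main obstacle is exactly this $1$-cohomology vanishing in the central sequence algebra. Here I would use that $\alpha$ extends to an outer, indeed aperiodic, automorphism of the purely infinite simple algebra $A_\omega$ (Theorems \ref{outer} and \ref{Kir2}), so that by Theorem \ref{ZRohlin} it has the Rohlin property there; the Rohlin towers are the standard device for expressing a central unitary cocycle as an approximate coboundary, after first trivializing its $K_1$-class by pure infiniteness. Alternatively, and more in the spirit of the sentence preceding the statement, one may deduce (1)$\Rightarrow$(2) directly from Theorem \ref{Z}: it suffices to exhibit \emph{one} aperiodic, $KK$-trivial, asymptotically representable automorphism $\beta$ --- for instance an infinite-tensor-product automorphism on $A\cong A\otimes\mathcal{O}_\infty$ whose implementing unitaries, being finite products, commute with the tails --- and then observe that asymptotic representability is transported across the resulting $KK$-trivial cocycle conjugacy $\alpha\sim\beta$ by the elementary computation of the first paragraph. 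In either route the genuine work is the Rohlin-type construction; the transport and the implication (2)$\Rightarrow$(1) are formal.
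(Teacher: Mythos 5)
Your second route is precisely the paper's proof, and it is correct: by Theorem \ref{Z}, a $KK$-trivial aperiodic $\alpha$ is $KK$-trivially cocycle conjugate to the model automorphism $\id_A\otimes\bigotimes_{n=1}^\infty\Ad u_n$ of $A\otimes\bigotimes_{n=1}^\infty\mathcal{O}_\infty$ with each $u_n$ of finite spectrum; the finite partial products $1\otimes u_1\otimes\dots\otimes u_N\otimes1\otimes\cdots$ are \emph{exactly} fixed by the model and asymptotically implement it, the finite spectra allow continuous interpolation between consecutive partial products, and asymptotic representability passes across cocycle conjugacy by the kind of elementary computation you indicate. So the proposal does contain a complete argument, and it is the same one as in the paper.

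Your primary route, however, has a genuine gap at exactly the step you label as standard. Solving the coboundary equation $\alpha(z(t))z(t)^*\approx c(t)^*$ through continuous, asymptotically central unitaries is not a routine Rohlin-tower exercise, and the obstruction is not a $K_1$-class: for a path of unitaries almost fixed by $\alpha$ (equivalently, a unitary over $S\otimes A$), the relevant obstruction is the $K_0(A)$-valued invariant $\kappa(\cdot,\alpha)\in\Coker(\id-K_0(\alpha))$ constructed in Section 6 of the paper; since $K_0(\alpha)=\id$ here, this group is all of $K_0(A)$, and Lemma \ref{kappaadjust} shows every class is actually realized by an almost-invariant unitary, so the obstruction does not vanish for free and your sketch never verifies it vanishes (or can be arranged to vanish) for the particular defect $c(t)$. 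Moreover, the Rohlin property yields sequence-type (approximate) untwisting directly; upgrading to continuous paths requires the path versions of stability (Lemma \ref{stability} (2), Lemma \ref{nakamura}), and that passage is one of the delicate points of Nakamura's argument. In short, carrying out your first route amounts to redoing the proof of Theorem \ref{Z} itself, so it is neither simpler nor, as written, complete. Relatedly, your closing sentence misstates the situation for the second route: there the Rohlin-type work is entirely hidden inside the citation of Theorem \ref{Z}, and nothing beyond elementary computations remains to be done, which is exactly why that route is the right one.
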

\begin{proof}
(2)$\Rightarrow$(1) is trivial, and so we show the other implication. 
If $KK(\alpha)=1_A$, by Theorem \ref{Z}, 
the automorphism $\alpha$ is cocycle conjugate to 
an automorphism of the form 
\[
(\id_A\otimes\bigotimes_{n=1}^\infty\Ad u_n,
A\otimes\bigotimes_{n=1}^\infty\mathcal{O}_\infty), 
\]
where $u_n$ is a unitary in $\mathcal{O}_\infty$ 
with finite spectrum, which shows the statement. 
\end{proof}

\section{Equivariant Kirchberg's theorem}

Throughout this section, 
let $A$ denote a unital Kirchberg algebra and 
let $\alpha:G\curvearrowright A$ be 
an approximately representable outer action of 
a discrete countable amenable group $G$. 
We show an equivariant version of Theorem \ref{Kir1} and \ref{Kir2}. 
In what follows 
we often regard $A^\omega\rtimes_\alpha G$ 
as a subalgebra of $(A\rtimes_\alpha G)^\omega$ and 
identify $(A_\omega)^\alpha$ 
with $A^\omega\cap(A\rtimes_\alpha G)'\subset(A\rtimes_\alpha G)^\omega$. 

\begin{thm}\label{equivKir}
Let $C\subset A^\omega$ be 
a unital separable nuclear globally $\alpha$-invariant $C^*$-subalgebra. 
For any $\rho\in\Hom_G(C,A^\omega)$, 
there exists an isometry $s\in(A^\omega)^\alpha$ such that 
$\rho(x)=s^*xs$ for all $x\in C$. 
\end{thm}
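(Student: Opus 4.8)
The plan is to combine the non-equivariant Kirchberg theorem (Theorem \ref{Kir1}) with an averaging procedure that turns an arbitrary implementing isometry into an $\alpha$-fixed one. Throughout I would fix an increasing sequence of finite subsets exhausting the separable algebra $C$ and a F{\o}lner exhaustion of the amenable group $G$, and aim first at an approximate local statement: for every finite $F\subset C$, finite $K\subset G$ and $\ep>0$, produce an isometry $s\in A^\omega$ with $\lVert s^*xs-\rho(x)\rVert<\ep$ for $x\in F$ and $\lVert\alpha_g(s)-s\rVert<\ep$ for $g\in K$. A standard reindexation (Kirchberg's $\ep$-test) in the countably saturated algebra $A^\omega$ then upgrades these approximate solutions to a single isometry $s\in(A^\omega)^\alpha$ with $\rho(x)=s^*xs$ for all $x\in C$.

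For the non-equivariant core, $A^\omega$ is unital purely infinite simple, so Theorem \ref{Kir1} (again via a reindexation, using separability of $C$) yields an isometry $s_0\in A^\omega$ with $\rho(x)=s_0^*xs_0$ for every $x\in C$. The essential observation is that, because $\rho\in\Hom_G(C,A^\omega)$ and $C$ is globally $\alpha$-invariant, each translate $\alpha_g(s_0)$ again implements $\rho$: applying $\alpha_g$ to $\rho(x)=s_0^*xs_0$ and using $\alpha_g\circ\rho=\rho\circ\alpha_g$ gives $\rho(y)=\alpha_g(s_0)^*y\alpha_g(s_0)$ for all $y\in C$.

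The heart of the argument is to glue the family $\{\alpha_g(s_0)\}_g$ into a single $\alpha$-fixed isometry. For this I would produce, in the relative commutant of the separable set $C\cup\rho(C)\cup\{\alpha_g(s_0):g\in F\}$ inside $A^\omega$, a finite family of isometries $\{e_g\}_{g\in F}$ with mutually orthogonal ranges, $\sum_{g\in F}e_ge_g^*=1$, and approximate equivariance $\alpha_k(e_g)\approx e_{kg}$ whenever $g,kg\in F$ --- an approximate Rohlin tower for $\alpha$ indexed by $F$. Granting such a tower, set
\[
S=\sum_{g\in F}e_g\,\alpha_g(s_0)\,e_g^*.
\]
Orthogonality of the ranges makes $S$ an isometry; since each $e_g$ commutes with $x$, with $\rho(x)$ and with the relevant $\alpha_g(s_0)$, a direct computation collapses $S^*xS$ to $\sum_g e_g\rho(x)e_g^*=\rho(x)$, so $S$ implements $\rho$ \emph{exactly}. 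Finally $\alpha_k(S)=\sum_g\alpha_k(e_g)\,\alpha_{kg}(s_0)\,\alpha_k(e_g)^*\approx\sum_g e_{kg}\,\alpha_{kg}(s_0)\,e_{kg}^*=S$, the discrepancy being a boundary term that the F{\o}lner condition drives to $0$; thus $S$ is approximately $\alpha$-invariant, which is precisely the desired local statement.

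The main obstacle is the construction of the equivariant Rohlin tower $\{e_g\}_{g\in F}$ in the relative commutant. Pure infiniteness of the central sequence algebra (Theorem \ref{Kir2}) together with outerness of the induced action (Theorem \ref{outer}) supply an abundance of orthogonal isometries commuting with any prescribed separable subalgebra, but arranging the approximate $\alpha$-permutation $\alpha_k(e_g)\approx e_{kg}$ is the delicate point, and it is exactly here that approximate representability is indispensable: the implementing unitaries $\{v_g\}$ furnish the $G$-equivariant scaffolding---a copy of the group compatible with $\alpha$---on which the tower is modeled. I would isolate this as a separate Rohlin-type lemma and expect the bulk of the technical work to reside there, the remainder of the proof being the bookkeeping of relative commutants and the two reindexation arguments bracketing the construction.
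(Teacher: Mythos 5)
Your reduction to a local approximate statement, and the observation that each translate $\alpha_g(s_0)$ again implements $\rho$, both match sound steps (the first is exactly how the paper's proof begins), and your gluing formula is algebraically correct \emph{granted} the tower. But the tower is the whole proof, and the lemma you defer it to is false. First, even forgetting equivariance, a family of isometries $\{e_g\}_{g\in F}$ in $A^\omega$ (or in a relative commutant) with mutually orthogonal ranges and $\sum_{g\in F}e_ge_g^*=1$ lifts, after a standard perturbation, to isometries in $A$ with orthogonal ranges summing exactly to $1$; this forces $(\lvert F\rvert-1)[1_A]=0$ in $K_0(A)$. For $A=\mathcal{O}_\infty$ (where $K_0(A)=\Z$ and $[1]=1$) no such family with $\lvert F\rvert\geq2$ exists, although the theorem certainly holds there. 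Second, if one repairs this by using projections instead (a Rohlin tower $\{p_g\}_{g\in F}$ with $\sum_g p_g=1$ and $\alpha_k(p_g)\approx p_{kg}$, which would indeed suffice: $S=\sum_g\alpha_g(s_0)p_g$ works), then your lemma asserts precisely the Rohlin property of $\alpha$. This fails for the finite groups covered by the theorem's hypotheses: lifting a Rohlin tower shows $[1_A]\in\{\sum_{g\in G}K_0(\alpha_g)(y)\mid y\in K_0(A)\}$, so no $\Z/2$-action on $\mathcal{O}_3$ has the Rohlin property ($K_0(\mathcal{O}_3)=\Z/2$ is generated by $[1]$, while $2K_0(\mathcal{O}_3)=0$), yet outer approximately representable $\Z/2$-actions on $\mathcal{O}_3$ exist (infinite tensor product type). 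Your hope that approximate representability "furnishes the scaffolding" for such a tower cannot be realized: by Izumi's duality, approximate representability is the \emph{dual} notion to the Rohlin property, not a source of it, and this failure of the Rohlin property for general amenable $G$ is exactly the difficulty the paper's introduction flags and its proof is designed to avoid.

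There is also a structural problem: within this paper, all Rohlin-type statements in the presence of $\alpha$ (Theorem \ref{equivRohlin}, Corollary \ref{RohlinZN}) are \emph{consequences} of Theorem \ref{equivKir}, via Corollary \ref{Aomega=pis}, and they concern a single automorphism $\beta$ commuting with $\alpha$ (with two towers of coprime heights, \`a la Nakamura), never a F{\o}lner-indexed single tower for $\alpha$ itself; so even in the $\Z^N$ case your route would be circular. The paper's actual proof sidesteps towers entirely: it extends $\rho$ to $\tilde\rho$ on $C\rtimes_\alpha G$ fixing the canonical unitaries, applies the non-equivariant Kirchberg theorem inside $A^\omega\rtimes_\alpha G$ (purely infinite simple by Theorems \ref{KKFields} and \ref{outer}) to get an isometry $t$ approximately intertwining on $F_1\cup\{\lambda^\alpha_g\mid g\in F_2\}$ --- approximate commutation with the $\lambda^\alpha_g$ being the encoding of approximate $\alpha$-invariance --- and only then uses approximate representability, to build a homomorphism $\phi:(D\rtimes_\alpha G)\otimes C^*(G)\to A^\omega\rtimes_\alpha G$ with $\phi(x\otimes1)=x$, $\phi(\lambda^\alpha_g\otimes1)=w_g\in A^\omega$, $\phi(1\otimes\lambda_g)=w_g^*\lambda^\alpha_g$, so that $s=\phi(t\otimes1)$ lands back in $A^\omega$ and inherits the intertwining relations. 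That compression trick, not a Rohlin tower, is where approximate representability is indispensable.
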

\begin{proof}
We regard $C\rtimes_\alpha G$ 
as a subalgebra of $A^\omega\rtimes_\alpha G$. 
We let $\tilde\rho:C\rtimes_\alpha G\to A^\omega\rtimes_\alpha G$ 
denote the extension of $\rho$ determined by 
$\tilde\rho(\lambda_g^\alpha)=\lambda_g^\alpha$ for all $g\in G$. 

It suffices to show the following: 
for any finite subsets $F_1\subset C$, $F_2\subset G$ and $\ep>0$, 
there exists an isometry $s\in A^\omega$ such that 
\[
\lVert\rho(x)-s^*xs\rVert<\ep\text{ and }
\lVert\lambda_g^\alpha-s^*\lambda_g^\alpha s\rVert<\ep
\]
for all $x\in F_1$ and $g\in F_2$. 
By Theorem \ref{KKFields} and Theorem \ref{outer}, 
$A^\omega\rtimes_\alpha G$ is purely infinite and simple. 
Then Kirchberg's theorem (\cite{Kir},\cite[Proposition 1.4]{KP}) 
shows that there exists an isometry $t\in A^\omega\rtimes_\alpha G$ 
such that 
\[
\lVert\tilde\rho(x)-t^*xt\rVert<\ep
\]
for all $x\in F_1\cup\{\lambda_g^\alpha\mid g\in F_2\}$. 
Choose a unital separable $\alpha$-invariant subalgebra $D$ of $A^\omega$ 
so that $C,\rho(C)\subset D$ and $t\in D\rtimes_\alpha G$. 
Since $\alpha:G\curvearrowright A$ is approximately representable, 
we can find a family of unitaries $\{w_g\}_{g\in G}$ in $A^\omega$ 
such that 
\[
w_gw_h=w_{gh}, \ \alpha_g(w_h)=w_{ghg^{-1}}
\text{ and }\alpha_g(x)=w_gxw_g^*
\]
for all $g,h\in G$ and $x\in D$. 
Define a homomorphism 
\[
\phi:(D\rtimes_\alpha G)\otimes C^*(G)\to A^\omega\rtimes_\alpha G
\]
by 
\[
\phi(x\otimes1)=x, \ \phi(\lambda_g^\alpha\otimes1)=w_g
\text{ and }\phi(1\otimes\lambda_g)=w_g^*\lambda_g^\alpha \]
for every $x\in D$ and $g\in G$. 
From 
\[
\lVert\rho(x)\otimes1-(t^*\otimes1)(x\otimes1)(t\otimes1)\rVert<\ep, 
\quad \text{for all }x\in F_1
\]
and 
\[
\lVert\lambda_g^\alpha\otimes\lambda_g
-(t^*\otimes1)(\lambda_g^\alpha\otimes\lambda_g)(t\otimes1)\rVert<\ep, 
\quad \text{for all }g\in F_2, 
\]
one has 
\[
\lVert\tilde\rho(x)-\phi(t\otimes1)^*x\phi(t\otimes1)\rVert<\ep
\]
for all $x\in F_1\cup\{\lambda_g^\alpha\mid g\in F_2\}$. 
Hence $s=\phi(t\otimes1)\in A^\omega$ meets the requirement. 
\end{proof}

\begin{cor}\label{Aomega=pis}
\begin{enumerate}
\item The $C^*$-algebra $(A^\omega)^\alpha$ is 
purely infinite and simple. 
\item The $C^*$-algebra $(A_\omega)^\alpha$ is 
purely infinite and simple. 
\end{enumerate}
\end{cor}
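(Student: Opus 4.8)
The plan is to reduce both statements to a single scaling property: if $B$ is a unital $C^*$-algebra in which every nonzero positive element $b$ admits some $z \in B$ with $z^*bz = 1$, then $B$ is automatically simple and purely infinite. Indeed, for a nonzero closed two-sided ideal $I$ one picks $0 \ne c \in I$ and gets $z^*(c^*c)z = 1 \in I$, so $I = B$; and for nonzero $x,y$ one takes $z$ with $z^*(x^*x)z = 1$ and writes $y = (yz^*x^*)\,x\,z$, which is the required factorization $y = axb$. Thus in each part it suffices, for a fixed nonzero positive element of the relevant fixed-point algebra, to produce an element of that same algebra scaling it to the unit, and Theorem \ref{equivKir} is exactly the tool that manufactures such elements.

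For part (1), given nonzero positive $x \in (A^\omega)^\alpha$ I set $\lambda = \|x\| \in \sigma(x)\setminus\{0\}$ and take $C = C^*(1,x) \subset A^\omega$. Since $x$ is $\alpha$-fixed, $C$ is unital, separable, nuclear (commutative), globally $\alpha$-invariant, and $\alpha$ acts trivially on it. Letting $\chi_\lambda \colon C \cong C(\sigma(x)) \to \C$ be evaluation at $\lambda$, the homomorphism $\rho(c) = \chi_\lambda(c)1$ lies in $\Hom_G(C, A^\omega)$ (its image is scalar, hence fixed). Theorem \ref{equivKir} then yields an isometry $s \in (A^\omega)^\alpha$ with $s^*cs = \rho(c)$ for all $c \in C$; in particular $s^*xs = \lambda 1$, so $z := \lambda^{-1/2}s \in (A^\omega)^\alpha$ satisfies $z^*xz = 1$, finishing part (1).

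For part (2), given nonzero positive $x \in (A_\omega)^\alpha$, I would enlarge $C$ to $C = C^*(A,x) \subset A^\omega$, again unital, separable, nuclear, and globally $\alpha$-invariant. The purpose of including $A$ is to force the resulting isometry into $A' \cap A^\omega = A_\omega$. Using that $A$ is simple, the multiplication map $A \otimes C^*(1,x) \to A^\omega$ is injective, so $C \cong A \otimes C^*(1,x)$, and I set $\rho = \id_A \otimes \chi_\lambda \colon C \to A \subset A^\omega$; this restricts to the identity on $A$, sends $x$ to $\lambda 1$, and lies in $\Hom_G(C, A^\omega)$ since $\alpha$ corresponds to $\alpha_g \otimes \id$ under the isomorphism. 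Theorem \ref{equivKir} gives an isometry $s \in (A^\omega)^\alpha$ with $s^*as = a$ for all $a \in A$ and $s^*xs = \lambda 1$. A short computation shows $s^*as = a$ and $s^*s = 1$ force $(1-ss^*)as = 0$ (apply the relation to $a^*a$ and compare with $(s^*as)^*(s^*as)$), whence $sa = ss^*as = as$; thus $s$ commutes with $A$, so $s \in A_\omega$, and being $\alpha$-fixed, $s \in (A_\omega)^\alpha$. Once more $z := \lambda^{-1/2}s$ gives $z^*xz = 1$.

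The main obstacle is the tensorial independence invoked in part (2): the injectivity of $A \otimes C^*(1,x) \to A^\omega$, equivalently $C \cong A \otimes C^*(1,x)$, which is precisely what allows $\rho$ to fix $A$ pointwise while collapsing $x$ to a scalar, and which is where simplicity of $A$ is genuinely used. The second point requiring care, though elementary, is the implication that an isometry $s$ with $s^*as = a$ throughout $A$ must commute with $A$; without it the isometry produced by Theorem \ref{equivKir} would only be known to lie in $(A^\omega)^\alpha$ rather than in the smaller algebra $(A_\omega)^\alpha$ that part (2) concerns.
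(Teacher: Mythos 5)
Your construction is essentially the paper's own proof. For part (2) the paper likewise applies Theorem \ref{equivKir} to $C=C^*(A,a)$, uses simplicity of $A$ to identify $C$ with $A\otimes C^*(a,1)$, takes the homomorphism which is the identity on $A$ and collapses $a$ to a scalar, and then shows the resulting isometry $s$ lies in $A_\omega$ by exactly your computation (the paper expands $\lVert(sx-xs)^*(sx-xs)\rVert$ using $s^*xs=x$, which is the same algebra as your ``$(1-ss^*)as=0$'' step); part (1), which the paper dismisses as ``similar,'' is your $C^*(1,x)$ version. Your preliminary reduction to the scaling property $z^*bz=1$ is also what the paper implicitly invokes when it passes from the existence of such isometries to the conclusion.

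The one step of the paper's proof that you omit is its first sentence: the verification that $(A_\omega)^\alpha\neq\C$ (obtained from the construction in the proof of Proposition \ref{analogKTYM}). This is not purely cosmetic. Your scaling criterion is satisfied by $\C$ itself, so it cannot exclude the degenerate case where the fixed point algebra is trivial; and while the paper's literal definition of pure infiniteness (every nonzero $y$ factors as $axb$ through every nonzero $x$) is vacuously satisfied by $\C$, the notion actually used downstream is the standard one from \cite[Chapter 4]{R} --- for instance Lemma \ref{Nakamura1} needs a unital copy of $\mathcal{O}_\infty$ inside $(A_\omega)^\alpha$, hence a properly infinite unit --- and that notion fails for $\C$. (For a unital algebra with your scaling property, being purely infinite in the standard sense is equivalent to being different from $\C$.) So to make your argument a complete proof of the corollary as it is used, you must additionally exhibit one non-scalar element of $(A_\omega)^\alpha$; this is where the ultrapower structure genuinely enters (the approximately invariant elements produced via Lemma \ref{orthogonal} in the proof of Proposition \ref{analogKTYM} are promoted to exactly invariant ones by a reindexation argument), and it is the only ingredient of the paper's proof missing from yours.
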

\begin{proof}
We show only (2). (1) can be shown in a similar way. 
It is easy to verify $(A_\omega)^\alpha\neq\C$ 
(see the proof of Proposition \ref{analogKTYM}). 
Let $a\in(A_\omega)^\alpha$ be a positive element of norm one and 
let $C\subset A^\omega$ be the $C^*$-algebra generated by $A$ and $a$. 
Since $A$ is simple, 
$C$ is isomorphic to $A\otimes C^*(a,1)$. 
Hence there exists a homomorphism $\rho:C\to A^\omega$ 
such that $\rho(x)=x$ for $x\in A$ and $\rho(a)=1$. 
From Theorem \ref{equivKir}, 
one obtains an isometry $s\in(A^\omega)^\alpha$ 
such that $s^*xs=x$ for all $x\in A$ and $s^*as=1$. 
For any $x\in A$, 
\[
\lVert[s,x]\rVert^2
=\lVert(sx-xs)^*(sx-xs)\rVert
=\lVert x^*x-x^*s^*xs-s^*x^*sx+s^*x^*xs\rVert=0, 
\]
and so $s$ belongs to $(A_\omega)^\alpha$. 
Therefore $(A_\omega)^\alpha$ is purely infinite and simple. 
\end{proof}

We will need the following lemma in Section 4. 

\begin{lem}\label{Nakamura1}
Let $\ep>0$ be a positive real number 
and let $F\subset A\rtimes_\alpha G$ be a finite subset. 
If $u:[0,1]\times[0,1]\to U(A)$ is a continuous map satisfying 
\[
\lVert[u(s,t),x]\rVert<\frac{\ep}{27}
\]
for all $x\in F$ and $s,t\in[0,1]$, 
then there exists a continuous map $v:[0,1]\times[0,1]\to U(A)$ 
such that 
\[
v(s,0)=u(s,0), \quad v(s,1)=u(s,1), \quad 
\Lip(v(s,\cdot))<6\pi
\]
and 
\[
\lVert[v(s,t),x]\rVert<\ep
\]
for all $x\in F$ and $s,t\in[0,1]$. 
\end{lem}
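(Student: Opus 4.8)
The plan is to treat $s$ as a parameter and, for each fixed $s$, to replace the path $t\mapsto u(s,t)$ running from $a:=u(s,0)$ to $b:=u(s,1)$ by a path of bounded length, keeping the two endpoints and the almost-commutation with $F$. Multiplying on the right by $a^*$ reduces this to connecting the unit $1$ to $c:=u(s,1)u(s,0)^*$ by a path of length $<6\pi$ that almost commutes with $F$; since right multiplication by the fixed unitary $a$ is an isometry, it preserves the Lipschitz constant in $t$ and alters each commutator only by $\lVert[a,x]\rVert<\ep/27$. Note that $c$ lies in the connected component of $1$, because the given path $t\mapsto u(s,t)u(s,0)^*$ already joins $1$ to $c$; however that path may be arbitrarily long, and the abelian case $A=C(X)$ shows that no construction using only the functional calculus of $c$ can shorten it. Hence the pure infiniteness of $A$ must enter in an essential way.

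First I would manufacture the non-commutative room needed to shorten paths without spoiling almost-commutation. By Corollary \ref{Aomega=pis} the algebra $(A_\omega)^\alpha=A^\omega\cap(A\rtimes_\alpha G)'$ is purely infinite and simple, so it contains a sequence of isometries with mutually orthogonal ranges. Representing finitely many of these by sequences in $A$ and correcting them to exact isometries yields, for the given finite set $F$ and any prescribed tolerance $\delta>0$, isometries $s_1,s_2,\dots\in A$ with orthogonal ranges and $\lVert[s_i,x]\rVert<\delta$ for all $x\in F$. Because elements of $A^\omega\cap(A\rtimes_\alpha G)'$ also commute with all of $A$, and the family $\{u(s,0),u(s,1):s\in[0,1]\}$ is norm-compact, I may further demand that these isometries almost commute with every $c=c(s)$ and $a=a(s)$ uniformly in $s$.

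With such almost-central isometries in hand, I would build the short path from $1$ to $c$ out of finitely many rotation (Whitehead) arcs. The basic arc is the standard homotopy $\theta\mapsto d\,R_\theta\,d^*R_\theta^*$ joining $1$ to the doubled unitary $s_1cs_1^*+s_2c^*s_2^*+(1-s_1s_1^*-s_2s_2^*)$, where $d=s_1cs_1^*+(1-s_1s_1^*)$ and $R_\theta$ is the rotation in the two-dimensional corner determined by $s_1,s_2$; this arc has length at most $\pi$, and since it is assembled entirely from $c$ and the $s_i$, all of which almost commute with $F$, it almost commutes with $F$ within a fixed multiple of $\delta$. Combining this arc with an Eilenberg-swindle rotation through infinitely many orthogonal almost-central corners lets me cancel the surviving $c^*$ and identity corners and return to $c$ itself, the whole concatenation consisting of at most three arcs of length $\le2\pi$, hence of total length $<6\pi$. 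Tracking how each arc inflates the commutator estimate — each of the three steps multiplying the bound by at most $3$ — is exactly what turns the hypothesis $\ep/27$ into the conclusion $\ep$.

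Finally, because $c(s)$ and $a(s)$ depend norm-continuously on $s$ and every arc above is given by an explicit norm-continuous formula in $c(s),a(s)$ and the fixed isometries $s_i$, the resulting family $v(s,t)$ is jointly continuous; a constant-speed reparametrization in $t$ then delivers $v(s,0)=u(s,0)$, $v(s,1)=u(s,1)$ and the uniform bound $\Lip(v(s,\cdot))<6\pi$. The hard part will be the simultaneous control of the three competing requirements: the length bound $6\pi$, the commutator bound $\ep$, and joint continuity in $(s,t)$. The length bound is the genuine obstacle, since it is precisely here that pure infiniteness — through the almost-central isometries supplied by Corollary \ref{Aomega=pis} — is indispensable; once the shortening arcs are fixed, the book-keeping that lands the constants exactly on $6\pi$ and $\ep/27$ is delicate but routine.
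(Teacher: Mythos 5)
You have correctly isolated the one genuinely equivariant ingredient: Corollary \ref{Aomega=pis} gives a unital copy of $\mathcal{O}_\infty$ inside $(A_\omega)^\alpha=A^\omega\cap(A\rtimes_\alpha G)'$, and lifting its isometries produces isometries in $A$ that almost commute with the finite set $F\subset A\rtimes_\alpha G$ (not merely with a finite subset of $A$). This is exactly what the paper uses. But at that point the paper does not rebuild the path-shortening argument at all: it invokes \cite[Theorem 7]{N2} and its proof, i.e.\ Nakamura's homotopy lemma, whose proof (resting on exponential-length bounds for tensor products with $\mathcal{O}_\infty$ and Berg-type techniques) is run verbatim with the almost-central isometries now taken from $(A_\omega)^\alpha$. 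Your proposal instead attempts to reprove that lemma from scratch, and the reconstruction has a genuine gap.

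The gap is the step ``combining this arc with an Eilenberg-swindle rotation through infinitely many orthogonal almost-central corners lets me cancel the surviving $c^*$ and identity corners and return to $c$ itself.'' First, expressions such as $\sum_i s_ics_i^*$ over infinitely many orthogonal corners do not converge in norm, so the swindle is simply not available inside $A$. Second, and more decisively, every arc in your construction is, by your own description, an explicit formula in $c(s)$, $a(s)$ and the fixed isometries $s_i$; the witnessing homotopy $t\mapsto u(s,t)u(s,0)^*$ is never used. A construction with that property would connect \emph{any} unitary almost commuting with $F$ to $1$ by a path of length $<6\pi$, which is impossible: if $[c]\neq0$ in $K_1(A)$ there is no path from $1$ to $c$ at all. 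So the hypothesis $c\in U(A)_0$ (and in fact the given, possibly very long, path) must be fed into the cancellation step; in the correct argument this is done by chopping the given path into many short pieces and distributing them over finitely many almost-central corners (Berg's technique, as in \cite[Theorem 7]{N2} and \cite{Phi2}). That is precisely the part you defer as ``delicate but routine'' book-keeping, and it is where the actual content of the lemma, including the constants $27$ and $6\pi$, lives. The efficient repair is to do what the paper does: establish the almost-central equivariant $\mathcal{O}_\infty$ (which you did) and then quote Nakamura's theorem and its proof rather than re-derive it.
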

\begin{proof}
By Corollary \ref{Aomega=pis} (2), 
the $\alpha$-fixed point subalgebra $(A_\omega)^\alpha$ of $A_\omega$ 
contains a unital copy of $\mathcal{O}_\infty$. 
Therefore the assertion follows 
from \cite[Theorem 7]{N2} and its proof. 
\end{proof}

The next lemma follows from 
\cite[Lemma 1.1]{Kis} and \cite[Lemma 3]{N2}. 

\begin{lem}\label{orthogonal}
Let $\{\beta_i\}$ be a countable family of outer automorphisms of $A$. 
Then there exists a non-zero projection $p\in A_\omega$ 
such that $p$ is orthogonal to $\beta_i(p)$ for all $i$. 
\end{lem}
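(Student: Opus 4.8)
The plan is to work inside the central sequence algebra $A_\omega$, where the desired relation $p\beta_i(p)=0$ is an \emph{exact} orthogonality, and to reduce the whole statement to a single-automorphism fact supplied by the two cited lemmas. Recall that $A_\omega$ is purely infinite and simple (Theorem \ref{Kir2}) and that each $\beta_i$ extends to an outer automorphism of $A_\omega$ (Theorem \ref{outer}). The input I would import from \cite[Lemma 1.1]{Kis} and \cite[Lemma 3]{N2} is the following corner version for one automorphism: if $\gamma$ is an outer automorphism of $A$ and $a\in A_\omega$ is a nonzero positive element, then there is a nonzero positive $b\in\overline{aA_\omega a}$ with $b\gamma(b)=0$. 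Here Kishimoto's lemma already allows one to choose the relevant positive contraction inside a prescribed hereditary subalgebra while $\gamma$ remains an automorphism of the whole algebra, and the passage to $A_\omega$, as in Nakamura's Lemma 3, turns the approximate orthogonality into exact orthogonality.

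Granting this, I would first treat a finite subfamily $\beta_1,\dots,\beta_k$ by a monotone induction. Put $a_0=1$ and, given a nonzero positive $a_{i-1}$, apply the single-automorphism fact to $\gamma=\beta_i$ to obtain a nonzero positive $a_i\in\overline{a_{i-1}A_\omega a_{i-1}}$ with $a_i\beta_i(a_i)=0$. The point is that exact orthogonality is inherited downward: for $j\le i$ we have $a_i\in\overline{a_jA_\omega a_j}$ and $\beta_j(a_i)\in\beta_j(\overline{a_jA_\omega a_j})=\overline{\beta_j(a_j)A_\omega\beta_j(a_j)}$, and these two hereditary subalgebras are mutually orthogonal because $a_j\beta_j(a_j)=0$; hence $a_i\beta_j(a_i)=0$ for all $j\le i$ with no loss. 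Thus $a_k$ is a nonzero positive element of $A_\omega$ orthogonal to $\beta_i(a_k)$ for every $i\le k$.

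To pass to the full countable family I would use a standard reindexation argument. Represent each $a_k$ by a central sequence $(a_{k,n})_n$ of positive norm-one contractions in $A$, so that $\lim_{n\to\omega}\lVert[a_{k,n},x]\rVert=0$ for all $x\in A$ and $\lim_{n\to\omega}\lVert a_{k,n}\beta_i(a_{k,n})\rVert=0$ for all $i\le k$. Since each of the countably many conditions holds along $\omega$, a diagonal selection produces a single central sequence $(b_n)$ of positive norm-one contractions with $\lim_{n\to\omega}\lVert[b_n,x]\rVert=0$ for all $x\in A$ and $\lim_{n\to\omega}\lVert b_n\beta_i(b_n)\rVert=0$ for every fixed $i$. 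Its class $a=[(b_n)]\in A_\omega$ is then a nonzero positive element with $a\beta_i(a)=0$ for all $i$.

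Finally I would promote $a$ to a projection. The hereditary subalgebra $\overline{aA_\omega a}\subset A_\omega$ is purely infinite and simple (Theorem \ref{Kir2}), hence contains a nonzero projection $p$; because $p\in\overline{aA_\omega a}$ and $a\beta_i(a)=0$, the elements $p$ and $\beta_i(p)\in\overline{\beta_i(a)A_\omega\beta_i(a)}$ lie in orthogonal hereditary subalgebras, so $p\beta_i(p)=0$, and $p\in A_\omega$ is the required projection. I expect the only real obstacle to be the imported single-automorphism statement, that is, the analytic heart coming from Kishimoto's and Nakamura's lemmas; once it is available in the exact $A_\omega$ form, the downward inheritance of orthogonality, the reindexation, and the passage to a projection are all formal.
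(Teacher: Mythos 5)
Your proposal is correct and takes essentially the same route as the paper: the paper's entire proof is the one-line remark that the lemma follows from \cite[Lemma 1.1]{Kis} and \cite[Lemma 3]{N2}, and your argument is exactly an assembly of those two inputs (corner-wise exact orthogonality for a single outer automorphism, downward inheritance of orthogonality through nested hereditary subalgebras, an $\omega$-diagonalization to pass from finite to countable families, and the standard projection trick in the purely infinite simple algebra $A_\omega$). You simply make explicit the routine steps that the authors leave to the reader.
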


The following proposition is an analogue of \cite[Theorem 4.8]{Kat}. 
Note that we do not need amenability of $G$ and 
approximate representability of $\alpha$ for this proposition. 

\begin{prop}\label{analogKTYM}
Let $\beta$ be an automorphism of $A$ such that 
$\beta\alpha_g$ is not inner for all $g\in G$. 
Then $\beta$ is not the identity on $(A_\omega)^\alpha$. 
\end{prop}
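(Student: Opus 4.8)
The plan is to prove the contrapositive structure by a direct contradiction argument. Suppose $\beta$ acts as the identity on $(A_\omega)^\alpha$; I will show that $\beta\alpha_g$ is inner for some $g\in G$. The intuition is that if $\beta$ fixes all of the central sequence algebra's $\alpha$-fixed points, then $\beta$ is so close to being implemented by a central sequence that we can combine it with the $\alpha$-action to manufacture an honest intertwiner inside $A$ itself.

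First I would use Lemma \ref{orthogonal} as the key technical engine. The family of automorphisms to feed into it is $\{\beta\alpha_g\}_{g\in G}$ (together with their inverses or relevant products as needed), all of which are outer by hypothesis. This yields a non-zero projection $p\in A_\omega$ that is orthogonal to $\beta\alpha_g(p)$ for every $g$. The point of producing such a $p$ is to build a Rohlin-type or isometry-type element witnessing the relation we want to contradict. Next I would exploit the assumption that $\beta=\id$ on $(A_\omega)^\alpha$: by averaging or by replacing $p$ with a suitable $\alpha$-invariant refinement, I expect to arrange a projection that lies in $(A_\omega)^\alpha$ (or is at least approximately fixed by $\alpha$), so that $\beta$ acts trivially on it, while orthogonality to its translates under $\beta\alpha_g$ survives. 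The tension between $\beta p=p$ and $p\perp\beta\alpha_g(p)$ is what drives the contradiction.

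The main obstacle I anticipate is precisely the interplay between $\alpha$-invariance and the orthogonality relation: Lemma \ref{orthogonal} produces an arbitrary projection in $A_\omega$, not one in the fixed-point algebra $(A_\omega)^\alpha$, and pushing it into $(A_\omega)^\alpha$ without destroying the separation property requires care. I would handle this by working with the extended automorphism $\beta$ on $A_\omega$ and using that $\beta$ commutes with $\alpha$ appropriately, so that $\beta$ restricts to an automorphism of $(A_\omega)^\alpha$; the hypothesis says this restriction is trivial. Combining this with the purely infinite simplicity of $(A_\omega)^\alpha$ established in Corollary \ref{Aomega=pis}(2), I can find enough projections and isometries inside the fixed-point algebra to run a comparison argument.

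Finally, once a non-zero projection $p\in(A_\omega)^\alpha$ with $p\perp\beta\alpha_g(p)$ is in hand, I would reach the contradiction as follows: the relation $\beta p=p$ forces $p=\beta p\perp\beta(\alpha_g(p))$, but for the neutral element $g=e$ this reads $p\perp\beta(p)=p$, forcing $p=0$, contradicting $p\neq 0$. Thus the subtlety is entirely in producing such a $p$ inside $(A_\omega)^\alpha$ while retaining the separation for the non-neutral indices, and in correctly tracking which automorphisms must be outer for Lemma \ref{orthogonal} to apply. I expect the actual proof to be short, with essentially all the content residing in this existence step.
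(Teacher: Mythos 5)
Your argument has a genuine gap, and it sits exactly where you placed all the content: the production of a non-zero projection $p\in(A_\omega)^\alpha$ with $p\perp\beta(p)$. None of the repairs you sketch can deliver it. Averaging a projection over $G$ does not yield a projection. The claim that ``$\beta$ commutes with $\alpha$ appropriately, so that $\beta$ restricts to an automorphism of $(A_\omega)^\alpha$'' is not available: the proposition assumes only that $\beta\alpha_g$ is never inner, with no commutation relation between $\beta$ and the action, so $\beta$ need not even map $(A_\omega)^\alpha$ into itself (the statement ``$\beta$ is not the identity on $(A_\omega)^\alpha$'' makes sense without that, but your argument requires it). Finally, invoking Corollary \ref{Aomega=pis}(2) is doubly problematic: its proof rests on Theorem \ref{equivKir}, which uses approximate representability of $\alpha$ --- an assumption the paper explicitly notes is \emph{not} needed for this proposition --- and the proof of Corollary \ref{Aomega=pis} itself cites the proof of Proposition \ref{analogKTYM} to get $(A_\omega)^\alpha\neq\C$, so leaning on it here courts circularity. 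Your concluding step ($\beta(p)=p$ and $p\perp\beta(p)$ force $p=0$) is fine, but it is conditional on an existence statement you never establish, and I see no way to establish it at this stage of the paper without essentially reproving the proposition.

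The idea you are missing is that one should not insist on exact $\alpha$-invariance, nor on projections. Since it suffices to exhibit a single element of $(A_\omega)^\alpha$ moved by $\beta$, the paper reduces (by the standard reindexing/diagonal argument over the ultrafilter, using countability of $G$) to constructing, for each finite $F\subset G$ and $n\in\N$, a norm-one $a\in A_\omega$ with $\lVert\alpha_g(a)-a\rVert\leq 1/n$ for $g\in F$ and $\lVert\beta(a)-a\rVert=1$; exact invariance is then recovered in the ultrapower. To build $a$, Lemma \ref{orthogonal} must be fed a larger family than your $\{\beta\alpha_g\}_{g\in G}$, namely $\{\alpha_g\mid g\in G_0\setminus\{e\}\}\cup\{\alpha_h\beta\alpha_g\mid g,h\in G_0\}$, where $G_0$ is the subgroup generated by $F$ (all outer: the first set because the action is outer, the second because $\alpha_h\beta\alpha_g$ inner would force $\beta\alpha_{gh}$ inner). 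This makes the translates $\alpha_g(p)$ mutually orthogonal \emph{and} orthogonal to every $\beta(\alpha_h(p))$ --- with your smaller family the $\alpha$-translates of $p$ need not be orthogonal, so no sum over the orbit behaves. One then takes the F\o lner-type weighted sum
\[
a=\sum_{g\in G_0}\frac{n-\min\{n,l(g)\}}{n}\,\alpha_g(p),
\]
with $l$ the word length on $G_0$ relative to $F\cup F^{-1}$: the $1/n$-Lipschitz property of the weights along $l$ gives approximate $\alpha$-invariance, and the orthogonality of $\beta(a)$ to $a$ gives $\lVert\beta(a)-a\rVert=1$. This weighted-orbit construction plus reindexing is the actual proof; it needs neither projections, nor pure infiniteness of $(A_\omega)^\alpha$, nor any compatibility of $\beta$ with $\alpha$.
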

\begin{proof}
Let $F\subset G$ be a finite subset and let $n\in\N$. 
It suffices to construct $a\in A_\omega$ such that 
\[
\lVert a\rVert=1, \ \lVert\alpha_g(a)-a\rVert\leq1/n 
\text{ and }\lVert\beta(a)-a\rVert=1 \]
for every $g\in F$. 

Let $G_0\subset G$ be the subgroup generated by $F$ and 
let $l:G_0\to\Z$ be the length function with respect to $F\cup F^{-1}$. 
Applying the lemma above to 
$\{\alpha_g\mid g\in G_0\setminus\{e\}\}
\cup\{\alpha_h\beta\alpha_g\mid g,h\in G_0\}$, 
we get a non-zero projection $p\in A_\omega$ such that 
$\{\alpha_g(p),\beta(\alpha_h(p))\}_{g,h\in G_0}$ is a set of 
mutually orthogonal projections. 
Then 
\[
a=\sum_{g\in G_0}\frac{n-\min\{n,l(g)\}}{n}\alpha_g(p)\in A_\omega 
\]
is the desired element. 
\end{proof}

The following theorem is 
an equivariant version of \cite[Theorem 1]{N2}. 

\begin{thm}\label{equivRohlin}
Let $\beta$ be an automorphism of $A$ such that 
the map $(n,g)\mapsto\beta^n\alpha_g$ induces an injective homomorphism 
from $\Z\times G$ to $\Out(A)$. 
Then for any $N\in\N$, there exist projections 
$e_0,e_1,\dots,e_{N-1}$, $f_0,f_1,\dots,f_N$ in $(A_\omega)^\alpha$ 
such that 
\[
\sum_{i=0}^{N-1}e_i+\sum_{j=0}^Nf_j=1, \quad 
\beta(e_i)=e_{i+1} \ \text{ and } \ \beta(f_j)=f_{j+1}
\]
for all $i=0,1,\dots,N-1$ and $j=0,1,\dots,N$, 
where $e_N$ and $f_{N+1}$ mean $e_0$ and $f_0$, respectively. 
\end{thm}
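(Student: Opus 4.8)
The statement is the Rohlin-type theorem one needs for the $\beta$-action on the fixed-point algebra $(A_\omega)^\alpha$, with two towers of heights $N$ and $N+1$ summing to the identity. The natural strategy is to reduce it to the ordinary (non-equivariant) Rohlin property of an aperiodic automorphism via Theorem \ref{ZRohlin}, applied not to $A$ itself but to the central sequence algebra $(A_\omega)^\alpha$. The hypothesis that $(n,g)\mapsto\beta^n\alpha_g$ is injective into $\Out(A)$ is precisely what guarantees the aperiodicity I will need: for each $n\neq0$, the automorphism $\beta^n\alpha_g$ is outer for every $g$, so in particular $\beta^n\alpha_g$ is never inner. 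By Proposition \ref{analogKTYM} (whose hypothesis ``$\beta^n\alpha_g$ is not inner for all $g$'' is met for the automorphism $\beta^n$), each power $\beta^n$ with $n\neq0$ acts nontrivially on $(A_\omega)^\alpha$. This is the first key step.

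First I would observe that $(A_\omega)^\alpha$ is itself a purely infinite simple $C^*$-algebra by Corollary \ref{Aomega=pis}(2), and that $\beta$ restricts to an automorphism of it (since $\beta$ commutes with each $\alpha_g$ at the level of the action, $\beta$ preserves the $\alpha$-fixed points of the central sequence algebra). The two classical obstructions to running Nakamura's argument directly are (i) $(A_\omega)^\alpha$ is not separable, so Theorem \ref{ZRohlin} cannot be quoted verbatim, and (ii) I must verify genuine aperiodicity of $\beta$ on $(A_\omega)^\alpha$, i.e.\ that $\beta^n$ is outer there, not merely nontrivial. For (ii), the correct form of the non-innerness statement needs to be promoted: I would show that $\beta^n$ acts \emph{non-trivially} on the central sequence algebra of $(A_\omega)^\alpha$, which is the property that actually feeds the Rohlin construction. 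Concretely, Proposition \ref{analogKTYM} already produces, for any finite $F\subset G$ and any $n$, an element $a\in A_\omega$ that is almost $\alpha$-invariant and satisfies $\lVert\beta(a)-a\rVert=1$; iterating this idea for $\beta^m$ in place of $\beta$ (using that $\beta^m\alpha_g$ is non-inner for all $g$) gives the separating elements needed to certify that no power of $\beta$ is approximately inner on $(A_\omega)^\alpha$.

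The main construction then follows the proof of \cite[Theorem 1]{N2}: I would build the Rohlin projections by a reindexation argument inside the double central-sequence algebra $\bigl((A_\omega)^\alpha\bigr)_\omega$. The standard mechanism produces, for the aperiodic automorphism $\beta$, two towers of projections $\{e_i\}_{i=0}^{N-1}$ and $\{f_j\}_{j=0}^{N}$ of heights $N$ and $N+1$ that are cyclically permuted by $\beta$ and sum to $1$; the coprimality of $N$ and $N+1$ is what allows the partition of unity by two towers. Because every projection is manufactured inside $(A_\omega)^\alpha$, the relations $\sum e_i+\sum f_j=1$ and $\beta(e_i)=e_{i+1}$, $\beta(f_j)=f_{j+1}$ hold automatically in the fixed-point algebra, with the cyclic conventions $e_N=e_0$, $f_{N+1}=f_0$ as stated. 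A routine diagonal/reindexation argument then pushes the projections from the double central sequence algebra back into $(A_\omega)^\alpha$ itself.

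The hard part will be step (ii): certifying that $\beta$ is genuinely aperiodic on $(A_\omega)^\alpha$ in the strong sense required for the Rohlin property, rather than merely non-trivial. The subtlety is that outerness on $A$ does not transparently transfer to the fixed-point subalgebra of the central sequence algebra, and one must rule out that some power $\beta^n$ becomes approximately inner there. The remedy is to leverage Proposition \ref{analogKTYM} in the sharper form just described, combined with the pure infiniteness and simplicity of $(A_\omega)^\alpha$ from Corollary \ref{Aomega=pis}; together these supply both the aperiodicity input and the infinite supply of mutually orthogonal projections that Nakamura's Rohlin construction consumes.
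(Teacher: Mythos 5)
Your skeleton is the one the paper uses: feed the powers $\beta^n$, $n\neq0$, into Proposition \ref{analogKTYM} (the injectivity hypothesis guarantees that $\beta^n\alpha_g$ is non-inner for every $g$), conclude that each $\beta^n$ acts non-trivially on $(A_\omega)^\alpha$, upgrade non-triviality to outerness, and then repeat the two-tower construction of \cite[Theorem 1]{N2} inside $(A_\omega)^\alpha$, which is purely infinite and simple by Corollary \ref{Aomega=pis}. The paper's proof is exactly this, with the upgrade discharged by citing \cite[Lemma 2]{N2}.

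The gap is in that upgrade, which you rightly single out as the crux but do not actually carry out. You claim that the elements supplied by Proposition \ref{analogKTYM} --- almost $\alpha$-invariant $a$ with $\lVert\beta^m(a)-a\rVert=1$ --- are ``the separating elements needed to certify that no power of $\beta$ is approximately inner on $(A_\omega)^\alpha$''. This is a non sequitur: an inner automorphism $\Ad u$ moves elements by norm up to $2$, so a displacement of norm $1$ witnesses non-triviality and nothing more; iterating the construction over the powers $\beta^m$ changes the automorphism but still yields only non-triviality of each power. What the Kishimoto-type lemmas underlying Nakamura's tower construction actually consume is outerness of every $\beta^n$ on $(A_\omega)^\alpha$ (and on its relative commutants with separable subsets), and non-triviality does not imply outerness in general. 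The missing ingredient is the reindexation argument of \cite[Lemma 2]{N2}: if $\beta^m=\Ad u$ on $(A_\omega)^\alpha$ for some unitary $u\in(A_\omega)^\alpha$, then $\beta^m$ fixes $(A_\omega)^\alpha\cap\{u\}'$ pointwise, and this contradicts the relative-commutant form of Proposition \ref{analogKTYM} (whose proof, via Lemma \ref{orthogonal}, can be run inside $(A_\omega)^\alpha\cap S'$ for separable $S$). You gesture at the right object when you mention non-triviality on ``the central sequence algebra of $(A_\omega)^\alpha$'', but you never prove that sharper statement (your displacement elements are produced only in $(A_\omega)^\alpha$ itself, not in deeper relative commutants), nor do you record the observation that innerness forces triviality on the commutant of the implementing unitary, which is what converts such non-triviality into outerness. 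Without these two points --- or without simply invoking \cite[Lemma 2]{N2} as the paper does --- the aperiodicity input to the Rohlin construction is not established and the proof does not close.
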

\begin{proof}
The proposition above shows that 
the restriction of $\beta^n$ to $(A_\omega)^\alpha$ is 
a non-trivial automorphism for every $n\neq0$, 
and hence it is outer thanks to \cite[Lemma 2]{N2}. 
This means that 
we can choose a Rohlin tower for $\beta$ in $(A_\omega)^\alpha$. 
We omit the detail, 
because the argument is exactly the same as \cite[Theorem 1]{N2}. 
\end{proof}

The following corollary is 
an immediate consequence of the theorem above and \cite[Remark 2]{N1}. 
See \cite[Section 2]{N1} (or \cite[Section 2]{M}) 
for the definition of the Rohlin property of $\Z^N$-actions. 

\begin{cor}\label{RohlinZN}
When $G$ is $\Z^N$, 
the action $\alpha$ has the Rohlin property. 
\end{cor}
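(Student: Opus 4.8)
The plan is to obtain the $\Z^N$-Rohlin property by producing, for each of the $N$ standard generators, a Rohlin tower for that generator living inside the fixed-point algebra of the remaining generators, and then splicing these relative towers together via the combinatorial device of \cite[Remark 2]{N1}. Fix $k\in\{1,\dots,N\}$, let $G_k\subset\Z^N$ be the rank-$(N-1)$ subgroup generated by $\{e_i:i\neq k\}$ with restricted action $\alpha^{(k)}=\alpha|_{G_k}$, and set $\beta=\alpha_{e_k}$. I would first record that $\alpha^{(k)}$ inherits the standing hypotheses of this section: outerness is immediate, and approximate representability passes to the subgroup $G_k$ by simply restricting the implementing unitaries (the three defining limits persist for $g,h\in G_k$). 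Hence $\alpha^{(k)}$ is again an approximately representable outer action of an amenable group, so every result of this section applies to it; in particular Corollary \ref{Aomega=pis}(2) guarantees that $(A_\omega)^{\alpha^{(k)}}$ is purely infinite and simple, which is what makes room for a tower. For $N=1$ this reduces to the ordinary situation and recovers Theorem \ref{ZRohlin}.

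With $\alpha^{(k)}$ playing the role of the section's action, the hypothesis of Theorem \ref{equivRohlin} for $\beta=\alpha_{e_k}$ reads that $(n,g)\mapsto\alpha_{e_k}^n\alpha_g$ is injective from $\Z\times G_k\cong\Z^N$ into $\Out(A)$, which is exactly the assumption that $\alpha$ is outer as a $\Z^N$-action, and so holds. Applying Theorem \ref{equivRohlin} then yields, for every prescribed height, the two families of projections $e_0,\dots,e_{M-1}$ and $f_0,\dots,f_M$ in $(A_\omega)^{\alpha^{(k)}}$, cyclically permuted by $\alpha_{e_k}$ and summing to $1$. The essential feature is that these projections are fixed by every $\alpha_{e_i}$ with $i\neq k$, so they constitute a Rohlin tower for $\alpha_{e_k}$ that is transverse to the other coordinate directions. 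The use of two towers of coprime heights $M$ and $M+1$ is the standard device that absorbs the $K_0$-obstruction to an exact equivariant partition, and I would keep it in force. Carrying this out for each $k$ produces the $N$ relative towers to be used as input.

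It then remains to combine these $N$ relative towers into a single partition of unity in $A_\omega$, permuted along each axis by the corresponding generator, which is precisely the inductive assembly recorded in \cite[Remark 2]{N1}; I would invoke it to conclude, with the Rohlin property for $\Z^N$-actions understood in the sense of \cite[Section 2]{N1} (equivalently \cite[Section 2]{M}). The step I expect to be the genuine obstacle is exactly this splicing: the relative towers are manufactured in different fixed-point subalgebras $(A_\omega)^{\alpha^{(k)}}$, and one must arrange them to be mutually commuting and jointly realizable in the central sequence algebra before they can be multiplied into a genuine $\Z^N$-equivariant Rohlin system. This is where the coprime-height bookkeeping and the compatibility across the various fixed-point algebras must be handled with care; since that combinatorics is the content of \cite[Remark 2]{N1}, the remaining work is to verify that the output of Theorem \ref{equivRohlin} meets the hypotheses of that lemma, after which the corollary follows.
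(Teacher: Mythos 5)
Your proposal is correct and takes essentially the same route as the paper: the paper obtains the corollary exactly by applying Theorem \ref{equivRohlin} with $\beta=\alpha_{e_k}$ relative to the restricted action of the complementary subgroup $G_k\cong\Z^{N-1}$ (whose outerness and approximate representability are inherited by restriction, and for which the injectivity hypothesis is precisely outerness of the $\Z^N$-action), and then citing \cite[Remark 2]{N1} to assemble the $N$ relative towers into the Rohlin property. The splicing step you flag as the remaining obstacle is exactly what the paper delegates to \cite[Remark 2]{N1}, so your write-up simply makes explicit what the paper treats as immediate.
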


In Corollary \ref{splitZN}, we will show that 
asymptotic representability of $\alpha$ is not necessary 
for the statement above.

\section{Asymptotically representable actions}

Throughout this section, 
we assume that $A$ is a unital separable nuclear $C^*$-algebra 
and that $\alpha$ is an asymptotically representable action of 
a countable amenable group $G$ on $A$. 
We fix $v_g=(v_g(t))_{t\geq0}\in U(C^b([0,\infty),A))$ 
as in Definition \ref{asymp}. 
We denote the crossed product $A\rtimes_\alpha G$ by $B$ 
and the implementing unitary representation of $G$ in $B$ 
by $\{\lambda^\alpha_g\}\subset B$. 
Let $C^*(G)$ be the group $C^*$-algebra of $G$, 
which is generated 
by the left regular representation $\{\lambda_g\}_{g\in G}$. 
Let $\halpha$ be the dual coaction of $\alpha$, 
which is a homomorphism from $B$ to $B\otimes C^*(G)$ determined by 
\[
\halpha(x)=x\otimes 1
\ \text{ and } \ 
\halpha(\lambda^\alpha_g)=\lambda^\alpha_g\otimes\lambda_g
\]
for $x\in A$ and $g\in G$. 
Then $\halpha$ satisfies coassociativity 
\[
(\halpha\otimes\id_{C^*(G)})\circ\halpha
=(\id_B\otimes\delta_G)\circ\halpha, 
\]
where $\delta_G:C^*(G)\to C^*(G)\otimes C^*(G)$ is 
the coproduct of $C^*(G)$ determined by 
$\delta_G(\lambda_g)=\lambda_g\otimes\lambda_g$. 

Let $C_0=C_0([0,\infty),B)=C_0([0,\infty))\otimes B$. 
We regard $B$ as a subalgebra of $C^b([0,\infty),B)$ and 
set $C^b$ to be the $C^*$-subalgebra of $C^b([0,\infty),B)$ 
generated by $C^b([0,\infty),A)$ and $\{\lambda^\alpha_g\}_g$. 
The action $\alpha$ of $G$ on $A$ gives rise to 
an action of $G$ on $C^b([0,\infty),A)/C_0([0,\infty),A)$ and 
the crossed product by $G$ is isomorphic to $C^b/C_0$ in a canonical way. 
We denote the coaction of $C^*(G)$ on $C^b/C_0$ by $\halpha^\infty$. 
We let $\pi$ denote the quotient map $\pi:C^b\to C^b/C_0$. 

Define a unital homomorphism $\phi$ from $B\otimes C^*(G)$ to $C^b/C_0$ 
by 
\[
\phi(x\otimes1)=\pi(x), \ 
\phi(\lambda^\alpha_g\otimes1)=\pi(v_g) \ \text{ and } \ 
\phi(1\otimes\lambda_g)=\pi(v_g^*\lambda^\alpha_g)
\]
for $x\in A$ and $g\in G$. 
It is easy to show the following. 

\begin{lem}\label{propofphi}
The homomorphism $\phi$ satisfies 
\[
\phi\circ\halpha=\pi|B
\]
and 
\[
\halpha^\infty\circ\phi
=(\phi\otimes\id_{C^*(G)})\circ(\id_B\otimes\delta_G). 
\]
\end{lem}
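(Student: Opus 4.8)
The plan is to verify both identities by checking them on algebraic generators, since every map in sight is a $*$-homomorphism. For the first identity both $\phi\circ\halpha$ and $\pi|B$ are homomorphisms from $B$, which is generated by $A$ together with $\{\lambda^\alpha_g\}_g$; for the second, both sides are homomorphisms from $B\otimes C^*(G)$, which is generated by the elements $x\otimes1$ $(x\in A)$, $\lambda^\alpha_g\otimes1$, and $1\otimes\lambda_g$. Thus it suffices to check agreement on these finitely many types of generators. Before doing so I would record how $\halpha^\infty$ acts on the relevant pieces: under the canonical identification of $C^b/C_0$ with the crossed product of $C^b([0,\infty),A)/C_0([0,\infty),A)$ by $\alpha$, the dual coaction sends $\pi(f)\mapsto\pi(f)\otimes1$ for $f\in C^b([0,\infty),A)$ (in particular for constant functions $x\in A$ and for each $v_g$), while $\pi(\lambda^\alpha_g)\mapsto\pi(\lambda^\alpha_g)\otimes\lambda_g$.

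For the first identity, on $x\in A$ one has $\phi(\halpha(x))=\phi(x\otimes1)=\pi(x)$. On $\lambda^\alpha_g$ I would write $\halpha(\lambda^\alpha_g)=\lambda^\alpha_g\otimes\lambda_g=(\lambda^\alpha_g\otimes1)(1\otimes\lambda_g)$, so that $\phi(\halpha(\lambda^\alpha_g))=\pi(v_g)\pi(v_g^*\lambda^\alpha_g)=\pi(v_gv_g^*\lambda^\alpha_g)=\pi(\lambda^\alpha_g)$. Here the key point is that each $v_g$ is a genuine unitary in $C^b([0,\infty),A)$, so $v_gv_g^*=1$ exactly (no passage to the quotient is needed for this cancellation). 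Both computations match $\pi|B$, giving the first identity.

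For the second identity I would compute both sides on the three generators. On $x\otimes1$ and $\lambda^\alpha_g\otimes1$ the left side gives $\pi(x)\otimes1$ and $\pi(v_g)\otimes1$ respectively, since $\pi(x)$ and $\pi(v_g)$ lie in the image of the coefficient algebra; on the right side $\delta_G(1)=1\otimes1$ together with the leg conventions of $(\phi\otimes\id_{C^*(G)})\circ(\id_B\otimes\delta_G)$ produces the same elements. On $1\otimes\lambda_g$ the left side is $\halpha^\infty(\pi(v_g^*\lambda^\alpha_g))=(\pi(v_g^*)\otimes1)(\pi(\lambda^\alpha_g)\otimes\lambda_g)=\pi(v_g^*\lambda^\alpha_g)\otimes\lambda_g$, whereas the right side, using $\delta_G(\lambda_g)=\lambda_g\otimes\lambda_g$, equals $(\phi\otimes\id_{C^*(G)})(1\otimes\lambda_g\otimes\lambda_g)=\phi(1\otimes\lambda_g)\otimes\lambda_g=\pi(v_g^*\lambda^\alpha_g)\otimes\lambda_g$. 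The two coincide, which establishes the second identity.

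The calculation is routine, and no asymptotic relations among the $v_g$ are invoked—only that they are honest unitaries. The sole point demanding care is bookkeeping: correctly tracking the tensor-leg conventions in $(\phi\otimes\id_{C^*(G)})\circ(\id_B\otimes\delta_G)$, and correctly invoking the definition of $\halpha^\infty$ in the crossed-product picture of $C^b/C_0$, namely that it is trivial on the image of $C^b([0,\infty),A)$ and shifts only the implementing unitaries $\pi(\lambda^\alpha_g)$.
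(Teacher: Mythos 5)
Your verification is correct, and it is precisely the routine generator-by-generator check that the paper has in mind when it states this lemma without proof (``It is easy to show the following''). The two points you isolate are exactly the ones that make the check work: the cancellation $\pi(v_g)\pi(v_g^*\lambda^\alpha_g)=\pi(v_gv_g^*\lambda^\alpha_g)=\pi(\lambda^\alpha_g)$ is exact because each $v_g$ is an honest unitary in $C^b([0,\infty),A)$, and under the crossed-product picture of $C^b/C_0$ the coaction $\halpha^\infty$ sends the image of $C^b([0,\infty),A)$ (hence each $\pi(v_g)$) to itself tensored with $1$ while sending $\pi(\lambda^\alpha_g)$ to $\pi(\lambda^\alpha_g)\otimes\lambda_g$.
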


For simplicity, 
we often omit $\pi$ and identify $x\in B$ with $\pi(x)$ in what follows. 
In particular, one has $\halpha=\halpha^\infty|B$. 

Since $B\otimes C^*(G)$ is nuclear, 
there exists a unital completely positive map $\tilde\phi$ from 
$B\otimes C^*(G)$ to $C^b$ satisfying $\pi\circ\tilde\phi=\phi$. 
We fix such $\tilde\phi$. 
For $t\geq0$, we set $\tilde\phi_t=\chi_t\circ\tilde\phi$, 
where $\chi_t:C^b\to B$ is the point evaluation map at $t$. 
The family $\{\tilde\phi_t\}_{t\geq0}$ is an asymptotic morphism 
from $B\otimes C^*(G)$ to $B$. 

\begin{lem}\label{propofphit}
For any compact subsets $K\subset B\otimes C^*(G)$, $K'\subset B$ 
and a positive number $\ep$, 
there exists $t_0\geq0$ such that the following hold for all $t\geq t_0$. 
\begin{enumerate}
\item For all $x,y\in K$, 
$\lVert\tilde\phi_t(x)\tilde\phi_t(y)-\tilde\phi_t(xy)\rVert<\ep$. 
\item For all $x\in K$, 
$\lVert\halpha(\tilde\phi_t(x))
-(\tilde\phi_t\otimes\id_{C^*(G)})(\id_B\otimes\delta_G)(x)\rVert<\ep$. 
\item For all $x\in K'$, 
$\lVert\tilde\phi_t(\halpha(x))-x\rVert<\ep$. 
\end{enumerate}
\end{lem}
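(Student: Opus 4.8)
The plan is to verify each of the three asymptotic properties by tracing them back to the corresponding exact identities for $\phi$ established in Lemma \ref{propofphi}. The key mechanism is that $\{\tilde\phi_t\}_{t\geq0}$ is by construction an asymptotic morphism lifting $\phi$, meaning $\pi\circ\tilde\phi=\phi$, so any algebraic relation that holds exactly after applying $\pi$ must hold asymptotically at the level of the point-evaluations $\tilde\phi_t=\chi_t\circ\tilde\phi$. I would first record the general principle: for a continuous $B$-valued function $f\in C^b([0,\infty),B)$, one has $\pi(f)=0$ in $C^b/C_0$ precisely when $\lim_{t\to\infty}\lVert f(t)\rVert=0$, and since $\chi_t$ is the evaluation at $t$, statements of the form $\lim_{t\to\infty}\lVert(\text{expression in }\tilde\phi_t)\rVert=0$ are equivalent to the vanishing in $C^b/C_0$ of the corresponding continuous lift.

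For item (1), I would consider the map $x,y\mapsto\tilde\phi(x)\tilde\phi(y)-\tilde\phi(xy)$, which is a continuous function $[0,\infty)\to B$ whose image under $\pi$ is $\phi(x)\phi(y)-\phi(xy)=0$ because $\phi$ is a homomorphism. Hence its norm tends to $0$ as $t\to\infty$, giving the desired asymptotic multiplicativity; uniformity over the compact set $K\times K$ is standard, using a finite cover of $K$ and the fact that $\tilde\phi$ is norm-continuous (in fact completely positive and contractive). For item (3), the relevant lift is $x\mapsto\tilde\phi(\halpha(x))-x$ for $x\in B$, and applying $\pi$ gives $\phi(\halpha(x))-\pi(x)=\phi\circ\halpha(x)-\pi(x)=0$ by the first identity $\phi\circ\halpha=\pi|B$ of Lemma \ref{propofphi}; again the norm vanishes asymptotically, uniformly over the compact set $K'$.

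Item (2) is the most delicate, because it involves the coaction $\halpha$ and the tensor-factor $C^*(G)$. Here I would use the second identity of Lemma \ref{propofphi}, namely $\halpha^\infty\circ\phi=(\phi\otimes\id_{C^*(G)})\circ(\id_B\otimes\delta_G)$. The natural lift to consider is the continuous $B\otimes C^*(G)$-valued function $x\mapsto\halpha(\tilde\phi(x))-(\tilde\phi\otimes\id_{C^*(G)})(\id_B\otimes\delta_G)(x)$, and one checks that applying the quotient map (in the form $\pi\otimes\id_{C^*(G)}$, together with the compatibility $\halpha^\infty\circ\pi=(\pi\otimes\id_{C^*(G)})\circ\halpha$ on $C^b$) sends it to $\halpha^\infty\circ\phi(x)-(\phi\otimes\id)(\id\otimes\delta_G)(x)=0$. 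The subtle point is that the dual coaction $\halpha$ on $B$ extends continuously to $C^b([0,\infty),B)$ and descends correctly to $C^b/C_0$, so that evaluating at $t$ commutes appropriately with $\halpha$; this compatibility of $\halpha^\infty$ with the point-evaluations is where I expect the main bookkeeping to lie. Once that is in place, the vanishing in the quotient again yields the asymptotic estimate, uniformly over compact $K$.

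The hard part will be handling the coaction in item (2) carefully, since one must be sure that the identity $\halpha^\infty\circ\phi=(\phi\otimes\id)(\id\otimes\delta_G)$ interacts correctly with the lift $\tilde\phi$ and the evaluation maps $\chi_t\otimes\id_{C^*(G)}$; all three items are ultimately routine applications of the principle that a continuous lift vanishing in $C^b/C_0$ has norm tending to zero, combined with compactness for the uniformity, but item (2) requires the extra verification that $\halpha$ is genuinely induced on the quotient and compatible with evaluation.
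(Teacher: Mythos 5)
Your proposal is correct and takes essentially the same route as the paper, whose proof is simply that the lemma ``immediately follows from Lemma \ref{propofphi}'': your unpacking---each defect expression is a continuous lift killed by $\pi$ (resp.\ $\pi\otimes\id_{C^*(G)}$), hence lies in $C_0$ and decays in norm, with uniformity over compact sets from the equicontinuity of the contractions $\tilde\phi_t$---is exactly the intended justification. The compatibility you flag in item (2), namely $\halpha^\infty\circ\pi=(\pi\otimes\id_{C^*(G)})\circ\halpha$ with $\halpha$ applied pointwise on $C^b$, does hold and is verified by checking it on the generators $C^b([0,\infty),A)$ and $\{\lambda^\alpha_g\}_g$ of $C^b$.
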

\begin{proof}
This immediately follows from Lemma \ref{propofphi}. 
\end{proof}

\begin{lem}\label{CVofhalpha1}
Let $\rho$ be a unital endomorphism of $B$. 
Assume 
\begin{equation}
\lVert(\Ad w\circ\halpha\circ\rho)(x)
-((\rho\otimes\id_{C^*(G)})\circ\halpha)(x)\rVert<\ep, 
\label{4mouse}
\end{equation}
\begin{equation}
\lVert((\Ad w\circ\halpha\circ\rho)\otimes\id_{C^*(G)})(\halpha(x))
-(((\rho\otimes\id_{C^*(G)})\circ\halpha)\otimes\id_{C^*(G)})(\halpha(x))
\rVert<\varepsilon, 
\label{4cow}
\end{equation}
for some $x\in B$ and $w\in U(B\otimes C^*(G))$. 
Then the following inequalities hold: 
\begin{equation}
\lVert[(\id_B\otimes\delta_G)(w^*)(w\otimes1)(\halpha\otimes\id_{C^*(G)})(w),
((\halpha\otimes\id_{C^*(G)})\circ\halpha\circ\rho)(x)]\rVert<3\ep, 
\label{4tiger}
\end{equation}
\begin{equation}
\lVert(\halpha^\infty\circ\Ad\phi(w)\circ\rho)(x)
-((\Ad\phi(w)\circ\rho)\otimes\id_{C^*(G)})(\halpha(x))\rVert
<4\ep. 
\label{4rabbit}
\end{equation}
\end{lem}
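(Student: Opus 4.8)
The plan is to treat this as a purely formal cocycle manipulation driven by the coassociativity identity $(\halpha\otimes\id)\circ\halpha=(\id_B\otimes\delta_G)\circ\halpha$ together with the two identities of Lemma \ref{propofphi}. Throughout I abbreviate $\sigma=\halpha\circ\rho$ and $\tau=(\rho\otimes\id)\circ\halpha$, so that hypothesis \eqref{4mouse} reads $\lVert w\sigma(x)w^*-\tau(x)\rVert<\ep$, and I write $P=((\halpha\otimes\id)\circ\halpha\circ\rho)(x)$, which by coassociativity equals both $(\halpha\otimes\id)(\halpha(\rho(x)))$ and $(\id_B\otimes\delta_G)(\halpha(\rho(x)))$. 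The two conclusions are then obtained by transporting \eqref{4mouse} and \eqref{4cow} through the contractive homomorphisms $\halpha\otimes\id$, $\id_B\otimes\delta_G$, and finally $\phi\otimes\id$, keeping track of the tensor legs.

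For \eqref{4tiger} I would first apply $\halpha\otimes\id$ to \eqref{4mouse}; using $(\halpha\otimes\id)\circ\halpha=(\id_B\otimes\delta_G)\circ\halpha$ and the leg-commuting identity $(\halpha\otimes\id)\circ(\rho\otimes\id)=\sigma\otimes\id$, this yields
\[
\lVert (\halpha\otimes\id)(w)\,P\,(\halpha\otimes\id)(w^*)-(\sigma\otimes\id)(\halpha(x))\rVert<\ep.
\]
Next I would identify the two sides of \eqref{4cow}: the left side equals $(w\otimes1)(\sigma\otimes\id)(\halpha(x))(w^*\otimes1)$, while the right side equals $(\id_B\otimes\delta_G)(\tau(x))$ by the analogous identity $(\tau\otimes\id)\circ\halpha=(\id_B\otimes\delta_G)\circ\tau$ (again a consequence of coassociativity and the fact that $\rho$ and $\delta_G$ act on disjoint legs). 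Applying $\id_B\otimes\delta_G$ to \eqref{4mouse} gives $\lVert(\id_B\otimes\delta_G)(\tau(x))-(\id_B\otimes\delta_G)(w)\,P\,(\id_B\otimes\delta_G)(w^*)\rVert<\ep$. Feeding the first displayed estimate into the left side of \eqref{4cow} and combining with \eqref{4cow} itself and the last estimate, a triangle inequality ($\ep+\ep+\ep$) shows that $(w\otimes1)(\halpha\otimes\id)(w)\,P\,(\halpha\otimes\id)(w^*)(w^*\otimes1)$ is within $3\ep$ of $(\id_B\otimes\delta_G)(w)\,P\,(\id_B\otimes\delta_G)(w^*)$; conjugating out by the unitary $(\id_B\otimes\delta_G)(w)$ gives exactly $\lVert W P W^*-P\rVert<3\ep$, i.e. \eqref{4tiger}.

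For \eqref{4rabbit} I would expand both sides of the target using that $\halpha^\infty$ and $\phi$ are homomorphisms, $\halpha^\infty|B=\halpha$, and $\halpha^\infty(\phi(w))=(\phi\otimes\id)((\id_B\otimes\delta_G)(w))$ from Lemma \ref{propofphi}: the left side is $\halpha^\infty(\phi(w))\,\sigma(x)\,\halpha^\infty(\phi(w))^*$ and the right side is $(\phi(w)\otimes1)\,\tau(x)\,(\phi(w)^*\otimes1)$. Now apply the homomorphism $\phi\otimes\id$ to \eqref{4tiger}. Using $\phi\circ\halpha=\pi|B$ one checks the collapses $(\phi\otimes\id)((\halpha\otimes\id)(w))=w$ and $(\phi\otimes\id)(P)=\sigma(x)$, whence the image of $W$ is the unitary $V:=\halpha^\infty(\phi(w))^*(\phi(w)\otimes1)w$ and \eqref{4tiger} becomes $\lVert[V,\sigma(x)]\rVert<3\ep$. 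Since $\halpha^\infty(\phi(w))V=(\phi(w)\otimes1)w$, conjugating $\lVert V\sigma(x)V^*-\sigma(x)\rVert<3\ep$ by $\halpha^\infty(\phi(w))$ rewrites the left side of \eqref{4rabbit} as within $3\ep$ of $(\phi(w)\otimes1)\,w\sigma(x)w^*\,(\phi(w)^*\otimes1)$. Finally, \eqref{4mouse} in the form $\tau(x)\approx_\ep w\sigma(x)w^*$, conjugated by $\phi(w)\otimes1$, shows the right side of \eqref{4rabbit} is within $\ep$ of the same element, giving the bound $4\ep$.

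The main obstacle is not any single deep step but the tensor-leg bookkeeping: one must carefully verify the two leg-commuting identities and, above all, that applying $\phi\otimes\id$ to $W$ and to $P$ collapses correctly via $\phi\circ\halpha=\id$ (identifying $w$ and $\sigma(x)$ with their images in $(C^b/C_0)\otimes C^*(G)$). Getting these reductions right, and tracking which leg each of $\halpha$, $\delta_G$, $\rho$, and $\phi$ acts on, is where errors would creep in; once they are settled the estimates are just repeated triangle inequalities and conjugations by unitaries.
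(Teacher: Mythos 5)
Your proposal is correct and takes essentially the same route as the paper: the $3\ep$ bound in \eqref{4tiger} comes from exactly the same three estimates (applying $\halpha\otimes\id_{C^*(G)}$ and $\id_B\otimes\delta_G$ to \eqref{4mouse}, invoking coassociativity, and chaining with \eqref{4cow}), and \eqref{4rabbit} follows by using Lemma \ref{propofphi} to identify the relevant unitary as the image under $\phi\otimes\id_{C^*(G)}$ of the commutator unitary in \eqref{4tiger}, plus one more application of \eqref{4mouse}. The only difference is organizational --- you push \eqref{4tiger} through $\phi\otimes\id_{C^*(G)}$ before conjugating, whereas the paper conjugates the target difference first and recognizes the resulting unitary at the end --- which is immaterial.
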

\begin{proof}
From (\ref{4mouse}) and (\ref{4cow}), 
we get 
\begin{equation}
\begin{split}
& \lVert(\Ad(w\otimes 1)\circ\Ad(\halpha\otimes\id_{C^*(G)}(w))
\circ(\halpha\otimes\id_{C^*(G)}))(\halpha(\rho(x))) \\
& \quad -((\rho\otimes\id_{C^*(G)}\otimes\id_{C^*(G)})
\circ(\halpha\otimes\id_{C^*(G)}))(\halpha(x))\rVert<2\ep. 
\label{4dragon}
\end{split}
\end{equation}
On the other hand, 
applying $\id_B\otimes\delta_G$ to the inside of the norm of (\ref{4mouse}) 
and using coassociativity, 
we get 
\begin{equation}
\begin{split}
& \lVert(\Ad((\id_B\otimes\delta_G)(w))\circ(\halpha\otimes\id_{C^*(G)}))
(\halpha(\rho(x))) \\
& \quad -((\rho\otimes\id_{C^*(G)}\otimes\id_{C^*(G)})
\circ(\halpha\otimes\id_{C^*(G)}))(\halpha(x))\rVert<\ep. 
\label{4snake}
\end{split}
\end{equation}
Then, (\ref{4dragon}) and (\ref{4snake}) imply (\ref{4tiger}). 

Direct computation implies 
\begin{align*}
& \lVert(\halpha^\infty\circ\Ad\phi(w)\circ\rho)(x)
-((\Ad\phi(w)\circ\rho)\otimes\id_{C^*(G)})(\halpha(x))\rVert \\
& =\lVert\Ad((\phi(w^*)\otimes1)\halpha^\infty(\phi(w)))
(\halpha^\infty(\rho(x))
-(\rho\otimes\id_{C^*(G)})(\halpha(x)))\rVert \\
& \leq\ep+\lVert\Ad((\phi(w^*)\otimes1)\halpha^\infty(\phi(w)))
(\halpha^\infty(\rho(x))
-\Ad w(\halpha^\infty(\rho(x))))\rVert \\
& =\ep+\lVert\Ad(w^*(\phi(w^*)\otimes1)\halpha^\infty(\phi(w)))
(\halpha^\infty(\rho(x)))
-\halpha^\infty(\rho(x))\rVert, 
\end{align*}
where we used (\ref{4mouse}). 
Thanks to Lemma \ref{propofphi}, we have 
\begin{align*}
& w^*(\phi(w^*)\otimes1)\halpha^\infty(\phi(w)) \\
& =((\phi\halpha)\otimes\id_{C^*(G)})(w^*)(\phi(w^*)\otimes1)
((\phi\otimes\id_{C^*(G)})\circ(\id_B\otimes \delta_G))(w) \\
& =(\phi\otimes\id_{C^*(G)})
((\halpha\otimes\id_{C^*(G)})(w^*)(w^*\otimes1)(\id_B\otimes\delta_G)(w)). 
\end{align*}
Now Lemma \ref{propofphi} and (\ref{4tiger}) imply (\ref{4rabbit}). 
\end{proof}

\begin{lem}\label{CVofhalpha2}
Let $\rho$ be a unital endomorphism of $B$ and 
let $\{w_0(s)\}_{s\geq0}$ be a continuous family of unitaries 
in $B\otimes C^*(G)$ such that 
\[
\lim_{s\to\infty}\lVert\Ad w_0(s)(\halpha(\rho(x)))
-(\rho\otimes\id_{C^*(G)})(\halpha(x))\rVert=0
\]
for all $x\in B$. 
Then we have the following. 
\begin{enumerate}
\item There exists a continuous family of unitaries $\{w(t)\}_{t\geq0}$ in $B$ 
such that 
\[
\lim_{t\to\infty}\lVert\halpha((\Ad w(t)\circ\rho)(x))
-((\Ad w(t)\circ\rho)\otimes\id_{C^*(G)})(\halpha(x))\rVert=0. 
\]
\item Assume further that $w_0(0)=1$ and 
that for a finite subset $F$ of $B$ and a positive number $\ep$, 
the inequality  
\[
\lVert[w_0(s),\halpha(\rho(x))]\rVert<\ep
\]
holds for all $s\geq0$ and $x\in F$. 
Then $w(t)$ can be chosen so that $w(0)=1$ and 
\[
\lVert[w(t),\rho(x)]\rVert<\ep
\]
for all $t\geq0$ and $x\in F$. 
\end{enumerate}
\end{lem}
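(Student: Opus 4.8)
The plan is to transport the given family $\{w_0(s)\}$, which lives in $B\otimes C^*(G)$, down into $B$ by means of the asymptotic morphism $\{\tilde\phi_t\}$, and to feed the approximate $\halpha$-equivariance of the hypothesis into the already established Lemma \ref{CVofhalpha1}, whose output (\ref{4rabbit}) is exactly a statement of approximate $\halpha^\infty$-equivariance in $C^b/C_0$.

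First I would note that the hypothesis is precisely inequality (\ref{4mouse}) for $w=w_0(s)$, with an error $\ep_s\to0$ as $s\to\infty$ after taking a supremum over a finite test set. The companion inequality (\ref{4cow}) is not assumed, but it follows: writing $\halpha(x)$ as a norm-limit of finite sums $\sum_g y_g\otimes\lambda_g$ with $y_g\in B$ and applying $(\,\cdot\,)\otimes\id_{C^*(G)}$, the left-hand side of (\ref{4cow}) is dominated, up to an arbitrarily small truncation error, by $\sum_g\lVert(\Ad w_0(s)\circ\halpha\circ\rho-(\rho\otimes\id_{C^*(G)})\circ\halpha)(y_g)\rVert$, which tends to $0$ by the hypothesis applied to each Fourier coefficient $y_g$. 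Thus both (\ref{4mouse}) and (\ref{4cow}) hold with vanishing error, and Lemma \ref{CVofhalpha1} gives (\ref{4rabbit}): the map $\psi_s:=\Ad\phi(w_0(s))\circ\rho\colon B\to C^b/C_0$ asymptotically intertwines $\halpha$ and $\halpha^\infty$ with error $4\ep_s\to0$.

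Next I would pass from the quotient $C^b/C_0$ down to $B$ pointwise in $t$. Since $\pi\circ\tilde\phi=\phi$ and $\tilde\phi$ is unital, the element $\tilde\phi(w_0(s))\in C^b$ is the bounded continuous path $t\mapsto\tilde\phi_t(w_0(s))$, which becomes unitary as $t\to\infty$; unitarizing via the polar decomposition (valid once $t$ is large) yields a jointly continuous family $U(s,t)$ of unitaries in $B$ whose $\pi$-class is $\phi(w_0(s))$. Recalling that $\halpha^\infty$ is implemented pointwise by $\halpha$ and that $\halpha=\halpha^\infty|B$, inequality (\ref{4rabbit}) for $w=w_0(s)$ translates, for $t$ beyond some $T(s)$, into
\[
\bigl\lVert\halpha\bigl(\Ad U(s,t)(\rho(x))\bigr)-\bigl((\Ad U(s,t)\circ\rho)\otimes\id_{C^*(G)}\bigr)(\halpha(x))\bigr\rVert<4\ep_s+o(1),
\]
where the $o(1)$ absorbs the unitarization error and the asymptotic multiplicativity errors of $\tilde\phi_t$ controlled by Lemma \ref{propofphit}. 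A diagonal argument then settles (1): choosing a continuous increasing reparametrization $\sigma(t)\to\infty$ that grows slowly enough that $t\geq T(\sigma(t))$ for all large $t$, and setting $w(t)=U(\sigma(t),t)$ (extended continuously near $0$), one gets $\ep_{\sigma(t)}\to0$, hence the desired limit.

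For (2) I would refine this construction at the two extra points. Since $w_0(0)=1$ and $\tilde\phi$ is unital, $U(0,t)=1$ for every $t$, so arranging $\sigma(0)=0$ forces $w(0)=1$. For the commutator bound, Lemma \ref{propofphi} gives $\phi\circ\halpha=\pi|B$, whence $[\phi(w_0(s)),\rho(x)]=\phi([w_0(s),\halpha(\rho(x))])$ and therefore $\lVert[\phi(w_0(s)),\rho(x)]\rVert<\ep$ in $C^b/C_0$ for all $s\geq0$ and $x\in F$. Transferring pointwise, the jointly continuous function $g(s,t)=\max_{x\in F}\lVert[U(s,t),\rho(x)]\rVert$ satisfies $g(0,t)=0$ for all $t$ and $\limsup_{t\to\infty}g(s,t)<\ep$ for each $s$. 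The main obstacle is to choose $\sigma$ so that the \emph{strict} bound $g(\sigma(t),t)<\ep$ holds for all $t\geq0$, not merely asymptotically: the open region $\{g<\ep\}$ contains the whole edge $\{0\}\times[0,\infty)$ together with a neighborhood of infinity in each $t$-column, and one must route a continuous path $t\mapsto(\sigma(t),t)$ from $(0,0)$ out to $s=\infty$ inside this region, increasing $s$ only after $t$ has entered the good zone of the current column. The strict inequality $<\ep$ supplies exactly the room to absorb the $o(1)$ errors while preserving the uniform bound, and this simultaneous control of the asymptotic equivariance and the uniform commutator estimate along a single path is the technical heart of the argument.
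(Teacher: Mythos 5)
Your proposal is correct and is essentially the paper's own argument: you derive (\ref{4cow}) from the hypothesis by finite-sum approximation of $\halpha(x)$, invoke Lemma \ref{CVofhalpha1} to get (\ref{4rabbit}), pull the data back into $B$ via the asymptotic morphism $\{\tilde\phi_t\}$ with polar-decomposition unitarization, and conclude by a diagonal path argument in the $(s,t)$-plane, where your slowly growing reparametrization $\sigma$ plays exactly the role of the paper's piecewise-linear staircase path $L$. Your treatment of (2), using $\phi\circ\halpha=\pi|B$ to transfer the commutator bound and then routing the path inside the region where the strict bound holds, is likewise the paper's ``slight modification'' of the same construction.
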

\begin{proof}
(1) 
We choose an increasing sequence $\{F_n\}_{n=0}^\infty$ of 
finite subsets of $B$ whose union is dense in $B$. 
We may assume that the following hold 
for any non-negative integer $n$, $x\in F_n$ and $s\geq n$: 
\[
\lVert\Ad w_0(s)(\halpha(\rho(x)))
-(\rho\otimes\id_{C^*(G)})(\halpha(x))\rVert<2^{-n},
\]
\[
\lVert((\Ad w_0(s)\circ\halpha\circ\rho)\otimes\id_{C^*(G)})(\halpha(x))
-(((\rho\otimes\id_{C^*(G)})\circ\halpha)\otimes\id_{C^*(G)})(\halpha(x))\rVert
<2^{-n}. 
\]
We set $w_1(s,t)=\tilde\phi_t(w_0(s))$. 
Thanks to Lemma \ref{propofphit} and Lemma \ref{CVofhalpha1}, 
for each non-negative integer $n$, 
there exists $t_n\geq0$ such that the following hold: 
for all $t\geq t_n$, $s\in [n,n+1]$ and $x\in F_n$, we have 
\[
\lVert w_1(s,t)^*w_1(s,t)-1\rVert<2^{-n}, \ \quad \ 
\lVert w_1(s,t)w_1(s,t)^*-1\rVert<2^{-n}
\]
and 
\[
\lVert(\halpha\circ\Ad w_1(s,t)\circ\rho)(x)
-((\Ad w_1(s,t)\circ\rho)\otimes\id_{C^*(G)})(\halpha(x))\rVert<2^{-n+2}. 
\]
We may assume that the sequence $\{t_n\}_{n=0}^\infty$ is increasing. 
Let $L$ be the piecewise linear path in $[0,\infty)^2$ 
starting from $(0,t_0)$ and 
connecting the following points by line segments in order: 
\[
(0,t_0),(1,t_0),(1,t_1),(2,t_1),\dots,(n,t_n),(n+1,t_n),(n+1,t_{n+1}),\dots. 
\] 
Then the function $w_1(s,t)$ is continuous on $L$ and 
$w_1(s,t)$ is invertible for all $(s,t)\in L$. 
Let $f$ be a homeomorphism from $[0,\infty)$ onto $L$ with $f(0)=(0,t_0)$. 
Then $w(t)=w_1(f(t))\lvert w_1(f(t))\rvert^{-1}$ gives 
the desired family of unitaries. 

(2) Since $\phi\circ\halpha=\pi|B$, we have 
\[
\lVert[\phi(w_0(s)),\rho(x)]\rVert
\leq\lVert[w_0(s),\halpha(\rho(x))]\rVert. 
\]
Thus the statement follows 
from the above construction with a slight modification. 
\end{proof}

By using this lemma, we can prove the following theorem. 

\begin{thm}\label{existence}
Let $\rho$ be a unital endomorphism of $B$ such that 
$\halpha\circ\rho$ and $(\rho\otimes\id_{C^*(G)})\circ\halpha$ are 
asymptotically unitarily equivalent. 
Then there exists a unital endomorphism $\rho_1$ of $B$ such that 
$\rho_1$ is asymptotically unitarily equivalent to $\rho$ and 
\[
\halpha\circ\rho_1=(\rho_1\otimes\id_{C^*(G)})\circ\halpha. 
\]
\end{thm}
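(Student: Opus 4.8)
The plan is to promote the asymptotic intertwining hypothesis into an exact intertwining by a single perturbation $\rho_1 = \Ad W \circ \rho$ for a suitable unitary $W \in U(B)$, obtained as a limit along a Rohlin-type telescoping of the approximate solutions produced by Lemma \ref{CVofhalpha2}. First I would record what the hypothesis gives: by assumption there is a continuous family $\{w_0(s)\}_{s \geq 0}$ of unitaries in $B \otimes C^*(G)$ with $\lim_{s\to\infty}\lVert \Ad w_0(s)(\halpha(\rho(x))) - (\rho\otimes\id_{C^*(G)})(\halpha(x))\rVert = 0$ for all $x \in B$, which is precisely the input required by Lemma \ref{CVofhalpha2}. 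Applying part (1) of that lemma, I obtain a continuous family $\{w(t)\}_{t\geq 0}$ of unitaries in $B$ (note: living in $B$ itself, not in the tensor product) such that
\[
\lim_{t\to\infty}\lVert \halpha((\Ad w(t)\circ\rho)(x)) - ((\Ad w(t)\circ\rho)\otimes\id_{C^*(G)})(\halpha(x))\rVert = 0.
\]
So the perturbed endomorphisms $\rho_t := \Ad w(t)\circ\rho$ already \emph{asymptotically} intertwine the coaction, and each $\rho_t$ is asymptotically unitarily equivalent to $\rho$. The remaining task is to pass from this asymptotic intertwining to an \emph{exact} one while retaining asymptotic unitary equivalence with $\rho$.

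The mechanism I would use for that passage is an intertwining / Elliott-type argument run within $B$. The idea is to build an increasing sequence of times $t_1 < t_2 < \cdots$ and correction unitaries so that the compositions $\Ad(z_n)\circ\rho_{t_n}$ form a Cauchy sequence in the point-norm topology whose limit is a genuine endomorphism $\rho_1$ satisfying the coaction intertwining on the nose. Concretely, choosing an increasing exhaustion $\{F_n\}$ of $B$ by finite subsets with dense union and a summable tolerance sequence $\varepsilon_n = 2^{-n}$, I would iteratively select $t_n$ large and a unitary correction (supplied again by Lemma \ref{CVofhalpha2}, or by a one-step version of its telescoping construction) pushing the coaction defect on $F_n$ below $\varepsilon_n$ while moving the map in point-norm by at most $O(\varepsilon_n)$. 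The summability then guarantees point-norm convergence of the corrected maps to a unital endomorphism $\rho_1$, and the coaction defect, being controlled on every $F_n$ and hence on all of $B$ by density, vanishes in the limit, yielding the exact relation $\halpha\circ\rho_1 = (\rho_1\otimes\id_{C^*(G)})\circ\halpha$.

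The two things to verify carefully are that $\rho_1$ stays asymptotically unitarily equivalent to the original $\rho$, and that the iterative corrections can be chosen without destroying previously achieved estimates. For the former, each correction is an inner perturbation by a unitary in $B$, and concatenating the correction unitaries with the family $\{w(t)\}$ along the chosen time path produces a single continuous unitary path implementing the asymptotic equivalence $\rho_1 \sim \rho$; this is the same path-splicing bookkeeping already carried out in the proof of Lemma \ref{CVofhalpha2}(1), where a piecewise-linear path $L$ in $[0,\infty)^2$ was used to turn a two-parameter family into a single continuous one. For the latter, the corrections are applied only on finite subsets $F_n$ and the point-norm displacement is summable, so the standard back-and-forth stabilization applies.

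I expect the main obstacle to be controlling the corrections so that the coaction defect on the \emph{already-handled} sets $F_1,\dots,F_{n-1}$ is not spoiled when handling $F_n$ — i.e.\ making the intertwining argument genuinely convergent rather than merely approximate at each finite stage. This is exactly the point where the telescoping construction of Lemma \ref{CVofhalpha2}(1), with its increasing sequence $\{t_n\}$ and the piecewise-linear path $L$, does the real work: it organizes the estimates so that each stage's error is absorbed into a summable budget, and the continuity of $w_1(s,t)$ across the corners of $L$ is what keeps the whole corrected family continuous. Everything else — nuclearity-driven existence of the asymptotic morphism $\tilde\phi$, pure infiniteness and simplicity of the relevant fixed-point algebras, and the algebraic identities for $\phi$ in Lemma \ref{propofphi} — is available from the results already established, so the proof reduces to invoking Lemma \ref{CVofhalpha2} and running the intertwining limit.
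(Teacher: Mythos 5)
Your proposal is correct and follows essentially the same route as the paper's proof: apply Lemma \ref{CVofhalpha2}, then iterate it along an exhaustion $\{F_n\}$ with summable tolerances $2^{-n}$, using the commutator control of Lemma \ref{CVofhalpha2}(2) to keep each correction's point-norm displacement summable, so that the corrected maps $\Ad w'(t)\circ\rho$ converge point-norm to an exactly intertwining $\rho_1$ that is asymptotically unitarily equivalent to $\rho$ via the spliced unitary path. The one step you gloss over, which the paper carries out explicitly and which is the real engine of the non-spoiling you worry about (rather than the piecewise-linear path of part (1), which only handles continuity), is how to manufacture the input for re-applying Lemma \ref{CVofhalpha2}(2) at stage $n$: one sets $v^{(n)}(t)=(u^{(n)}(t)^*\otimes 1)\halpha(u^{(n)}(t))$, where $u^{(n)}(t)$ is the normalized tail of the previous stage's path, and deduces its approximate commutation with $\halpha(\rho^{(n)}(x))$ from the coaction-defect bounds already achieved on $F_n$.
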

\begin{proof}
We choose an increasing sequence 
$\{F_n\}_{n=1}^\infty$ of finite subsets of $B$ whose union is dense in $B$. 
Thanks to Lemma \ref{CVofhalpha2}, 
there exists a continuous family of unitaries $\{w(t)\}_{t\geq 0}$ in $B$ 
satisfying  
\[
\lim_{t\to\infty}
\lVert\halpha((\Ad w(t)\circ\rho)(x))
-((\Ad w(t)\circ\rho)\otimes\id_{C^*(G)})(\halpha(x))\rVert=0
\]
for all $x\in B$. 
We set $\rho^{(0)}=\rho$, $w^{(0)}(t)=w(t)$ and $t_0=0$. 

We choose $t_1\geq0$ such that 
\[
\lVert\halpha((\Ad w(t)\circ\rho)(x))
-((\Ad w(t)\circ\rho)\otimes\id_{C^*(G)})(\halpha(x))\rVert<2^{-1}
\]
holds for all $t\geq t_1$ and $x\in F_1$. 
Let $\rho^{(1)}=\Ad w^{(0)}(t_1)\circ\rho^{(0)}$ and 
$u^{(1)}(t)=w^{(0)}(t)w^{(0)}(t_1)^*$ for $t\geq t_1$. 
Then we have $u^{(1)}(t_1)=1$ and 
\[
\lVert\halpha((\Ad u^{(1)}(t)\circ\rho^{(1)})(x))
-((\Ad u^{(1)}(t)\circ\rho^{(1)})\otimes\id_{C^*(G)})(\halpha(x))\rVert
<2^{-1}
\]
for all $t\geq t_1$ and $x\in F_1$. 
Besides, 
\[
\lim_{t\to\infty}
\lVert\halpha((\Ad u^{(1)}(t)\circ\rho^{(1)})(x))
-((\Ad u^{(1)}(t)\circ\rho^{(1)})\otimes\id_{C^*(G)})(\halpha(x))\rVert=0
\]
holds for all $x\in B$. 

For $t\geq t_1$, 
let $v^{(1)}(t)=(u^{(1)}(t)^*\otimes1_{C^*(G)})\halpha(u^{(1)}(t))$. 
Then $\{v^{(1)}(t)\}_{t\geq t_1}$ is 
a continuous family of unitaries in $B\otimes C^*(G)$ 
with $v^{(1)}(t_1)=1$ satisfying 
\[
\lVert[v^{(1)}(t),\halpha(\rho^{(1)}(x))]\rVert<1 \quad 
\text{for all }t\geq t_1\text{ and }x\in F_1
\]
and 
\[
\lim_{t\to\infty}
\lVert\Ad v^{(1)}(t)(\halpha(\rho^{(1)}(x)))
-(\rho^{(1)}\otimes\id_{C^*(G)})(\halpha(x))\rVert=0 \quad 
\text{for all }x\in B. 
\]
Thus thanks to Lemma \ref{CVofhalpha2}, 
there exists a continuous family of unitaries 
$\{w^{(1)}(t)\}_{t\geq t_1}$ in $B$ with $w^{(1)}(t_1)=1$ 
satisfying 
\[
\lVert[w^{(1)}(t),\rho^{(1)}(x)]\rVert<1 \quad 
\text{for all }t\geq t_1\text{ and }x\in F_1
\]
and 
\[
\lim_{t\to\infty}
\lVert\halpha((\Ad w^{(1)}(t)\circ\rho^{(1)})(x))
-((\Ad w^{(1)}(t)\circ\rho^{(1)})\otimes\id_{C^*(G)})(\halpha(x))\rVert=0 
\quad \text{for all }x\in B. 
\]

Repeating this argument, 
we can construct a sequence of unital endomorphisms 
$\{\rho^{(n)}\}_{n=0}^\infty$, 
an increasing sequence of positive numbers $\{t_n\}_{n=0}^\infty$ and 
a sequence of continuous families of unitaries 
$\{\{w^{(n)}(t)\}_{t\geq t_n}\}_{n=0}^\infty$ with $w^{(n)}(t_n)=1$ 
for $n\geq0$ satisfying 
\[
\rho^{(n+1)}=\Ad w^{(n)}(t_{n+1})\circ\rho^{(n)}, 
\]
\[
\lVert\halpha(\rho^{(n)}(x))
-(\rho^{(n)}\otimes\id_{C^*(G)})(\halpha(x))\rVert<2^{-n} \quad 
\text{for all }x\in F_n
\]
and 
\[
\lVert[w^{(n)}(t),\rho^{(n)}(x)]\rVert<2^{-n+1} \quad 
\text{for all }t\geq t_n\text{ and }x\in F_n. 
\]
We construct a continuous family of unitaries 
$\{w'(t)\}_{t\geq0}$ in $B$ as follows.  
For $t\in [t_0,t_1]$, we set $w'(t)=w^{(0)}(t)$. 
For $t\in [t_n,t_{n+1}]$ with $n\geq1$, we set $w'(t)=w^{(n)}(t)w'(t_n)$. 
Note that $\rho^{(n)}=\Ad w'(t_n)\circ\rho$ holds. 

Let $x\in F_n$ and $n<m$. 
For $t\geq t_m$, 
we choose an integer $l\geq m$ such that $t\in [t_l,t_{l+1}]$. 
Then one has 
\begin{align*}
& \lVert\Ad w'(t)(\rho(x))-\Ad w'(t_m)(\rho(x))\rVert \\
& \leq\lVert\Ad w'(t)(\rho(x))-\Ad w'(t_l)(\rho(x))\rVert
+\sum_{k=m}^{l-1}\lVert\Ad w'(t_{k+1})(\rho(x))-\Ad w'(t_k)(\rho(x))\rVert \\
& =\lVert\Ad w^{(l)}(t)(\rho^{(l)}(x))-\rho^{(l)}(x)\rVert
+\sum_{k=m}^{l-1}
\lVert\Ad w^{(k)}(t_{k+1})(\rho^{(k)}(x))-\rho^{(k)}(x)\rVert \\ 
& \leq2^{-l+1}+\sum_{k=m}^{l-1}2^{-k+1}<2^{-m+2}, 
\end{align*}
which implies that 
$\lim_{t\to\infty}\Ad w'(t)(\rho(x))$ exists 
for any $x$ in the dense subset $\bigcup_{n=1}^\infty F_n$ of $B$. 
This shows that 
there exists a unital endomorphism $\rho_1$ of $B$ satisfying 
$\halpha\circ\rho_1=(\rho_1\otimes\id_{C^*(G)})\circ\halpha$ and 
\[
\lim_{t\to\infty}\Ad w'(t)(\rho(x))=\rho_1(x)
\]
for all $x\in B$, which completes the proof. 
\end{proof}

\begin{rem}\label{EKforcoaction}
Let $\gamma:G\curvearrowright C$ be 
an asymptotically representable action of $G$ 
on a unital separable nuclear $C^*$-algebra $C$ and 
let $D=C\rtimes_\gamma G$. 
In a similar fashion to the theorem above, 
we can show the following: 
if $\rho:D\to B$ is an isomorphism such that 
$\halpha\circ\rho$ and $(\rho\otimes\id_{C^*(G)})\circ\hat\gamma$ 
are asymptotically unitarily equivalent, then 
there exists an isomorphism $\rho_1:D\to B$ such that 
$\rho_1$ is asymptotically unitarily equivalent to $\rho$ and 
$\halpha\circ\rho_1=(\rho_1\otimes\id_{C^*(G)})\circ\hat\gamma$. 
\end{rem}

\begin{rem}
When $\alpha:G\curvearrowright A$ is 
an approximately representable action of $G$, 
one can prove the following in an analogous fashion to the theorem above: 
if $\rho$ is a unital endomorphism of $B$ such that 
$\halpha\circ\rho$ and $(\rho\otimes\id_{C^*(G)})\circ\halpha$ are 
approximately unitarily equivalent, then 
there exists a unital endomorphism $\rho_1$ of $B$ such that 
$\rho_1$ is approximately unitarily equivalent to $\rho$ and 
$\halpha\circ\rho_1=(\rho_1\otimes\id_{C^*(G)})\circ\halpha$. 

Moreover, 
when $\gamma:G\curvearrowright C$ is 
an approximately representable action of $G$ 
on a unital separable nuclear $C^*$-algebra $C$ 
and $D=C\rtimes_\gamma G$, 
one can prove the following in a similar way: 
if $\rho:D\to B$ is an isomorphism such that 
$\halpha\circ\rho$ and $(\rho\otimes\id_{C^*(G)})\circ\hat\gamma$ are 
approximately unitarily equivalent, then 
there exists an isomorphism $\rho_1:D\to B$ such that 
$\rho_1$ is approximately unitarily equivalent to $\rho$ and 
$\halpha\circ\rho_1=(\rho_1\otimes\id_{C^*(G)})\circ\halpha$. 
This is a generalization of \cite[Corollary 3.9]{I2}. 
\end{rem}

\begin{thm}\label{G-asymp}
Let $\gamma:G\curvearrowright C$ be an action of $G$ 
on a unital separable nuclear $C^*$-algebra $C$ 
and let $D=C\rtimes_\gamma G$. 
For any two homomorphisms $\rho,\sigma\in\Hom_{\hat{G}}(D,B)$, 
the following conditions are equivalent. 
\begin{enumerate}
\item The two homomorphisms $\rho$ and $\sigma$ are 
$\hat{G}$-asymptotically unitarily equivalent. 
\item The two homomorphisms $\rho$ and $\sigma$ are 
asymptotically unitarily equivalent. 
\end{enumerate}
\end{thm}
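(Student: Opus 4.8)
The implication (2)$\Rightarrow$(1) is the entire content: a continuous family of unitaries in $A$ is in particular a family in $B=A\rtimes_\alpha G$ implementing the same asymptotic relation, so (1)$\Rightarrow$(2) is immediate. Thus the plan is, starting from a continuous family $\{u_t\}\subset U(B)$ with $\rho(x)=\lim_t\Ad u_t(\sigma(x))$, to replace it by a family lying in the coefficient algebra $A=\{b\in B:\halpha(b)=b\otimes1\}$, the fixed-point algebra of the dual coaction.

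First I would record the obstruction cocycle. Set $w_t=(u_t^*\otimes1)\halpha(u_t)\in U(B\otimes C^*(G))$; a direct computation using coassociativity shows that $\{w_t\}$ is a coaction $1$-cocycle, and the identity $\halpha(u_t)=(u_t\otimes1)w_t$ exhibits it as a coboundary. Applying $\halpha$ to $\rho(x)\approx\Ad u_t(\sigma(x))$ and using $\halpha\circ\rho=(\rho\otimes\id_{C^*(G)})\circ\hgamma$ and $\halpha\circ\sigma=(\sigma\otimes\id_{C^*(G)})\circ\hgamma$, together with the relation $(\rho\otimes\id_{C^*(G)})\hgamma(x)=\lim_t\Ad(u_t\otimes1)((\sigma\otimes\id_{C^*(G)})\hgamma(x))$, I obtain that $\Ad w_t$ asymptotically fixes $(\sigma\otimes\id_{C^*(G)})\hgamma(D)=\halpha(\sigma(D))$; equivalently $\lim_t\lVert[w_t,\halpha(\sigma(x))]\rVert=0$ for all $x\in D$. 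So $w_t$ is a coboundary that asymptotically lies in the relative commutant of $\halpha(\sigma(D))$.

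The goal is then to produce a continuous family $\{z_t\}\subset U(B)$ with $\lim_t\lVert[z_t,\sigma(x)]\rVert=0$ for all $x\in D$ and $\halpha(z_t)\approx w_t^*(z_t\otimes1)$. Granting this, $\tilde u_t:=u_tz_t$ satisfies $\halpha(\tilde u_t)\approx\tilde u_t\otimes1$, so $\tilde u_t$ is asymptotically in $A$, while $\Ad\tilde u_t\circ\sigma\to\rho$ because $\Ad z_t$ asymptotically fixes $\sigma(D)$. Here the naive attempt to trivialize $w_t$ through the splitting homomorphism $\phi$ of Section 4 collapses, since $\phi\circ\halpha=\pi|B$ forces $\phi(w_t)=1$ for a coboundary; the real task is to produce a \emph{second}, different trivialization that commutes with $\sigma(D)$. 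I would obtain it by solving the cohomology of $\{w_t\}$ inside the relative commutant of $\sigma(D)$, which by arguments parallel to Corollary \ref{Aomega=pis} is again purely infinite and carries a unital copy of $\mathcal O_\infty$; the estimates of Lemma \ref{propofphit}, Lemma \ref{CVofhalpha1} and Lemma \ref{CVofhalpha2}, applied in this relative setting, give the pointwise corrections, and a reparametrisation/diagonal argument exactly as in the proof of Theorem \ref{existence} glues them into a single continuous family $\{z_t\}$. A final small perturbation, using that the $\tilde u_t$ are asymptotically $\halpha$-invariant, then replaces $\{\tilde u_t\}$ by a genuine continuous unitary path in $A$ with $\rho=\lim_t\Ad\tilde u_t\circ\sigma$, which is (1).

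The main obstacle is this middle step: the relative cohomology vanishing with the simultaneous commutation constraint, assembled continuously in $t$. The difficulty is not the mere existence of a trivialization—the cocycle is even a coboundary—but extracting one that asymptotically commutes with $\sigma(D)$ and depends continuously on the parameter. This is exactly where asymptotic representability of $\alpha$ is indispensable, both to split the dual coaction through $\phi$ and $\tilde\phi_t$ and to guarantee that the relevant relative commutant is large enough for the Rohlin-type gluing of Theorem \ref{existence} to apply.
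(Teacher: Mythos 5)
Your reduction is sound as far as it goes: (1)$\Rightarrow$(2) is indeed trivial, the element $w_t=(u_t^*\otimes1)\halpha(u_t)$ does asymptotically commute with $\halpha(\sigma(D))$, and producing $\{z_t\}$ with $\lim_t[z_t,\sigma(x)]=0$ and $\halpha(z_t)\approx w_t^*(z_t\otimes1)$ would finish. But the pivot of your proposal --- the claim that the splitting homomorphism $\phi$ is useless because ``$\phi\circ\halpha=\pi|B$ forces $\phi(w_t)=1$'' --- is a miscomputation. That conclusion would require $\phi(b\otimes1)=\pi(b)$ for \emph{every} $b\in B$, whereas by definition this holds only for $b\in A$; on the canonical unitaries one has $\phi(\lambda^\alpha_g\otimes1)=\pi(v_g)$, so $\phi(w_t)=\phi(u_t\otimes1)^*\pi(u_t)\neq1$ in general. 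In fact $\phi(B\otimes\C)\subset C^*(A\cup\{v_g\})/C_0\subset\pi(C^b([0,\infty),A))$, so $\phi(u_t\otimes1)$ is asymptotically an element of $A$, and a computation in $C^b/C_0$ using both identities of Lemma \ref{propofphi} shows that $z_t:=\phi(w_t)^*=u_t^*\phi(u_t\otimes1)$ satisfies exactly your two requirements: $\halpha(z_t)=w_t^*(z_t\otimes1)$ and $\Ad z_t$ asymptotically fixes $\sigma(D)$. Far from collapsing, $\phi$ applied to your coboundary hands you the corrector you are looking for. This is precisely the paper's proof, in streamlined form: apply $\phi$ to the inequality $(w_0(s)\otimes1)\halpha(\rho(x))\approx\halpha(\sigma(x))(w_0(s)\otimes1)$ to conclude that $\phi(w_0(s)\otimes1)$ approximately intertwines $\rho$ and $\sigma$; lift by $\tilde\phi_t$ and push into $A$ with the conditional expectation $E:B\to A$ (harmless, since $\{\tilde\phi_t\}$ and $\{E\circ\tilde\phi_t\}$ are equivalent on $B\otimes\C$); then glue along a piecewise linear path in the $(s,t)$-plane exactly as in Lemma \ref{CVofhalpha2} to obtain a continuous unitary family in $A$.

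The substitute you propose for this step is a genuine gap, for two reasons. First, it is unavailable at the stated level of generality: in Section 4 the algebra $A$ is merely unital separable nuclear and $\alpha$ is not assumed outer, so Corollary \ref{Aomega=pis} (which needs $A$ Kirchberg and $\alpha$ outer) gives you nothing --- take $A$ to be a UHF algebra and there is no purely infinite relative commutant, no unital copy of $\mathcal O_\infty$, and no Rohlin-type machinery anywhere; the theorem and its actual proof are entirely ``soft''. Second, even granting Kirchberg plus outerness, you never explain how pure infiniteness would trivialize a coaction cocycle subject to the simultaneous commutation constraint; the lemmas you cite for the ``pointwise corrections'' (Lemmas \ref{propofphit}, \ref{CVofhalpha1}, \ref{CVofhalpha2}) are the very $\phi$-machinery you had just declared to collapse, and Theorem \ref{existence} contains no Rohlin-type gluing at all, only the reparametrization argument. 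So: correct trivial direction, correct reduction to $\{z_t\}$, but the core step is missing, and the one tool that fills it is the one you threw away.
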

\begin{proof}
The implication (1)$\Rightarrow$(2) is trivial. 
We would like to show that (2) implies (1), 
assuming that $\alpha$ is asymptotically representable. 
Suppose that $\{w_0(s)\}_{s\geq0}$ is 
a continuous family of unitaries in $B=A\rtimes_\alpha G$ satisfying 
\[
\lim_{s\to\infty}\Ad w_0(s)(\rho(x))=\sigma(x)
\]
for all $x\in D$. 
We choose an increasing sequence 
$\{F_n\}_{n=0}^\infty$ of finite subsets of $D$ whose union is dense in $D$. 
We may assume 
\[
\lVert((\Ad w_0(s)\circ\rho)\otimes\id_{C^*(G)})(\hgamma(x))
-(\sigma\otimes\id_{C^*(G)})(\hgamma(x))\rVert<2^{-n}
\]
holds for all $s\geq n$ and $x\in F_n$. 
From 
\[
\halpha\circ\rho=(\rho\otimes\id_{C^*(G)})\circ\hgamma
\]
and 
\[
\halpha\circ\sigma=(\sigma\otimes\id_{C^*(G)})\circ\hgamma, 
\]
we have 
\[
\lVert(w_0(s)\otimes1)\halpha(\rho(x))
-\halpha(\sigma(x))(w_0(s)\otimes1)\rVert<2^{-n}
\]
for all $s\geq n$ and $x\in F_n$. 
Then, Lemma \ref{propofphi} implies 
\[
\lVert\phi(w_0(s)\otimes1)\rho(x)
-\sigma(x)\phi(w_0(s)\otimes1)\rVert<2^{-n}
\]
for all $s\geq n$ and $x\in F_n$. 
Let $E:B\rightarrow A$ be the canonical conditional expectation 
determined by $E(u^\alpha_g)=0$ for $g\in G\setminus \{e\}$. 
Note that since $\phi(B\otimes\C)\subset C^*(A\cup\{v_g\})/C_0$, 
the two asymptotic morphisms 
$\{\tilde\phi_t\}_{t\geq0}$ and $\{E\circ\tilde\phi_t\}_{t\geq0}$ are 
equivalent on $B\otimes \C$, that is, 
\[
\lim_{t\to\infty}
\lVert\tilde\phi_t(x\otimes1)-E(\tilde\phi_t(x\otimes1))\rVert=0
\]
for all $x\in B$. 
We set $w_1(s,t)=E(\tilde\phi_t(w_0(s)\otimes1))\in A$. 

Thanks to Lemma \ref{propofphit}, 
for each non-negative integer $n$, 
there exists $t_n\geq0$ such that the following hold: 
for any $t\geq t_n$, $s\in [n,n+1]$ and $x\in F_n$, one has 
\[
\lVert w_1(s,t)^*w_1(s,t)-1\rVert<2^{-n}, \quad 
\lVert w_1(s,t)w_1(s,t)^*-1\rVert<2^{-n}
\]
and 
\[
\lVert w_1(s,t)\rho(x)-\sigma(x)w_1(s,t)\rVert<2^{-n}. 
\]
The rest of the proof is the same as that of Lemma \ref{CVofhalpha2}. 
\end{proof}

\begin{rem}\label{G-approx}
When $\alpha:G\curvearrowright A$ is 
an approximately representable action of $G$, 
one can prove the following in an analogous fashion to the theorem above: 
for any two homomorphisms $\rho,\sigma\in\Hom_{\hat{G}}(D,B)$, 
the following conditions are equivalent. 
\begin{enumerate}
\item The two homomorphisms $\rho$ and $\sigma$ are 
$\hat{G}$-approximately unitarily equivalent. 
\item The two homomorphisms $\rho$ and $\sigma$ are 
approximately unitarily equivalent. 
\end{enumerate}
\end{rem}

When $\{u_g\}_{g\in G}$ is an $\alpha$-cocycle in $A$, 
one can define a homomorphism $\iota_u:C^*(G)\to B$ 
by $\iota_u(\lambda_g)=u_g\lambda^\alpha_g$ for $g\in G$. 
Clearly $\iota_u$ belongs to $\Hom_{\hat{G}}(C^*(G),B)$, 
where $C^*(G)$ is regarded as the crossed product of $\C$ 
by the trivial action of $G$. 
The perturbed action $\alpha^u:G\curvearrowright A$ is 
the action defined by $\alpha^u_g=\Ad u_g\circ\alpha_g$. 

\begin{cor}\label{strongCC}
Suppose that 
$\alpha:G\curvearrowright A$ is approximately representable and 
$\{u_g\}_g,\{v_g\}_g$ are $\alpha$-cocycles in $A$. 
\begin{enumerate}
\item There exists a sequence of unitaries $\{s_n\}_{n=1}^\infty$ in $A$ 
satisfying 
\[
\lim_{n\to\infty}s_nu_g\alpha_g(s_n^*)=v_g
\]
for all $g\in G$ if and only if 
$\iota_u$ and $\iota_v$ are approximately unitarily equivalent. 
\item If there exists $\mu\in\Aut_{\hat{G}}(B)$ 
such that $\mu\circ\iota_u$ and $\iota_v$ are 
approximately unitarily equivalent, then 
the two actions $\alpha^u$ and $\alpha^v$ are strongly cocycle conjugate. 
When $G=\Z^N$ and $B$ is purely infinite simple, the converse also holds. 
\end{enumerate}
\end{cor}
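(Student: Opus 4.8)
The plan is to exploit the duality between $\hat{G}$-equivariant automorphisms of $B=A\rtimes_\alpha G$ and cocycle conjugacies of $\alpha$, combined with part~(1) and Remark~\ref{G-approx}. Since $G$ is amenable and $A$ is unital, the fixed subalgebra $\{x\in B:\halpha(x)=x\otimes1\}$ of the dual coaction equals $A$, and the spectral subspace attached to $g$ is $A\lambda^\alpha_g$. Consequently every $\mu\in\Aut_{\hat{G}}(B)$ restricts to $\mu_0:=\mu|_A\in\Aut(A)$ and has the form $\mu(\lambda^\alpha_g)=z_g\lambda^\alpha_g$ for unitaries $z_g\in A$; evaluating $\mu$ on the relations $\lambda^\alpha_g a(\lambda^\alpha_g)^*=\alpha_g(a)$ and $\lambda^\alpha_g\lambda^\alpha_h=\lambda^\alpha_{gh}$ shows that $\{z_g\}$ is an $\alpha$-cocycle and $\mu_0\circ\alpha_g\circ\mu_0^{-1}=\Ad z_g\circ\alpha_g$, and conversely each such pair $(\mu_0,\{z_g\})$ yields an element of $\Aut_{\hat{G}}(B)$. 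Under this dictionary $\mu\circ\iota_u=\iota_{u'}$ with $u'_g=\mu_0(u_g)z_g$, and $\mu_0$ conjugates $\alpha^u$ to $\alpha^{u'}$.

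For part~(1), I would write $\iota_u(\lambda_g)=u_g\lambda^\alpha_g$ and use $\lambda^\alpha_gs^*=\alpha_g(s^*)\lambda^\alpha_g$ to get, for unitaries $s_n\in A\subset B$, the identity $\Ad s_n(\iota_u(\lambda_g))=s_nu_g\alpha_g(s_n^*)\lambda^\alpha_g$. Thus $s_nu_g\alpha_g(s_n^*)\to v_g$ for all $g$ is exactly the assertion that $\Ad s_n\circ\iota_u\to\iota_v$ pointwise, giving the ``only if'' implication with implementing unitaries drawn from $A$. For the converse, $\iota_u,\iota_v\in\Hom_{\hat{G}}(C^*(G),B)$, where $C^*(G)=\C\rtimes G$ carries the trivial action; hence Remark~\ref{G-approx} upgrades approximate unitary equivalence of $\iota_u$ and $\iota_v$ to $\hat{G}$-approximate unitary equivalence, whose implementing unitaries $s_n$ lie in the coefficient algebra $A$. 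Comparing the $\lambda^\alpha_g$-coefficients (e.g.\ by applying the canonical conditional expectation $E\colon B\to A$) then yields $s_nu_g\alpha_g(s_n^*)\to v_g$.

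For the forward implication of part~(2), the dictionary turns the hypothesis into the statement that $\iota_{u'}=\mu\circ\iota_u$ is approximately unitarily equivalent to $\iota_v$, so part~(1) supplies $s_n\in A$ with $s_nu'_g\alpha_g(s_n^*)\to v_g$. Setting $d_g=v_g(u'_g)^*$, a direct check shows that $\{d_g\}$ is an $\alpha^{u'}$-cocycle with $(\alpha^{u'})^d=\alpha^v$, and the rewriting $s_nu'_g\alpha_g(s_n^*)=s_n\alpha^{u'}_g(s_n^*)u'_g$ gives $s_n\alpha^{u'}_g(s_n^*)\to d_g$; hence $\alpha^{u'}$ and $\alpha^v$ are strongly cocycle conjugate via the identity conjugacy. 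Since $\mu_0$ conjugates $\alpha^u$ to $\alpha^{u'}$, pulling the cocycle $\{d_g\}$ and the unitaries $\{s_n\}$ back through $\mu_0$ preserves strong cocycle conjugacy, so $\alpha^u$ and $\alpha^v$ are strongly cocycle conjugate.

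The converse of part~(2) runs this backwards. A strong cocycle conjugacy provides an $\alpha^u$-cocycle $\{c_g\}$, unitaries $\{t_n\}$ with $\lVert c_g-t_n\alpha^u_g(t_n^*)\rVert\to0$, and $\theta\in\Aut(A)$ with $\theta\circ(\alpha^u)^c_g\circ\theta^{-1}=\alpha^v_g$. I would take $\mu_0=\theta$ and $z_g=\theta(u_g^*c_g^*)v_g$, so that $u'_g=\mu_0(u_g)z_g=\theta(c_g^*)v_g$; then with $s_n=\theta(t_n)$ and the identities $\alpha^v_g\circ\theta=\theta\circ(\alpha^u)^c_g$ and $c_g^*(\alpha^u)^c_g(t_n^*)=\alpha^u_g(t_n^*)c_g^*$ one computes $s_nu'_g\alpha_g(s_n^*)=\theta(t_n\alpha^u_g(t_n^*))\,\theta(c_g^*)v_g\to\theta(c_g)\theta(c_g^*)v_g=v_g$, whence part~(1) gives that $\mu\circ\iota_u=\iota_{u'}$ is approximately unitarily equivalent to $\iota_v$. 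The \emph{main obstacle} is algebraic: one must guarantee that the resulting $\{z_g\}$ is a genuine $\alpha$-cocycle, since $\theta\alpha_g\theta^{-1}=\Ad z_g\circ\alpha_g$ pins down $z_g$ only modulo a central unitary, so a priori $z_g\alpha_g(z_h)=\omega(g,h)z_{gh}$ for a scalar $2$-cocycle $\omega$. With the explicit choice above the identity $z_g\alpha_g(z_h)=z_{gh}$ can be verified by hand from the cocycle identities for $u,v,c$ and the conjugacy relation, and it is precisely in controlling this scalar obstruction that I expect the hypotheses $G=\Z^N$ and $B$ purely infinite simple to be used, ensuring that $(\mu_0,\{z_g\})$ genuinely assembles into an element of $\Aut_{\hat{G}}(B)$.
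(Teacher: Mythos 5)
Your proof is correct, and parts of it coincide with the paper's: part (1) and the forward half of part (2) are exactly the paper's argument (the paper's $\alpha^v$-cocycle $w_g=\mu(u_g\lambda^\alpha_g)\lambda^{\alpha*}_gv_g^*$ is, in your dictionary, just $u'_gv_g^*=d_g^*$, so the two constructions agree up to bookkeeping). Where you genuinely diverge is the converse half of part (2). The paper throws away most of the strong cocycle conjugacy data: it keeps only the fact that for every $\ep>0$ and finite generating set $F$ there exist $\mu\in\Aut(A)$ and an $\alpha^v$-cocycle $\{w_g\}$ with $\Ad w_g\circ\alpha^v_g=\mu\circ\alpha^u_g\circ\mu^{-1}$ and $\lVert w_g-1\rVert<\ep$ on $F$. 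From this it builds $\tilde\mu\in\Aut_{\T^N}(B)$ with $\tilde\mu\circ\iota_u$ only $\ep$-close to $\iota_v$ on generators, and then it must recover approximate unitary equivalence by $KK$-theory: since $C^*(\Z^N)\cong C(\T^N)$ satisfies the UCT and has finitely generated free $K$-groups, closeness for small $\ep$ forces $KK(\tilde\mu\circ\iota_u)=KK(\iota_v)$, and then \cite[Theorem 1.7]{D1} (this is where pure infiniteness and simplicity of $B$ enter) yields approximate unitary equivalence. You instead retain the approximating sequence $\{t_n\}$ from the definition of strong cocycle conjugacy and exhibit the intertwining unitaries $s_n=\theta(t_n)$ explicitly, reducing everything to the elementary (algebraic) direction of part (1). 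Your route is more elementary, avoids $KK$-theory entirely, and in fact proves a formally stronger statement.

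Concerning the step you flag as the ``main obstacle'': it is not an obstacle, and your expectation that $G=\Z^N$ and pure infiniteness are needed there is misplaced. With your explicit choice $z_g=\theta(u_g^*c_g^*)v_g$ the cocycle identity holds on the nose: using $v_g\alpha_g(\theta(x))=\theta\bigl((\alpha^u)^c_g(x)\bigr)v_g$ one gets $z_g\alpha_g(z_h)=\theta\bigl(u_g^*c_g^*\,(\alpha^u)^c_g(u_h^*c_h^*)\bigr)v_{gh}$, and a direct computation gives $u_g^*c_g^*\,(\alpha^u)^c_g(u_h^*c_h^*)=\alpha_g(u_h^*)\alpha_g(c_h^*)u_g^*c_g^*=u_{gh}^*c_{gh}^*$, whence $z_g\alpha_g(z_h)=z_{gh}$; no scalar $2$-cocycle ever appears, because $z_g$ is pinned down by an explicit formula rather than chosen merely to implement $\theta\alpha_g\theta^{-1}$. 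Thus $(\theta,\{z_g\})$ assembles into $\mu\in\Aut_{\hat{G}}(B)$ with no extra hypotheses, and your argument establishes the converse of part (2) for any countable discrete amenable $G$ in the standing setting of the section; the hypotheses $G=\Z^N$ and $B$ purely infinite simple are used only by the paper's $KK$-theoretic route, not by yours.
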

\begin{proof}
(1) Suppose that 
there exists a sequence of unitaries $\{s_n\}_{n=1}^\infty$ in $A$ 
such that $\lim_{n\to\infty}s_nu_g\alpha_g(s_n^*)=v_g$ for all $g\in G$. 
It follows that 
\[
\lim_{n\to\infty}s_n\iota_u(\lambda_g)s_n^*
=\lim_{n\to\infty}s_nu_g\lambda^\alpha_gs_n^*
=\lim_{n\to\infty}s_nu_g\alpha_g(s_n^*)\lambda^\alpha_g=v_g\lambda^\alpha_g
=\iota_v(\lambda_g), 
\]
which means that 
$\iota_u$ and $\iota_v$ are approximately unitarily equivalent. 

Conversely, 
if $\iota_u$ and $\iota_v$ are approximately unitarily equivalent, then 
by Remark \ref{G-approx}, 
$\iota_u$ and $\iota_v$ are $\hat{G}$-approximately unitarily equivalent. 
Thus there exists a sequence of unitaries $\{s_n\}_{n=1}^\infty$ in $A$ 
such that $\lim_{n\to\infty}s_n\iota_u(\lambda_g)s_n^*=\iota_v(\lambda_g)$ 
for all $g\in G$. 
Hence 
$s_nu_g\alpha_g(s_n^*)=s_nu_g\lambda^\alpha_gs_n^*\lambda^{\alpha*}_g$ 
goes to $v_g\lambda^\alpha_g\lambda^{\alpha*}_g=v_g$ as $n$ goes to infinity. 

(2) Take a finite subset $F\subset G$ and $\ep>0$ arbitrarily. 
By Remark \ref{G-approx}, 
$\mu\circ\iota_u$ and $\iota_v$ are 
$\hat{G}$-approximately unitarily equivalent. 
Therefore there exists a sequence of unitaries $\{s_n\}_{n=1}^\infty$ in $A$ 
such that 
\[
\lim_{n\to\infty}\lVert s_n\mu(u_g\lambda^\alpha_g)s_n^*
-v_g\lambda^\alpha_g\rVert=0
\]
for all $g\in G$. 
Put $w_g=\mu(u_g\lambda^\alpha_g)\lambda^{\alpha*}_gv_g^*$. 
Then $\{w_g\}_{g\in G}$ is an $\alpha^v$-cocycle in $A$ and 
$\Ad w_g\circ\alpha^v_g=\mu\circ\alpha^u_g\circ\mu^{-1}$ on $A$. 
In addition, 
\[
\lim_{n\to\infty}\lVert w_g-s_n^*\alpha^v_g(s_n)\rVert=0
\]
which completes the proof. 

We turn to the case $G=\Z^N$. 
Suppose that 
the two actions $\alpha^u$ and $\alpha^v$ are strongly cocycle conjugate. 
Let $F\subset\Z^N$ be a finite generating set and let $\ep>0$. 
There exists an isomorphism $\mu:A\to A$ and 
an $\alpha^v$-cocycle $\{w_g\}_{g\in\Z^N}$ such that 
\[
\Ad w_g\circ\alpha^v_g=\mu\circ\alpha^u_g\circ\mu^{-1} 
\quad \forall g\in\Z^N \quad 
\text{ and } \quad \lVert w_g-1\rVert<\ep \quad \forall g\in F. 
\]
Define $\tilde\mu\in\Aut_{\T^N}(B)$ by 
\[
\tilde\mu(a)=a \quad \forall a\in A \quad \text{ and } \quad 
\tilde\mu(\lambda^\alpha_g)=\mu(u_g^*)w_gv_g\lambda^\alpha_g \quad 
\forall g\in\Z^N. 
\]
It is easy to see 
$\lVert\tilde\mu(\iota_u(\lambda_g))-\iota_v(\lambda_g)\rVert<\ep$ 
for every $g\in F$. 
Since $C^*(\Z^N)$ satisfies the universal coefficient theorem 
and its $K$-groups are free abelian groups of finite rank, 
if $\ep>0$ is sufficiently small, 
we can conclude that $KK(\tilde\mu\circ\iota_u)=KK(\iota_v)$. 
It follows from \cite[Theorem 1.7]{D1} that 
$\tilde\mu\circ\iota_u$ and $\iota_v$ are 
approximately unitarily equivalent. 
\end{proof}

\bigskip

Finally in this section, 
we show a $G$-equivariant version of 
Nakamura's theorem \cite[Theorem 5]{N2}. 
Suppose that $A$ is a unital Kirchberg algebra 
and that $\alpha:G\curvearrowright A$ is outer. 
Let $\beta_1$ and $\beta_2$ be automorphisms on $A$ 
satisfying 
\[
\beta_i\circ\alpha_g=\Ad u_{i,g}\circ\alpha_g\circ\beta_i
\]
for $i=1,2$ and $g\in G$, 
where $\{u_{1,g}\}_{g\in G}$ and $\{u_{2,g}\}_{g\in G}$ are 
$\alpha$-cocycles in $A$. 
Then, for each $i=1,2$, there exists an extension 
$\tilde\beta_i\in\Aut(B)$ 
determined by $\tilde\beta_i(\lambda^\alpha_g)=u_{i,g}\lambda^\alpha_g$. 
Note that $\tilde\beta_i$ belongs to $\Aut_{\hat{G}}(B)$. 
We further assume that, for each $i=1,2$, 
the map $(n,g)\mapsto \beta_i^n\alpha_g$ induces an injection 
from $\Z\times G$ to $\Out(A)$. 

\begin{thm}\label{equivNakamura}
In the setting above, 
if $KK(\tilde\beta_1)=KK(\tilde\beta_2)$, then 
there exists an automorphism $\mu\in\Aut_{\hat{G}}(B)$ 
and a unitary $v\in A$ such that 
$KK(\mu)=1_B$, $KK(\mu|A)=1_A$ and 
\[
\mu\circ\tilde\beta_1\circ\mu^{-1}
=\Ad v\circ\tilde\beta_2. 
\]
\end{thm}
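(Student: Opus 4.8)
The plan is to transplant the Evans--Kishimoto intertwining argument behind Nakamura's Theorem \ref{Z} into the crossed product $B=A\rtimes_\alpha G$, while keeping every unitary that appears inside the coefficient algebra $A$, so that the conjugating automorphism automatically respects the dual coaction and is $KK$-trivial both on $B$ and on $A$. First I would note that $B$ is a unital Kirchberg algebra: it is separable and nuclear, and purely infinite simple by Theorem \ref{KKFields} since $\alpha$ is outer; in particular $B\cong B\otimes\mathcal O_\infty$. Because $KK(\tilde\beta_1)=KK(\tilde\beta_2)$, Theorem \ref{Phillips}(2) shows that the unital endomorphisms $\tilde\beta_1,\tilde\beta_2$ of $B$ are asymptotically unitarily equivalent. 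Both lie in $\Hom_{\hat{G}}(B,B)$ (taking $C=A$, $\gamma=\alpha$, $D=B$), so, $\alpha$ being asymptotically representable, Theorem \ref{G-asymp} upgrades this to $\hat{G}$-asymptotic unitary equivalence: there is a continuous family $\{u_t\}_{t\ge0}$ of unitaries in $A$ with
\[
\tilde\beta_1(x)=\lim_{t\to\infty}\Ad u_t(\tilde\beta_2(x))\qquad(x\in B).
\]
Taking $x=a\in A$ and $x=\lambda^\alpha_g$ this reads $\beta_1(a)=\lim_t\Ad u_t(\beta_2(a))$ and $u_{1,g}=\lim_t u_tu_{2,g}\alpha_g(u_t^*)$; the decisive gain is that the implementing unitaries live in $A$, so conjugating by them preserves $\halpha$.

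Next I would install the equivariant Rohlin property. For each $i$ the cocycle identity $\beta_i\circ\alpha_g=\Ad u_{i,g}\circ\alpha_g\circ\beta_i$, together with $u_{i,g}\in A$, forces $\beta_i$ to map $(A_\omega)^\alpha=A^\omega\cap B'$ into itself and to act there as a genuine automorphism (the cocycle, lying in $A$, conjugates trivially on central sequences). The injectivity of $(n,g)\mapsto\beta_i^n\alpha_g$ into $\Out(A)$ is precisely the hypothesis of Theorem \ref{equivRohlin}, so for every $N$ we obtain Rohlin towers for $\tilde\beta_i$ consisting of projections in $(A_\omega)^\alpha$. The essential feature is that these projections are central sequences for all of $B$ and sit inside $A^\omega$, so any unitary assembled from them again lies in $A^\omega$ and commutes with the coaction; this is what keeps the intertwining inside $A$.

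With these two ingredients the construction proceeds as in the proof of Theorem \ref{Z}: alternately approximating $\tilde\beta_1$ by $\tilde\beta_2$ and back, I would use the Rohlin tower to convert the continuous path $\{u_t\}$ into a single unitary $w_n\in U(A)$ implementing the required approximation on a prescribed finite set while approximately commuting with the data already fixed, the path-shortening and Lipschitz control being furnished by Lemma \ref{Nakamura1}. Setting $W_n=w_n\cdots w_1\in U(A)$ and running the usual Elliott bookkeeping so that $\Ad W_n$ converges in both directions, one obtains an automorphism $\mu=\lim_n\Ad W_n$ with $\mu\circ\tilde\beta_1\circ\mu^{-1}=\Ad v\circ\tilde\beta_2$. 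Since every $W_n$ lies in $U(A)$, we get $\mu(A)=A$ and $\mu\in\Aut_{\hat{G}}(B)$, while $\mu$ and $\mu|A$ are both approximately inner, whence $KK(\mu)=1_B$ and $KK(\mu|A)=1_A$. Finally $\Ad v=\mu\circ\tilde\beta_1\circ\mu^{-1}\circ\tilde\beta_2^{-1}$ is inner and $\hat{G}$-equivariant, which forces $v\in U(A)$.

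The main obstacle is the simultaneous bookkeeping in the third step: the intertwining must shorten the asymptotic path $\{u_t\}$ to a single nearly-central unitary and keep it inside $A$, so that coaction equivariance and both $KK$-trivialities survive at once. This is exactly where the $\hat{G}$-refinements of Phillips' uniqueness (Theorem \ref{G-asymp}) and of the Rohlin property (towers in $(A_\omega)^\alpha$, which are central in $B$) are indispensable, and where the commutator and Lipschitz estimates of Lemma \ref{Nakamura1} do the real work. Guaranteeing that $\Ad W_n$ converges to an \emph{automorphism} while the accumulated error remains an honest coboundary $\Ad v$ with $v\in U(A)$ is the crux of the argument.
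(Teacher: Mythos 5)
Your proposal reproduces the paper's own proof: identify $B$ as a unital Kirchberg algebra via Theorem \ref{KKFields}, pass from $KK(\tilde\beta_1)=KK(\tilde\beta_2)$ to asymptotic unitary equivalence by Theorem \ref{Phillips} (2), upgrade this to $\hat{G}$-asymptotic unitary equivalence (implementing unitaries in $A$) by Theorem \ref{G-asymp}, take Rohlin towers for $\tilde\beta_i$ in $(A_\omega)^\alpha\subset B_\omega$ from Theorem \ref{equivRohlin}, and run the Evans--Kishimoto/Nakamura intertwining with Lemma \ref{Nakamura1} playing the role of Nakamura's Theorem 7. These are exactly the five ingredients the paper cites, assembled in the same order.

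There is, however, one genuinely false step: your closing inference that ``$\Ad v=\mu\circ\tilde\beta_1\circ\mu^{-1}\circ\tilde\beta_2^{-1}$ is inner and $\hat{G}$-equivariant, which forces $v\in U(A)$.'' Take $G$ abelian (the case this theorem is actually applied to, $G=\Z^{N-1}$) and $v=\lambda^\alpha_g$ with $g\neq e$: then $\Ad\lambda^\alpha_g$ lies in $\Aut_{\hat{G}}(B)$ and is inner, yet it is implemented by no unitary of $A$ --- its restriction to $A$ is $\alpha_g$, outer by hypothesis; and if $\Ad\lambda^\alpha_g=\Ad u$ with $u\in U(A)$, simplicity of $B$ would force $\lambda^\alpha_g\in\T u\subset A$, contradicting $E(\lambda^\alpha_g)=0$ for the canonical conditional expectation $E:B\to A$. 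So membership of $v$ in $U(A)$ cannot be deduced post hoc from innerness plus equivariance; it must be read off from the intertwining itself. Fortunately your construction does supply it: every unitary used (those coming from $\hat{G}$-asymptotic unitary equivalence and those assembled from Rohlin projections in $(A_\omega)^\alpha$, represented by sequences in $A$) lies in $A$, and $U(A)$ is norm-closed in $B$, so the accumulated error unitary lands in $U(A)$ by construction. A second, smaller imprecision: ``approximately inner, whence $KK(\mu)=1_B$'' is not valid in general --- approximate innerness only controls the $KL$-class; you need $\mu$ to be a limit of inner automorphisms along a \emph{continuous} path of unitaries (asymptotic innerness), which is precisely what the Lipschitz-controlled paths furnished by Lemma \ref{Nakamura1} and the continuous families from Theorem \ref{G-asymp} are there to guarantee. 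With these two points repaired, your argument coincides with the paper's.
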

\begin{proof}
We apply the argument of \cite[Theorem 5]{N2} 
to $\tilde\beta_1$ and $\tilde\beta_2$. 
By Theorem \ref{KKFields}, $B$ is a unital Kirchberg algebra. 
Since $KK(\tilde\beta_1)=KK(\tilde\beta_2)$, by Theorem \ref{Phillips} (2), 
$\tilde\beta_1$ and $\tilde\beta_2$ are 
asymptotically unitarily equivalent. 
Then Theorem \ref{G-asymp} implies that 
$\tilde\beta_1$ and $\tilde\beta_2$ are 
$\hat{G}$-asymptotically unitarily equivalent. 
Moreover, by Theorem \ref{equivRohlin}, 
we can find Rohlin projections for $\tilde\beta_i$ 
in $(A_\omega)^\alpha\subset B_\omega$. 
Hence, 
by using Lemma \ref{Nakamura1} instead of \cite[Theorem 7]{N2}, 
the usual intertwining argument shows the statement. 
\end{proof}

\section{Uniqueness of $\Z^N$-actions on $\mathcal{O}_\infty$}

Throughout this section, 
we let $\T$ denote the set of all complex numbers with absolute value one. 
For a discrete group $G$, 
let $Z^2(G,\T)$ denote the abelian group of 
all 2-cocycles from $G\times G$ to $\T$, that is, 
\[
Z^2(G,\T)=\{\omega:G\times G\to\T\mid
\omega(g,h)\omega(gh,k)=\omega(h,k)\omega(g,hk)
\quad \forall g,h,k\in G\}
\]
and let $B^2(G,\T)$ denote the subgroup of 2-coboundaries in $Z^2(G,\T)$, 
i.e. elements of the form $\omega(g,h)=f(g)f(h)f(gh)^{-1}$ 
for some map $f:G\to\T$. 
Two 2-cocycles $\omega,\omega'\in Z^2(G,\T)$ are said to be cohomologous, 
if $\omega\omega'^{-1}$ is in $B^2(G,\T)$. 

Let $\alpha:G\curvearrowright A$ be 
an action of a discrete group $G$ on a unital $C^*$-algebra $A$. 
We say that 
$\alpha$ absorbs $\omega\in Z^2(G,\T)$, 
if there exists an automorphism $\mu$ of $A$ 
and a family of unitaries $\{u_g\}_{g\in G}$ in $A$ satisfying 
\[
\mu\circ\alpha_g\circ\mu^{-1}=\Ad u_g\circ\alpha_g
\]
and 
\[
u_g\alpha_g(u_h)=\omega(g,h)u_{g,h}
\]
for every $g,h\in G$. 
If $\omega,\omega'\in Z^2(G,\T)$ are cohomologous and 
$\alpha$ absorbs $\omega$, 
then clearly $\alpha$ absorbs $\omega'$, too. 

The following is an easy observation and we omit the proof. 

\begin{lem}\label{omegaabsorb1}
Suppose that 
two actions $\alpha,\beta$ of a discrete group $G$ 
on a unital simple $C^*$-algebra $A$ are outer conjugate. 
\begin{enumerate}
\item If $\alpha$ absorbs $\omega\in Z^2(G,\T)$, 
then so does $\beta$. 
\item If $\alpha$ absorbs all elements in $Z^2(G,\T)$, 
then $\alpha$ and $\beta$ are cocycle conjugate. 
\end{enumerate}
\end{lem}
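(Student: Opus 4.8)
The plan is to track the single scalar $2$-cocycle that outer conjugacy produces and to use absorption to cancel it. First I would unwind the hypothesis: outer conjugacy of $\alpha$ and $\beta$ provides $\mu\in\Aut(A)$ and unitaries $\{u_g\}_{g\in G}$ in $A$ with $\Ad u_g\circ\alpha_g=\mu^{-1}\circ\beta_g\circ\mu$, equivalently $\beta_g=\mu\circ(\Ad u_g\circ\alpha_g)\circ\mu^{-1}$. Comparing $\beta_g\circ\beta_h$ with $\beta_{gh}$ forces $\Ad(u_g\alpha_g(u_h))=\Ad u_{gh}$, so $u_g\alpha_g(u_h)u_{gh}^*$ lies in the center of $A$; since $A$ is simple and unital this center is $\C 1$, and we obtain a scalar $c(g,h)\in\T$ with $u_g\alpha_g(u_h)=c(g,h)u_{gh}$. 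Associativity of the $u_g$ then shows $c\in Z^2(G,\T)$. Thus $\alpha^u$ (with $\alpha^u_g=\Ad u_g\circ\alpha_g$) is a genuine action and $\beta$ is conjugate to $\alpha^u$ via $\mu$. I would also record the trivial fact that absorption of a fixed $\omega$ is a conjugacy invariant: if $\delta$ absorbs $\omega$ via $(\nu,\{w_g\})$ and $\theta\in\Aut(A)$, then $\theta\delta\theta^{-1}$ absorbs $\omega$ via $(\theta\nu\theta^{-1},\{\theta(w_g)\})$.

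For (1), I would split outer conjugacy into a conjugacy followed by a perturbation. By conjugacy invariance of absorption, after replacing $\alpha$ by $\mu\alpha\mu^{-1}$ (which still absorbs $\omega$) and $u_g$ by $a_g=\mu(u_g)$, it remains to treat the case $\beta_g=\Ad a_g\circ\alpha_g$, where $\alpha$ absorbs $\omega$ via $(\nu,\{w_g\})$. I would guess the absorbing data for $\beta$ to be $(\nu,\{w_g'\})$ with $w_g'=\nu(a_g)w_g a_g^*$. The relation $\Ad w_g'\circ\beta_g=\nu\circ\beta_g\circ\nu^{-1}$ is immediate from $\nu\alpha_g\nu^{-1}=\Ad w_g\circ\alpha_g$. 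For the twisted identity $w_g'\beta_g(w_h')=\omega(g,h)w_{gh}'$ I would substitute, move $\nu$ and $\alpha_g$ past each other via $\nu(\alpha_g(x))=w_g\alpha_g(\nu(x))w_g^*$, and apply $w_g\alpha_g(w_h)=\omega(g,h)w_{gh}$; the expression collapses to $\omega(g,h)\,\nu(a_g\alpha_g(a_h))\,w_{gh}\,(a_g\alpha_g(a_h))^*$. Because $a_g\alpha_g(a_h)=c(g,h)a_{gh}$ with $c(g,h)$ a scalar (again by simplicity), the two copies of $c(g,h)$ cancel and this equals $\omega(g,h)w_{gh}'$. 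Hence $\beta$ absorbs $\omega$.

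For (2), I would exploit that $\alpha$ absorbs \emph{every} element of $Z^2(G,\T)$, in particular the cocycle $c$ found above: fix $\nu\in\Aut(A)$ and unitaries $\{w_g\}$ with $\nu\alpha_g\nu^{-1}=\Ad w_g\circ\alpha_g$ and $w_g\alpha_g(w_h)=c(g,h)w_{gh}$. The idea is to use $\{w_g\}$ to correct the scalar defect of $\{u_g\}$: set $z_g=\nu^{-1}(u_g w_g^*)$. Rewriting the absorption relation as $\alpha_g(\nu^{-1}(y))=\nu^{-1}(w_g\alpha_g(y)w_g^*)$, one checks first that $\nu^{-1}\circ\alpha^u_g\circ\nu=\Ad z_g\circ\alpha_g=\alpha^z_g$, and second that $\{z_g\}$ is a genuine $\alpha$-cocycle: expanding $z_g\alpha_g(z_h)$ gives $\nu^{-1}(u_g\alpha_g(u_h)\,\alpha_g(w_h^*)w_g^*)$, and substituting $u_g\alpha_g(u_h)=c(g,h)u_{gh}$ together with $\alpha_g(w_h^*)w_g^*=(w_g\alpha_g(w_h))^*=\overline{c(g,h)}\,w_{gh}^*$ makes the scalars cancel, yielding $z_g\alpha_g(z_h)=z_{gh}$. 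Consequently $\alpha^z=\nu^{-1}\alpha^u\nu$ is conjugate to $\alpha^u$, which is conjugate to $\beta$; so the genuine $\alpha$-cocycle $\{z_g\}$ realizes $\alpha^z$ as conjugate to $\beta$, which is exactly cocycle conjugacy of $\alpha$ and $\beta$.

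The only real content, the ``main obstacle'' in what is otherwise bookkeeping with the two defining identities of absorption, is recognizing that outer conjugacy is governed by a single scalar $2$-cocycle $c\in Z^2(G,\T)$ measuring the failure of $\{u_g\}$ to be an honest cocycle, and that absorption is tailored precisely to kill it: in (1) the class $c$ cancels automatically because it is central, while in (2) absorbing this particular $c$ is what makes the genuine cocycle $\{z_g\}$ exist. Simplicity of $A$ is used exactly to guarantee that $c$ is scalar-valued.
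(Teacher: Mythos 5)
Your proof is correct, and since the paper explicitly omits the proof of this lemma (calling it ``an easy observation''), yours supplies exactly the intended argument: simplicity of $A$ forces the defect $u_g\alpha_g(u_h)u_{gh}^*$ of the outer-conjugacy unitaries to be a scalar $2$-cocycle $c\in Z^2(G,\T)$, and absorption is precisely what cancels it --- automatically in (1), and by absorbing this particular $c$ to build the genuine cocycle $z_g=\nu^{-1}(u_gw_g^*)$ in (2). All the identities you check (the relation $\nu(\alpha_g(x))=w_g\alpha_g(\nu(x))w_g^*$, the collapse to $\omega(g,h)\,\nu(a_g\alpha_g(a_h))\,w_{gh}\,(a_g\alpha_g(a_h))^*$, and the cancellation $c\bar{c}=1$ in the cocycle identity for $z$) are verified correctly.
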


When $G$ is abelian, as described in \cite{OPT}, 
the second cohomology group $H^2(G,\T)\cong Z^2(G,\T)/B^2(G,\T)$ 
is isomorphic to the subgroup 
\[
\{\chi\in\Hom(G,\hat{G})\mid\langle\chi(g),g\rangle=1
\text{ for every }g\in G\}
\]
of $\Hom(G,\hat{G})$, 
where $\langle\cdot,\cdot\rangle$ is the paring of $\hat{G}$ and $G$. 
The isomorphism between them is given 
by sending $\omega\in Z^2(G,\T)$ to 
\[
\chi(g)(h)=\omega(g,h)\omega(h,g)^{-1} \quad g,h\in G. 
\]
In particular, when $G$ is $\Z^N$, 
$H^2(G,\T)=H^2(\Z^N,\T)$ is isomorphic to 
\[
\{(\theta_{i,j})_{1\leq i\leq j\leq N}\in\T^{N^2}\mid
\theta_{i,i}=1\text{ and }\theta_{i,j}\theta_{j,i}=1
\text{ for all }i,j\}
\cong\T^{N(N-1)/2}. 
\]

\begin{lem}\label{omegaabsorb2}
For any $\omega\in Z^2(\Z^N,\T)$, 
there exists an outer $\Z^N$-action $\alpha$ 
on the Cuntz algebra $\mathcal{O}_\infty$ 
which absorbs $\omega$. 
\end{lem}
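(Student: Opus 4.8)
The plan is to reduce the general statement to a single elementary building block by exploiting two structural features: the absorption property depends only on the cohomology class of $\omega$ (as already observed before the lemma), and it is multiplicative under tensor products. First I would record the tensor-product principle: if $\alpha^{(i)}:\Z^N\curvearrowright A_i$ absorbs $\omega_i\in Z^2(\Z^N,\T)$ with data $(\mu_i,\{u^{(i)}_g\})$, then $\alpha^{(1)}\otimes\alpha^{(2)}$ on $A_1\otimes A_2$ absorbs $\omega_1\omega_2$ via $\mu_1\otimes\mu_2$ and $\{u^{(1)}_g\otimes u^{(2)}_g\}$; both defining identities follow by direct computation. Combined with the identification $H^2(\Z^N,\T)\cong\T^{N(N-1)/2}$, I may take the representative $\omega=\prod_{i<j}p_{ij}^*\omega_{\theta_{ij}}$, where $p_{ij}:\Z^N\to\Z^2$ is the projection onto the $(i,j)$-coordinates and $\omega_\theta\in Z^2(\Z^2,\T)$ is the standard one-parameter bicharacter cocycle. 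Since $\mathcal{O}_\infty\otimes\mathcal{O}_\infty\cong\mathcal{O}_\infty$, it then suffices to produce, for each single $\theta$, a $\Z^2$-action on $\mathcal{O}_\infty$ absorbing $\omega_\theta$, pull it back along $p_{ij}$, tensor these together, and finally tensor with one fixed \emph{outer} $\Z^N$-action $\gamma$ on $\mathcal{O}_\infty$ (which absorbs the trivial cocycle via $\mu=\id$, $u_g=1$). The factor $\gamma$ guarantees outerness of the resulting model, because an action outer on one tensor factor of a simple algebra is outer on the whole.

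For the one-parameter case I would build a product-type model. Choose a unital copy of the rotation algebra $A_\theta\subset\mathcal{O}_\infty$ with canonical unitaries $w_1,w_2$, so that $g\mapsto w_g$ (with $w_{(m,n)}=w_1^m w_2^n$) is a projective representation of $\Z^2$ with cocycle $\omega_\theta$, i.e. $w_g w_h=\omega_\theta(g,h)w_{gh}$. Writing $D=\mathcal{O}_\infty$ and $A=\bigotimes_{k\ge1}D\cong\mathcal{O}_\infty$, and letting $w_g^{(k)}$ be the image of $w_g$ in the $k$-th factor, I set
\[
\beta_g=\bigotimes_{k\ge2}\Ad w_g^{(k)},\qquad u_g=w_g^{(1)}.
\]
Since scalars drop out of $\Ad$, the relation $w_g^{(k)}w_h^{(k)}=\omega_\theta(g,h)w_{gh}^{(k)}$ shows that $\beta$ is a genuine $\Z^2$-action; and as $\beta_g$ is trivial on the first factor, $u_g\beta_g(u_h)=w_g^{(1)}w_h^{(1)}=\omega_\theta(g,h)u_{gh}$, so $\{u_g\}$ is an $\omega_\theta$-twisted $\beta$-cocycle. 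The perturbed action is $\beta^u_g=\Ad u_g\circ\beta_g=\bigotimes_{k\ge1}\Ad w_g^{(k)}$, the full product over all factors.

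It remains to produce $\mu\in\Aut(A)$ with $\mu\circ\beta_g\circ\mu^{-1}=\beta^u_g$, and this is the heart of the matter. The two actions differ only in the first tensor factor, where $\beta$ is trivial and $\beta^u$ is $\Ad w_g$; a genuine reindexing shift already gives $(\Ad w)_D\otimes\beta^u\cong\beta^u$, so the real content is that the infinite product action absorbs a single \emph{trivial} tensor factor, i.e. $(\mathcal{O}_\infty,\mathrm{triv})\otimes\beta^u\cong\beta^u$. Note that the one-sided shift endomorphism $\lambda(x_1\otimes x_2\otimes\cdots)=1\otimes x_1\otimes x_2\otimes\cdots$ already intertwines them, $\beta_g\circ\lambda=\lambda\circ\beta^u_g$, but $\lambda$ is not surjective, so the task is to upgrade this shift-intertwiner to an automorphism. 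I expect this to be the main obstacle, and I would handle it by a standard approximate-intertwining (shift-absorption) argument that pushes the spurious trivial factor off to infinity, using the strong self-absorption of $\mathcal{O}_\infty$; alternatively, since $\beta$ and $\beta^u$ are product-type and asymptotically representable with matching invariants, one can appeal directly to the equivariant uniqueness developed in Section 4. Once the conjugacy $\beta\cong\beta^u$ is in hand, the pair $(\mu,\{u_g\})$ witnesses that $\beta$ absorbs $\omega_\theta$, and assembling the factors as in the first paragraph yields the required outer $\Z^N$-action on $\mathcal{O}_\infty$ absorbing $\omega$.
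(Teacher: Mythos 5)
Your skeleton matches the paper's: reduce to the antisymmetric data $(\theta_{i,j})$, realize the commutation relations by unitaries inside $\mathcal{O}_\infty$, build an infinite tensor product action with the twisted cocycle sitting in a spectator factor, and restore outerness at the end by tensoring with an outer action. But the step you yourself flag as ``the heart of the matter'' is a genuine gap, and neither of your fallbacks closes it. Absorption demands a \emph{genuine} conjugacy $\mu\circ\beta_g\circ\mu^{-1}=\Ad u_g\circ\beta_g$ against the specific twisted cocycle $u_g=w_g^{(1)}$, i.e.\ the trivial-factor absorption $(\mathcal{O}_\infty,\mathrm{triv})\otimes\beta^u\cong\beta^u$ as $\Z^2$-actions. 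The uniqueness theorems you propose to invoke yield only \emph{cocycle} conjugacy, which does not produce the pair $(\mu,\{u_g\})$ required by the definition; worse, inside this paper they are logically downstream of the present lemma (Theorem \ref{uniqueO_infty} is deduced from Lemmas \ref{omegaabsorb1} and \ref{omegaabsorb2}, and Corollary \ref{splitZN}, Corollary \ref{oc>cc} and Theorem \ref{uniqueasymprepre} all rest on it), so that route is circular. As for the ``standard approximate-intertwining'': what is needed is an equivariant McDuff-type absorption theorem, and it is not standard. The naive factor-by-factor matching fails outright, since tensoring the projective representations multiplies the cocycle, e.g.\ $(w_1\otimes w_1)(w_2\otimes w_2)=\theta^2(w_2\otimes w_2)(w_1\otimes w_1)$, and conjugate inner actions must have cohomologous implementing cocycles; any corrected matching (absorbing the trivial factor into a block) requires conjugating specific projective representation pairs by an \emph{exact} isomorphism, a uniqueness statement that itself needs classification machinery. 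A genuine proof would need approximately central, approximately invariant unital copies of $\mathcal{O}_\infty$ and an equivariant intertwining, and in this paper such tools (Corollary \ref{Aomega=pis}, Theorem \ref{G-asymp}) are established only for \emph{outer}, approximately representable actions --- hypotheses your block action $\beta$ need not satisfy, since you tensor in the outer factor $\gamma$ only after absorption is supposedly proved.

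The paper dissolves the difficulty with one piece of bookkeeping you are missing: give the unperturbed action \emph{two} trivial tensor factors and place the unitaries in the \emph{second} one. On $A=\bigotimes_{k\geq0}B$ it sets $\alpha_i=\id\otimes\id\otimes\bigotimes_{k\geq2}\Ad v_i$ and $u_i=1\otimes v_i\otimes1\otimes\cdots$; then $\Ad u_i\circ\alpha_i=\id\otimes\bigotimes_{k\geq1}\Ad v_i$ \emph{still} has a leading trivial factor, so $\alpha$ and $\alpha^u$ are conjugate exactly, by any isomorphism $B\otimes B\cong B$ merging the two trivial factors followed by a shift of indices --- no approximation, no absorption theorem, no outerness needed at this stage. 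The twisted cocycle identity $u_i\alpha_i(u_j)=\theta_{i,j}u_j\alpha_j(u_i)$ holds for the same reason as in your construction, because $\alpha$ acts trivially on the factor carrying the $u_i$. With this single modification (one extra buffer factor) your argument goes through; without it, the key conjugacy remains unproved.
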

\begin{proof}
From the fact mentioned above, 
$\omega$ corresponds to some $(\theta_{i,j})_{i,j}\in\T^{N^2}$ 
satisfying $\theta_{i,i}=1$ and $\theta_{i,j}\theta_{j,i}=1$ 
for all $i,j=1,2,\dots,N$. 
Therefore, it suffices to construct 
an outer $\Z^N$-action $\alpha$ on $\mathcal{O}_\infty$, 
a family of unitaries $\{u_i\}_{i=1}^N$ in $\mathcal{O}_\infty$ and 
an automorphism $\mu$ of $\mathcal{O}_\infty$ such that 
\[
\mu\circ\alpha_i\circ\mu^{-1}=\Ad u_i\circ\alpha_i
\]
and 
\[
u_i\alpha_i(u_j)=\theta_{i,j}u_j\alpha_j(u_i)
\]
for every $i,j=1,2,\dots,N$, 
where $\alpha_i$ denotes the $i$-th generator of the $\Z^N$-action $\alpha$. 

First, we claim that 
$\mathcal{O}_\infty$ contains a family of unitaries $\{v_i\}_{i=1}^N$ 
satisfying $v_iv_j=\theta_{i,j}v_jv_i$. 
For each $\theta_{i,j}$, 
it is easy to find two unitaries $x_{i,j}$ and $y_{i,j}$ 
such that $x_{i,j}y_{i,j}=\theta_{i,j}y_{i,j}x_{i,j}$ 
in a $C^*$-algebra $B_{i,j}$ isomorphic to $\mathcal{O}_\infty$ 
(see \cite{HR} for example). 
Let $B=\bigotimes_{1\leq i<j\leq N}B_{i,j}$. 
We regard $B_{i,j}$ as a subalgebra of $B$. 
For $i=1,2,\dots,N$, we set 
\[
v_i=\left(\prod_{1\leq k<i}y_{k,i}\right)
\left(\prod_{i<l\leq N}x_{i,l}\right). 
\]
It is straightforward to see 
the commutation relation $v_iv_j=\theta_{i,j}v_jv_i$. 

We define a $\Z^N$-action $\alpha$ on $A=\bigotimes_{k=0}^\infty B$ by 
\[
\alpha_i=\id\otimes\id\otimes\bigotimes_{k=2}^\infty\Ad v_i
\]
for $i=1,2,\dots,N$. 
Let 
\[
u_i=1\otimes v_i\otimes1\otimes\dots. 
\]
Since $\mathcal{O}_\infty$ is isomorphic to 
$\mathcal{O}_\infty\otimes\mathcal{O}_\infty$, 
$\alpha$ is conjugate to 
the $\Z^N$-action determined by $\Ad u_i\circ\alpha_i$. 
In addition, $u_i\alpha_i(u_j)=\theta_{i,j}u_j\alpha_j(u_i)$ holds. 
By tensoring another outer $\Z^N$-action if necessary, 
we may assume that $\alpha$ is outer, 
thereby completing the proof. 
\end{proof}

\begin{lem}\label{equivO_infty}
Let $\alpha$ be an action of $\Z^N$ 
on the Cuntz algebra $\mathcal{O}_\infty$. 
Then, the canonical inclusion 
$\iota_N:C^*(\Z^N)\to\mathcal{O}_\infty\rtimes_\alpha\Z^N$ 
induces a $KK$-equivalence. 
\end{lem}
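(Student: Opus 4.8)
The plan is to reduce the statement to a $K$-theory computation via the universal coefficient theorem and then to carry out that computation by iterating the Pimsner--Voiculescu exact sequence one generator at a time. Both $C^*(\Z^N)\cong C(\T^N)$ and $\mathcal{O}_\infty\rtimes_\alpha\Z^N$ lie in the bootstrap class (the latter because $\mathcal{O}_\infty$ does and the bootstrap class is closed under crossed products by $\Z$, applied $N$ times), so by the UCT it suffices to show that $\iota_N$ induces an isomorphism $K_*(C^*(\Z^N))\to K_*(\mathcal{O}_\infty\rtimes_\alpha\Z^N)$: a morphism between UCT algebras that is a $K_*$-isomorphism is automatically a $KK$-equivalence.

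The base of the induction is the unital inclusion $j\colon\C\to\mathcal{O}_\infty$. Since $K_0(\mathcal{O}_\infty)=\Z[1]$ and $K_1(\mathcal{O}_\infty)=0$, the map $K_0(j)$ sends $[1]$ to $[1]$ and is an isomorphism, so $j$ is a $KK$-equivalence. I would build $\mathcal{O}_\infty\rtimes_\alpha\Z^N$ by crossing with the generators successively: writing $D_k=\mathcal{O}_\infty\rtimes_\alpha\Z^k$ for the crossed product by the first $k$ generators, the remaining generators extend to commuting automorphisms $\tilde\alpha_{e_{k+1}},\dots,\tilde\alpha_{e_N}$ of $D_k$ that fix each implementing unitary $\lambda^\alpha_{e_i}$ ($i\le k$), and one has $D_{k+1}=D_k\rtimes_{\tilde\alpha_{e_{k+1}}}\Z$. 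Correspondingly $\iota_N$ is the $N$-fold extension of $j$ fixing the new implementing unitary at each stage, so its intermediate restrictions $\iota_k\colon C^*(\Z^k)\to D_k$ make sense, with $\iota_N$ the final term.

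I would then prove by induction on $k$ that $\iota_k$ is a $K_*$-isomorphism. Granting this at level $k$, observe that $\tilde\alpha_{e_{k+1}}$ fixes the generators $\lambda^\alpha_{e_1},\dots,\lambda^\alpha_{e_k}$ and the unit, hence fixes the image of $\iota_k$ pointwise; since $\iota_k$ is surjective on $K_*$, the automorphism $\tilde\alpha_{e_{k+1}}$ acts trivially on $K_*(D_k)$. Therefore the connecting maps $1-\tilde\alpha_{e_{k+1},*}$ in the Pimsner--Voiculescu sequence for $D_{k+1}$ vanish, and the sequence collapses to short exact sequences $0\to K_i(D_k)\to K_i(D_{k+1})\to K_{i-1}(D_k)\to 0$. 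The same collapse holds for $C^*(\Z^{k+1})=C^*(\Z^k)\rtimes_{\id}\Z$, as that crossed product is by the identity. By naturality of the Pimsner--Voiculescu sequence with respect to the equivariant map $\iota_k$ (equivariance being exactly the pointwise-fixing statement above), $\iota_{k+1}$ yields a morphism of these two short exact sequences whose outer vertical arrows are the isomorphism $\iota_{k,*}$; the five lemma then shows $\iota_{k+1}$ is a $K_*$-isomorphism. This completes the induction, and with the reduction of the first paragraph, the proof.

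The step I expect to require the most care is the bookkeeping around the iterated crossed product: verifying that the extended automorphisms $\tilde\alpha_{e_j}$ genuinely fix the implementing unitaries of the already-formed crossed products (so that $\iota_k$ is equivariant and its image is fixed pointwise), and that naturality of the Pimsner--Voiculescu sequence applies to the comparison map $\iota_k$. Everything else---the vanishing of the connecting maps, the five-lemma comparison, and the final passage from $K_*$-isomorphism to $KK$-equivalence via the UCT---is then formal.
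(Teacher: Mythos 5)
Your proposal is correct and takes essentially the same route as the paper: induction on the number of generators, naturality of the Pimsner--Voiculescu sequence applied to the covariant inclusion $\iota_k$ (covariance being exactly that the extended automorphism fixes the image of $\iota_k$ pointwise), the five lemma, and the UCT to upgrade the $K_*$-isomorphism to a $KK$-equivalence. The only cosmetic difference is your intermediate observation that the extended automorphism acts trivially on $K_*(D_k)$, collapsing the Pimsner--Voiculescu sequence into short exact sequences before applying the five lemma; the paper skips this and runs the five-lemma diagram chase directly on the full six-term ladders, so your extra step is harmless but unnecessary.
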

\begin{proof}
We use the induction on $N$. 
When $N=0$, the assertion is clear. 
Assume that the claim has been shown for $N-1$. 

Let $\alpha$ be an action of $\Z^N$ on $\mathcal{O}_\infty$. 
We regard $\Z^{N-1}$ as a subgroup of $\Z^N$ 
via the map $(n_1,n_2,\dots,n_{N-1})\mapsto(n_1,n_2,\dots,n_{N-1},0)$. 
Let $A$ be the crossed product of $\mathcal{O}_\infty$ by $\Z^{N-1}$ 
and let $\alpha_N$ denote the $N$-th generator of $\alpha$. 
Then, $\alpha_N$ extends to an automorphism on $A$ and 
$\mathcal{O}_\infty\rtimes_\alpha\Z^N$ is canonically identified 
with $A\rtimes_{\alpha_N}\Z$. 
From the induction hypothesis, 
the inclusion $\iota_{N-1}:C^*(\Z^{N-1})\to A$ induces 
a $KK$-equivalence. 
In addition, $\iota_{N-1}$ is a covariant homomorphism 
from $(C^*(\Z^{N-1}),\id)$ to $(A,\alpha_N)$. 
By the naturality of the Pimsner-Voiculescu exact sequence, 
we obtain the following commutative diagram, 
in which the vertical sequences are exact. 
\[
\begin{CD}
K_*(C^*(\Z^{N-1})) @>K_*(\iota_{N-1})>\cong> K_*(A) \\
@VVV @VVV \\
K_*(C^*(\Z^{N-1})) @>K_*(\iota_{N-1})>\cong> K_*(A) \\
@VVV @VVV \\
K_*(C^*(\Z^N)) @>K_*(\iota_N)>> K_*(A\rtimes_{\alpha_N}\Z) \\
@VVV @VVV \\
K_{1-*}(C^*(\Z^{N-1})) @>K_{1-*}(\iota_{N-1})>\cong> K_{1-*}(A) \\
@VVV @VVV \\
K_{1-*}(C^*(\Z^{N-1})) @>K_{1-*}(\iota_{N-1})>\cong> K_{1-*}(A) \\
\end{CD}
\]
From an easy diagram chase, 
we can conclude that $K_*(\iota_N)$ is an isomorphism for $*=0,1$. 
The universal coefficient theorem \cite{RS} implies 
that $KK(\iota_N)$ is invertible. 
\end{proof}

Now we can prove 
the uniqueness of outer $\Z^N$-actions on $\mathcal{O}_\infty$ 
up to cocycle conjugacy. 

\begin{thm}\label{uniqueO_infty}
Any outer $\Z^N$-actions on $\mathcal{O}_\infty$ are 
cocycle conjugate to each other. 
In consequence, they are asymptotically representable. 
\end{thm}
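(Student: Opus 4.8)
The plan is to prove uniqueness by combining the cocycle-absorption machinery of Lemmas \ref{omegaabsorb1}, \ref{omegaabsorb2} with the equivariant classification results of Section 4, using the $KK$-equivalence of Lemma \ref{equivO_infty} as the key $K$-theoretic input. Let $\alpha$ and $\beta$ be two outer $\Z^N$-actions on $\mathcal{O}_\infty$. By Lemma \ref{equivO_infty}, both crossed products $B_\alpha=\mathcal{O}_\infty\rtimes_\alpha\Z^N$ and $B_\beta=\mathcal{O}_\infty\rtimes_\beta\Z^N$ are $KK$-equivalent to $C^*(\Z^N)$, hence $KK$-equivalent to each other; moreover, since $\mathcal{O}_\infty$ is $KK$-equivalent to $\C$, the canonical inclusion $\iota_N$ carries all the $KK$-theoretic information. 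The strategy is to upgrade this $KK$-equivalence to an honest $\hat{\Z^N}$-equivariant isomorphism between the dual coactions, and then to read off a cocycle conjugacy between $\alpha$ and $\beta$ from it.

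First I would reduce to the case where the two actions are asymptotically representable. Since $\mathcal{O}_\infty$ has trivial $K$-theory, any automorphism in the action is automatically $KK$-trivial, so one expects the whole action to be asymptotically representable; the cleanest route is to produce one concrete model action that is asymptotically representable and is universal in the sense that every outer action is cocycle conjugate to it. I would construct such a model by tensoring: writing $\mathcal{O}_\infty\cong\mathcal{O}_\infty\otimes\mathcal{O}_\infty$ and building an action of product form with the right asymptotic-representability data, analogous to the shift-type construction in Lemma \ref{omegaabsorb2}. The crucial point is that, because $H^2(\Z^N,\T)$ is nontrivial, a cocycle action may carry a $\T$-valued obstruction; Lemma \ref{omegaabsorb2} supplies, for each $\omega\in Z^2(\Z^N,\T)$, a genuine outer action absorbing $\omega$, and Lemma \ref{omegaabsorb1}(2) then says that an action absorbing \emph{all} of $Z^2(\Z^N,\T)$ is cocycle conjugate to any action outer conjugate to it. So I would verify that the model action absorbs every $2$-cocycle, which eliminates the cohomological obstruction uniformly.

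With the cohomological obstruction removed, the remaining step is to match the two asymptotically representable actions. Here I would invoke the equivariant Kirchberg--Phillips package of Section 4: using Lemma \ref{equivO_infty} produce $x\in KK(B_\alpha,B_\beta)$ that is invertible and compatible with the unit and with the dual coactions at the level of $KK$. By Theorem \ref{Phillips} lift $x$ and $x^{-1}$ to honest homomorphisms, and apply Theorem \ref{existence} (together with Remark \ref{EKforcoaction}) to perturb them, within asymptotic unitary equivalence, into strictly $\hat{\Z^N}$-equivariant homomorphisms; Theorem \ref{G-asymp} guarantees that ordinary asymptotic unitary equivalence already implies the $\hat{\Z^N}$-equivariant version, so the intertwining (Elliott-type approximate-intertwining) argument closes up to yield a $\hat{\Z^N}$-equivariant isomorphism $\rho_1\colon B_\beta\to B_\alpha$. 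Such an isomorphism restricts to the fixed-point algebras $\mathcal{O}_\infty$ and intertwines the dual coactions, which by Takai duality translates into an outer conjugacy of $\alpha$ and $\beta$ up to a cocycle action; combining this with the absorption of all of $Z^2(\Z^N,\T)$ via Lemma \ref{omegaabsorb1} upgrades outer conjugacy to genuine cocycle conjugacy.

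The main obstacle I anticipate is controlling the $K$-theoretic bookkeeping so that the lifted homomorphisms are simultaneously equivariant for the dual coaction and unital-compatible. The naive invertible $KK$-class from Lemma \ref{equivO_infty} need not respect the coaction structure on the nose, so the delicate part is checking that the class can be chosen inside $\Hom_{\hat{\Z^N}}$ at the level needed to feed Theorem \ref{G-asymp}, and that the perturbations supplied by Theorem \ref{existence} do not destroy invertibility during the back-and-forth intertwining. Since $C^*(\Z^N)$ satisfies the universal coefficient theorem and has free finitely generated $K$-groups, I expect the obstruction to vanish after a sufficiently small perturbation, exactly as in the final paragraph of Corollary \ref{strongCC}; still, verifying that the asymptotic-representability hypothesis of Section 4 genuinely holds for \emph{every} outer action on $\mathcal{O}_\infty$ (rather than just for the model) is the subtle input that makes the ``In consequence'' clause of the statement nontrivial, and it is where I would spend the most care.
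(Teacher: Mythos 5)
There is a genuine gap, and it is circularity. Your central step applies the Section 4 package (Theorem \ref{existence}, Remark \ref{EKforcoaction}, Theorem \ref{G-asymp}) to the crossed products $B_\alpha=\mathcal{O}_\infty\rtimes_\alpha\Z^N$ and $B_\beta=\mathcal{O}_\infty\rtimes_\beta\Z^N$ by the \emph{full} group $\Z^N$. But every result in Section 4 carries the standing hypothesis that the action being crossed by is asymptotically representable, and for arbitrary outer $\Z^N$-actions $\alpha,\beta$ on $\mathcal{O}_\infty$ this is precisely (equivalent to) the statement being proved: asymptotic representability is preserved by cocycle conjugacy for abelian groups, so it can only come \emph{out} of the uniqueness theorem, not go into it. You acknowledge this at the end, but the only justification offered is that each generator is $KK$-trivial, so ``one expects the whole action to be asymptotically representable.'' That inference fails: Lemma \ref{Zasymp} makes each singly generated $\Z$-subaction asymptotically representable, but asymptotic representability of the whole $\Z^N$-action is a global condition (the approximating unitary paths for the different generators must asymptotically satisfy the group relations simultaneously), and measuring exactly this kind of global obstruction is what the invariant of Sections 5--7 is for. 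Your proposed reduction --- ``produce a model that is universal in the sense that every outer action is cocycle conjugate to it'' --- is itself the full theorem, so nothing has been reduced. (By contrast, your $K$-theoretic bookkeeping worry is not the problem: the canonical inclusions $\iota_N$ are coaction-compatible, so a coaction-compatible invertible class in $KK(B_\alpha,B_\beta)$ does exist by Lemma \ref{equivO_infty}.)

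The paper escapes the circle by induction on $N$, and this is not a cosmetic difference but the load-bearing idea. The equivariant machinery is never applied to the crossed product by all of $\Z^N$: the induction hypothesis (uniqueness for $N-1$, \emph{including} the ``in consequence'' clause) guarantees that the restricted actions $\alpha',\beta'$ of $\Z^{N-1}$ are asymptotically representable and, after conjugation, cocycle perturbations of one another. One then forms $B_\alpha=A\rtimes_{\alpha'}\Z^{N-1}$, identifies $B_\beta\cong B_\alpha$ via $\pi$, uses Lemma \ref{equivO_infty} to see that the extensions $\tilde\alpha_N$ and $\pi\circ\tilde\beta_N\circ\pi^{-1}$ of the last generators have trivial $KK$-class, and applies the equivariant Nakamura theorem (Theorem \ref{equivNakamura}) --- legitimately, since its Section 4 hypothesis is asymptotic representability of $\alpha'$, not of $\alpha$ --- to conjugate these two automorphisms inside $\Aut_{\T^{N-1}}(B_\alpha)$ up to an inner. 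Restricting to $A$ gives outer conjugacy of $\alpha$ and $\beta$; only your final step, upgrading outer conjugacy to cocycle conjugacy via Lemmas \ref{omegaabsorb1} and \ref{omegaabsorb2}, coincides with the paper and is correct as you state it. To repair your proposal you would have to impose exactly this inductive structure, at which point it becomes the paper's proof.
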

\begin{proof}
The proof is by induction on $N$. 
When $N=1$, 
we get the conclusion from Theorem \ref{Z} and Lemma \ref{Zasymp}. 
Assume that the claim has been shown for $N-1$. 

Let $\alpha$ and $\beta$ be outer $\Z^N$-actions 
on $A\cong\mathcal{O}_\infty$. 
Let $\alpha'$ and $\beta'$ be the $\Z^{N-1}$-actions 
generated by the first $N-1$ generators of $\alpha$ and $\beta$, 
respectively. 
We denote the $N$-th generator of $\alpha$ and $\beta$ 
by $\alpha_N$ and $\beta_N$, respectively. 
From the induction hypothesis, 
by conjugating $\beta$ if necessary, we may assume that 
there exists an $\alpha'$-cocycle $\{u_g\}_{g\in\Z^{N-1}}$ in $A$ 
such that $\beta_g=\Ad u_g\circ\alpha_g$. 
Moreover, $\alpha'$ is asymptotically representable. 
It is easy to check 
\[
\beta_N\circ\alpha'_g
=(\Ad\beta_N(u_g^*)u_g)\circ\alpha'_g\circ\beta_N
\]
for all $g\in\Z^{N-1}$ and 
$\{\beta_N(u_g^*)u_g\}_g$ is an $\alpha'$-cocycle. 

Let $B_\alpha$ and $B_\beta$ be the crossed product of $A$ 
by the $\Z^{N-1}$-actions $\alpha'$ and $\beta'$, respectively. 
There exists an isomorphism $\pi:B_\beta\to B_\alpha$ 
such that $\pi(a)=a$ for all $a\in A$ and 
$\pi(\lambda^\beta_g)=u_g\lambda^\alpha_g$ for all $g\in\Z^{N-1}$. 
The automorphisms $\alpha_N$ and $\beta_N$ of $A$ 
extend to automorphisms $\tilde\alpha_N$ and $\tilde\beta_N$ 
of $B_\alpha$ and $B_\beta$, respectively. 
By Lemma \ref{equivO_infty}, 
we have $KK(\tilde\alpha_N)=1$ and $KK(\tilde\beta_N)=1$, 
and hence 
\[
KK(\pi\circ\tilde\beta_N\circ\pi^{-1})=1=KK(\tilde\alpha_N)
\]
in $KK(B_\alpha,B_\alpha)$. 
By applying Theorem \ref{equivNakamura} to $\alpha'$, $\tilde\alpha_N$ 
and $\pi\circ\tilde\beta_N\circ\pi^{-1}$, 
we obtain $\mu\in\Aut_{\T^{N-1}}(B_\alpha)$ and $v\in U(A)$ 
satisfying the conditions stated there. 
For each $g\in\Z^{N-1}$, 
$\mu|A$ commutes with $\alpha_g$ up to an inner automorphism of $A$. 
Furthermore, 
\begin{align*}
\Ad v\circ\alpha_N
& =(\Ad v\circ\tilde\alpha_N)|A \\
& =(\mu\circ\pi\circ\tilde\beta_N\circ\pi^{-1}\circ\mu^{-1})|A \\
& =(\mu|A)\circ\beta_N\circ(\mu|A)^{-1}. 
\end{align*}
It follows that 
the $\Z^N$-actions $\alpha$ and $\beta$ are outer conjugate. 
Thus, 
any outer $\Z^N$-actions on $\mathcal{O}_\infty$ are 
outer conjugate to each other. 
Now the conclusion follows 
from Lemma \ref{omegaabsorb1} and Lemma \ref{omegaabsorb2}. 
\end{proof}

\section{Uniqueness of asymptotically representable $\Z^N$-actions}

Let $G$ be a countable infinite discrete amenable group and 
let $\{S_g\}_{g\in G}$ be the generator 
of the Cuntz algebra $\mathcal{O}_\infty$. 
Define an action $\gamma^G$ of $G$ on $\mathcal{O}_\infty$ 
by $\gamma^G_g(S_h)=S_{gh}$. 
We consider the diagonal action $\gamma^G\otimes\gamma^G$ 
on $\mathcal{O}_\infty\otimes\mathcal{O}_\infty$. 
Clearly $\gamma^G$ and $\gamma^G\otimes\gamma^G$ are both outer. 
Let 
\[
\rho_l:\mathcal{O}_\infty\ni x\mapsto
x\otimes1\in \mathcal{O}_\infty\otimes\mathcal{O}_\infty, 
\]
\[
\rho_r:\mathcal{O}_\infty\ni x\mapsto
1\otimes x\in \mathcal{O}_\infty\otimes\mathcal{O}_\infty. 
\]
Then clearly $\rho_l,\rho_r$ are in 
$\Hom_G(\mathcal{O}_\infty,\mathcal{O}_\infty\otimes\mathcal{O}_\infty)$. 
We let 
\[
\tilde{\rho}_l,\tilde{\rho}_r
\in\Hom_{\hat{G}}(\mathcal{O}_\infty\rtimes_{\gamma^G}G, \ 
(\mathcal{O}_\infty\otimes\mathcal{O}_\infty)
\rtimes_{\gamma^G\otimes\gamma^G}G)
\]
denote the natural extensions of $\rho_l$ and $\rho_r$, respectively. 

\begin{lem}\label{rho_lr}
If the diagonal action $\gamma^G\otimes\gamma^G$ is 
asymptotically (resp. approximately) representable, 
then $\rho_l$ and $\rho_r$ are 
$G$-asymptotically (resp. $G$-approximately) unitarily equivalent. 
\end{lem}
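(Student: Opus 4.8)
The plan is to pass to the crossed products and reduce the statement to the uniqueness machinery of Section~4. Write $\beta=\gamma^G\otimes\gamma^G$, $D=\mathcal{O}_\infty\rtimes_{\gamma^G}G$ and $\tilde B=(\mathcal{O}_\infty\otimes\mathcal{O}_\infty)\rtimes_\beta G$. The homomorphisms $\tilde\rho_l,\tilde\rho_r\in\Hom_{\hat{G}}(D,\tilde B)$ restrict on the coefficient subalgebra $\mathcal{O}_\infty\subset D$ to $\rho_l,\rho_r$, and both send the implementing unitary $\lambda_g^{\gamma^G}$ to the common unitary $\lambda_g^\beta$. The strategy is first to prove that $\tilde\rho_l$ and $\tilde\rho_r$ are $\hat{G}$-asymptotically unitarily equivalent, and then to read off the desired $G$-asymptotic unitary equivalence of $\rho_l,\rho_r$ by restricting the implementing unitaries back to $\mathcal{O}_\infty$.

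To obtain the $\hat{G}$-equivalence I would invoke Theorem~\ref{G-asymp} with $C=\mathcal{O}_\infty$, $\gamma=\gamma^G$ and the Section~4 data $A=\mathcal{O}_\infty\otimes\mathcal{O}_\infty$, $\alpha=\beta$; its hypothesis, asymptotic representability of the coefficient action, is exactly our standing assumption on $\beta$. Thus it suffices to show that $\tilde\rho_l$ and $\tilde\rho_r$ are asymptotically unitarily equivalent in the ordinary sense. Since $\beta$ is outer, Theorem~\ref{KKFields} shows that $\tilde B$ is a unital Kirchberg algebra, so $\tilde B\cong\tilde B\otimes\mathcal{O}_\infty$ and Theorem~\ref{Phillips}(2) applies; hence the ordinary equivalence will follow once I check
\[
KK(\tilde\rho_l)=KK(\tilde\rho_r).
\]

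The main obstacle is precisely this $KK$-equality. I would establish it by comparing the two maps on the canonical copy of $C^*(G)$: if $\iota\colon C^*(G)\to D$ denotes the inclusion $\lambda_g\mapsto\lambda_g^{\gamma^G}$, then $\tilde\rho_l\circ\iota=\tilde\rho_r\circ\iota$, both being the canonical inclusion $C^*(G)\to\tilde B$. The key point is that $\iota$ is a $KK$-equivalence, which is the content of Lemma~\ref{equivO_infty} in the case $G=\Z^N$ and is proved for amenable $G$ in the same spirit. From $KK(\iota)\cdot KK(\tilde\rho_l)=KK(\iota)\cdot KK(\tilde\rho_r)$, multiplying on the left by $KK(\iota)^{-1}$ then yields $KK(\tilde\rho_l)=KK(\tilde\rho_r)$. (Alternatively one could factor $\tilde\rho_r=\tilde\Sigma\circ\tilde\rho_l$ through the extension $\tilde\Sigma$ of the flip $\Sigma$ on $\mathcal{O}_\infty\otimes\mathcal{O}_\infty$ and argue that $\tilde\Sigma$ is $KK$-trivial, but the comparison on $C^*(G)$ is cleaner.)

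Finally I would carry out the descent, which is the satisfying step. By Theorem~\ref{G-asymp} there is a continuous family of unitaries $\{u_t\}_{t\ge0}$ in the coefficient algebra $\mathcal{O}_\infty\otimes\mathcal{O}_\infty$ with $\tilde\rho_l(y)=\lim_{t\to\infty}\Ad u_t(\tilde\rho_r(y))$ for all $y\in D$. Restricting $y$ to $\mathcal{O}_\infty$ gives $\rho_l(x)=\lim_{t\to\infty}\Ad u_t(\rho_r(x))$, while taking $y=\lambda_g^{\gamma^G}$ gives $\lambda_g^\beta=\lim_{t\to\infty}u_t\lambda_g^\beta u_t^*=\lim_{t\to\infty}u_t\beta_g(u_t^*)\lambda_g^\beta$, whence $\lim_{t\to\infty}\lVert u_t-\beta_g(u_t)\rVert=0$. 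These are exactly the two conditions defining $G$-asymptotic unitary equivalence of $\rho_l$ and $\rho_r$, since the $u_t$ lie in $\mathcal{O}_\infty\otimes\mathcal{O}_\infty$ as required. The approximate case is identical, using Remark~\ref{G-approx} in place of Theorem~\ref{G-asymp}.
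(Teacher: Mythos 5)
Your proposal is correct and follows essentially the same route as the paper's proof: compare $\tilde\rho_l$ and $\tilde\rho_r$ on the canonical copy of $C^*(G)$, use invertibility of $KK(\iota)$ to conclude $KK(\tilde\rho_l)=KK(\tilde\rho_r)$, apply Theorem \ref{Phillips} (2) and then Theorem \ref{G-asymp} (resp.\ Remark \ref{G-approx}), and descend to $\rho_l,\rho_r$ by restricting to $\mathcal{O}_\infty$ and to the implementing unitaries, exactly as the paper does (it leaves the descent implicit; your spelled-out version is right, since $\hat{G}$-equivalence places the unitaries in the coefficient algebra $\mathcal{O}_\infty\otimes\mathcal{O}_\infty$). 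The one point to repair is your justification that $\iota$ is a $KK$-equivalence for a general countable amenable $G$: the induction proving Lemma \ref{equivO_infty} is special to $\Z^N$ (it rests on the Pimsner--Voiculescu sequence) and does not extend ``in the same spirit'' to arbitrary amenable groups; the paper instead observes that $\gamma^G$ is quasi-free and invokes Pimsner \cite[Section 4]{Pim}, which is the correct source in this generality.
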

\begin{proof}
Note that $\gamma^G$ is a quasi-free action. 
Let $\iota:C^*(G)\to\mathcal{O}_\infty\rtimes_{\gamma^G}G$ be 
the inclusion map. 
It is well-known that 
$KK(\iota)\in KK(C^*(G),\mathcal{O}_\infty\rtimes_{\gamma^G}G)$ is invertible 
(see \cite[Section 4]{Pim}). 
From $\tilde\rho_l\circ\iota=\tilde\rho_r\circ\iota$, 
we can conclude $KK(\tilde{\rho}_l)=KK(\tilde{\rho}_r)$. 
It follows from Theorem \ref{Phillips} (2) that 
$\tilde{\rho}_l$ and $\tilde{\rho_r}$ are asymptotically unitarily equivalent. 
Hence, by Theorem \ref{G-asymp} and Remark \ref{G-approx}, 
$\tilde{\rho}_l$ and $\tilde{\rho_r}$ are 
$\hat{G}$-asymptotically unitarily equivalent 
(resp. $\hat{G}$-approximately unitarily equivalent). 
Therefore, $\rho_l$ and $\rho_r$ are 
$G$-asymptotically unitarily equivalent 
(resp. $G$-approximately unitarily equivalent). 
\end{proof}

\begin{lem}\label{equivembed}
Let $A$ be a unital Kirchberg algebra and 
let $(\alpha,u)$ be an outer cocycle action of $G$ on $A$. 
Then, there exists $\sigma\in\Hom_G(\mathcal{O}_\infty,A_\omega)$. 
\end{lem}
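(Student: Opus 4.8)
The plan is to reduce the construction of an equivariant unital embedding to producing a single suitable isometry in $A_\omega$. Write the quasi-free generators of $\mathcal{O}_\infty$ as $\{S_g\}_{g\in G}$, so that $\gamma^G_g(S_h)=S_{gh}$. Since $\mathcal{O}_\infty$ is the universal unital $C^*$-algebra generated by a countable family of isometries with mutually orthogonal ranges, giving $\sigma\in\Hom_G(\mathcal{O}_\infty,A_\omega)$ amounts to producing isometries $\{T_g\}_{g\in G}$ in $A_\omega$ with $T_g^*T_h=\delta_{g,h}$ and $\alpha_g(T_h)=T_{gh}$. Setting $T:=T_e$, equivariance forces $T_g=\alpha_g(T)$, so it suffices to find one isometry $T\in A_\omega$ with $T^*T=1$ whose range projection $p=TT^*$ satisfies $p\,\alpha_k(p)=0$ for every $k\in G\setminus\{e\}$: this orthogonality is exactly the assertion that $\{\alpha_g(p)\}_{g\in G}$ is a mutually orthogonal family, which yields $T_g^*T_h=\alpha_g(T^*\alpha_{g^{-1}h}(T))=0$ for $g\neq h$.

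Two preliminary observations make this tractable. First, although $(\alpha,u)$ is only a cocycle action, its extension to $A_\omega$ is a genuine $G$-action: each $u(g,h)$ lies in $A$ and hence acts trivially on $A_\omega=A^\omega\cap A'$, so that $\alpha_g\circ\alpha_h=\Ad u(g,h)\circ\alpha_{gh}=\alpha_{gh}$ on $A_\omega$. Consequently $\alpha_g(p)\,\alpha_h(p)=\alpha_g(p\,\alpha_{g^{-1}h}(p))$, and the desired mutual orthogonality of the orbit is equivalent to $p\,\alpha_k(p)=0$ for all $k\neq e$. Second, since the cocycle action is outer, each $\alpha_k$ with $k\neq e$ is an outer automorphism of $A$, and by Theorem \ref{outer} its extension to $A_\omega$ is again outer.

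With these in hand I would argue as follows. Apply Lemma \ref{orthogonal} to the countable family $\{\alpha_k\mid k\in G\setminus\{e\}\}$ of outer automorphisms of $A$ to obtain a nonzero projection $p\in A_\omega$ with $p\,\alpha_k(p)=0$ for every $k\neq e$; as noted, this is precisely mutual orthogonality of $\{\alpha_g(p)\}_{g\in G}$. By Theorem \ref{Kir2} the algebra $A_\omega$ is unital, purely infinite and simple, so the standard comparison theory for purely infinite simple $C^*$-algebras (see \cite[Chapter 4]{R}) provides an isometry $T\in A_\omega$ with $T^*T=1$ and $TT^*\le p$. Put $T_g=\alpha_g(T)$; then $T_g^*T_g=\alpha_g(1)=1$, and for $g\neq h$ the ranges $T_gT_g^*\le\alpha_g(p)$ and $T_hT_h^*\le\alpha_h(p)$ are orthogonal, whence $T_g^*T_h=0$. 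By universality these isometries determine a unital homomorphism $\sigma\colon\mathcal{O}_\infty\to A_\omega$ with $\sigma(S_g)=T_g$, and $\sigma(\gamma^G_g(S_h))=T_{gh}=\alpha_g(T_h)=\alpha_g(\sigma(S_h))$ shows that $\sigma\in\Hom_G(\mathcal{O}_\infty,A_\omega)$.

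The essential content is carried entirely by Lemma \ref{orthogonal} together with the pure infiniteness of $A_\omega$; the remainder is bookkeeping, and notably amenability of $G$ plays no role beyond $G$ being countable. I therefore do not anticipate a serious obstacle, but the one point demanding care is the passage from the mere nonzero projection $p$ furnished by Lemma \ref{orthogonal} to a genuine (proper) isometry $T$ with $TT^*\le p$: this is what upgrades orthogonality of ranges to the full Cuntz relations $T_g^*T_g=1$, and it relies crucially on $1\precsim p$ holding in the purely infinite simple algebra $A_\omega$ for every nonzero projection $p$.
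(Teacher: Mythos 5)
Your proof is correct and follows essentially the same route as the paper's own: apply Lemma \ref{orthogonal} to the outer automorphisms $\{\alpha_k\}_{k\in G\setminus\{e\}}$ to obtain the projection $p\in A_\omega$, use pure infiniteness and simplicity of $A_\omega$ (Theorem \ref{Kir2}) to get an isometry $T$ with $TT^*\leq p$, and set $\sigma(S_g)=\alpha_g(T)$. The only difference is that you spell out details the paper leaves implicit, namely that the cocycle action induces a genuine $G$-action on $A_\omega$ (since the cocycle unitaries lie in $A$ and hence commute with $A_\omega$) and the verification of the Cuntz relations and of equivariance.
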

\begin{proof}
By Lemma \ref{orthogonal}, 
there exists a non-zero projection $p\in A_\omega$ such that 
$p\alpha_g(p)=0$ for every $g\in G\setminus\{e\}$. 
Let $T\in A_\omega$ be an isometry satisfying $TT^*\leq p$. 
Define a unital homomorphism $\sigma:\mathcal{O}_\infty\to A_\omega$ 
by $\sigma(S_g)=\alpha_g(T)$. 
Evidently $\sigma$ belongs to $\Hom_G(\mathcal{O}_\infty,A_\omega)$. 
\end{proof}

\begin{thm}\label{splitG}
Let $A$ be a unital Kirchberg algebra and 
let $(\alpha,u)$ be an outer cocycle action of $G$ on $A$. 
If the diagonal action $\gamma^G\otimes\gamma^G$ of $G$ 
on $\mathcal{O}_\infty\otimes\mathcal{O}_\infty$ is 
approximately representable, 
then there exists an isomorphism $\mu:A\otimes\mathcal{O}_\infty\to A$ 
such that $(\alpha,u)$ is equivalent to 
$(\mu\circ(\alpha\otimes\gamma^G)\circ\mu^{-1},\mu(u\otimes1))$.  
\end{thm}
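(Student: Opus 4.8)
The plan is to treat the pair $(\mathcal{O}_\infty,\gamma^G)$ as a $G$-equivariant analogue of a strongly self-absorbing algebra and to run a McDuff-type absorption argument. Two ingredients are available: an approximately central, $G$-equivariant copy of $\mathcal{O}_\infty$ inside $A$ (Lemma \ref{equivembed}), and the equivariant half-flip $\rho_l\sim_G\rho_r$ (Lemma \ref{rho_lr}), which produces connecting unitaries that are not merely approximately central but also asymptotically fixed by the action. The latter feature is what will force equivariance to survive in the limit.

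First I would pass to the central sequence algebra. On $A_\omega=A^\omega\cap A'$ the inner perturbations $\Ad u(g,h)$ act trivially, since $u(g,h)\in A$ commutes with every element of $A_\omega$; hence the cocycle action $(\alpha,u)$ restricts to a genuine $G$-action $\alpha$ on $A_\omega$. Lemma \ref{equivembed} then supplies $\sigma\in\Hom_G(\mathcal{O}_\infty,A_\omega)$. Because $\sigma(\mathcal{O}_\infty)$ commutes with $A$, the formula $\Phi(a\otimes x)=a\,\sigma(x)$ defines a unital homomorphism $\Phi:A\otimes\mathcal{O}_\infty\to A^\omega$. A direct check shows that $\Phi$ intertwines $\alpha\otimes\gamma^G$ with the induced action $\alpha$ on $A^\omega$, that $\Phi$ carries the cocycle $u\otimes1$ to $u$, and that $\Phi(a\otimes1)=a$. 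Thus $\Phi$ is the desired isomorphism at the level of $A^\omega$, and the task is to straighten it into an honest isomorphism onto $A$.

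Next I would run a two-sided (Evans--Kishimoto/Elliott) intertwining between the inclusion $\iota\colon A\to A\otimes\mathcal{O}_\infty$, $a\mapsto a\otimes1$, and representatives of $\Phi$. At each stage one reindexes the approximately central copy $\sigma$ and uses the equivariant half-flip of Lemma \ref{rho_lr} to connect the copy $\sigma(x)\otimes1$ to $1\otimes x$ by unitaries that are approximately central and, crucially, asymptotically $\alpha$-invariant. The resulting limit is an isomorphism $\mu\colon A\otimes\mathcal{O}_\infty\to A$. Since the connecting unitaries are asymptotically fixed by $\alpha$, the limit satisfies $\mu\circ(\alpha\otimes\gamma^G)_g\circ\mu^{-1}=\Ad w(g)\circ\alpha_g$ for a family $\{w(g)\}_{g\in G}$ of unitaries in $A$; reading off the associated cocycle identity then shows that $u\otimes1$ is transported by $\mu$ to a cocycle cohomologous to $u$ via $w$, which is exactly the equivalence of cocycle actions demanded by the statement.

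The main obstacle is the intertwining bookkeeping, which must secure three things at once: convergence to a genuine isomorphism, persistence of equivariance in the limit, and that the transported cocycle remains in the class of $u$. Persistence of equivariance is the delicate point and is the reason one needs $G$-approximate, rather than merely approximate, unitary equivalence in Lemma \ref{rho_lr}: only the asymptotic $\alpha$-invariance of the connecting unitaries guarantees that the limit $\mu$ conjugates $\alpha\otimes\gamma^G$ into $\alpha$ up to a genuine $\alpha$-cocycle $\{w(g)\}$, rather than an uncontrolled perturbation. Without this extra invariance one would recover only the non-equivariant absorption $A\cong A\otimes\mathcal{O}_\infty$.
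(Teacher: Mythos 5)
Your proposal is correct and follows essentially the same route as the paper: the paper's own proof is exactly the combination of Lemma \ref{rho_lr} (the $G$-equivariant half-flip) and Lemma \ref{equivembed} (the equivariant approximately central copy of $\mathcal{O}_\infty$ in $A_\omega$), fed into the absorption/intertwining machinery of Theorems 7.2.1, 7.2.2 and Proposition 2.3.5 of \cite{R}, with the details left to the reader. Your write-up simply makes explicit the steps the paper omits (the passage to $A_\omega$ where the cocycle becomes trivial, the homomorphism $a\otimes x\mapsto a\,\sigma(x)$, the equivariant intertwining, and the cocycle bookkeeping), including the correct observation that the $G$-invariance of the connecting unitaries is what makes equivariance survive in the limit.
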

\begin{proof}
By using Lemma \ref{rho_lr} and Lemma \ref{equivembed}, 
we can show the statement 
in a similar fashion to the proofs of 
Theorem 7.2.2, Theorem 7.2.1 and Proposition 2.3.5
of \cite{R}. 
We leave the details to the reader. 
\end{proof}

We obtain the following two corollaries. 

\begin{cor}\label{splitZN}
Let $A$ be a unital Kirchberg algebra and 
let $(\alpha,u)$ be an outer cocycle action of $\Z^N$ on $A$. 
Let $\gamma$ be an outer action of $\Z^N$ on $\mathcal{O}_\infty$. 
\begin{enumerate}
\item There exists an isomorphism $\mu:A\otimes\mathcal{O}_\infty\to A$ 
such that $(\alpha,u)$ is equivalent to 
$(\mu\circ(\alpha\otimes\gamma)\circ\mu^{-1},\mu(u\otimes1))$.  
\item $(\alpha,u)$ has the Rohlin property. 
\end{enumerate}
\end{cor}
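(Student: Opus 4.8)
The plan is to derive Corollary \ref{splitZN} as a specialization of Theorem \ref{splitG} together with the uniqueness result of Theorem \ref{uniqueO_infty}. The crucial observation is that Theorem \ref{splitG} requires the diagonal action $\gamma^G\otimes\gamma^G$ on $\mathcal{O}_\infty\otimes\mathcal{O}_\infty$ to be approximately representable, and in the case $G=\Z^N$ this hypothesis is automatically satisfied. Indeed, $\gamma^{\Z^N}\otimes\gamma^{\Z^N}$ is an outer $\Z^N$-action on $\mathcal{O}_\infty\otimes\mathcal{O}_\infty\cong\mathcal{O}_\infty$, so Theorem \ref{uniqueO_infty} tells us it is cocycle conjugate to any other outer $\Z^N$-action and, in particular, that it is asymptotically representable. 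Asymptotic representability implies approximate representability, so the hypothesis of Theorem \ref{splitG} holds for $G=\Z^N$.

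To prove (1), I would first note that the given outer action $\gamma$ of $\Z^N$ on $\mathcal{O}_\infty$ is cocycle conjugate to $\gamma^{\Z^N}$ by Theorem \ref{uniqueO_infty}. Applying Theorem \ref{splitG} with $G=\Z^N$ produces an isomorphism $\mu:A\otimes\mathcal{O}_\infty\to A$ realizing the equivalence of $(\alpha,u)$ with $(\mu\circ(\alpha\otimes\gamma^{\Z^N})\circ\mu^{-1},\mu(u\otimes1))$. To replace $\gamma^{\Z^N}$ by the arbitrary outer action $\gamma$, I would use that cocycle conjugacy of $\gamma$ and $\gamma^{\Z^N}$ on $\mathcal{O}_\infty$ induces an equivalence of the corresponding cocycle actions $\alpha\otimes\gamma$ and $\alpha\otimes\gamma^{\Z^N}$ on $A\otimes\mathcal{O}_\infty$, which is preserved under the isomorphism $\mu$ and composition of the cocycles. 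Since equivalence of cocycle actions is transitive, this yields the desired equivalence with $\gamma$ in place of $\gamma^{\Z^N}$.

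For (2), I would exploit the tensor-splitting established in (1). The point is that the tensor-factor action $\alpha\otimes\gamma$ inherits the Rohlin property from the second tensor factor: since $\gamma$ is an outer $\Z^N$-action on $\mathcal{O}_\infty$, Corollary \ref{RohlinZN} (via the asymptotic representability guaranteed by Theorem \ref{uniqueO_infty}) shows that $\gamma$ has the Rohlin property. One then transports Rohlin projections for $\gamma$ in $(\mathcal{O}_\infty)_\omega$ into $(A\otimes\mathcal{O}_\infty)_\omega$ through the embedding $x\mapsto 1\otimes x$, obtaining Rohlin towers for $\alpha\otimes\gamma$ that commute with $A\otimes 1$. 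Because the Rohlin property is invariant under equivalence of cocycle actions, the equivalence from (1) transfers this property back to $(\alpha,u)$.

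The main obstacle I anticipate is the second step of part (1): passing from the splitting with $\gamma^{\Z^N}$ to the splitting with an arbitrary outer $\gamma$. One must be careful that the cocycle conjugacy between $\gamma$ and $\gamma^{\Z^N}$ interacts correctly with the cocycle $u$ under the tensor product and that the resulting perturbing cocycles compose to give a genuine equivalence of cocycle actions in the sense defined in Section 2. The bookkeeping of the unitaries and the verification that the $2$-cocycle identity is respected is routine but requires care; it is here that the precise definition of equivalence of cocycle actions, rather than mere outer conjugacy, is essential.
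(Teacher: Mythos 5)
Your proposal is correct and takes essentially the same route as the paper's own (two-line) proof: part (1) is exactly Theorem \ref{uniqueO_infty} combined with Theorem \ref{splitG}, and part (2) is Corollary \ref{RohlinZN} together with Theorem \ref{uniqueO_infty} and part (1). The details you supply---approximate representability of the diagonal action as the hypothesis of Theorem \ref{splitG}, the passage from $\gamma^{\Z^N}$ to an arbitrary outer $\gamma$ via cocycle conjugacy, and the transport of Rohlin towers through the second tensor factor and the equivalence---are precisely the steps the paper leaves to the reader.
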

\begin{proof}
The first statement follows 
from Theorem \ref{uniqueO_infty} and Theorem \ref{splitG}. 
The second statement follows 
from Corollary \ref{RohlinZN}, Theorem \ref{uniqueO_infty} 
and the first statement. 
\end{proof}

\begin{cor}\label{oc>cc}
Let $A$ be a unital Kirchberg algebra and 
let $\alpha,\beta$ be outer $\Z^N$-actions on $A$. 
If $\alpha$ and $\beta$ are outer conjugate, 
then they are cocycle conjugate. 
\end{cor}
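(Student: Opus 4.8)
The plan is to exploit the fact that, for $\Z^N$-actions on a simple $C^*$-algebra, outer conjugacy differs from cocycle conjugacy only by a scalar-valued $2$-cocycle, and then to kill that discrepancy using the $\mathcal{O}_\infty$-absorption of Corollary \ref{splitZN}. Indeed, if $\alpha$ and $\beta$ are outer conjugate, there are an isomorphism $\mu$ and unitaries $\{u_g\}$ with $\Ad u_g\circ\alpha_g=\mu^{-1}\circ\beta_g\circ\mu$; comparing $(\mu^{-1}\beta\mu)_g(\mu^{-1}\beta\mu)_h=(\mu^{-1}\beta\mu)_{gh}$ and using simplicity of $A$ shows that $u_g\alpha_g(u_h)=\omega(g,h)u_{gh}$ for some $\omega\in Z^2(\Z^N,\T)$. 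Thus $\{u_g\}$ fails to be an honest $\alpha$-cocycle only through $\omega$, and the whole strategy is to show that $\alpha$ absorbs every such $\omega$: once this is known, Lemma \ref{omegaabsorb1}(2) upgrades outer conjugacy to cocycle conjugacy at once.

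So the heart of the argument is the claim that every outer $\Z^N$-action $\alpha$ on $A$ absorbs every $\omega\in Z^2(\Z^N,\T)$. First I would fix $\omega$ and invoke Lemma \ref{omegaabsorb2} to obtain an outer $\Z^N$-action $\gamma$ on $\mathcal{O}_\infty$ that absorbs $\omega$, say $\nu\circ\gamma_g\circ\nu^{-1}=\Ad w_g\circ\gamma_g$ with $w_g\gamma_g(w_h)=\omega(g,h)w_{gh}$. Tensoring this data with the identity on $A$, that is, using $\id_A\otimes\nu$ together with the unitaries $1\otimes w_g$, shows immediately that the product action $\alpha\otimes\gamma$ on $A\otimes\mathcal{O}_\infty$ absorbs $\omega$. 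On the other hand, Corollary \ref{splitZN}(1) applied to the genuine action $(\alpha,1)$ and this $\gamma$ provides an isomorphism $\mu:A\otimes\mathcal{O}_\infty\to A$ under which $(\alpha,1)$ is equivalent to $(\mu\circ(\alpha\otimes\gamma)\circ\mu^{-1},1)$. Unwinding the definition of equivalence of cocycle actions in the case of trivial cocycles, the intertwining unitaries $\{w(g)\}$ satisfy $w(gh)=\alpha_g(w(h))w(g)$, so $\{w(g)^*\}$ is a genuine $\alpha$-cocycle and $\alpha$ is cocycle conjugate to $\mu\circ(\alpha\otimes\gamma)\circ\mu^{-1}$, hence to $\alpha\otimes\gamma$. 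In particular $\alpha$ and $\alpha\otimes\gamma$ are outer conjugate, and Lemma \ref{omegaabsorb1}(1) then transfers the absorption, giving that $\alpha$ absorbs $\omega$.

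Since $\omega\in Z^2(\Z^N,\T)$ was arbitrary, $\alpha$ absorbs every element of $Z^2(\Z^N,\T)$, and applying Lemma \ref{omegaabsorb1}(2) to the outer conjugate pair $\alpha,\beta$ finishes the proof. The genuinely substantial input here is Corollary \ref{splitZN}, which rests on Theorem \ref{splitG} and the uniqueness Theorem \ref{uniqueO_infty}; granting these, the remaining steps are formal. The one place where I expect to have to be careful is the verification that equivalence of the two cocycle actions with trivial cocycles really amounts to cocycle conjugacy of the underlying genuine actions, i.e. that the perturbing unitaries assemble into an honest $\alpha$-cocycle rather than merely a projective one; this is the short computation with $w(gh)=\alpha_g(w(h))w(g)$ indicated above, and it is where one must resist conflating equivalence of cocycle actions with plain conjugacy.
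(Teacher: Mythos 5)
Your proposal is correct and follows essentially the same route as the paper: Corollary \ref{splitZN}(1) to absorb an outer $\Z^N$-action on $\mathcal{O}_\infty$, Lemma \ref{omegaabsorb2} to realize an arbitrary $\omega\in Z^2(\Z^N,\T)$ in such an action, Lemma \ref{omegaabsorb1}(1) to transfer absorption to $\alpha$, and Lemma \ref{omegaabsorb1}(2) to conclude. The only (immaterial) difference is that you feed the $\omega$-absorbing action $\gamma$ directly into Corollary \ref{splitZN}(1), whereas the paper applies it with $\gamma^{\Z^N}$ and invokes Theorem \ref{uniqueO_infty} separately---but that theorem is already what makes Corollary \ref{splitZN}(1) valid for arbitrary outer $\gamma$, so the ingredients coincide.
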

\begin{proof}
By Corollary \ref{splitZN} (1), 
$(A,\alpha)$ is cocycle conjugate to 
$(A\otimes\mathcal{O}_\infty,\alpha\otimes\gamma^{\Z^N})$. 
Then the assertion follows from 
Lemma \ref{omegaabsorb1}, Lemma \ref{omegaabsorb2} 
and Theorem \ref{uniqueO_infty}. 
\end{proof}

Now we can prove the uniqueness of 
asymptotically representable outer $\Z^N$-actions 
on a unital Kirchberg algebra up to $KK$-trivial cocycle conjugacy 
in a similar fashion to Theorem \ref{uniqueO_infty}. 

\begin{thm}\label{uniqueasymprepre}
Any asymptotically representable outer $\Z^N$-actions 
on a unital Kirchberg algebra $A$ are 
$KK$-trivially cocycle conjugate to each other. 
\end{thm}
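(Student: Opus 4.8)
I would argue by induction on $N$, following the scheme of Theorem \ref{uniqueO_infty} but replacing its $\mathcal{O}_\infty$-specific input (Lemma \ref{equivO_infty}) by an argument valid on an arbitrary Kirchberg algebra. For $N=1$, an outer asymptotically representable automorphism is aperiodic with trivial $KK$-class by Lemma \ref{Zasymp}, so Theorem \ref{Z} shows any two such are $KK$-trivially cocycle conjugate. Assume the statement for $N-1$ and let $\alpha,\beta$ be asymptotically representable outer $\Z^N$-actions on $A$. Write $\alpha',\beta'$ for the $\Z^{N-1}$-subactions on the first $N-1$ coordinates and $\alpha_N,\beta_N$ for the last generators. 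Restricting the implementing paths to $\Z^{N-1}$ shows $\alpha',\beta'$ are again outer and asymptotically representable, so by the induction hypothesis they are $KK$-trivially cocycle conjugate. Conjugating $\beta$ by a $KK$-trivial automorphism (which only moves $\beta$ within its $KK$-trivial cocycle conjugacy class, and preserves asymptotic representability and outerness), I may assume $\beta'_g=\Ad u_g\circ\alpha'_g$ for an $\alpha'$-cocycle $\{u_g\}_{g\in\Z^{N-1}}$.

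Set $G=\Z^{N-1}$ and form $B_\alpha=A\rtimes_{\alpha'}\Z^{N-1}$, $B_\beta=A\rtimes_{\beta'}\Z^{N-1}$, both unital Kirchberg algebras by Theorem \ref{KKFields}, together with the isomorphism $\pi:B_\beta\to B_\alpha$ determined by $\pi|A=\id$ and $\pi(\lambda^\beta_g)=u_g\lambda^\alpha_g$. Since the generators commute, $\alpha_N,\beta_N$ extend to $\tilde\alpha_N\in\Aut_{\hat{G}}(B_\alpha)$ and $\tilde\beta_N\in\Aut_{\hat{G}}(B_\beta)$ fixing the implementing unitaries, and a direct computation (as in Theorem \ref{uniqueO_infty}) identifies $\pi\circ\tilde\beta_N\circ\pi^{-1}$ as the extension of $\beta_N$ associated to the $\alpha'$-cocycle $\{\beta_N(u_g^*)u_g\}_g$.

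The crucial step, and where asymptotic representability does the work that Lemma \ref{equivO_infty} did on $\mathcal{O}_\infty$, is to prove $KK(\tilde\alpha_N)=1_{B_\alpha}$ (and likewise $KK(\tilde\beta_N)=1_{B_\beta}$); I expect this to be the main obstacle. I would show directly that $\tilde\alpha_N$ is asymptotically inner. Let $\{v_h(t)\}_{h\in\Z^N}$ witness the asymptotic representability of $\alpha$ and put $w(t)=v_{(0,\dots,0,1)}(t)\in U(A)\subset U(B_\alpha)$. Then $w(t)aw(t)^*\to\alpha_N(a)$ for $a\in A$, while for $g\in\Z^{N-1}$ one has $w(t)\lambda^\alpha_g w(t)^*=w(t)\alpha'_g(w(t))^*\lambda^\alpha_g$; since $\Z^N$ is abelian, Definition \ref{asymp} gives $\lVert\alpha'_g(w(t))-w(t)\rVert\to0$, whence $w(t)\alpha'_g(w(t))^*\to1$ and $w(t)\lambda^\alpha_g w(t)^*\to\lambda^\alpha_g$. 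As $A$ and $\{\lambda^\alpha_g\}$ generate $B_\alpha$ and the $\Ad w(t)$ are $\ast$-homomorphisms, $\Ad w(t)\to\tilde\alpha_N$ pointwise on $B_\alpha$, so $\tilde\alpha_N$ is asymptotically inner and $KK(\tilde\alpha_N)=1_{B_\alpha}$. Consequently $KK(\tilde\alpha_N)=KK(\pi\circ\tilde\beta_N\circ\pi^{-1})=1_{B_\alpha}$.

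Having verified the hypotheses of Theorem \ref{equivNakamura} for the asymptotically representable outer action $\alpha'$ and the automorphisms $\alpha_N,\beta_N$ (injectivity of $(n,g)\mapsto\beta_i^n\alpha'_g$ into $\Out(A)$ follows from outerness of $\alpha,\beta$), I obtain $\mu\in\Aut_{\hat{G}}(B_\alpha)$ and $v\in U(A)$ with $KK(\mu)=1_{B_\alpha}$, $KK(\mu|A)=1_A$ and $\mu\circ\tilde\alpha_N\circ\mu^{-1}=\Ad v\circ\pi\circ\tilde\beta_N\circ\pi^{-1}$. Finally I would assemble this over the last $\Z$: using $B_\alpha\rtimes_{\tilde\alpha_N}\Z\cong A\rtimes_\alpha\Z^N$ and $B_\beta\rtimes_{\tilde\beta_N}\Z\cong A\rtimes_\beta\Z^N$, the maps $\pi$, $\mu$ and the inner perturbation by $v\in A$ compose to a $\T^N$-equivariant isomorphism $\Phi:A\rtimes_\alpha\Z^N\to A\rtimes_\beta\Z^N$ with $\Phi|A=\mu|A$, so $KK(\Phi|A)=1_A$. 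Any such dual-coaction-equivariant isomorphism carries the fixed-point algebra $A$ onto $A$ and satisfies $\Phi(\lambda^\alpha_g)=z_g\lambda^\beta_g$ for a genuine $\beta$-cocycle $\{z_g\}_{g\in\Z^N}$ with $(\Phi|A)\circ\alpha_g\circ(\Phi|A)^{-1}=\Ad z_g\circ\beta_g$; setting $\nu=\Phi|A$ and $u_g=\nu^{-1}(z_g^*)$ yields an $\alpha$-cocycle with $\alpha^u_g=\nu^{-1}\circ\beta_g\circ\nu$ and $KK(\nu)=1_A$, which is exactly a $KK$-trivial cocycle conjugacy between $\alpha$ and $\beta$. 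Combining with the $KK$-trivial conjugacy used in the reduction and invoking transitivity completes the induction.
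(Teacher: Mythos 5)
Your proposal is correct, and its skeleton---induction on $N$ with base case Theorem \ref{Z} plus Lemma \ref{Zasymp}, reduction via the induction hypothesis to $\beta'_g=\Ad u_g\circ\alpha'_g$, the identity $KK(\tilde\alpha_N)=1_{B_\alpha}$ as the crucial input, and the application of Theorem \ref{equivNakamura}---is exactly the paper's. Two points differ. First, the paper merely asserts $KK(\tilde\alpha_N)=1$ from asymptotic representability of $\alpha$ (and flags this as the key step in the remark following the theorem); your argument with $w(t)=v_{(0,\dots,0,1)}(t)$, using commutativity of $\Z^N$ to get $\lVert\alpha'_g(w(t))-w(t)\rVert\to0$ and hence $\Ad w(t)\to\tilde\alpha_N$ pointwise on $B_\alpha$, is precisely the intended justification, spelled out. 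Second, and more substantively, the endgame is different: the paper restricts everything to $A$, concludes only that $\alpha$ and $\beta$ are outer conjugate via a $KK$-trivial automorphism, and then invokes Corollary \ref{oc>cc} and its proof---which runs through $\mathcal{O}_\infty$-absorption (Corollary \ref{splitZN}) and absorption of $\T$-valued $2$-cocycles (Lemmas \ref{omegaabsorb1}, \ref{omegaabsorb2}, Theorem \ref{uniqueO_infty})---to upgrade outer conjugacy to $KK$-trivial cocycle conjugacy. You instead keep the crossed-product data: composing $\mu$, the inner perturbation by $v$, and $\pi^{-1}$ gives a $\hat{\Z^N}$-equivariant isomorphism $\Phi:A\rtimes_\alpha\Z^N\to A\rtimes_\beta\Z^N$ with $\Phi|A=\mu|A$, and the standard dictionary (the unitaries $z_g=\Phi(\lambda^\alpha_g)\lambda^{\beta*}_g$ lie in $A$ and automatically satisfy $z_{gh}=z_g\beta_g(z_h)$) converts this into a genuine cocycle conjugacy implemented by the $KK$-trivial automorphism $\mu|A$. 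This bypasses the second-cohomology absorption machinery entirely and makes the $KK$-triviality bookkeeping transparent; the price is that you must verify, as your generator computations do, that the iterated-crossed-product identifications $B_\alpha\rtimes_{\tilde\alpha_N}\Z\cong A\rtimes_\alpha\Z^N$ and $B_\beta\rtimes_{\tilde\beta_N}\Z\cong A\rtimes_\beta\Z^N$ are compatible with the dual coactions and that each of the three composed isomorphisms is equivariant. Both routes are sound; the paper's is shorter given its established corollaries, while yours is more self-contained and arguably cleaner.
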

\begin{proof}
The proof is by induction on $N$. 
When $N=1$, 
we get the conclusion Theorem \ref{Z} and Lemma \ref{Zasymp}. 
Assume that the claim has been shown for $N-1$. 

Let $\alpha$ and $\beta$ be 
asymptotically representable outer $\Z^N$-actions on $A$. 
We use the same notation as in the proof of Theorem \ref{uniqueO_infty}. 
Since $\alpha'$ and $\beta'$ are 
asymptotically representable outer $\Z^{N-1}$-actions on $A$, 
by the induction hypothesis, we may assume that 
there exists an $\alpha'$-cocycle $\{u_g\}_{g\in\Z^{N-1}}$ in $A$ 
such that $\beta_g=\Ad u_g\circ\alpha_g$. 
Moreover, as $\alpha$ and $\beta$ are both asymptotically representable, 
one has $KK(\tilde\alpha_N)=1$ in $KK(B_\alpha,B_\alpha)$ 
and $KK(\tilde\beta_N)=1$ in $KK(B_\beta,B_\beta)$. 
In the same way as Theorem \ref{uniqueO_infty}, 
we can find $\mu\in\Aut(A)$ with $KK(\mu)=1_A$ 
that induces outer conjugacy between $\alpha$ and $\beta$. 
Then, from Corollary \ref{oc>cc} and its proof, 
we can conclude that 
$\alpha$ and $\beta$ are $KK$-trivially cocycle conjugate. 
\end{proof}

\begin{rem}
A key step in the proof above is 
$KK(\tilde\alpha_N)=1$ in $KK(B_\alpha,B_\alpha)$. 
When $A=\mathcal{O}_2$, the crossed product 
$B_\alpha=A\rtimes_{\alpha'}\Z^{N-1}$ is again $\mathcal{O}_2$,
and so $KK(B_\alpha,B_\alpha)=0$. 
By using this observation, 
one can show the uniqueness of outer $\Z^N$-actions on $\mathcal{O}_2$ 
up to cocycle conjugacy, which is the main result of \cite{M}.  

If there exists 
a path $\{\gamma_t\}_{t\in[0,1]}$ of automorphisms of $A$ 
such that $\gamma_0=\id$, $\gamma_1=\alpha_N$ and 
$\gamma_t\circ\alpha'_g=\alpha'_g\circ\gamma_t$ 
for all $t\in[0,1]$ and $g\in\Z^{N-1}$, 
then $KK(\tilde\alpha_N)=1$ in $KK(B_\alpha,B_\alpha)$. 
Hence any outer $\Z^N$-action $\alpha$ on a unital Kirchberg algebra 
that extends to an $\R^N$-action is asymptotically representable. 
In particular, 
any outer quasi-free $\Z^N$-actions on the Cuntz algebra $\mathcal{O}_n$ 
are mutually cocycle conjugate. 
\end{rem}

\bigskip

In the rest of this section, 
we let $\alpha:\Z^N\curvearrowright A$ be 
an outer asymptotically representable action of $\Z^N$ 
on a unital Kirchberg algebra $A$ and 
let $B=A\rtimes_\alpha\Z^N$. 
We would like to characterize elements 
in $\Hom_{\T^N}(B,B)$ in terms of $KK$-theory. 
As applications, we show Theorem \ref{app2} and Theorem \ref{app4}. 
Let $H_{\T^N}(B,B)$ be the subset of $KK(B,B)$ 
consisting of $x$ satisfying 
\[
K_0(x)([1])=[1]
\]
and 
\[
x\cdot KK(\halpha)=KK(\halpha)\cdot(x\otimes1_{C^*(\Z^N)}), 
\]
and let $H_{\T^N}(B,B)^{-1}$ be the subset of $H_{\T^N}(B,B)$ 
consisting of invertible elements. 
Thanks to Theorem \ref{existence}, Remark \ref{EKforcoaction} 
and Theorem \ref{G-asymp}, we obtain the following. 

\begin{lem}\label{H1}
\begin{enumerate}
\item There exists a natural bijective correspondence 
between $H_{\T^N}(B,B)$ and 
the quotient space of $\Hom_{\T^N}(B,B)$ 
by $\T^N$-asymptotically unitary equivalence. 
\item There exists a natural bijective correspondence 
between $H_{\T^N}(B,B)^{-1}$ and 
the quotient space of $\Aut_{\T^N}(B)$ 
by $\T^N$-asymptotically unitary equivalence. 
\end{enumerate}
\end{lem}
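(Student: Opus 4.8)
The plan is to prove both parts by showing that the assignment $[\rho]\mapsto KK(\rho)$ is the asserted bijection, where $[\rho]$ denotes the $\T^N$-asymptotic unitary equivalence class of $\rho$. First I would record the standing facts that make the classification machinery available: since $\alpha$ is outer, Theorem \ref{KKFields} shows $B$ is a unital Kirchberg algebra, so $B\cong B\otimes\mathcal{O}_\infty$ and likewise $B\otimes C^*(\Z^N)\cong(B\otimes C^*(\Z^N))\otimes\mathcal{O}_\infty$; thus Theorem \ref{Phillips} applies with source $B$ and target $B$ or $B\otimes C^*(\Z^N)$. The conceptual core is the elementary bookkeeping that the defining relation of $H_{\T^N}(B,B)$ is exactly the $KK$-shadow of $\T^N$-equivariance: for a unital endomorphism $\rho$ one has $KK(\halpha\circ\rho)=KK(\rho)\cdot KK(\halpha)$ and $KK((\rho\otimes\id_{C^*(\Z^N)})\circ\halpha)=KK(\halpha)\cdot(KK(\rho)\otimes1_{C^*(\Z^N)})$, so that $KK(\rho)\in H_{\T^N}(B,B)$ if and only if these two $KK$-classes coincide. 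Consequently $[\rho]\mapsto KK(\rho)$ lands in $H_{\T^N}(B,B)$ for $\rho\in\Hom_{\T^N}(B,B)$, and in $H_{\T^N}(B,B)^{-1}$ when $\rho\in\Aut_{\T^N}(B)$ since then $KK(\rho)$ is invertible; it is well defined because asymptotic (a fortiori $\T^N$-asymptotic) unitary equivalence preserves $KK$-classes.

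For injectivity, suppose $\rho,\sigma\in\Hom_{\T^N}(B,B)$ satisfy $KK(\rho)=KK(\sigma)$. Then Theorem \ref{Phillips}(2) shows $\rho$ and $\sigma$ are asymptotically unitarily equivalent, and Theorem \ref{G-asymp} upgrades this to $\T^N$-asymptotic unitary equivalence, whence $[\rho]=[\sigma]$. The identical argument, restricted to $\Aut_{\T^N}(B)$, gives injectivity in (2).

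For surjectivity of (1), take $x\in H_{\T^N}(B,B)$. As $K_0(x)([1])=[1]$, Theorem \ref{Phillips}(1) provides a unital endomorphism $\rho_0$ of $B$ with $KK(\rho_0)=x$. The bookkeeping identity together with $x\in H_{\T^N}(B,B)$ gives $KK(\halpha\circ\rho_0)=KK((\rho_0\otimes\id_{C^*(\Z^N)})\circ\halpha)$; both are unital homomorphisms from $B$ into the $\mathcal{O}_\infty$-absorbing algebra $B\otimes C^*(\Z^N)$, so Theorem \ref{Phillips}(2) makes $\halpha\circ\rho_0$ and $(\rho_0\otimes\id_{C^*(\Z^N)})\circ\halpha$ asymptotically unitarily equivalent. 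This is precisely the hypothesis of Theorem \ref{existence}, which yields $\rho_1$ asymptotically unitarily equivalent to $\rho_0$ with $\halpha\circ\rho_1=(\rho_1\otimes\id_{C^*(\Z^N)})\circ\halpha$, i.e.\ $\rho_1\in\Hom_{\T^N}(B,B)$ and $KK(\rho_1)=x$. For surjectivity of (2), take $x\in H_{\T^N}(B,B)^{-1}$; since $K_0(x)$ is an isomorphism, $K_0(x^{-1})([1])=[1]$, and the Kirchberg--Phillips classification theorem (obtained from Theorem \ref{Phillips} by the standard two-sided asymptotic intertwining argument) furnishes $\mu_0\in\Aut(B)$ with $KK(\mu_0)=x$. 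Exactly as before, the $H_{\T^N}$-relation forces $\halpha\circ\mu_0$ and $(\mu_0\otimes\id_{C^*(\Z^N)})\circ\halpha$ to be asymptotically unitarily equivalent, and feeding this into Remark \ref{EKforcoaction} (the isomorphism version of Theorem \ref{existence}, with $C=A$ and $\gamma=\alpha$) produces an isomorphism $\mu_1$ asymptotically unitarily equivalent to $\mu_0$ with $\halpha\circ\mu_1=(\mu_1\otimes\id_{C^*(\Z^N)})\circ\halpha$, so $\mu_1\in\Aut_{\T^N}(B)$ and $KK(\mu_1)=x$.

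I expect the only genuinely delicate point to be surjectivity in (2): Theorem \ref{Phillips}(1) only manufactures \emph{homomorphisms}, so before the equivariance can be installed via Remark \ref{EKforcoaction} one must first pass through the full classification theorem to replace the homomorphism realizing $x$ by a bona fide automorphism. Everything else is a formal assembly of the already-established Theorem \ref{existence}, Remark \ref{EKforcoaction}, Theorem \ref{G-asymp} and Theorem \ref{Phillips}, combined with the routine $KK$-computation identifying the defining relation of $H_{\T^N}(B,B)$ with $\T^N$-equivariance.
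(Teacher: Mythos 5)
Your proof is correct and follows essentially the same route as the paper, which disposes of this lemma in one line by citing exactly the ingredients you assemble: Theorem \ref{existence} and Remark \ref{EKforcoaction} for surjectivity, Theorem \ref{G-asymp} for upgrading to $\T^N$-asymptotic unitary equivalence, with Theorem \ref{Phillips} supplying the underlying existence and uniqueness of $KK$-lifts. Your observation that surjectivity in (2) requires first passing through the full Kirchberg--Phillips intertwining argument to realize an invertible class by an automorphism, before invoking Remark \ref{EKforcoaction}, is precisely the intended use of that remark.
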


Let $\delta_N:C^*(\Z^N)\to C^*(\Z^N)\otimes C^*(\Z^N)$ 
be the coproduct and 
let $j_\alpha:A\to B$ and $j:A\to A\otimes C^*(\Z^N)$ 
be the canonical embeddings. 
By the uniqueness of 
asymptotically representable outer actions of $\Z^N$ 
on a unital Kirchberg algebra, 
we get the following. 

\begin{lem}\label{w}
There exists a $KK$-equivalence 
$w\in KK(B,A\otimes C^*(\Z^N))$ such that 
\[
KK(j_\alpha)\cdot w=KK(j)
\]
and 
\[
w\cdot KK(\id_A\otimes\delta_N)=KK(\halpha)\cdot(w\otimes1_{C^*(\Z^N)}). 
\]
\end{lem}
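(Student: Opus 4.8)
The plan is to realize $w$ as a composite of coaction-equivariant $KK$-equivalences, the idea being to transport the (non-outer) ``trivial-type'' coaction on $A\otimes C^*(\Z^N)$ into the setting of outer asymptotically representable actions, where Theorem \ref{uniqueasymprepre} applies. First I would fix an outer $\Z^N$-action $\gamma$ on $\mathcal{O}_\infty$, which is asymptotically representable by Theorem \ref{uniqueO_infty}. The inclusion $\iota\colon C^*(\Z^N)\to\mathcal{O}_\infty\rtimes_\gamma\Z^N$, $\lambda_g\mapsto\lambda^\gamma_g$, satisfies $\hgamma\circ\iota=(\iota\otimes\id_{C^*(\Z^N)})\circ\delta_N$, so it is $\hat{\Z^N}$-equivariant (where $C^*(\Z^N)$ carries the coaction $\delta_N$), and by Lemma \ref{equivO_infty} it is a $KK$-equivalence. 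Tensoring by $A$ then yields a coaction-equivariant $KK$-equivalence between $(A\otimes(\mathcal{O}_\infty\rtimes_\gamma\Z^N),\id_A\otimes\hgamma)$ and the target $(A\otimes C^*(\Z^N),\id_A\otimes\delta_N)$.

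Next I would manufacture a convenient reference action on $A$ itself. Using Theorem \ref{Phillips}, I would choose an isomorphism $\theta\colon A\otimes\mathcal{O}_\infty\to A$ whose inverse has the same $KK$-class as the canonical embedding $\sigma\colon A\to A\otimes\mathcal{O}_\infty$, $a\mapsto a\otimes1$; this is legitimate because $\sigma$ is a unit-preserving $KK$-equivalence. Set $\alpha_0=\theta\circ(\id_A\otimes\gamma)\circ\theta^{-1}$. A short argument using simplicity of $A$ to compute the relative commutant of $A\otimes1$ shows that $\id_A\otimes\gamma$ is outer, and if $\{v_g(t)\}$ implements $\gamma$ asymptotically then $\{1\otimes v_g(t)\}$ exhibits its asymptotic representability; hence $\alpha_0$ is an outer asymptotically representable $\Z^N$-action on $A$. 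Through $\theta$ one obtains a coaction-equivariant identification $\Theta\colon A\rtimes_{\alpha_0}\Z^N\to A\otimes(\mathcal{O}_\infty\rtimes_\gamma\Z^N)$ carrying $\hat{\alpha}_0$ to $\id_A\otimes\hgamma$.

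Now Theorem \ref{uniqueasymprepre} gives that $\alpha$ and $\alpha_0$ are $KK$-trivially cocycle conjugate. Unwinding Definition (7), a cocycle perturbation followed by a conjugacy produces an isomorphism $\Psi\colon B\to A\rtimes_{\alpha_0}\Z^N$ that intertwines $\halpha$ and $\hat{\alpha}_0$ (each step preserves the $\Z^N$-grading, hence the dual coactions correspond), and whose restriction to the base is the automorphism $\mu$ with $KK(\mu)=1_A$. I would then define
\[
w=KK(\Psi)\cdot KK(\Theta)\cdot\bigl(1_A\otimes KK(\iota)^{-1}\bigr),
\]
a $KK$-equivalence in $KK(B,A\otimes C^*(\Z^N))$. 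The displayed coaction relation for $w$ follows by composing the three intertwining identities, since every factor is coaction-equivariant. For the base relation, tracking $A$ through the chain gives
\[
KK(j_\alpha)\cdot w=KK(\mu)\cdot KK(\sigma)\cdot\bigl(1_A\otimes(KK(k)\cdot KK(\iota)^{-1})\bigr),
\]
where $k\colon\mathcal{O}_\infty\to\mathcal{O}_\infty\rtimes_\gamma\Z^N$ is the base embedding; since $KK(\mu)=1_A$ and $k,\iota$ agree on the units, the external-product bookkeeping collapses the $\mathcal{O}_\infty$-factor down to the unit inclusion $\C\to C^*(\Z^N)$, producing exactly $KK(j)$.

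The step I expect to be delicate is this last base computation. It forces the precise choice $KK(\theta^{-1})=KK(\sigma)$ (not merely some unit-preserving $KK$-equivalence), and it requires matching the two unit inclusions $\C\to\mathcal{O}_\infty\rtimes_\gamma\Z^N$ induced by $k$ and by $\iota$ so that the composite external Kasparov product reduces to $a\mapsto a\otimes1$. By contrast, the coaction identity is essentially formal: once each arrow in the chain is recognized as equivariant, the intertwining relation for $w$ is obtained by mechanical composition.
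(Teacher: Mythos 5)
Your proposal is correct and takes essentially the same route as the paper's proof: fix an outer (hence, by Theorem \ref{uniqueO_infty}, asymptotically representable) $\Z^N$-action $\gamma$ on $\mathcal{O}_\infty$, invert $KK(\id_A\otimes\iota)$ via Lemma \ref{equivO_infty}, obtain a coaction-equivariant isomorphism $B\cong(A\otimes\mathcal{O}_\infty)\rtimes_{\id\otimes\gamma}\Z^N$ from Theorem \ref{uniqueasymprepre}, and define $w$ as the composite. The only difference is that you spell out what the paper leaves implicit, namely that the identification $A\otimes\mathcal{O}_\infty\cong A$ must be chosen with the $KK$-class of $a\mapsto a\otimes1$ (available by Theorem \ref{Phillips}) so that the base relation $KK(j_\alpha)\cdot w=KK(j)$ actually comes out — a clarification of the same argument, not a departure from it.
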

\begin{proof}
Let $\gamma:\Z^N\curvearrowright\mathcal{O}_\infty$ be an outer action 
and let $\iota:C^*(\Z^N)\to\mathcal{O}_\infty\rtimes_\gamma\Z^N$ be 
the canonical inclusion. 
We have the following commutative diagram. 
\[
\begin{CD}
A\otimes C^*(\Z^N) @>\id_A\otimes\iota>> 
(A\otimes\mathcal{O}_\infty)\rtimes_{\id\otimes\gamma}\Z^N \\
@V\id_A\otimes\delta_NVV @VV\widehat{\id\otimes\gamma}V \\
A\otimes C^*(\Z^N)\otimes C^*(\Z^N) 
@>>\id_A\otimes\iota\otimes\id_{C^*(\Z^N)}> 
((A\otimes\mathcal{O}_\infty)\rtimes_{\id\otimes\gamma}\Z^N)
\otimes C^*(\Z^N)
\end{CD}
\]
By Lemma \ref{equivO_infty}, $KK(\iota)$ is invertible, 
and so $KK(\id_A\otimes\iota)$ is also invertible. 

Both $\alpha$ and $\id\otimes\gamma$ are 
asymptotically representable outer $\Z^N$-actions 
on $A\cong A\otimes\mathcal{O}_\infty$. 
It follows from Theorem \ref{uniqueasymprepre} that 
there exists an isomorphism 
\[
\mu\in\Hom_{\T^N}(B,
(A\otimes\mathcal{O}_\infty)\rtimes_{\id\otimes\gamma}\Z^N). 
\]
In other words, the following diagram 
\[
\begin{CD}
B @>\mu>> (A\otimes\mathcal{O}_\infty)\rtimes_{\id\otimes\gamma}\Z^N \\
@VV\halpha V @V\widehat{\id\otimes\gamma}VV \\
B\otimes C^*(\Z^N) @>>\mu\otimes\id_{C^*(\Z^N)}> 
((A\otimes\mathcal{O}_\infty)\rtimes_{\id\otimes\gamma}\Z^N)
\otimes C^*(\Z^N)
\end{CD}
\]
is commutative. 
Then $w=KK(\mu)\cdot KK(\id_A\otimes\iota)^{-1}$ meets the requirement. 
\end{proof}

In what follows we fix the $KK$-equivalence $w\in KK(B,A\otimes C^*(\Z^N))$ 
described in the lemma above. 

Let $S=C_0((0,1))$ and let $\mathcal{T}=\C\oplus S$. 
Put $[N]=\{1,2,\dots,N\}$. 
The $N$-fold tensor product $\mathcal{T}^{\otimes N}$ 
has $2^N$ direct sum components and 
each of them is isomorphic to a tensor product of several copies of $S$. 
For $I\subset[N]$, 
we let $S_I\subset\mathcal{T}^{\otimes N}$ denote 
the tensor product of $S$'s 
of the $i$-th tensor product component for all $i\in I$, 
so that 
\[
\mathcal{T}^{\otimes N}=\bigoplus_{I\subset[N]}S_I. 
\]
Note that $S_\emptyset$ is isomorphic to $\C$. 
When $I,J\subset[N]$ are disjoint, 
we may identify $S_I\otimes S_J$ with $S_{I\cup J}$. 
For $I\subset[N]$ and $J\subset I$, 
we let $\sigma(I,J)$ be the canonical invertible element 
in $KK(S_I,S_{I\setminus J}\otimes S_J)$, 
where $S_{I\setminus J}\otimes S_J$ is a subalgebra of 
$\mathcal{T}^{\otimes N}\otimes\mathcal{T}^{\otimes N}$. 
Define 
\[
\sigma_N\in KK(\mathcal{T}^{\otimes N},
\mathcal{T}^{\otimes N}\otimes\mathcal{T}^{\otimes N})
\]
by 
\[
\sigma_N=\sum_{J\subset I\subset[N]}\sigma(I,J). 
\]

For $n\in\N$, 
we let $\delta_n:C^*(\Z^n)\to C^*(\Z^n)\otimes C^*(\Z^n)$ 
denote the coproduct. 
By a suitable permutation of tensor components of 
$C^*(\Z^n)\otimes C^*(\Z^n)\cong C^*(\Z)^{\otimes2n}$, 
$\delta_n$ can be viewed as $(\delta_1)^{\otimes n}$. 
Let $z\in KK(C^*(\Z),\mathcal{T})$ be an invertible element. 
We denote the $N$-fold tensor product of $z$ 
by $z_N\in KK(C^*(\Z^N),\mathcal{T}^{\otimes N})$. 

\begin{lem}\label{z_N}
In the setting above, 
one has $z_N\cdot\sigma_N\cdot(z_N\otimes z_N)^{-1}=KK(\delta_N)$. 
\end{lem}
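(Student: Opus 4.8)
The plan is to reduce the statement to the case $N=1$ and then bootstrap using the multiplicativity of the Kasparov product under exterior tensor products. The three objects involved are all $N$-fold tensor powers of their $N=1$ versions once the tensor legs are suitably shuffled: by construction $z_N=z^{\otimes N}$, and the remark "$\delta_n$ can be viewed as $(\delta_1)^{\otimes n}$" quoted just before the lemma means $KK(\delta_N)=KK(\delta_1)^{\otimes N}\cdot KK(\Sigma')$, where $\Sigma'$ is the canonical shuffle isomorphism $(C^*(\Z)\otimes C^*(\Z))^{\otimes N}\to C^*(\Z^N)\otimes C^*(\Z^N)$ regrouping the left and right tensor legs. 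First I would record the analogous identity $\sigma_N=\sigma_1^{\otimes N}\cdot KK(\Sigma)$ for the shuffle $\Sigma\colon(\mathcal{T}\otimes\mathcal{T})^{\otimes N}\to\mathcal{T}^{\otimes N}\otimes\mathcal{T}^{\otimes N}$. Indeed, decomposing each of the $N$ factors $\mathcal{T}=\C\oplus S$, the summand $S_I$ of $\mathcal{T}^{\otimes N}$ receives, for each subset $J\subset I$ recording which of its $S$-legs is sent to the right, exactly one component landing in $S_{I\setminus J}\otimes S_J$, and by definition this component is $\sigma(I,J)$; summing over $J\subset I\subset[N]$ reproduces $\sigma_N$.

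For the base case I would invoke the UCT. All the algebras in sight, namely $C^*(\Z)$, $\mathcal{T}$, and their tensor powers, lie in the bootstrap class and have free finitely generated $K$-groups, so the $\Ext$-term in the UCT of \cite{RS} vanishes and $KK(C^*(\Z),C^*(\Z)\otimes C^*(\Z))\cong\Hom(K_*(C^*(\Z)),K_*(C^*(\Z)\otimes C^*(\Z)))$. It therefore suffices to compare the two sides of $z\cdot\sigma_1\cdot(z\otimes z)^{-1}=KK(\delta_1)$ on the generators $[1]$ and $[u]$ of $K_*(C^*(\Z))$. Normalizing $z$ so that $K_0(z)([1])$ is the class $[e]$ of the unit of the $\C$-summand of $\mathcal{T}$ and $K_1(z)([u])$ is the Bott generator $[\beta]$ of $K_1(S)$, a direct Künneth computation gives $K_*(\delta_1)\colon[1]\mapsto[1]\otimes[1]$ and $[u]\mapsto[u]\otimes[1]+[1]\otimes[u]$, so that $[1]$ is grouplike and $[u]$ is primitive. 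On the other side $\sigma(\emptyset,\emptyset)$ sends $[e]$ to $[e]\otimes[e]$, while $\sigma(\{1\},\emptyset)$ and $\sigma(\{1\},\{1\})$ send $[\beta]$ to $[\beta]\otimes[e]$ and $[e]\otimes[\beta]$ respectively; after conjugating by $z$ the two patterns match, and the sign occurring in $K_1(z)$ cancels between $z$ and $(z\otimes z)^{-1}$.

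Granting the base case, I would conclude by naturality of the shuffle. Since $\Sigma$ and $\Sigma'$ are natural for the symmetric monoidal structure, one has $((z\otimes z)^{\otimes N})^{-1}\cdot KK(\Sigma')=KK(\Sigma)\cdot(z_N\otimes z_N)^{-1}$, whence
\[
z_N\cdot\sigma_N\cdot(z_N\otimes z_N)^{-1}
=\bigl(z\cdot\sigma_1\cdot(z\otimes z)^{-1}\bigr)^{\otimes N}\cdot KK(\Sigma')
=KK(\delta_1)^{\otimes N}\cdot KK(\Sigma')
=KK(\delta_N),
\]
where the first equality uses $\sigma_N=\sigma_1^{\otimes N}\cdot KK(\Sigma)$, the naturality identity, and the compatibility of composition with exterior tensor products, the second uses the base case, and the third is the description of $\delta_N$ as $\delta_1^{\otimes N}$ followed by $\Sigma'$.

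The main obstacle I anticipate is organizational rather than conceptual: keeping the shuffle isomorphisms $\Sigma$ and $\Sigma'$ consistent across the three objects, verifying the naturality identity with the correct grading signs, and pinning down the component description $\sigma_N=\sigma_1^{\otimes N}\cdot KK(\Sigma)$ from the definition of $\sigma(I,J)$ as the canonical invertible element of $KK(S_I,S_{I\setminus J}\otimes S_J)$. Everything else is the short $K$-theory computation of the base case.
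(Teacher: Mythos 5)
Your proposal is correct and follows essentially the same route as the paper: the paper verifies the base case $z_1^{-1}\cdot KK(\delta_1)\cdot(z_1\otimes z_1)=\sigma_1$ (declared ``easy to see,'' i.e.\ exactly the UCT/$K$-theory check you spell out, legitimate since all algebras involved lie in the bootstrap class with free finitely generated $K$-groups) and then runs the induction $\sigma_{n-1}\otimes\sigma_1=\sigma_n$, which is precisely your identity $\sigma_N=\sigma_1^{\otimes N}\cdot KK(\Sigma)$ unrolled one tensor factor at a time. Your explicit normalization $K_0(z)([1])=[e]$ is a worthwhile refinement rather than a deviation: the identity genuinely fails for an invertible $z$ with $K_0(z)([1])=-[e]$ (the two sides then differ by a sign on $K_0$), while the $K_1$ sign cancels as you note, a point the paper leaves implicit in its choice of $z$.
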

\begin{proof}
The proof is by induction. 
It is easy to see that 
\[
z_1^{-1}\cdot KK(\delta_1)\cdot(z_1\otimes z_1)
=\sigma(\emptyset,\emptyset)
+\sigma(\{1\},\emptyset)+\sigma(\{1\},\{1\})=\sigma_1. 
\]
The conclusion follows from 
\begin{align*}
& \sigma_{n-1}\otimes\sigma_1 \\
&=\sum_{J\subset I\subset[n-1]}
\sigma(I,J)\otimes\sigma(\emptyset,\emptyset)
+\sigma(I,J)\otimes\sigma(\{n\},\emptyset)
+\sigma(I,J)\otimes\sigma(\{n\},\{n\}) \\
&=\sum_{J\subset I\subset[n-1]}\sigma(I,J)
+\sum_{J\subset I\subset[n],n\in I,n\notin J}\sigma(I,J)
+\sum_{J\subset I\subset[n],n\in J}\sigma(I,J) \\
&=\sigma_n. 
\end{align*}
\end{proof}

Let $e\in\mathcal{T}^{\otimes N}$ be 
the unit of $S_{\emptyset}\cong\C$. 
Let $H'_{\T^N}(B,B)$ be the subset of 
$KK(A\otimes\mathcal{T}^{\otimes N},A\otimes\mathcal{T}^{\otimes N})$ 
consisting of $x$ satisfying 
\[
K_0(x)([1\otimes e])=[1\otimes e]
\]
and 
\[
x\cdot(1_A\otimes\sigma_N)
=(1_A\otimes\sigma_N)\cdot(x\otimes1_{\mathcal{T}^{\otimes N}}), 
\]
and let $H'_{\T^N}(B,B)^{-1}$ be the subset of $H'_{\T^N}(B,B)$ 
consisting of invertible elements. 

From the above two lemmas, we have the following. 

\begin{lem}\label{H2}
\begin{enumerate}
\item The $KK$-equivalence 
$w\cdot(1_A\otimes z_N)\in KK(B,A\otimes\mathcal{T}^{\otimes N})$ induces 
a bijective correspondence between $H_{\T^N}(B,B)$ and $H'_{\T^N}(B,B)$. 
\item The $KK$-equivalence 
$w\cdot(1_A\otimes z_N)\in KK(B,A\otimes\mathcal{T}^{\otimes N})$ induces 
a bijective correspondence 
between $H_{\T^N}(B,B)^{-1}$ and $H'_{\T^N}(B,B)^{-1}$. 
\end{enumerate}
\end{lem}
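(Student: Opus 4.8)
The plan is to show that both bijections are implemented by conjugation with the $KK$-equivalence $W=w\cdot(1_A\otimes z_N)\in KK(B,A\otimes\mathcal{T}^{\otimes N})$. Since $w$ is a $KK$-equivalence by Lemma \ref{w} and $z_N$ is invertible, $W$ is invertible, so the assignment $x\mapsto x'=W^{-1}\cdot x\cdot W$ is already a bijection from $KK(B,B)$ onto $KK(A\otimes\mathcal{T}^{\otimes N},A\otimes\mathcal{T}^{\otimes N})$. The entire content of the lemma is therefore that this conjugation carries the two defining conditions of $H_{\T^N}(B,B)$ exactly onto those of $H'_{\T^N}(B,B)$; statement (2) will then be immediate, since conjugation by an invertible element preserves invertibility.

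First I would transport the $K_0$-condition. Applying $K_0$ to the relation $KK(j_\alpha)\cdot w=KK(j)$ of Lemma \ref{w} and evaluating at $[1_A]$ gives $K_0(w)([1_B])=[1_A\otimes 1_{C^*(\Z^N)}]$, because $j_\alpha$ and $j$ are unital. Composing with $1_A\otimes z_N$ and using that $K_0$ respects external products yields $K_0(W)([1_B])=[1_A]\times\bigl(K_0(z)([1])\bigr)^{\times N}$. With the normalization of $z$ implicit in Lemma \ref{z_N} one has $K_0(z)([1])=[e]$, whence $K_0(W)([1_B])=[1_A\otimes e]$. As $K_0(W)$ is an isomorphism, $K_0(x)([1])=[1]$ holds if and only if $K_0(x')([1\otimes e])=[1\otimes e]$.

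The heart of the argument is to transport the coaction-commutation condition, and this is where the bookkeeping is delicate. The key identity I would establish is
\[
W\cdot(1_A\otimes\sigma_N)=KK(\halpha)\cdot(1_B\otimes z_N)\cdot(W\otimes1_{\mathcal{T}^{\otimes N}}).
\]
To prove it, expand $W=w\cdot(1_A\otimes z_N)$, rewrite Lemma \ref{z_N} as $(1_A\otimes z_N)\cdot(1_A\otimes\sigma_N)=KK(\id_A\otimes\delta_N)\cdot(1_A\otimes z_N\otimes z_N)$, apply the second relation of Lemma \ref{w}, namely $w\cdot KK(\id_A\otimes\delta_N)=KK(\halpha)\cdot(w\otimes1_{C^*(\Z^N)})$, and collapse the resulting external products by the interchange law $(a\otimes b)\cdot(c\otimes d)=(a\cdot c)\otimes(b\cdot d)$. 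Writing $\Phi=KK(\halpha)\cdot(1_B\otimes z_N)$, the same interchange law together with the invertibility of $z_N$ shows that $x\in KK(B,B)$ satisfies the coaction condition $x\cdot KK(\halpha)=KK(\halpha)\cdot(x\otimes1_{C^*(\Z^N)})$ precisely when $x\cdot\Phi=\Phi\cdot(x\otimes1_{\mathcal{T}^{\otimes N}})$. Substituting the displayed identity into $x'=W^{-1}\cdot x\cdot W$ then converts $x\cdot\Phi=\Phi\cdot(x\otimes1_{\mathcal{T}^{\otimes N}})$ into $x'\cdot(1_A\otimes\sigma_N)=(1_A\otimes\sigma_N)\cdot(x'\otimes1_{\mathcal{T}^{\otimes N}})$, and the reverse implication follows because every factor is invertible.

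I expect the only genuine obstacle to be the fourth step: tracking which tensor leg each factor acts on while repeatedly invoking the interchange law, so that Lemma \ref{z_N} and Lemma \ref{w} can be inserted at exactly the right places. Once the intertwining identity is secured, the rest is formal. Finally, since $x\mapsto x'$ is conjugation by the invertible element $W$, it restricts to a bijection between the invertible elements of $H_{\T^N}(B,B)$ and those of $H'_{\T^N}(B,B)$, which is precisely statement (2).
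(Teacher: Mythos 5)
Your proof is correct and is exactly the argument the paper intends: the paper states Lemma \ref{H2} without proof as an immediate consequence of Lemmas \ref{w} and \ref{z_N}, and your verification---conjugation by the invertible element $w\cdot(1_A\otimes z_N)$, transporting the unit condition via the first relation of Lemma \ref{w}, and transporting the coaction condition via the second relation of Lemma \ref{w} combined with Lemma \ref{z_N} and the interchange law for Kasparov products---is precisely the omitted bookkeeping. Your remark that the normalization $K_0(z)([1])=[e]$ is implicit in Lemma \ref{z_N} is also justified, since applying $K_0$ to the identity of that lemma and evaluating at $[1]$ forces it.
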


Since 
\[
A\otimes\mathcal{T}^{\otimes N}
=\bigoplus_{I\subset[N]}A\otimes S_I, 
\]
any element 
$x\in KK(A\otimes\mathcal{T}^{\otimes N},A\otimes\mathcal{T}^{\otimes N})$ 
can be written as 
\[
x=\sum_{I,J\subset[N]}x(I,J), 
\]
where $x(I,J)$ is in $KK(A\otimes S_I,A\otimes S_J)$. 
Now we calculate $H'_{\T^N}(B,B)$ and $H'_{\T^N}(B,B)^{-1}$ as follows. 

\begin{lem}\label{H3}
For 
\[
x=\sum_{I,J\subset[N]}x(I,J)
\in KK(A\otimes\mathcal{T}^{\otimes N},A\otimes\mathcal{T}^{\otimes N}), 
\]
the following are equivalent. 
\begin{enumerate}
\item $x$ belongs to $H'_{\T^N}(B,B)$. 
\item For each $K\subset[N]$, 
there exists $y_K\in KK(A\otimes S_K,A)$ such that 
$K_0(y_\emptyset)([1])=[1]$ and 
\[
x(I,J)=\begin{cases}
y_{I\setminus J}\otimes 1_{S_J} & \text{ if }J\subset I \\
0 & \text{ otherwise. } \end{cases}
\]
\end{enumerate}
Moreover, $x$ is invertible if and only if 
$y_\emptyset$ is invertible. 
\end{lem}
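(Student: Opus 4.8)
The plan is to work entirely in components with respect to the direct sum decomposition $A\otimes\mathcal{T}^{\otimes N}=\bigoplus_{I\subset[N]}A\otimes S_I$, writing any $x\in KK(A\otimes\mathcal{T}^{\otimes N},A\otimes\mathcal{T}^{\otimes N})$ as the ``matrix'' $x=\sum_{I,J}x(I,J)$ and reading off what the intertwining relation $x\cdot(1_A\otimes\sigma_N)=(1_A\otimes\sigma_N)\cdot(x\otimes1_{\mathcal{T}^{\otimes N}})$ says component by component. First I would compute both sides. Since $\sigma_N=\sum_{J\subset I}\sigma(I,J)$ and $\sigma(I,J)\in KK(S_I,S_{I\setminus J}\otimes S_J)$ is the part of $\sigma_N$ carrying $S_I$ into the summand $S_{I\setminus J}\otimes S_J$ of $\mathcal{T}^{\otimes N}\otimes\mathcal{T}^{\otimes N}$, a routine bookkeeping gives, for the component from $A\otimes S_I$ to $A\otimes S_P\otimes S_Q$,
\[
[x\cdot(1_A\otimes\sigma_N)](I;P,Q)=x(I,P\cup Q)\cdot(1_A\otimes\sigma(P\cup Q,Q))
\]
when $P\cap Q=\emptyset$ (and $0$ otherwise), and
\[
[(1_A\otimes\sigma_N)\cdot(x\otimes1)](I;P,Q)=(1_A\otimes\sigma(I,Q))\cdot(x(I\setminus Q,P)\otimes1_{S_Q})
\]
when $Q\subset I$ (and $0$ otherwise). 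The relation in the definition of $H'_{\T^N}(B,B)$ is exactly the equality of these two families.

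For (1)$\Rightarrow$(2) I would specialize to $P=\emptyset$, where $\sigma(Q,Q)$ and $\sigma(I,\emptyset)$ are the canonical identifications $\C\otimes S_Q\cong S_Q$ and $S_I\otimes\C\cong S_I$. The left component then reduces to $x(I,Q)$, while the right component equals $(1_A\otimes\sigma(I,Q))\cdot(x(I\setminus Q,\emptyset)\otimes1_{S_Q})$ when $Q\subset I$ and is $0$ when $Q\not\subset I$. Reading $Q=J$ and setting $y_K:=x(K,\emptyset)\in KK(A\otimes S_K,A)$, this yields at once $x(I,J)=0$ for $J\not\subset I$ and
\[
x(I,J)=(1_A\otimes\sigma(I,J))\cdot(y_{I\setminus J}\otimes1_{S_J})\qquad(J\subset I),
\]
which is the asserted formula once $A\otimes S_I$ is identified with $A\otimes S_{I\setminus J}\otimes S_J$ via the canonical element $\sigma(I,J)$. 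The normalization $K_0(y_\emptyset)([1])=[1]$ then follows from the first defining condition: $1\otimes e$ sits in the summand $A\otimes S_\emptyset$, and $x(\emptyset,J)=0$ for $J\neq\emptyset$, so $K_0(x)([1\otimes e])=K_0(y_\emptyset)([1])$.

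For (2)$\Rightarrow$(1) I would substitute the ansatz into the two displays above. The normalization is immediate from $x(\emptyset,\emptyset)=y_\emptyset$. For the intertwining relation, writing $I=L\sqcup P\sqcup Q$ with $L=I\setminus(P\cup Q)$, both components become external Kasparov products of $y_L$ with a $\sigma$-term, and the identity to be checked,
\[
(y_L\otimes1_{S_{P\cup Q}})\cdot(1_A\otimes\sigma(P\cup Q,Q))=(1_A\otimes\sigma(I,Q))\cdot(y_L\otimes1_{S_P}\otimes1_{S_Q}),
\]
follows from the coherence relation $\sigma(I,Q)=1_{S_L}\otimes\sigma(P\cup Q,Q)$ among the canonical elements together with the exchange law for external products, since the leg $A\otimes S_L$ on which $y_L$ acts is disjoint from the leg $S_{P\cup Q}$ on which $\sigma$ acts. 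I expect this to be the main obstacle: no single step is deep, but the argument rests on the coherence of the family $\{\sigma(I,J)\}$ and on careful tracking of which tensor legs each factor acts on, so one must confirm that these canonical identifications really compose as claimed.

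Finally, for the invertibility clause I would use that $x(I,J)=0$ for $J\not\subset I$ makes $x$ triangular for the filtration of $\bigoplus_I A\otimes S_I$ by the cardinality $|I|$, with diagonal components $x(I,I)=(1_A\otimes\sigma(I,I))\cdot(y_\emptyset\otimes1_{S_I})$. As $\sigma(I,I)$ is invertible and $-\otimes1_{S_I}$ preserves $KK$-invertibility (because $S_I$ is $KK$-equivalent to $\C$), each $x(I,I)$ is invertible if and only if $y_\emptyset$ is. A triangular endomorphism of a finite direct sum in the additive category $KK$ is invertible precisely when all its diagonal components are: if they are, then $x=d+n$ with $d$ diagonal invertible and $n$ strictly triangular hence nilpotent, so $d^{-1}x=1+d^{-1}n$ is invertible by a finite Neumann series; conversely $x$ and its inverse both respect the filtration, forcing the inverse to be triangular with diagonal $x(I,I)^{-1}$. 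Hence $x$ is invertible if and only if $y_\emptyset$ is, completing the proof.
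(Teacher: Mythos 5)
Your proof is correct and takes essentially the same route as the paper: decompose the intertwining relation into its $KK(A\otimes S_I,A\otimes S_{J'}\otimes S_J)$-components, specialize to $J'=\emptyset$ to obtain $x(I,J)=0$ for $J\not\subset I$ and $x(I,J)=y_{I\setminus J}\otimes 1_{S_J}$ with $y_K=x(K,\emptyset)$, and then settle the invertibility clause by upper-triangularity of $x$ with diagonal entries $y_\emptyset\otimes 1_{S_I}$. One minor slip worth noting: $S_I=S^{\otimes\lvert I\rvert}$ is $KK$-equivalent to $\C$ only when $\lvert I\rvert$ is even (otherwise it is $KK$-equivalent to $S$), but this does not affect your argument, since $-\otimes 1_{S_I}$ preserves invertibility by functoriality of the exterior Kasparov product alone, and for the converse direction you only need invertibility of the $I=\emptyset$ diagonal entry, which is $y_\emptyset$ itself.
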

\begin{proof}
The implication from (2) to (1) is straightforward. 
Let us show the other implication. 
Notice that 
$KK(A\otimes\mathcal{T}^{\otimes N},
A\otimes\mathcal{T}^{\otimes N}\otimes\mathcal{T}^{\otimes N})$ is 
isomorphic to 
\[
\bigoplus_{I,J,J'\subset[N]}
KK(A\otimes S_I,A\otimes S_{J'}\otimes S_J). 
\]
For any $z$ 
in $KK(A\otimes\mathcal{T}^{\otimes N},
A\otimes\mathcal{T}^{\otimes N}\otimes\mathcal{T}^{\otimes N})$, 
we denote its $KK(A\otimes S_I,A\otimes S_{J'}\otimes S_J)$ component 
by $z(I,J',J)$. 
One can see 
\begin{align*}
& (x\cdot(1_A\otimes\sigma_N))(I,J',J) \\
&=\begin{cases}
x(I,J'\cup J)\cdot(1_A\otimes\sigma(J'\cup J,J)) & 
\text{ if }J'\cap J=\emptyset \\
0 & \text{ otherwise } \end{cases}
\end{align*}
and 
\begin{align*}
& ((1_A\otimes\sigma_N)\cdot(x\otimes1_{\mathcal{T}^{\otimes N}}))(I,J',J) \\
&=\begin{cases}
(1_A\otimes\sigma(I,J))\cdot
(x(I\setminus J,J')\otimes1_{S_J}) & 
\text{ if }J\subset I \\
0 & \text{ otherwise. } \end{cases}
\end{align*}
By letting $J'=\emptyset$, we have 
\begin{align*}
& x(I,J)\cdot(1_A\otimes\sigma(J,J)) \\
&=\begin{cases}
(1_A\otimes\sigma(I,J))\cdot
(x(I\setminus J,\emptyset)\otimes1_{\mathcal{T}^{\otimes N}}) & 
\text{ if }J\subset I \\
0 & \text{ otherwise. } \end{cases}
\end{align*}
Hence, if $J$ is not contained in $I$, then $x(I,J)=0$. 
If $J$ is a subset of $I$, then 
\begin{align*}
& x(I,J) \\
&=(1_A\otimes\sigma(I,J))\cdot
(x(I\setminus J,\emptyset)\otimes1_{\mathcal{T}^{\otimes N}})\cdot
(1_A\otimes\sigma(J,J))^{-1} \\
&=x(I\setminus J,\emptyset)\otimes1_{S_J}, 
\end{align*}
which implies (2). 
In addition, it is easy to see that 
\[
x=\sum_{J\subset I\subset[N]}x(I,J)
\]
is of the form of an upper triangular matrix. 
Therefore, $x$ is invertible if and only if 
the diagonal part is invertible. 
Since the diagonal part is given by 
\[
\sum_{I\subset[N]}y_\emptyset\otimes1_{S_I}, 
\]
the proof is completed. 
\end{proof}

Notice that, for $x_1,x_2\in H'_{\T^N}(B,B)$, 
the product $x_1\cdot x_2$ is computed as follows: 
\[
(x_1\cdot x_2)(I,\emptyset)
=\sum_{J\subset I}x_1(I,J)\cdot x_2(J,\emptyset)
=\sum_{J\subset I}
(x_1(I\setminus J,\emptyset)\otimes1_{S_J})\cdot x_2(J,\emptyset). 
\]
From Lemma \ref{H1}, \ref{H2}, \ref{H3} and this observation, 
we get the following proposition. 
For 
\[
y\in KK(A\otimes\mathcal{T}^{\otimes N},A)
=\bigoplus_{I\subset[N]}KK(A\otimes S_I,A), 
\]
its $KK(A\otimes S_I,A)$ component is denoted by $y_I$. 

\begin{prop}\label{equivKK}
There exists a surjective map $\Theta$ from $\Hom_{\T^N}(B,B)$ to 
\[
H''=\{y\in KK(A\otimes\mathcal{T}^{\otimes N},A)\mid
K_0(y_\emptyset)([1]){=}[1]\}
\]
satisfying the following. 
\begin{enumerate}
\item $\rho_1,\rho_2\in\Hom_{\T^N}(B,B)$ are 
$\T^N$-asymptotically unitarily equivalent 
if and only if $\Theta(\rho_1)=\Theta(\rho_2)$. 
\item If $y$ is in $H''$ and $y_\emptyset\in KK(A,A)$ is invertible, 
then there exists $\rho\in\Aut_{\T^N}(B)$ such that $\Theta(\rho)=y$. 
\item For $\rho_1,\rho_2\in\Hom_{\T^N}(B,B)$, 
$\Theta(\rho_2\circ\rho_1)$ is given by 
\[
\Theta(\rho_2\circ\rho_1)_I=\sum_{J\subset I}
(\Theta(\rho_1)_{I\setminus J}\otimes 1_{S_J})\cdot\Theta(\rho_2)_J, 
\qquad I\subset[N]. 
\]
\end{enumerate}
\end{prop}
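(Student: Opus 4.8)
The plan is to define $\Theta$ as the composite of the three bijections established in Lemmas \ref{H1}, \ref{H2} and \ref{H3}, and then to read off the three asserted properties from the corresponding properties of those bijections. First I would fix the $KK$-equivalence $W=w\cdot(1_A\otimes z_N)\in KK(B,A\otimes\mathcal{T}^{\otimes N})$ and, for $\rho\in\Hom_{\T^N}(B,B)$, form $KK(\rho)\in KK(B,B)$. Since $(\rho\otimes\id_{C^*(\Z^N)})\circ\halpha=\halpha\circ\rho$ and $\rho$ is unital, $KK(\rho)$ lies in $H_{\T^N}(B,B)$. Conjugating by $W$ produces $W^{-1}\cdot KK(\rho)\cdot W\in H'_{\T^N}(B,B)$ by Lemma \ref{H2}, and Lemma \ref{H3} shows this element is completely determined by its bottom row; I would therefore set $\Theta(\rho)_I=(W^{-1}\cdot KK(\rho)\cdot W)(I,\emptyset)$ for each $I\subset[N]$. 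Because $K_0(KK(\rho))([1])=[1]$, the resulting $y=\Theta(\rho)$ satisfies $K_0(y_\emptyset)([1])=[1]$, so $\Theta$ indeed lands in $H''$.

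Surjectivity of $\Theta$ onto $H''$ follows by running the bijections backwards: given $y\in H''$, Lemma \ref{H3} reconstructs a unique $x\in H'_{\T^N}(B,B)$ with bottom row $y$, Lemma \ref{H2} transports it to an element of $H_{\T^N}(B,B)$, and Lemma \ref{H1}(1)—whose surjectivity rests on Theorem \ref{existence} together with Theorem \ref{Phillips}—produces $\rho\in\Hom_{\T^N}(B,B)$ realising it, whence $\Theta(\rho)=y$. For property (1), I would observe that $\Theta(\rho_1)=\Theta(\rho_2)$ is equivalent, step by step, to equality of the associated elements of $H'_{\T^N}(B,B)$ (since Lemma \ref{H3} reconstructs the full $x$ from its bottom row), hence of $H_{\T^N}(B,B)$ by Lemma \ref{H2}, hence to $KK(\rho_1)=KK(\rho_2)$; by Lemma \ref{H1}(1), which encodes Theorem \ref{G-asymp}, this holds if and only if $\rho_1$ and $\rho_2$ are $\T^N$-asymptotically unitarily equivalent. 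Property (2) is analogous: if $y_\emptyset$ is invertible, the final clause of Lemma \ref{H3} places the corresponding $x$ in $H'_{\T^N}(B,B)^{-1}$, Lemma \ref{H2}(2) carries it into $H_{\T^N}(B,B)^{-1}$, and Lemma \ref{H1}(2) yields $\rho\in\Aut_{\T^N}(B)$ with $\Theta(\rho)=y$.

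For the composition formula (3), the key algebraic input is $KK(\rho_2\circ\rho_1)=KK(\rho_1)\cdot KK(\rho_2)$, so that conjugation by $W$ turns composition in $\Hom_{\T^N}(B,B)$ into the Kasparov product in $H'_{\T^N}(B,B)$: the element attached to $\rho_2\circ\rho_1$ is $x_1\cdot x_2$, where $x_i$ is attached to $\rho_i$. I would then read off its bottom row using exactly the product computation recorded just before the proposition, namely
\[
(x_1\cdot x_2)(I,\emptyset)
=\sum_{J\subset I}(x_1(I\setminus J,\emptyset)\otimes1_{S_J})\cdot x_2(J,\emptyset),
\]
and substitute $x_i(K,\emptyset)=\Theta(\rho_i)_K$ to obtain the stated formula. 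The only genuine obstacle here is bookkeeping: I must keep the variance of the $KK$-functor straight, so that composition becomes $x_1\cdot x_2$ rather than $x_2\cdot x_1$, and make sure the conjugation by the non-symmetric equivalence $W$ respects the coaction compatibility in the correct order. Once that is fixed, the whole proposition reduces to the triangular structure already extracted in Lemma \ref{H3}.
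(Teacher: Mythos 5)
Your proposal is correct and follows essentially the same route as the paper: the paper's proof is precisely the composite of Lemmas \ref{H1}, \ref{H2}, \ref{H3} together with the product computation recorded just before the proposition, with $\Theta$ given by conjugating $KK(\rho)$ by the equivalence $w\cdot(1_A\otimes z_N)$ and extracting the components $x(I,\emptyset)$. Your handling of the order of the Kasparov product, namely $KK(\rho_2\circ\rho_1)=KK(\rho_1)\cdot KK(\rho_2)$ so that the element attached to the composite is $x_1\cdot x_2$, matches the paper's conventions exactly.
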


\begin{rem}
When $A=\mathcal{O}_\infty$, 
\[
\{y\in H''\mid y_\emptyset\text{ is invertible}\}
\]
is isomorphic to $\Z^{2^{N-1}-1}$. 
For any $\rho\in\Aut_{\T^N}(B)$, 
$\alpha:\Z^N\curvearrowright A$ and $\rho|A$ give rise to 
a cocycle action of $\Z^{N+1}$ on $A$. 
Hence, when $N\geq2$, 
there exist infinitely many outer cocycle $\Z^{N+1}$-actions 
on $\mathcal{O}_\infty$ which are not mutually equivalent. 
In other words, 
$\mathcal{O}_\infty\otimes\K$ admits 
infinitely many outer $\Z^{N+1}$-actions 
which are not mutually cocycle conjugate. 
Moreover, 
there exist infinitely many cocycle $\Z^{N+1}$-actions 
on $\mathcal{O}_\infty$ which are not equivalent to a genuine action, 
because outer $\Z^{N+1}$-actions on $\mathcal{O}_\infty$ 
are unique up to cocycle conjugacy by Theorem \ref{uniqueO_infty}. 
\end{rem}

Let $H_{\T^N}(C^*(\Z^N),B)$ be the subset of $KK(C^*(\Z^N),B)$ 
consisting of $x$ satisfying 
\[
K_0(x)([1])=[1]
\]
and 
\[
x\cdot KK(\halpha)=KK(\delta_N)\cdot(x\otimes1_{C^*(\Z^N)}). 
\]
There exists a natural map 
from $\Hom_{\T^N}(C^*(\Z^N),B)$ to $H_{\T^N}(C^*(\Z^N),B)$. 

Let $H'_{\T^N}(C^*(\Z^N),B)$ be the subset of 
$KK(\mathcal{T}^{\otimes N},A\otimes\mathcal{T}^{\otimes N})$ 
consisting of $a$ satisfying 
\[
K_0(a)([e])=[1\otimes e]
\]
and 
\[
a\cdot(1_A\otimes\sigma_N)
=\sigma_N\cdot(a\otimes1_{\mathcal{T}^{\otimes N}}). 
\]
From Lemma \ref{w} and \ref{z_N}, one has the following. 

\begin{lem}\label{H4}
The map 
\[
x\mapsto z_N^{-1}\cdot x\cdot w\cdot (1_A\otimes z_N)
\]
gives a bijective correspondence between 
$H_{\T^N}(C^*(\Z^N),B)$ and $H'_{\T^N}(C^*(\Z^N),B)$. 
\end{lem}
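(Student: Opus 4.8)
Since $z_N$ and $w$ are $KK$-equivalences, the assignment $x\mapsto a:=z_N^{-1}\cdot x\cdot w\cdot(1_A\otimes z_N)$ is visibly a bijection of the ambient groups $KK(C^*(\Z^N),B)$ and $KK(\mathcal{T}^{\otimes N},A\otimes\mathcal{T}^{\otimes N})$, with inverse $a\mapsto z_N\cdot a\cdot(1_A\otimes z_N)^{-1}\cdot w^{-1}$. So the whole task is to show that this bijection restricts to one between the distinguished subsets, i.e. that the two defining conditions transform into one another. The plan is to treat the $K_0$-normalization and the coaction-equivariance separately, obtaining an \emph{if and only if} in each case, so that no separate surjectivity argument onto $H'_{\T^N}(C^*(\Z^N),B)$ is needed.

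For the $K_0$-part I would evaluate $K_0(a)([e])$ by pushing $[e]$ through the product from left to right. Choosing $z$ so that $K_0(z)([1])=[e]$ (hence $K_0(z_N)([1])=[e]$ and $K_0(z_N^{-1})([e])=[1]$), the class $[e]$ is carried to $[1]\in K_0(C^*(\Z^N))$; then $K_0(x)$ is applied; then $K_0(w)$ sends $[1_B]$ to $[1_A\otimes1]$, which is the one genuine input and follows from $KK(j_\alpha)\cdot w=KK(j)$ in Lemma \ref{w} together with unitality of $j_\alpha$ and $j$; finally $1_A\otimes z_N$ sends $[1_A\otimes1]$ to $[1_A\otimes e]$. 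Reading the chain backwards (all four maps being isomorphisms on $K_0$), I conclude $K_0(x)([1])=[1]$ if and only if $K_0(a)([e])=[1\otimes e]$.

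The main work is the equivariance relation, and this is where I expect the only real difficulty — the bookkeeping of tensor legs — to appear. I would put $P=a\cdot(1_A\otimes\sigma_N)$ and $Q=\sigma_N\cdot(a\otimes1_{\mathcal{T}^{\otimes N}})$ and reduce both to a common form flanked by the same invertible factors. For $P$, substitute $a$, rewrite $1_A\otimes(z_N\cdot\sigma_N)$ as $KK(\id_A\otimes\delta_N)\cdot(1_A\otimes z_N\otimes z_N)$ using Lemma \ref{z_N}, and then apply the relation $w\cdot KK(\id_A\otimes\delta_N)=KK(\halpha)\cdot(w\otimes1_{C^*(\Z^N)})$ of Lemma \ref{w} to obtain
\[
P=z_N^{-1}\cdot\bigl(x\cdot KK(\halpha)\bigr)\cdot(w\otimes1_{C^*(\Z^N)})\cdot(1_A\otimes z_N\otimes z_N).
\]
For $Q$, distribute $a\otimes1_{\mathcal{T}^{\otimes N}}$ into exterior products, use Lemma \ref{z_N} in the form $\sigma_N\cdot(z_N^{-1}\otimes1_{\mathcal{T}^{\otimes N}})=z_N^{-1}\cdot KK(\delta_N)\cdot(1_{C^*(\Z^N)}\otimes z_N)$, and collapse the remaining factors by the interchange law $(p_1\otimes q_1)\cdot(p_2\otimes q_2)=(p_1\cdot p_2)\otimes(q_1\cdot q_2)$ to get
\[
Q=z_N^{-1}\cdot\bigl(KK(\delta_N)\cdot(x\otimes1_{C^*(\Z^N)})\bigr)\cdot(w\otimes1_{C^*(\Z^N)})\cdot(1_A\otimes z_N\otimes z_N).
\]
Now the left factor $z_N^{-1}$ and the right factor $(w\otimes1_{C^*(\Z^N)})\cdot(1_A\otimes z_N\otimes z_N)$ are invertible and identical in the two expressions, so $P=Q$ holds exactly when $x\cdot KK(\halpha)=KK(\delta_N)\cdot(x\otimes1_{C^*(\Z^N)})$, which is precisely the defining relation of $H_{\T^N}(C^*(\Z^N),B)$, giving the equivalence in both directions simultaneously.

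The hard part is thus entirely organizational rather than analytic: I must keep careful track of which copy of $C^*(\Z^N)$ or $\mathcal{T}^{\otimes N}$ each identity element acts on, and apply the interchange law so that the two normal forms for $P$ and $Q$ genuinely line up along identical flanking factors. Once that bookkeeping is fixed, both reductions are forced by Lemmas \ref{w} and \ref{z_N} and the functoriality of exterior Kasparov products, and the lemma follows.
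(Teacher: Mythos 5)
Your proof is correct and coincides with the paper's (entirely implicit) argument: the paper proves Lemma \ref{H4} only by the words ``From Lemma \ref{w} and \ref{z_N}, one has the following,'' and your computation --- conjugating the two defining relations by the invertible elements $z_N$, $w\otimes 1_{C^*(\Z^N)}$ and $1_A\otimes z_N\otimes z_N$, with Lemma \ref{w} handling the coaction condition and Lemma \ref{z_N} converting $\delta_N$ into $\sigma_N$ --- is exactly the verification being left to the reader. The only slip is cosmetic: in the $K_0$ step, $K_0(x)$ need not be an isomorphism (elements of $H_{\T^N}(C^*(\Z^N),B)$ are not assumed invertible), but your ``reading backwards'' only requires the three flanking maps to be isomorphisms on $K_0$, which they are.
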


For $a\in H'_{\T^N}(C^*(\Z^N),B)$ and $I,J\subset [N]$, 
we denote the $KK(S_I,A\otimes S_J)$ component of $a$ by $a(I,J)$. 
Let $h:\C\to A$ be the unital homomorphism. 
In a similar fashion to Lemma \ref{H3}, 
we have the following. 

\begin{lem}\label{H5}
For 
\[
a=\sum_{I,J\subset[N]}a(I,J)
\in KK(\mathcal{T}^{\otimes N},A\otimes\mathcal{T}^{\otimes N}), 
\]
the following are equivalent. 
\begin{enumerate}
\item $a$ belongs to $H'_{\T^N}(C^*(\Z^N),B)$. 
\item For each $K\subset[N]$, 
there exists $b_K\in KK(S_K,A)$ such that 
$b_\emptyset=KK(h)$ and 
\[
a(I,J)=\begin{cases}
b_{I\setminus J}\otimes 1_{S_J} & \text{ if }J\subset I \\
0 & \text{ otherwise. } \end{cases}
\]
\end{enumerate}
\end{lem}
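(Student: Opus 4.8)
The plan is to run the proof of Lemma \ref{H3} essentially verbatim, with the single structural change that the source algebra now carries no tensor factor of $A$: the domain is $\mathcal{T}^{\otimes N}$ rather than $A\otimes\mathcal{T}^{\otimes N}$, and correspondingly the left-hand intertwiner appearing in the defining relation is $\sigma_N$ rather than $1_A\otimes\sigma_N$. The implication (2)$\Rightarrow$(1) is a routine verification: substituting $a(I,J)=b_{I\setminus J}\otimes1_{S_J}$ for $J\subset I$ (and $0$ otherwise) into the intertwining relation and checking that $b_\emptyset=KK(h)$ forces $K_0(a)([e])=[1\otimes e]$. So the substance lies in (1)$\Rightarrow$(2).

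For that direction, I would first use the decomposition
\[
KK(\mathcal{T}^{\otimes N},A\otimes\mathcal{T}^{\otimes N}\otimes\mathcal{T}^{\otimes N})
\cong\bigoplus_{I,J',J\subset[N]}KK(S_I,A\otimes S_{J'}\otimes S_J),
\]
writing $z(I,J',J)$ for the components of an element $z$. Expanding $\sigma_N=\sum_{J\subset I}\sigma(I,J)$, I would compute the $(I,J',J)$-components of both sides of $a\cdot(1_A\otimes\sigma_N)=\sigma_N\cdot(a\otimes1_{\mathcal{T}^{\otimes N}})$, obtaining
\[
(a\cdot(1_A\otimes\sigma_N))(I,J',J)=a(I,J'\cup J)\cdot(1_A\otimes\sigma(J'\cup J,J))
\]
when $J'\cap J=\emptyset$ (and $0$ otherwise), and
\[
(\sigma_N\cdot(a\otimes1_{\mathcal{T}^{\otimes N}}))(I,J',J)=\sigma(I,J)\cdot(a(I\setminus J,J')\otimes1_{S_J})
\]
when $J\subset I$ (and $0$ otherwise). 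These are exactly the formulas of Lemma \ref{H3} with the leading $1_A$ of the right-hand coefficient deleted, reflecting the absence of the $A$-factor on the source.

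Next I would specialize to $J'=\emptyset$ and equate, obtaining $a(I,J)\cdot(1_A\otimes\sigma(J,J))=0$ when $J\not\subset I$ and $a(I,J)\cdot(1_A\otimes\sigma(J,J))=\sigma(I,J)\cdot(a(I\setminus J,\emptyset)\otimes1_{S_J})$ when $J\subset I$. Since every $\sigma(\cdot,\cdot)$ is invertible, the first equation forces $a(I,J)=0$ whenever $J\not\subset I$, and the second gives $a(I,J)=a(I\setminus J,\emptyset)\otimes1_{S_J}$ after cancelling the canonical identifications. Setting $b_K=a(K,\emptyset)\in KK(S_K,A)$ then yields the asserted formula, and the normalization $b_\emptyset=KK(h)$ follows from $K_0(a)([e])=[1\otimes e]$ via the standard isomorphism $KK(\C,A)\cong K_0(A)$, under which $b_\emptyset=a(\emptyset,\emptyset)$ is pinned down by $K_0(b_\emptyset)([1])=[1_A]$, the defining property of $KK(h)$. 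The only real obstacle is the bookkeeping of the invertible elements $\sigma(I,J)$ together with the implicit identifications $S_I\otimes S_J\cong S_{I\cup J}$ and $S_\emptyset\cong\C$: one must keep careful track of which leg each copy of $\mathcal{T}^{\otimes N}$ occupies so the Kasparov products are formed over the correct algebra. Because Lemma \ref{H3} already carries out the structurally identical calculation, I expect no genuinely new difficulty, only the need to transcribe that argument with the domain $A$-factor suppressed.
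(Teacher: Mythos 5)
Your proposal is correct and follows exactly the route the paper intends: the paper's own proof of Lemma \ref{H5} consists of the single remark ``in a similar fashion to Lemma \ref{H3},'' and your transcription of that argument with the domain $A$-factor suppressed (the component formulas with $\sigma(I,J)$ in place of $1_A\otimes\sigma(I,J)$ on the left, the specialization $J'=\emptyset$, and the identification $b_\emptyset=a(\emptyset,\emptyset)=KK(h)$ via $KK(\C,A)\cong K_0(A)$) is precisely the intended adaptation.
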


We let $\iota:C^*(\Z^N)\to B$ denote the canonical embedding. 
When $\{u_g\}_{g\in\Z^N}$ is an $\alpha$-cocycle in $A$, 
one can define a homomorphism $\iota_u:C^*(\Z^N)\to B$ 
by $\iota_u(\lambda_g)=u_g\lambda^\alpha_g$ for $g\in\Z^N$. 
Clearly $\iota_u$ belongs to $\Hom_{\T^N}(C^*(\Z^N),B)$. 

\begin{lem}\label{app1}
If $A$ satisfies the universal coefficient theorem and 
\[
K_i(A)=\{f([1])\mid f\in\Hom(K_0(A),K_i(A))\}
\]
for each $i=0,1$, then 
for any $\alpha$-cocycle $\{u_g\}_{g\in\Z^N}$ in $A$, 
there exists an automorphism $\mu\in\Aut_{\T^N}(B)$ such that 
$KK(\mu|A)=1_A$ and $KK(\mu\circ\iota)=KK(\iota_u)$. 
When $N=1$, we only need the assumption on $K_1(A)$. 
\end{lem}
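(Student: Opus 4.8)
The plan is to carry out the whole argument inside the $KK$-theoretic bookkeeping set up in Lemmas \ref{H1}--\ref{H5} and Proposition \ref{equivKK}, thereby reducing the construction of $\mu$ to solving a triangular system of $KK$-equations whose solvability is governed precisely by the stated $K$-theoretic hypothesis. First I would record the invariants of the two embeddings. Both $\iota$ and $\iota_u$ lie in $\Hom_{\T^N}(C^*(\Z^N),B)$, so their $KK$-classes belong to $H_{\T^N}(C^*(\Z^N),B)$; transporting them through the bijections of Lemma \ref{H4} and Lemma \ref{H5} produces two families $\{b^{(0)}_K\}_{K\subset[N]}$ and $\{b^{(1)}_K\}_{K\subset[N]}$ with $b^{(0)}_K,b^{(1)}_K\in KK(S_K,A)$ and $b^{(0)}_\emptyset=b^{(1)}_\emptyset=KK(h)$. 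Since the correspondence is bijective, the target identity $KK(\mu\circ\iota)=KK(\iota_u)$ is equivalent to the equality of the families attached to $\mu\circ\iota$ and to $\iota_u$.

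Next I would set up the equation for $\mu$ through its invariant $y=\Theta(\mu)$, with components $y_I\in KK(A\otimes S_I,A)$. The condition $KK(\mu|A)=1_A$ corresponds to $y_\emptyset=1_A$: indeed $\mu$ preserves the fixed-point algebra $A$ of $\hat\alpha$, and $\mu\circ j_\alpha=j_\alpha\circ(\mu|A)$ together with $KK(j_\alpha)\cdot w=KK(j)$ of Lemma \ref{w} identifies the $\emptyset$-component of $\Theta(\mu)$ with $KK(\mu|A)$. Applying the composition formula of Proposition \ref{equivKK}(3) to the pair $(\iota,\mu)$ in place of $(\rho_1,\rho_2)$, the $I$-component of the invariant of $\mu\circ\iota$ is $\sum_{J\subset I}(b^{(0)}_{I\setminus J}\otimes 1_{S_J})\cdot y_J$. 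Isolating $J=I$ and $J=\emptyset$ and using $b^{(0)}_\emptyset=KK(h)$, the requirement $KK(\mu\circ\iota)=KK(\iota_u)$ becomes, for each $I\subset[N]$,
\[
KK(h\otimes\id_{S_I})\cdot y_I
= b^{(1)}_I-b^{(0)}_I-\sum_{\emptyset\neq J\subsetneq I}(b^{(0)}_{I\setminus J}\otimes 1_{S_J})\cdot y_J.
\]
For $I=\emptyset$ this is vacuous beyond $y_\emptyset=1_A$, and for $|I|\geq1$ the right-hand side depends only on the $y_J$ with $J\subsetneq I$; so I would solve by induction on $|I|$, choosing $y_I$ at each stage to satisfy the displayed equation.

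The hard part will be the inductive step, namely showing that
\[
\Phi_I\colon KK(A\otimes S_I,A)\to KK(S_I,A),\qquad \Phi_I(y)=KK(h\otimes\id_{S_I})\cdot y,
\]
is surjective, so that the right-hand side always lies in its image. Here I would use naturality of the universal coefficient theorem for $A$. Since $S_I\cong S^{\otimes|I|}$ has free $K$-theory, equal to $\Z$ concentrated in degree $|I|\bmod 2$, the $\Ext$ term vanishes, $KK(S_I,A)\cong K_{|I|\bmod 2}(A)$, and surjectivity of $\Phi_I$ is equivalent to surjectivity of the induced map on $\Hom$-groups. Computing $K_*(A\otimes S_I)$ by the Künneth formula and observing that $(h\otimes\id_{S_I})_*$ carries the generator of $K_{|I|}(S_I)$ to the class corresponding to $[1]\in K_0(A)$, this $\Hom$-level map is exactly
\[
\Hom(K_0(A),K_{|I|\bmod 2}(A))\to K_{|I|\bmod 2}(A),\qquad f\mapsto f([1]),
\]
whose surjectivity is precisely the hypothesis $K_i(A)=\{f([1])\mid f\in\Hom(K_0(A),K_i(A))\}$ for $i=|I|\bmod 2$. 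When $N=1$ only $I=\{1\}$ occurs, so only the assumption on $K_1(A)$ is needed, whereas for $N\geq2$ both parities appear and both assumptions are used.

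Finally, having produced $y=\{y_K\}$ with $y_\emptyset=1_A$ (in particular invertible), I would invoke Proposition \ref{equivKK}(2) to obtain $\mu\in\Aut_{\T^N}(B)$ with $\Theta(\mu)=y$. By construction $KK(\mu|A)=1_A$, and the invariant of $\mu\circ\iota$ agrees with that of $\iota_u$, so $KK(\mu\circ\iota)=KK(\iota_u)$, which completes the proof. I expect essentially all of the difficulty to be concentrated in the surjectivity of $\Phi_I$ and in verifying that it is controlled by exactly the displayed $K$-theoretic condition; the triangular recursion and the passage between $\mu$, its invariant, and the families $\{b_K\}$ are routine given the established machinery.
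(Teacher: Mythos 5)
Your proposal is correct, and its core coincides with the paper's own proof: you transport $KK(\iota)$ and $KK(\iota_u)$ into $H'_{\T^N}(C^*(\Z^N),B)$ via Lemmas \ref{H4} and \ref{H5} (your families $b^{(0)}_K,b^{(1)}_K$ are the paper's $a(K,\emptyset)$ and $c(K,\emptyset)$), solve the same triangular system by induction on $\lvert I\rvert$, justify the key surjectivity of $y\mapsto KK(h\otimes\id_{S_I})\cdot y$ by the UCT plus the hypothesis on $K_i(A)$ (the paper asserts this in one sentence; your K\"unneth/parity elaboration, including the $N=1$ remark, is exactly right), and realize the solution by an automorphism via Proposition \ref{equivKK}~(2). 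Two remarks. First, Proposition \ref{equivKK}~(3) is stated only for endomorphisms of $B$; what you actually need is the product computation $(a\cdot x)(I,\emptyset)=\sum_{J\subset I}a(I,J)\cdot x(J,\emptyset)$ with $a\in H'_{\T^N}(C^*(\Z^N),B)$ and $x\in H'_{\T^N}(B,B)$, which is what the paper performs directly; this is a relabeling, not a gap. Second, and more interestingly, you diverge from the paper on the last step, $KK(\mu|A)=1_A$. The paper deduces it analytically: from $x(\emptyset,\emptyset)=1_A$ it gets $KK(\mu\circ j_\alpha)=KK(j_\alpha)$, invokes Theorem \ref{Phillips}~(2) to obtain asymptotic unitary equivalence of $\mu\circ j_\alpha$ and $j_\alpha$ by unitaries in $B$, and then uses asymptotic representability of $\alpha$ (as in the proof of Theorem \ref{G-asymp}) to push the unitaries down into $A$. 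You instead claim the purely $KK$-theoretic identification $\Theta(\mu)_\emptyset=KK(\mu|A)$. This identification is in fact valid, but your one-line justification compresses the step that makes it work: from $\mu\circ j_\alpha=j_\alpha\circ(\mu|A)$ and Lemma \ref{w} one gets $KK(\mu|A)\cdot\ep=\ep\cdot x=y_\emptyset\cdot\ep$, where $\ep=KK(j)\cdot(1_A\otimes z_N)$ is the class of $a\mapsto a\otimes e$ and the second equality uses the triangular form of $x$ from Lemma \ref{H3}; to cancel $\ep$ one must then use that it is split injective, e.g.\ by composing with the class of the projection of $A\otimes\mathcal{T}^{\otimes N}$ onto the direct summand $A\otimes S_\emptyset\cong A$. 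With that point made explicit, your route is arguably cleaner than the paper's: it stays entirely inside the $KK$ bookkeeping and needs no second appeal to Phillips' asymptotic uniqueness theorem or to asymptotic representability at this stage (those are, of course, already built into Lemmas \ref{H1}--\ref{H2} and Proposition \ref{equivKK} themselves).
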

\begin{proof}
For $I\subset[N]$, 
we let $\lvert I\rvert$ denote the cardinality of $I$. 
There exists a natural isomorphism between $KK(S_I,A)$ and $K_i(A)$, 
where $i=0$ if $\lvert I\rvert$ is even and 
$i=1$ if $\lvert I\rvert$ is odd. 
From the assumption, we have the following: 
for any $q\in KK(S_I,A)$, 
there exists $p\in KK(A\otimes S_I,A)$ such that 
$KK(h\otimes\id_{S_I})\cdot p=q$. 

By Lemma \ref{H4}, 
\[
a=z_N^{-1}\cdot KK(\iota)\cdot w\cdot (1_A\otimes z_N)
\]
and 
\[
c=z_N^{-1}\cdot KK(\iota_u)\cdot w\cdot (1_A\otimes z_N)
\]
are in $H'_{\T^N}(C^*(\Z^N),B)$. 
We claim that 
there exists $x\in H'_{\T^N}(B,B)$ such that $a\cdot x=c$. 

We construct $y_I\in KK(A\otimes S_I,A)$ for each $I\subset[N]$ 
by using the induction on $\lvert I\rvert$. 
First, we let $y_\emptyset=1_A\in KK(A,A)$. 
Suppose that 
$y_I$ has been chosen for every $I\subset[N]$ with $\lvert I\rvert<n$. 
Take $I\subset[N]$ such that $\lvert I\rvert=n$. 
Put 
\[
q=c(I,\emptyset)
-\sum_{J\subset I,J\neq I}a(I,J)\cdot y_J\in KK(S_I,A). 
\]
Then there exists $p\in KK(A\otimes S_I,A)$ such that 
$KK(h\otimes\id_{S_I})\cdot p=q$. 
We let $y_I=p$. 
Define 
$x\in KK(A\otimes\mathcal{T}^{\otimes N},A\otimes\mathcal{T}^{\otimes N})$ 
by 
\[
x(I,J)=\begin{cases}
y_{I\setminus J}\otimes 1_{S_J} & \text{ if }J\subset I \\
0 & \text{ otherwise. } \end{cases}
\]
By Lemma \ref{H3}, $x$ belongs to $H'_{\T^N}(B,B)$. 
For every $I\subset[N]$, 
\begin{align*}
(a\cdot x)(I,\emptyset)
&=\sum_{J\subset I}a(I,J)\cdot x(J,\emptyset) \\
&=\sum_{J\subset I,J\neq I}a(I,J)\cdot y_J+a(I,I)\cdot y_I \\
&=c(I,\emptyset), 
\end{align*}
because $a(I,I)=KK(h)\otimes 1_{S_I}$ by Lemma \ref{H5}. 
Since $a\cdot x$ is in $H'_{\T^N}(C^*(\Z^N),B)$, 
we get $a\cdot x=c$ by Lemma \ref{H5}. 
Notice that $x(\emptyset,\emptyset)=1_A$. 

It follows from Lemma \ref{H1} and \ref{H2} that 
there exists an automorphism $\mu\in\Aut_{\T^N}(B)$ such that 
$KK(\mu\circ j_\alpha)=KK(j_\alpha)$ and 
\[
KK(\mu)=w\cdot(1_A\otimes z_N)\cdot x
\cdot(1_A\otimes z_N)^{-1}\cdot w^{-1}. 
\]
Then one has 
\begin{align*}
KK(\mu\circ\iota)
&=(z_N\cdot a\cdot(1_A\otimes z_N)^{-1}\cdot w^{-1})\cdot 
(w\cdot(1_A\otimes z_N)\cdot x\cdot(1_A\otimes z_N)^{-1}\cdot w^{-1}) \\
&=z_N\cdot a\cdot x\cdot(1_A\otimes z_N)^{-1}\cdot w^{-1} \\
&=z_N\cdot c\cdot(1_A\otimes z_N)^{-1}\cdot w^{-1} \\
&=KK(\iota_u). 
\end{align*}
By Theorem \ref{Phillips} (2), 
$\mu\circ j_\alpha$ and $j_\alpha$ are asymptotically unitarily equivalent 
in $\Hom(A,B)$. 
Since $\alpha:\Z^N\curvearrowright A$ is asymptotically representable, 
by using an argument similar to the proof of Theorem \ref{G-asymp}, 
one can show that 
$\mu\circ j_\alpha|A$ and $j_\alpha|A$ are asymptotically unitarily equivalent 
in $\Hom(A,A)$, which implies $KK(\mu|A)=1_A$. 
\end{proof}

\begin{thm}\label{app2}
Let $A$ be a unital Kirchberg algebra 
satisfying the universal coefficient theorem and suppose that 
\[
K_i(A)=\{f([1])\mid f\in\Hom(K_0(A),K_i(A))\}
\]
for each $i=0,1$. 
Then any asymptotically representable outer actions of $\Z^N$ on $A$ 
are strongly $KK$-trivially cocycle conjugate to each other. 
When $N=1$, we only need the assumption on $K_1(A)$. 
\end{thm}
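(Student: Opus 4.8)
The plan is to take the $KK$-trivial cocycle conjugacy already furnished by Theorem \ref{uniqueasymprepre} and promote it to a \emph{strong} one, the only extra content being that the implementing cocycle can be arranged to be an approximate coboundary. Concretely, Theorem \ref{uniqueasymprepre} supplies $\mu\in\Aut(A)$ with $KK(\mu)=1_A$ and an $\alpha$-cocycle $\{u_g\}_{g\in\Z^N}$ with $\alpha^u_g=\mu^{-1}\circ\beta_g\circ\mu$ for all $g$. Since strong $KK$-trivial cocycle conjugacy is an equivalence relation and a $KK$-trivial conjugacy is the special case of it with trivial cocycle, the genuine conjugacy $\alpha^u\cong\beta$ is already strong and $KK$-trivial; thus the theorem follows once I show that $\alpha^u$ and $\alpha$ are strongly $KK$-trivially cocycle conjugate. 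Morally this means making $\{u_g\}$ an approximate coboundary, which by Corollary \ref{strongCC}(1) (with the trivial cocycle $v\equiv1$) is equivalent to $\iota_u$ and $\iota$ being approximately unitarily equivalent, i.e.\ by Theorem \ref{Phillips}(2) to $KK(\iota_u)=KK(\iota)$ in $KK(C^*(\Z^N),B)$.

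In general the cocycle produced by Theorem \ref{uniqueasymprepre} need not satisfy $KK(\iota_u)=KK(\iota)$, so I would correct its $KK$-class by an equivariant automorphism. Applying Lemma \ref{app1} to $\{u_g\}$ yields $\nu\in\Aut_{\T^N}(B)$ with $KK(\nu|A)=1_A$ and $KK(\nu\circ\iota)=KK(\iota_u)$. As $KK(\nu\circ\iota)=KK(\iota)\cdot KK(\nu)$, multiplying the identity $KK(\iota_u)=KK(\iota)\cdot KK(\nu)$ on the right by $KK(\nu^{-1})$ and using $KK(\nu)\cdot KK(\nu^{-1})=KK(\nu^{-1}\circ\nu)=1_B$ gives $KK(\nu^{-1}\circ\iota_u)=KK(\iota_u)\cdot KK(\nu^{-1})=KK(\iota)$. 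Hence $\nu^{-1}\circ\iota_u$ and $\iota$ are approximately unitarily equivalent by Theorem \ref{Phillips}(2). I would then invoke Corollary \ref{strongCC}(2) with the automorphism $\nu^{-1}\in\Aut_{\T^N}(B)$ and $v\equiv1$: it yields that $\alpha^u$ and $\alpha$ are strongly cocycle conjugate. Inspecting the proof of Corollary \ref{strongCC}(2), the conjugating automorphism is $\nu^{-1}|A$, whose $KK$-class is $1_A$; therefore $\alpha^u$ and $\alpha$ are strongly $KK$-trivially cocycle conjugate. Combining this with $\alpha^u\cong\beta$ by transitivity completes the proof. The refinement for $N=1$ is automatic, since Lemma \ref{app1} already carries the corresponding refinement requiring only the hypothesis on $K_1(A)$.

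I should note one routine point to verify along the way: the hypotheses of Lemma \ref{app1} and Corollary \ref{strongCC} are phrased for the base action, and the asymptotically representable $\alpha$ is in particular approximately representable (restrict a continuous implementing family to a sequence $t_n\to\infty$), so all of these results apply to the fixed $\alpha$ and its cocycles. One also checks that the ``strongly $KK$-trivially cocycle conjugate'' relation is transitive; this is the usual composition of cocycles and of approximating sequences of unitaries and presents no difficulty.

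The main obstacle is entirely concentrated in Lemma \ref{app1}, which I am granting: the step that converts the $KK$-class of an arbitrary cocycle into the trivial one by a $KK$-trivial equivariant automorphism is exactly where the hypothesis $K_i(A)=\{f([1])\mid f\in\Hom(K_0(A),K_i(A))\}$ is consumed, as it guarantees the liftability $KK(h\otimes\id_{S_I})\cdot p=q$ underlying the inductive construction of the $y_I$ there. With Lemma \ref{app1} in hand, the remaining work is the bookkeeping assembly of Theorem \ref{uniqueasymprepre}, Theorem \ref{Phillips}(2) and Corollary \ref{strongCC} described above; the only delicate point is to track that every conjugating automorphism introduced is $KK$-trivial on $A$, so that the final equivalence is genuinely $KK$-trivial and not merely ordinary (strong) cocycle conjugacy.
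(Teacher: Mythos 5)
Your strategy is the same as the paper's: reduce to $\beta=\alpha^u$ via Theorem \ref{uniqueasymprepre}, use Lemma \ref{app1} to produce $\nu\in\Aut_{\T^N}(B)$ with $KK(\nu|A)=1_A$ matching the $KK$-class of $\iota_u$, convert the resulting equality of $KK$-classes into approximate unitary equivalence of homomorphisms $C^*(\Z^N)\to B$, and feed this into Corollary \ref{strongCC}(2) and its proof while tracking that the conjugating automorphism is $KK$-trivial on $A$. Your extra step of passing to $\nu^{-1}\circ\iota_u\sim\iota$ instead of comparing $\nu\circ\iota$ with $\iota_u$ directly is harmless, and the Kasparov product bookkeeping is correct.

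There is, however, one genuine flaw at the load-bearing step: the implication ``$KK(\nu^{-1}\circ\iota_u)=KK(\iota)$ implies $\nu^{-1}\circ\iota_u$ and $\iota$ are approximately unitarily equivalent'' cannot be obtained from Theorem \ref{Phillips}(2). That theorem requires the \emph{domain} algebra to be simple, whereas here the domain is $C^*(\Z^N)\cong C(\T^N)$, which is not simple for $N\geq1$. The same misuse occurs in your opening paragraph, where you declare approximate unitary equivalence of $\iota_u$ and $\iota$ to be \emph{equivalent} to $KK(\iota_u)=KK(\iota)$ ``by Theorem \ref{Phillips}(2)''; besides the domain problem, the converse direction (approximate unitary equivalence forcing equality of $KK$-classes) is not part of Phillips' theorem at all --- in general one only gets agreement in $KL$, and it is only because the $K$-groups of $C^*(\Z^N)$ are finitely generated free abelian that this suffices, as the paper notes in the proof of Corollary \ref{strongCC}(2). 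The correct tool, and the one the paper invokes at exactly this point, is Dadarlat's theorem \cite[Theorem 1.7]{D1}, which applies since $C(\T^N)$ satisfies the universal coefficient theorem and has finitely generated $K$-theory. With that single substitution your argument goes through and coincides with the paper's proof, including the $N=1$ refinement inherited from Lemma \ref{app1}.
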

\begin{proof}
Let $\alpha$ and $\beta$ be 
asymptotically representable outer actions of $\Z^N$ on $A$. 
By Theorem \ref{uniqueasymprepre}, 
we may assume that 
there exists an $\alpha$-cocycle $\{u_g\}_{g\in\Z^N}$ in $A$ 
such that $\beta=\alpha^u$. 
Let $B=A\rtimes_\alpha\Z^N$. 
It follows from Lemma \ref{app1} that 
there exists an automorphism $\mu\in\Aut_{\T^N}(B)$ such that 
$KK(\mu|A)=1_A$ and $KK(\mu\circ\iota)=\iota_u$. 
By \cite[Theorem 1.7]{D1}, 
$\mu\circ\iota$ and $\iota_u$ are approximately unitarily equivalent. 
Hence the conclusion follows from Corollary \ref{strongCC} (2) and its proof. 
\end{proof}

\begin{lem}\label{app3}
Let $A$ be a unital Kirchberg algebra 
such that the following two conditions hold. 
\begin{enumerate}
\item[\textup{(i)}] If $y\in KK(A,A)$ satisfies $K_0(y)([1])=0$, then $y=0$. 
\item[\textup{(ii)}] $KK(A\otimes S,A)=0$. 
\end{enumerate}
If $\mu\in\Hom_{\T^N}(B,B)$ satisfies $KK(\mu\circ\iota)=KK(\iota)$, 
then $KK(\mu)=1_B$. 
\end{lem}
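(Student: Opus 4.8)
The plan is to exploit the machinery built in Lemmas \ref{H1}–\ref{H5}, which reduces questions about $\Hom_{\T^N}(B,B)$ and $\Hom_{\T^N}(C^*(\Z^N),B)$ to explicit computations in the direct-sum decomposition indexed by subsets of $[N]$. First I would transport the hypothesis $KK(\mu\circ\iota)=KK(\iota)$ and the desired conclusion $KK(\mu)=1_B$ through the $KK$-equivalence $w\cdot(1_A\otimes z_N)$. Concretely, set
\[
x=z_N^{-1}\cdot KK(\mu)\cdot\text{(transported via }w\text{)}
\in H'_{\T^N}(B,B),
\qquad
a=z_N^{-1}\cdot KK(\iota)\cdot w\cdot(1_A\otimes z_N)
\in H'_{\T^N}(C^*(\Z^N),B),
\]
so that the hypothesis becomes $a\cdot x=a$ and the goal becomes $x=1$. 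By Lemma \ref{H3}, $x$ is determined by components $y_I\in KK(A\otimes S_I,A)$ with $y_\emptyset\in KK(A,A)$, and by Lemma \ref{H5}, $a$ is determined by components $b_K\in KK(S_K,A)$ with $b_\emptyset=KK(h)$. I must show all $y_I$ are the trivial (identity-type) components, i.e. $y_\emptyset=1_A$ and $y_I=0$ for $I\neq\emptyset$.

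**Using the two hypotheses on $A$.** The condition $a\cdot x=a$ reads, component by component in $KK(S_I,A)$,
\[
\sum_{J\subset I}a(I,J)\cdot x(J,\emptyset)=a(I,\emptyset),
\qquad I\subset[N],
\]
and using $a(I,J)=b_{I\setminus J}\otimes 1_{S_J}$ and $x(J,\emptyset)=y_J$ this becomes $\sum_{J\subset I}(b_{I\setminus J}\otimes 1_{S_J})\cdot y_J=b_I$. For $I=\emptyset$ this gives $KK(h)\cdot y_\emptyset=KK(h)$, i.e. $K_0(y_\emptyset)([1])=[1]$, so $K_0(y_\emptyset-1_A)([1])=0$; hypothesis (i) then forces $y_\emptyset=1_A$. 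I would then induct on $\lvert I\rvert$. The key simplification is hypothesis (ii), $KK(A\otimes S,A)=0$: since for $I\neq\emptyset$ the algebra $S_I$ is a tensor product of copies of $S$, any $y_I\in KK(A\otimes S_I,A)$ factors through $KK(A\otimes S,A)=0$, so $y_I=0$ automatically for every nonempty $I$. Thus the off-diagonal components vanish for free, and the only content is the diagonal part $y_\emptyset$, already pinned down to $1_A$.

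**Concluding.** With $y_\emptyset=1_A$ and $y_I=0$ for $I\neq\emptyset$, Lemma \ref{H3} shows $x$ equals the identity element $1$ in $KK(A\otimes\mathcal{T}^{\otimes N},A\otimes\mathcal{T}^{\otimes N})$, whose diagonal is $\sum_{I\subset[N]}1_A\otimes1_{S_I}=1$. Transporting back through the $KK$-equivalence $w\cdot(1_A\otimes z_N)$ via Lemma \ref{H2}, this yields $KK(\mu)=1_B$, as desired.

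**Where the difficulty lies.** I expect the genuinely delicate step to be the correct bookkeeping of the transport in the first paragraph: one must verify that $KK(\mu)\in KK(B,B)$ really does land in $H_{\T^N}(B,B)$ so that Lemma \ref{H2} applies (this uses $\mu\in\Hom_{\T^N}(B,B)$ and the coaction-intertwining it entails), and that the hypothesis $KK(\mu\circ\iota)=KK(\iota)$ translates precisely into the equation $a\cdot x=a$ rather than something weaker. Once the dictionary between $\mu$ and $x$, and between $\iota$ and $a$, is set up correctly, hypotheses (i) and (ii) do essentially all the remaining work — (ii) killing the off-diagonal and (i) rigidifying the diagonal — so the algebraic heart of the argument is short. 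The subtlety is entirely in confirming that the two hypotheses of the lemma are exactly what is needed to close these two gaps and nothing more.
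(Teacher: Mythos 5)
Your setup (transporting $KK(\mu)$ and $KK(\iota)$ through $w\cdot(1_A\otimes z_N)$ into $x\in H'_{\T^N}(B,B)$ and $a\in H'_{\T^N}(C^*(\Z^N),B)$, reducing the hypothesis to $a\cdot x=a$, and pinning down $y_\emptyset=1_A$ via condition (i)) is exactly the paper's framework. But the step you call "the key simplification" contains a genuine error: you claim that for every nonempty $I$, any $y_I\in KK(A\otimes S_I,A)$ "factors through $KK(A\otimes S,A)=0$" and hence vanishes. This is false when $\lvert I\rvert$ is even. By Bott periodicity, $KK(A\otimes S_I,A)\cong KK(A\otimes S,A)$ only when $\lvert I\rvert$ is odd; when $\lvert I\rvert$ is even one has $KK(A\otimes S_I,A)\cong KK(A,A)$, which is never zero for a unital Kirchberg algebra (it contains $1_A$; e.g.\ for $A=\mathcal{O}_\infty$ and $\lvert I\rvert=2$ this group is $\Z$). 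So condition (ii) kills only the odd-cardinality components, and your argument leaves all even-cardinality components $y_I$ with $\lvert I\rvert\geq2$ completely unconstrained --- indeed, your proof never uses the relation $a\cdot x=a$ beyond $I=\emptyset$, which is a sign something is missing, since otherwise the hypothesis $KK(\mu\circ\iota)=KK(\iota)$ would only be entering through its $K_0$-consequence on the unit.

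The paper closes exactly this gap by induction on $\lvert I\rvert$ for even $\lvert I\rvert=n\geq2$: assuming $y_J=0$ for all $0<\lvert J\rvert<n$, the component equation
\[
a(I,\emptyset)=(a\cdot x)(I,\emptyset)
=\sum_{J\subset I}a(I,J)\cdot x(J,\emptyset)
=a(I,\emptyset)\cdot 1_A+KK(h\otimes\id_{S_I})\cdot y_I
\]
(using $a(I,I)=KK(h\otimes\id_{S_I})$ from Lemma \ref{H5}) yields $KK(h\otimes\id_{S_I})\cdot y_I=0$. Then condition (i), transported through the Bott isomorphism $KK(A\otimes S_I,A)\cong KK(A,A)$ (under which $y\mapsto KK(h\otimes\id_{S_I})\cdot y$ becomes $y'\mapsto K_0(y')([1])$), forces $y_I=0$. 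So conditions (i) and (ii) have a genuine division of labor --- (ii) for odd $\lvert I\rvert$, (i) plus the relation $a\cdot x=a$ for even $\lvert I\rvert$ --- rather than (ii) disposing of all off-diagonal terms "for free" as you assert. Your proof as written would, for instance, purport to show the conclusion without ever using $KK(\mu\circ\iota)=KK(\iota)$ on the even part, and it is not valid.
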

\begin{proof}
For $I\subset[N]$, 
we let $\lvert I\rvert$ denote the cardinality of $I$. 
There exists a natural isomorphism 
between $KK(A\otimes S_I,A)$ and $KK(A,A)$. 
When $\lvert I\rvert$ is even, from condition (i), we have the following: 
if $y\in KK(A\otimes S_I,A)$ satisfies $KK(h\otimes\id_{S_I})\cdot y=0$, 
then $y=0$. 

By Lemma \ref{H4} and \ref{H5}, 
\[
a=z_N^{-1}\cdot KK(\iota)\cdot w\cdot (1_A\otimes z_N)
\]
is in $H'_{\T^N}(C^*(G),B)$ and 
$a(I,I)=KK(h\otimes\id_{S_I})$ for every $I\subset[N]$. 
By Lemma \ref{H1} and \ref{H2}, 
\[
x=(1_A\otimes z_N)^{-1}\cdot w^{-1}\cdot KK(\mu)
\cdot w\cdot(1_A\otimes z_N)
\]
is in $H'_{\T^N}(B,B)$. 
From the hypothesis, we have $a\cdot x=a$. 
By Lemma \ref{H3}, 
there exists $y_I\in KK(A\otimes S_I,A)$ for each $I\subset[N]$ such that 
$K_0(y_\emptyset)([1])=[1]$ and 
\[
x(I,J)=\begin{cases}
y_{I\setminus J}\otimes 1_{S_J} & \text{ if }J\subset I \\
0 & \text{ otherwise. } \end{cases}
\]
Notice that condition (i) implies $y_\emptyset=1_A$ and 
that condition (ii) implies $y_I=0$ if $\lvert I\rvert$ is odd. 

We prove $y_I=0$ for every non-empty $I\subset[N]$ 
by using the induction on $\lvert I\rvert$. 
Suppose that 
we have shown $y_I=0$ for every $I$ such that $0<\lvert I\rvert<n$ 
and $n$ is even. 
Take $I\subset[N]$ such that $\lvert I\rvert=n$. 
From $a\cdot x=a$, one has 
\begin{align*}
a(I,\emptyset)&=(a\cdot x)(I,\emptyset) \\
&=\sum_{J\subset I}a(I,J)\cdot x(J,\emptyset) \\
&=\sum_{J\subset I,J\neq\emptyset,J\neq I}a(I,J)\cdot x(J,\emptyset)
+a(I,\emptyset)\cdot x(\emptyset,\emptyset)
+a(I,I)\cdot x(I,\emptyset) \\
&=a(I,\emptyset)\cdot 1_A+KK(h\otimes\id_{S_I})\cdot y_I, 
\end{align*}
and so $KK(h\otimes\id_{S_I})\cdot y_I=0$. 
Hence we get $y_I=0$. 

Consequently 
\[
x=\sum_{I\subset[N]}y_\emptyset\otimes1_{S_I}=1_{\mathcal{T}^{\otimes N}}, 
\]
which implies $KK(\mu)=1_B$. 
\end{proof}

\begin{thm}\label{app4}
Suppose $A=M_n\otimes C\otimes\mathcal{O}_\infty$, 
where $C$ is a UHF algebra satisfying $C\cong C\otimes C$ and $n\in\N$. 
Then, any outer $\Z^N$-actions on $A$ are cocycle conjugate to each other. 
In particular, they are asymptotically representable. 
\end{thm}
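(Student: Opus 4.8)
The plan is to verify that $A=M_n\otimes C\otimes\mathcal{O}_\infty$ satisfies the two hypotheses of Lemma \ref{app3}, and then to imitate the inductive argument of Theorem \ref{uniqueO_infty}, using Lemma \ref{app3} in place of Lemma \ref{equivO_infty} to produce the crucial $KK$-triviality of the dual automorphisms. Since $C$ is a UHF algebra with $C\cong C\otimes C$, its $K_0$-group is the unital subring $\mathbb{Z}[P^{-1}]\subseteq\mathbb{Q}$ obtained by inverting a set of primes $P$. The K\"unneth formula then gives $K_1(A)=0$ and $K_0(A)=G$, where $G\subseteq\mathbb{Q}$ is the rank-one torsion-free group attached to the UHF algebra $M_n\otimes C$ (explicitly $G=\tfrac1m\mathbb{Z}[P^{-1}]$ with $m$ the part of $n$ coprime to $P$), and $[1_A]=1\in G$. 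As $A$ satisfies the universal coefficient theorem and $K_1(A)=0$, the UCT sequence degenerates to $KK(A,A)\cong\Hom(G,G)$ and $KK(A\otimes S,A)\cong\Ext^1_{\mathbb{Z}}(G,G)$.

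I would next check conditions (i) and (ii) of Lemma \ref{app3}. For (i), every element of $\Hom(G,G)$ is multiplication by a rational $q$ with $qG\subseteq G$, and under $KK(A,A)\cong\Hom(G,G)$ the value $K_0(y)([1])$ is exactly $q$; hence $K_0(y)([1])=0$ forces $y=0$. For (ii), I would use that $G$ is isomorphic as an abelian group to the localization $\mathbb{Z}[P^{-1}]=\varinjlim_n\tfrac1{d_n}\mathbb{Z}$ along a cofinal sequence of $P$-numbers $d_n$; the resulting $\varprojlim^1$ description of $\Ext^1_{\mathbb{Z}}(G,G)$ has connecting maps given by multiplication by the units $d_{n+1}/d_n$ of $\mathbb{Z}[P^{-1}]$, so the $\varprojlim^1$-term vanishes and $KK(A\otimes S,A)=0$. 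This Ext-vanishing is the step I expect to require the most care, since it is precisely where the hypotheses $C\cong C\otimes C$ (making $K_0(C)$ a localization) and finiteness of $n$ are used; note also that the hypothesis of Theorem \ref{app2} can fail for these $A$, which is why Lemma \ref{app3} is the right tool.

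With (i) and (ii) in hand I would run an induction on $N$, carrying the two statements that outer $\Z^N$-actions on $A$ are (a) mutually cocycle conjugate and (b) asymptotically representable. For $N=1$, condition (i) shows $KK(\alpha)=1_A$ for every automorphism $\alpha$ (as $K_0(\alpha)([1])=[1]$), so Theorem \ref{Z} gives (a) and Lemma \ref{Zasymp} gives (b). For the inductive step, let $\alpha,\beta$ be outer $\Z^N$-actions, let $\alpha',\beta'$ be the restrictions to $\Z^{N-1}$, and assume as in Theorem \ref{uniqueO_infty} that $\beta'_g=\Ad u_g\circ\alpha'_g$; by the induction hypothesis $\alpha'$ is asymptotically representable, so the constructions of this section apply to $B_\alpha=A\rtimes_{\alpha'}\Z^{N-1}$ with $N-1$ in place of $N$. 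The extension $\tilde\alpha_N\in\Aut_{\T^{N-1}}(B_\alpha)$ satisfies $\tilde\alpha_N(\lambda^{\alpha'}_g)=\lambda^{\alpha'}_g$, because $\alpha_N$ commutes with $\alpha'$, whence $\tilde\alpha_N\circ\iota=\iota$ and $KK(\tilde\alpha_N\circ\iota)=KK(\iota)$; Lemma \ref{app3} then yields $KK(\tilde\alpha_N)=1_{B_\alpha}$, and likewise $KK(\tilde\beta_N)=1_{B_\beta}$. Transporting by the isomorphism $\pi\colon B_\beta\to B_\alpha$ gives $KK(\pi\circ\tilde\beta_N\circ\pi^{-1})=1=KK(\tilde\alpha_N)$, so Theorem \ref{equivNakamura} applies and, exactly as in Theorem \ref{uniqueO_infty}, shows that $\alpha$ and $\beta$ are outer conjugate; Corollary \ref{oc>cc} then upgrades this to cocycle conjugacy, proving (a). Finally, for (b) I would exhibit one asymptotically representable example: if $\gamma$ is an outer $\Z^N$-action on $\mathcal{O}_\infty$ (asymptotically representable by Theorem \ref{uniqueO_infty}), then $\mathrm{id}_{M_n\otimes C}\otimes\gamma$ is an outer action on $A$ that is asymptotically representable via $v_g(t)=1\otimes w_g(t)$. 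Since asymptotic representability is preserved under cocycle conjugacy (as is readily checked for $\Z^N$) and all outer $\Z^N$-actions are cocycle conjugate by (a), statement (b) follows.
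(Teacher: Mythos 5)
Your proposal is correct and follows essentially the same route as the paper: induction on $N$ in the pattern of Theorem \ref{uniqueO_infty}/Theorem \ref{uniqueasymprepre}, with Lemma \ref{app3} applied via $\tilde\alpha_N\circ\iota=\iota$ to obtain $KK(\tilde\alpha_N)=1_{B_\alpha}$, followed by Theorem \ref{equivNakamura} and Corollary \ref{oc>cc}. The only difference is that you spell out what the paper merely asserts, namely that $A$ satisfies hypotheses (i) and (ii) of Lemma \ref{app3} (via $K_1(A)=0$, $K_0(A)$ a rank-one localization of $\Z$, and the vanishing of $\Ext^1_{\Z}(K_0(A),K_0(A))$ by a Mittag-Leffler argument), and you make explicit the existence of one asymptotically representable outer action together with the invariance of asymptotic representability under cocycle conjugacy.
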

\begin{proof}
Note that $A$ satisfies the hypothesis of Lemma \ref{app3}. 
We use induction on $N$. 
Assume that the claim has been shown for $N-1$. 

Let $\alpha$ be an outer $\Z^N$-action on $A$. 
Let $\alpha'$ be the $\Z^{N-1}$-action 
generated by the first $N-1$ generators of $\alpha$. 
By the induction hypothesis, 
$\alpha':\Z^{N-1}\curvearrowright A$ is asymptotically representable. 
Let $B=A\rtimes_{\alpha'}\Z^{N-1}$ and 
let $\iota:C^*(\Z^{N-1})\to B$ be the canonical embedding. 
We denote the $N$-th generator of $\alpha$ by $\alpha_N$. 
The automorphism $\alpha_N$ of $A$ naturally extends to 
an automorphism $\tilde\alpha_N$ of $B$. 
Clearly we have $\tilde\alpha_N\circ\iota=\iota$. 
It follows from Lemma \ref{app3} that 
$KK(\tilde\alpha_N)$ is equal to $1_B$. 
The rest of the proof is the same as 
that of Theorem \ref{uniqueO_infty} or Theorem \ref{uniqueasymprepre}. 
\end{proof}

\section{Cocycle actions of $\Z^2$}

In this section, we consider 
when a given cocycle action of $\Z^2$ on a unital Kirchberg algebra 
is equivalent to a genuine action. 
Our basic philosophy goes back to Adrian Ocneanu's idea 
in the case of von Neumann algebras, 
while we have to handle $K$-theory with a special care 
in the case of $C^*$-algebras. 

We put $S=C_0((0,1))\cong C^*(\R)$. 
For a unital $C^*$-algebra $A$, 
the connected component of the identity in $U(A)$ is denoted by $U(A)_0$. 

\begin{lem}
Let $A$ be a unital Kirchberg algebra and 
let $u:[0,1]\to U(A)$ be a continuous path of unitaries 
such that $u(0)=u(1)=1$. 
If the $K_1$-class of $u$ is zero in $K_1(S\otimes A)$, 
then there exists a continuous map $v:[0,1]\times[0,1]\to U(A)$ such that 
\[
v(0,t)=1, \ v(1,t)=u(t), \ \Lip(v(\cdot,t))<7
\text{ and }v(s,0)=v(s,1)=1
\]
for all $s,t\in[0,1]$. 
\end{lem}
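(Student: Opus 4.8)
The plan is to read the hypothesis $K$-theoretically and then fight separately for the quantitative Lipschitz bound. First I would observe that, since $u(0)=u(1)=1$, the element $u-1$ lies in $S\otimes A=C_0((0,1),A)$, so $u$ defines a unitary in the unitization $(S\otimes A)^\sim$ whose $K_1$-class is exactly the one in the hypothesis. Under the natural identifications $K_1(S\otimes A)\cong K_0(A)\cong\pi_1(U(A),1)$, valid because $A$ is purely infinite simple so that $U(A)$ is connected with the expected homotopy type, the vanishing of this class says precisely that the loop $t\mapsto u(t)$ is null-homotopic rel basepoint. Hence a continuous $v:[0,1]^2\to U(A)$ with $v(0,t)=1$, $v(1,t)=u(t)$ and $v(s,0)=v(s,1)=1$ exists by soft arguments; the entire difficulty is to produce one whose first-variable Lipschitz constant is below $7$. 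The number $7$ is just above $2\pi$, the length of a single full rotation, which I read as a strong hint that the intended construction is essentially one geodesic rotation plus a small error.

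For the quantitative construction I would use pure infiniteness. Since $A$ is a Kirchberg algebra it contains a unital copy of $\mathcal{O}_\infty$, supplying orthogonal isometries and, crucially, projections of prescribed $K_0$-class together with the partial isometries realizing Cuntz's theorem (two nonzero projections in a purely infinite simple algebra are Murray--von Neumann equivalent iff they have the same $K_0$-class). The key tool is the Whitehead rotation: if $R(\theta)$ denotes the geodesic $2\times2$ rotation realized inside a corner $e+e'$, where $e\perp e'$ are equivalent projections, then conjugation homotopies $\theta\mapsto R(\theta)\,x\,R(\theta)^{*}$ have Lipschitz constant a fixed multiple of $\|R'\|=1$, so sweeping $\theta\in[0,\pi/2]$ and reparametrising to $[0,1]$ costs a Lipschitz contribution of order $\pi$ rather than the uncontrolled length of an arbitrary connecting path. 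This is exactly the difference between a soft homotopy and a geodesic one.

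Concretely I would try to replace $u$, up to a rotation homotopy of controlled Lipschitz constant, by a standard loop $\gamma(t)=\exp(2\pi i t\,e)+(1-e)$, where $e$ is a projection whose $K_0$-class is the Bott image of $[u]$; since $[u]=0$ this forces $[e]=0$. Using $[e]=0$ I would pick $e'$ orthogonal to $e$ with $e'\sim e$ (available by Cuntz's theorem since both classes vanish), and contract $\gamma$ to $1$ by rotating the phase carried on $e$ against $e'$ through the geodesic rotation, together with the Whitehead homotopy contracting $\operatorname{diag}(e^{2\pi it},e^{-2\pi it})$; the latter is null-homotopic with Lipschitz constant at most $\pi$, and the swap rotation absorbs the residual phase on $e'$. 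Throughout, the boundary conditions $v(s,0)=v(s,1)=1$ come for free, because $u(0)=u(1)=1$ makes every auxiliary unitary trivial at $t=0,1$ and hence the whole homotopy constant there.

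The main obstacle, as is typical in this circle of results, is the uniform-in-$t$ Lipschitz control: the $K$-theoretic input yields only an uncontrolled null-homotopy, and the real work is to rebuild it out of geodesic rotations in an $\mathcal{O}_\infty$-corner so that one full rotation of length $2\pi$ plus a small error suffices. I expect the two genuinely delicate points to be (i) carrying out the reduction of a general $u$ to the standard loop $\gamma$ continuously in $t$ without inflating the constant, and (ii) arranging the two rotation steps to overlap rather than to compose additively, since it is precisely this overlap that forces the final bound to sit just above $2\pi$ rather than at $3\pi$ or beyond.
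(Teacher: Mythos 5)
Your reading of the statement's structure is accurate — the basepoint-preserving null-homotopy exists for $K$-theoretic reasons, the content is the uniform Lipschitz bound, and the threshold $7$ sits just above $2\pi$ — but the proposal does not prove the quantitative half, and that half \emph{is} the lemma. Two concrete problems. First, your contraction of the standard loop $\gamma_e(t)=e^{2\pi it}e+(1-e)$ is circular as written: choosing $e'\perp e$ with $e'\sim e$ and running the geodesic Whitehead rotation contracts the \emph{product} $\gamma_e\gamma_{e'}^*$, i.e.\ it yields a based homotopy of length at most $\pi$ from $\gamma_e$ to $\gamma_{e'}$; the leftover loop $\gamma_{e'}$ has exactly the same form as the one you started with (a nonzero projection of trivial $K_0$-class), and the ``swap rotation'' can only conjugate $\gamma_{e'}$ back into $\gamma_e$ — it cannot absorb it. Eliminating the residual loop with length control requires an infinite, Eilenberg-swindle-type repetition implemented by isometries coming from the $\mathcal{O}_\infty$-structure, not a single orthogonal copy; that is where the real work lies. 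Second, your step (i) — deforming a general $u$ to a standard loop by a homotopy of controlled Lipschitz constant — cannot be deferred as a delicate detail: for a non-simple $\mathcal{O}_\infty$-absorbing algebra such as $C(\T)\otimes A$, this controlled reduction is essentially equivalent to the exponential-length bound $\operatorname{cel}\leq 2\pi$, which is a theorem of Phillips with a substantial proof. The two points you flag as delicate are, together, the entire statement.

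For comparison, the paper's proof consists of two citations. It first chooses a nonzero projection $p\in A$, $p\neq1$, with $\lVert[u,1\otimes p]\rVert<1$, observes that $1\otimes p$ and $1-1\otimes p$ are properly infinite and full in $C(\T)\otimes A$, and invokes the $K_1$-injectivity lemma of Blanchard--Rohde--R\o rdam (\cite[Lemma 2.4 (ii)]{BRR} and its proof) to produce the homotopy $v$ with all four boundary conditions; note that even your ``soft'' step is exactly this statement — the injectivity half of $\pi_1(U(A),1)\cong K_0(A)$ with basepoint control is $K_1$-injectivity, which is not a formality but the point of the almost-commuting projection. The Lipschitz bound then comes from Phillips' exponential-length theorem \cite[Theorem 3.1]{Phi2} for tensor products with $\mathcal{O}_\infty$, which is where $2\pi<7$ enters. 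To complete your argument you would either have to reprove that theorem — and your sketch does not contain the ideas needed — or cite it, at which point you should cite it for the whole quantitative step, as the paper does.
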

\begin{proof}
We regard $S\otimes A$ as a subalgebra of $C(\T)\otimes A$. 
Since $A$ is a Kirchberg algebra, 
there exists a nonzero projection $p\in A$ such that 
$p\neq1$ and $\lVert[u,1\otimes p]\rVert<1$, 
where $u$ is regarded as an element of the unitization of $S\otimes A$. 
Note that $1\otimes p$ and $1-1\otimes p$ are 
properly infinite and full in $C(\T)\otimes A$. 
By \cite[Lemma 2.4 (ii)]{BRR} (and its proof), 
we can find $v:[0,1]\times[0,1]\to U(A)$ such that 
\[
v(0,t)=1, \ v(1,t)=u(t) \text{ and } v(s,0)=v(s,1)=1
\]
for all $s,t\in[0,1]$. 
Then the conclusion follows from \cite[Theorem 3.1]{Phi2}. 
\end{proof}

\begin{lem}\label{stability}
Let $A$ be a unital Kirchberg algebra and 
let $\alpha$ be an automorphism of $A$ with the Rohlin property. 
\begin{enumerate}
\item For any unitary $u\in U(A)_0$ and $\ep>0$, 
there exists a unitary $v\in U(A)_0$ 
such that $\lVert u-v\alpha(v)^*\rVert<\ep$. 
\item Let $u:[0,1]\to U(A)$ be a continuous path of unitaries 
such that $u(0)=u(1)=1$. 
Suppose that the $K_1$-class of $u$ is zero in $K_1(S\otimes A)$. 
Then, for any $\ep>0$, 
there exists a continuous path of unitaries $v:[0,1]\to U(A)$ 
such that $v(0)=v(1)=1$ and 
$\lVert u(t)-v(t)\alpha(v(t))^*\rVert<\ep$ for all $t\in[0,1]$. 
\end{enumerate}
\end{lem}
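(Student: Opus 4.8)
The plan is to prove both statements by the standard telescoping construction along a Rohlin tower for $\alpha$, using the preceding lemma to supply a homotopy with a uniform Lipschitz bound so that the successive corrections stay small in norm. Since $\alpha$ has the Rohlin property (equivalently $\alpha$ is aperiodic, Theorem \ref{ZRohlin}), for every $N$ I may choose mutually orthogonal projections $e_0,\dots,e_{N-1}$ and $f_0,\dots,f_N$ in $A_\omega$ with $\alpha(e_i)=e_{i+1}$, $\alpha(f_j)=f_{j+1}$ cyclically and $\sum_i e_i+\sum_j f_j=1$. In the write-up I would realize these as approximately central, approximately $\alpha$-shifted projections in $A$ and absorb the resulting $O(1/N)$ errors at the end.

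For part (1), I would first connect $u$ to $1$ by a continuous path $p\colon[0,1]\to U(A)$ with $p(0)=1$ and $p(1)=u$; such a path exists precisely because $u\in U(A)_0$. Setting $W_k=u\,\alpha(u)\cdots\alpha^{k-1}(u)$ and $v=\sum_{k=0}^{N-1}W_k e_k$ one gets $v\alpha(v)^*=\sum_k W_k\alpha(W_{k-1})^* e_k$, and the recursion $W_k=u\,\alpha(W_{k-1})$ makes every floor value equal to $u$ except on the bottom floor $e_0$, where the value is $W_N^*\,u$ with $W_N=\prod_{j=0}^{N-1}\alpha^j(u)$, so the defect there is $\lVert W_N-1\rVert$. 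The whole difficulty is to make this single boundary term small in norm. This is where connectedness of $u$ to $1$ enters: it lets me replace the sharp telescope by one that interpolates along $p$, using the companion $f$-tower as a transition region on which the accumulated correction is spread out, so that each step changes the relevant value by only $O(\Lip(p)/N)$ and the tower closes up consistently. I would then pull the resulting approximate coboundary back to a genuine unitary $v\in U(A)$, and observe that it lies in $U(A)_0$ since it is assembled from values of $p$, which is null-homotopic.

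For part (2) the new ingredient is the hypothesis that the $K_1$-class of the loop $u$ vanishes in $K_1(S\otimes A)$: this is exactly what allows the preceding lemma to produce a map $V\colon[0,1]\times[0,1]\to U(A)$ with $V(0,t)=1$, $V(1,t)=u(t)$, $V(s,0)=V(s,1)=1$ and $\Lip(V(\cdot,t))<7$. I would run the construction of part (1) for each fixed $t$, using the contraction variable $s$ of $V$ as the direction in which the boundary defect is spread along the tower. The uniform bound $\Lip(V(\cdot,t))<7$ forces the successive increments to have norm $<7/N$ uniformly in $t$, which is what converts the per-floor discrepancies into a genuine norm estimate, while the conditions $V(s,0)=V(s,1)=1$ guarantee that the resulting path satisfies $v(0)=v(1)=1$. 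Carrying the estimates through uniformly in $t$ then gives $\lVert u(t)-v(t)\alpha(v(t))^*\rVert<\ep$ for all $t$.

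The main obstacle, and the point at which the argument is genuinely $C^*$-algebraic rather than measure-theoretic, is the norm control of this boundary term of the telescope. In a von Neumann algebra the defect sits on a projection of small trace and is automatically negligible, whereas here it is supported on $e_0$, has full operator norm, and must be made small directly. The two devices that make this possible are the homotopy-theoretic input—$u\in U(A)_0$ in part (1), triviality of the $K_1(S\otimes A)$-class in part (2)—which ensures that the loop of unitaries traced out around the tower is contractible, and the Lipschitz bound from the preceding lemma, which lets me fill that loop with a disk of bounded speed so the total correction accumulated up the tower is $O(1/N)$ and closes up to within $\ep$ of $1$. I expect the careful bookkeeping of these errors, together with the passage from the exact tower in $A_\omega$ to approximately central projections in $A$, to be the most delicate part of the proof.
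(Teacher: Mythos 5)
Your overall skeleton --- two cyclic Rohlin towers of coprime lengths, telescopes $W_k=u\alpha(u)\cdots\alpha^{k-1}(u)$, a correction spread along the towers, and a pull-back from $A_\omega$ to $A$ --- is indeed the structure of the argument that the paper itself does not write out but delegates to Nakamura (the paper's proof of (1) is literally ``a special case of \cite[Lemma 8]{N2}'', and (2) is that proof rerun with the unnumbered lemma preceding Lemma \ref{stability}). However, your part (1) has a genuine gap at exactly the point you call ``the whole difficulty''. The object that must be contracted is not $u$ but the holonomy $W_N$ around the length-$N$ tower (and $W_{N+1}$ around the other), and to absorb it with per-floor error $O(1/N)$ you need a path from $W_N$ to $1$ whose Lipschitz constant is bounded \emph{independently of $N$}. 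Connectedness of $u$ to $1$ does not provide this: the natural path $t\mapsto p(t)\alpha(p(t))\cdots\alpha^{N-1}(p(t))$ from $1$ to $W_N$ has Lipschitz constant of order $N\Lip(p)$, so distributing it over the $N$ floors leaves per-floor errors of order $\Lip(p)$, not $O(\Lip(p)/N)$; and no rearrangement of the telescope makes $W_N$ disappear, because the jumps $j_k=x_{k+1}\alpha(x_k)^*$ satisfy $j_{N-1}\alpha(j_{N-2})\cdots\alpha^{N-1}(j_0)=x_0\alpha^N(x_0)^*$, so demanding that every jump be $\ep$-close to $u$ forces $W_N$ to be $N\ep$-close to $x_0\alpha^N(x_0)^*$ --- the same kind of problem. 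The missing ingredient is a uniform exponential-length bound, i.e.\ pure infiniteness: by \cite[Theorem 3.1]{Phi2} (equivalently, by applying the unnumbered lemma preceding Lemma \ref{stability} to the contractible loop that runs from $1$ out to $W_N\in U(A)_0$ along some path and back), $W_N$ is joined to $1$ by a path $y$ with $\Lip(y)<7$, uniformly in $N$; similarly $z$ for $W_{N+1}$. Then
\[
v=\sum_{k=0}^{N-1}W_k\,\alpha^k(y(k/N))\,e_k
+\sum_{k=0}^{N}W_k\,\alpha^k(z(k/(N+1)))\,f_k,
\]
the same formula as in Lemma \ref{ocneanu2} of the paper, gives $\lVert u-v\alpha(v)^*\rVert=O(1/N)$. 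Nothing in your part (1) actually uses that $A$ is a Kirchberg algebra, which is a red flag: this per-floor estimate is precisely where the hypothesis enters.

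Part (2) inherits the same gap, and the fix is to apply the preceding lemma not to the loop $u(\cdot)$ but to the defect loops $t\mapsto W_N(t)=u(t)\alpha(u(t))\cdots\alpha^{N-1}(u(t))$ and $t\mapsto W_{N+1}(t)$. Their classes in $K_1(S\otimes A)$ vanish, since $[W_N]=\sum_{k=0}^{N-1}K_1(\id_S\otimes\alpha^k)([u])=0$, so the lemma yields $t$-continuous families of paths $y_t$ from $W_N(t)$ to $1$ (resp.\ $z_t$ from $W_{N+1}(t)$ to $1$) with $\Lip(y_t),\Lip(z_t)<7$ uniformly and $y_t\equiv z_t\equiv1$ at $t=0,1$; plugging these into the display above gives $v(t)$ with $v(0)=v(1)=1$ and the uniform estimate. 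By contrast, your plan of spreading the contraction $V(s,t)$ of $u$ itself along the tower does not work: two interpolated telescopes of length $k$ built from $V(s,t)$ and $V(s',t)$ differ in norm by up to $k\lVert V(s,t)-V(s',t)\rVert\approx 7k\lvert s-s'\rvert$, which for $\lvert s-s'\rvert=1/N$ is $O(k/N)=O(1)$ on the top floors, not $o(1)$. The remaining points of your outline (the two endpoint conditions, the passage from exact towers in $A_\omega$ to approximately central projections in $A$, and $v\in U(A)_0$ because $v$ is assembled floorwise from unitaries in $U(A)_0$) are fine once the Lipschitz input is corrected.
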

\begin{proof}
(1) This is a special case of \cite[Lemma 8]{N2}. 

(2) This also follows from \cite[Lemma 8]{N2} and its proof 
by using the lemma above. 
\end{proof}

\begin{lem}\label{nakamura}
Let $\ep$ be a sufficiently small positive real number. 
Let $A$ be a unital Kirchberg algebra and 
let $\alpha$ be an automorphism of $A$ with the Rohlin property. 
If $x:[0,1]\to U(A)$ is a path of unitaries satisfying 
\[
\lVert x(t)-\alpha(x(t))\rVert<\frac{\ep}{27}
\]
for all $t\in[0,1]$, 
then there exists a path of unitaries $y:[0,1]\to U(A)$ such that 
\[
y(0)=x(0), \quad y(1)=x(1), \quad 
\lVert y(t)-\alpha(y(t))\rVert<\ep
\]
for all $t\in[0,1]$ and $\Lip(y)$ is not greater than $6\pi$. 
\end{lem}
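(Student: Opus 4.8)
The plan is to transcribe the strategy of \cite[Theorem 7]{N2}, feeding in the two refined inputs developed in this section: the filling lemma at the beginning of the section (which contracts a loop through loops of Lipschitz constant less than $7$ in the deformation direction) and Lemma~\ref{stability}, which is where the Rohlin property of $\alpha$ is actually used. The guiding idea is that one should exchange the possibly wild trajectory of $x$ for a tame one, spending only a bounded amount of arclength, while preserving the approximate invariance $x(t)\approx\alpha(x(t))$. First I would normalize the endpoints: since $\lVert x(0)-\alpha(x(0))\rVert<\ep/27$, left multiplication by $x(0)^*$ is isometric, preserves Lipschitz constants, and sends the approximately invariant path $x$ to one that starts at $1$; after solving the problem there I multiply back. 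So I may assume $x(0)=1$ and set $c=x(1)$, which lies in $U(A)_0$ (it is joined to $1$ along $x$) and satisfies $\lVert c-\alpha(c)\rVert<\ep/27$.

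The core construction has two steps. Step one: choose a \emph{reference} path $p\colon[0,1]\to U(A)$ from $1$ to $c$ that is short but need not be invariant, and which is homotopic to $x$ rel endpoints. Step two: correct $p$ to an invariant path without moving its endpoints. Writing the desired path as $y=pk$, one computes $y^*\alpha(y)=k^*\bigl(p^*\alpha(p)\bigr)\alpha(k)$, so the requirement $y(t)\approx\alpha(y(t))$ is equivalent to the coboundary equation $k(t)\alpha(k(t))^*\approx p(t)^*\alpha(p(t))=:d(t)$. Here $d$ is an (approximate) loop based at $1$, since $d(0)=1$ and $d(1)=c^*\alpha(c)\approx1$. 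The decisive observation is that $d$ is $K_1$-trivial in $K_1(S\otimes A)$: the class of $p\mapsto[\,p^*\alpha(p)\,]$ depends only on the homotopy class of $p$ rel endpoints, and for $p\simeq x$ it agrees with the class of $x^*\alpha(x)$, which stays inside the $(\ep/27)$-ball around $1$ and is therefore null-homotopic. After a harmless adjustment of $p$ near $t=1$ to make $d$ an exact loop, Lemma~\ref{stability}(2) supplies $k$ with $k(0)=k(1)=1$ solving the equation, and then $y=pk$ is approximately invariant with the same endpoints as $x$.

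For the Lipschitz bound I would estimate $\Lip(y)\le\Lip(p)+\Lip(k)$, using that $p$ and $k$ are unitary-valued. The budget for $\Lip(k)$ comes from the $\Lip(\cdot)<7$ control of the filling lemma, which is exactly what drives the proof of Lemma~\ref{stability}(2); the budget for $\Lip(p)$ is the length needed to represent the homotopy class of $x$ by a short path, which near-invariance together with the Rohlin property (again through Lemma~\ref{stability}) allows one to bound independently of how wild $x$ is. Assembling these contributions is what produces the explicit constant $6\pi$, and propagating the approximations through the triangle inequality upgrades the two $\approx$'s into $\lVert y(t)-\alpha(y(t))\rVert<\ep$; this is precisely why the hypothesis carries the safety factor $1/27$.

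The main obstacle I expect is the simultaneous bookkeeping rather than any single inequality: one must compress the trajectory of $x$ down to a universally bounded scale, \emph{and} maintain approximate $\alpha$-invariance, \emph{and} verify that the loop fed into Lemma~\ref{stability}(2) is $K_1$-trivial in $K_1(S\otimes A)$, all while keeping the accumulated Lipschitz constant below $6\pi$. The genuinely delicate point is showing that the homotopy class forced by the fixed endpoints admits a short representative at all — equivalently, that the winding carried by the near-invariant path $x$ can be unwound using the tower supplied by the Rohlin property of $\alpha$ — and it is here, exactly as in \cite[Theorem 7]{N2} (with \cite[Theorem 3.1]{Phi2} governing the exponential length of the Kirchberg algebra $A$), that the quantitative constant is finally pinned down.
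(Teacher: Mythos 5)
Your overall architecture --- writing $y=pk$ with $p$ a short reference path in the homotopy class of $x$ and $k$ a solution of the approximate coboundary equation $k(t)\alpha(k(t))^*\approx p(t)^*\alpha(p(t))=d(t)$ supplied by Lemma \ref{stability} (2) --- cannot deliver the conclusion, because Lemma \ref{stability} (2) gives no Lipschitz control whatsoever on its solution, and your claim that the budget for $\Lip(k)$ ``comes from the $\Lip(\cdot)<7$ control of the filling lemma'' misreads where that control goes. In the Rohlin-tower proof of such stability statements (see \cite[Lemma 8]{N2}, or Lemma \ref{ocneanu2} of this paper), the solution has the form $v(t)=\sum_i d_i(t)e_i+\cdots$ with $d_i(t)=d(t)\alpha(d(t))\cdots\alpha^{i-1}(d(t))$, suitably corrected near the top of the tower; the Lipschitz bound on the contracting homotopy is spent on making the cocycle \emph{error} of order $1/N$ ($N$ the tower height), while the Lipschitz constant \emph{in $t$} of the solution grows like $N\cdot\Lip(d)$. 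So $\Lip(k)$ blows up exactly when you ask for better accuracy, and $\Lip(y)\le\Lip(p)+\Lip(k)$ gives nothing. Indeed, a Lipschitz-controlled version of Lemma \ref{stability} (2) would already contain the lemma you are trying to prove, so your dependency runs backwards: in \cite{N2} and in this paper, the stability lemmas are consumers of the Lipschitz-controlled homotopy lemma, not suppliers of it. (Two smaller slips: left multiplication by $x(0)^*$ does not preserve approximate invariance --- it doubles the bound, harmless here; and you cannot ``adjust $p$ near $t=1$ to make $d$ an exact loop,'' since $d(1)=c^*\alpha(c)$ is forced by the endpoint condition $p(1)=c$; one must instead multiply $d$ by a short path from $1$ to $\alpha(c)^*c$ obtained by functional calculus.)

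The paper's proof runs along a different axis: Lemma \ref{nakamura} is exactly \cite[Theorem 7]{N2} together with its proof, and the only place the Rohlin property enters is through \cite[Corollary 11]{N2}, which provides a \emph{unital copy of $\mathcal{O}_\infty$ inside the fixed-point algebra $(A_\omega)^\alpha$}. Because that copy is pointwise $\alpha$-fixed and commutes with $A$, the short path is built directly from $x$ and unitaries drawn from it, by Whitehead-type rotations and exponential-length estimates in the spirit of \cite[Theorem 3.1]{Phi2}: each rotation costs Lipschitz length a multiple of $\pi$ (whence the constant $6\pi$), and multiplying by exactly invariant, exactly commuting unitaries degrades the estimate $\lVert x(t)-\alpha(x(t))\rVert<\ep/27$ only by the bounded factors that the number $27$ is there to absorb. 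No cohomological equation is solved anywhere. If you want to salvage your decomposition, the missing ingredient you would have to prove is precisely this: that the correction can be implemented by homotopies taken from an $\alpha$-fixed central $\mathcal{O}_\infty$, which is the content of Nakamura's argument rather than of Lemma \ref{stability}.
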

\begin{proof}
This follows from \cite[Theorem 7]{N2} and its proof. 
Note that 
the $\alpha$-fixed point subalgebra of $A_\omega$ 
contains a unital copy of $\mathcal{O}_\infty$ 
by \cite[Corollary 11]{N2}. 
\end{proof}

Let $\alpha$ be an automorphism of a unital $C^*$-algebra $A$. 
For a unitary $u\in U(A)_0$ satisfying 
\[
\lVert u-\alpha(u)\rVert<2, 
\]
we associate an element $\kappa(u,\alpha)$ 
in $\Coker(\id-K_0(\alpha))$ as follows. 
Take a path of unitaries $x:[0,1]\to U(A)$ 
such that $x(0)=1$ and $x(1)=u$. 
Then, we can find another path of unitaries $v:[0,1]\to U(A)$ 
such that $v(0)=v(1)=1$ and 
\[
\lVert x(t)\alpha(x(t))^*-v(t)\rVert<2
\]
for all $t\in[0,1]$. 
Under the identification of $K_1(S\otimes A)$ with $K_0(A)$, 
the unitary $v$ determines an element of $K_0(A)$. 
We let $\kappa(u,\alpha)$ be 
the equivalence class of this element in $\Coker(\id-K_0(\alpha))$. 

\begin{lem}
In the setting above, 
$\kappa(u,\alpha)\in\Coker(\id-K_0(\alpha))$ does not 
depend on the choice of $x$ and $v$. 
\end{lem}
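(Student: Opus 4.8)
The plan is to disentangle the two choices and treat them separately: first I would fix the path $x$ and show that the resulting element of $K_0(A)$ is \emph{completely} independent of the auxiliary loop $v$, and then I would show that changing $x$ alters this element only by something in $\Ima(\id-K_0(\alpha))$, so that its image in $\Coker(\id-K_0(\alpha))$ is unchanged. Throughout I write $y(t)=x(t)\alpha(x(t))^*$; this is a path with $y(0)=1$ and $y(1)=u\alpha(u)^*$, and $\|y(1)-1\|=\|u-\alpha(u)\|<2$, so $y(1)$ has $-1$ outside its spectrum. An admissible $v$ is a based loop ($v(0)=v(1)=1$) with $\|y(t)-v(t)\|<2$ for all $t$, and I regard it as a unitary in $(S\otimes A)^\sim$ whose class in $K_1(S\otimes A)\cong K_0(A)$ is the quantity to be analysed.

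For the independence from $v$, suppose $v,v'$ are both admissible for the same $x$. I would set $k=yv^*$ and $k'=yv'^*$; then $\|k(t)-1\|<2$ and $\|k'(t)-1\|<2$, so $k,k'$ are paths inside the set $\mathcal U=\{w\in U(A):\|w-1\|<2\}$, each running from $1$ to $q:=u\alpha(u)^*$. Since $\mathcal U$ is contractible (via $(w,r)\mapsto w^r$ defined through the principal logarithm, which avoids $-1$), $k$ and $k'$ are homotopic rel endpoints in $\mathcal U$; let $K(s,r)$ be such a homotopy. A direct computation gives $v(s)v'(s)^*=k(s)^*k'(s)$, and then $G(s,r)=k(s)^*K(s,r)$ is a homotopy of based loops from the constant loop $1$ to $vv'^*$. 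Hence $[vv'^*]=0$, i.e. $[v]=[v']$ in $K_1(S\otimes A)$, so the class $[v]=:c(y)\in K_0(A)$ depends only on $x$.

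For the independence from $x$ modulo the image, let $x,x'$ both run from $1$ to $u$ and put $g=xx'^*$, a based loop with $x=gx'$. A short calculation yields $y=g\,y'\,\alpha(g)^*$ pointwise, where $y'(t)=x'(t)\alpha(x'(t))^*$, and in particular $y(1)=y'(1)=q$. If $v'$ is admissible for $x'$, then $v:=g\,v'\,\alpha(g)^*$ is a based loop with $\|y(t)-v(t)\|=\|y'(t)-v'(t)\|<2$, hence admissible for $x$. Using additivity of $K_1$ under pointwise products of loops, $[v]=[g]+[v']-[\alpha(g)]$ in $K_1(S\otimes A)$; since $\id_S\otimes\alpha$ induces $K_0(\alpha)$ on $K_1(S\otimes A)\cong K_0(A)$, one has $[\alpha(g)]=K_0(\alpha)([g])$. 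Therefore $c(y)-c(y')=(\id-K_0(\alpha))([g])\in\Ima(\id-K_0(\alpha))$, and the two choices of $x$ produce the same class in $\Coker(\id-K_0(\alpha))$.

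The main obstacle is bookkeeping rather than conceptual difficulty: I must verify that the suspension isomorphism $K_1(S\otimes A)\cong K_0(A)$ is natural enough that $\id_S\otimes\alpha$ is carried to $K_0(\alpha)$, and that the pointwise product of based loops represents the sum of their $K_1$-classes (the usual $\mathrm{diag}(ab,1)\sim\mathrm{diag}(a,b)$ argument applied in $(S\otimes A)^\sim$). A secondary point needing care is that every homotopy constructed above stays inside the unitary group and keeps both basepoints fixed, so that it genuinely computes in $K_1(S\otimes A)$; the distance-$2$ hypotheses are precisely what keep the relevant unitaries away from $-1$ and inside the contractible region $\mathcal U$.
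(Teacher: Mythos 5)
Your proof is correct and follows essentially the same route as the paper: the same two-step decomposition (first independence from the loop $v$ for a fixed path $x$, then independence from $x$ by producing the admissible loop $g\,v'\,\alpha(g)^*$ with $g=xx'^*$, which is precisely the paper's $z\,v\,(\id_S\otimes\alpha)(z)^*$ conjugation trick, followed by the same $K_1$-additivity and naturality computation). The only cosmetic difference is in the first step, where you invoke contractibility of $\{w\in U(A):\lVert w-1\rVert<2\}$ abstractly to homotope the two admissible loops, while the paper realizes the same logarithm-based homotopy by the explicit formula $(s,t)\mapsto v(t)e^{s\log(v(t)^*x(t)\alpha(x(t))^*)}e^{-st\log(u\alpha(u)^*)}$.
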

\begin{proof}
We let $\log$ be the standard branch 
defined on the complement of the negative real axis. 
Suppose that 
$x:[0,1]\to U(A)$ is a path satisfying $x(0)=1$ and $x(1)=u$, and 
that $v:[0,1]\to U(A)$ is a path satisfying $v(0)=v(1)=1$ and 
$\lVert x(t)\alpha(x(t))^*-v(t)\rVert<2$ for all $t\in[0,1]$. 

First, we show that 
$\kappa(u,\alpha)$ does not depend on the choice of $v$. 
We define $w:[0,1]\to U(A)$ by 
\[
w(t)=x(t)\alpha(x(t))^*e^{-t\log(u\alpha(u)^*)}, \qquad t\in[0,1]. 
\]
Then $\lVert x(t)\alpha(x(t))^*-w(t)\rVert$ is less than $2$. 
It is easy to see that 
$v$ and $w$ are homotopic in the unitary group of 
the unitization of $S\otimes A$ via the homotopy 
\[
(s,t)\mapsto
v(t)e^{s\log(v(t)^*x(t)\alpha(x(t))^*)}
e^{-st\log(u\alpha(u)^*)}, \qquad s,t\in[0,1]. 
\]
Hence $\kappa(u,\alpha)$ does not depend on the choice of $v$. 

Suppose that 
$y:[0,1]\to U(A)$ is another path satisfying $y(0)=1$ and $y(1)=u$. 
Then $z(t)=y(t)x(t)^*$ is a unitary in the unitization of $S\otimes A$, and 
\[
\lVert y(t)\alpha(y(t))^*-z(t)v(t)\alpha(z(t))^*\rVert
=\lVert z(t)(x(t)\alpha(x(t))^*-v(t))\alpha(z(t))^*\rVert<2. 
\]
In addition, 
$[zv(\id_S\otimes\alpha)(z)^*]$ belongs to 
$[v]+\Ima(\id-K_1(\id_S\otimes\alpha))$ in $K_1(S\otimes A)$, 
which means that 
$\kappa(u,\alpha)$ does not depend on the choice of $x$. 
\end{proof}

\begin{lem}\label{kappaadditive}
If $u_1,u_2\in U(A)_0$ satisfy 
$\lVert u_1-\alpha(u_1)\rVert+\lVert u_2-\alpha(u_2)\rVert<2$, 
then we have 
$\kappa(u_1u_2,\alpha)=\kappa(u_1,\alpha)+\kappa(u_2,\alpha)$. 
\end{lem}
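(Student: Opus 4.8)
The plan is to compute all three $\kappa$-classes with one coherent choice of paths and corrections, arranged so that the correction for $u_1u_2$ factors as an (appropriately twisted) product of the corrections for $u_1$ and $u_2$. Additivity then reduces to two elementary facts about $K_1(S\otimes A)$: that the class of a product of loops is the sum of the classes, and that conjugating a loop based at $1$ by a path of unitaries starting at $1$ preserves its class.

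First I would record that $\kappa(u_1u_2,\alpha)$ is defined: writing $\delta_i=\lVert u_i-\alpha(u_i)\rVert$, the triangle inequality gives $\lVert u_1u_2-\alpha(u_1u_2)\rVert\le\delta_1+\delta_2<2$, and $u_1u_2\in U(A)_0$. I would fix paths $x_i\colon[0,1]\to U(A)$ with $x_i(0)=1$, $x_i(1)=u_i$, set $a_i(t)=x_i(t)\alpha(x_i(t))^*$, and use the canonical corrections $v_i(t)=a_i(t)e^{-tc_i}$ with $c_i=\log(u_i\alpha(u_i)^*)$ (the standard branch is available since $\delta_i<2$ keeps $-1$ off the spectrum). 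By the preceding lemma each $v_i$ is an admissible correction, so $[v_i]=\kappa(u_i,\alpha)$ in $\Coker(\id-K_0(\alpha))$.

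For $u_1u_2$ I would take the pointwise product path $x=x_1x_2$. A direct computation gives $x(t)\alpha(x(t))^*=a_1(t)\,\Ad(\alpha(x_1(t)))(a_2(t))$, which motivates the candidate correction $V(t)=v_1(t)\,\Ad(\alpha(x_1(t)))(v_2(t))$. Since $\alpha(x_1(0))=1$ and $v_i(0)=v_i(1)=1$, one checks $V(0)=V(1)=1$, so $V$ is a loop. The crucial point — and the place where the hypothesis $\delta_1+\delta_2<2$ is consumed — is the admissibility estimate $\lVert x(t)\alpha(x(t))^*-V(t)\rVert<2$. Using that $\Ad(\alpha(x_1(t)))$ is isometric, this bound splits as $\lVert a_1-v_1\rVert+\lVert a_2-v_2\rVert=\lVert 1-e^{-tc_1}\rVert+\lVert 1-e^{-tc_2}\rVert$, and a spectral monotonicity argument shows $\lVert 1-e^{-tc_i}\rVert\le\lVert 1-e^{-c_i}\rVert=\delta_i$ for all $t\in[0,1]$; hence the sum is $<2$. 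By the preceding lemma it follows that $\kappa(u_1u_2,\alpha)=[V]$.

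It then remains to evaluate $[V]$. Writing $V$ as the product of the two loops $v_1$ and $\Ad(\alpha(x_1(\cdot)))(v_2)$, each based at $1$, additivity of the $K_1$-class of a product gives $[V]=[v_1]+[\Ad(\alpha(x_1))(v_2)]$. Because $\alpha(x_1(0))=1$, the map $H(s,t)=\Ad(\alpha(x_1(st)))(v_2(t))$ is a homotopy of loops based at $1$ joining $v_2$ to $\Ad(\alpha(x_1))(v_2)$, so these two loops have the same class; thus $[V]=[v_1]+[v_2]$ already in $K_0(A)\cong K_1(S\otimes A)$ for this choice of representatives. Projecting to $\Coker(\id-K_0(\alpha))$ yields the asserted identity $\kappa(u_1u_2,\alpha)=\kappa(u_1,\alpha)+\kappa(u_2,\alpha)$. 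I expect the only genuine work to be the spectral monotonicity estimate in the admissibility step, together with the bookkeeping identity $x\alpha(x)^*=a_1\,\Ad(\alpha(x_1))(a_2)$; the two $K_1$-theoretic facts are formal.
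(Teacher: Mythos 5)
Your proof is correct and follows essentially the same route as the paper: both take the product path $x_1x_2$, build the correction for $u_1u_2$ as a product of the individual corrections with $v_2$ conjugated by a path (you use $\alpha(x_1)$, the paper uses $x_1$ and writes $v=x_1v_2x_1^*v_1$), consume the hypothesis $\delta_1+\delta_2<2$ in exactly the same triangle-inequality estimate, and finish with the formal facts that $K_1$-classes add under products of loops and are invariant under conjugation by a path of unitaries. The only difference is that you make explicit the canonical exponential corrections $a_i(t)e^{-tc_i}$ and the spectral monotonicity bound $\lVert 1-e^{-tc_i}\rVert\le\delta_i$, which the paper leaves implicit in its choice of $v_i$ with $\lVert x_i(t)\alpha(x_i(t))^*-v_i(t)\rVert\le\lVert u_i-\alpha(u_i)\rVert$.
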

\begin{proof}
For each $i=1,2$, 
we choose $x_i:[0,1]\to U(A)$ and $v_i:[0,1]\to U(A)$ such that 
$x_i(0)=1$, $x_i(1)=u_i$, $v_i(0)=v_i(1)=1$ and 
\[
\lVert x_i(t)\alpha(x_i(t))^*-v_i(t)\rVert
\leq\lVert u_i-\alpha(u_i)\rVert
\]
for all $t\in[0,1]$. 
Then $t\mapsto x_1(t)x_2(t)$ is a path of unitaries from $1$ to $u_1u_2$. 
Put $v(t)=x_1(t)v_2(t)x_1(t)^*v_1(t)$ so that 
\[
\lVert x_1(t)x_2(t)\alpha(x_1(t)x_2(t))^*-v(t)\rVert
\leq\lVert u_1-\alpha(u_1)\rVert+\lVert u_2-\alpha(u_2)\rVert<2. 
\]
Clearly $[v]$ equals $[v_1]+[v_2]$ in $K_1(S\otimes A)$, 
and so we have 
$\kappa(u_1u_2,\alpha)=\kappa(u_1,\alpha)+\kappa(u_2,\alpha)$. 

\end{proof}

\begin{lem}\label{alphainv}
Let $0<\ep<2$. 
Let $A$ be a unital Kirchberg algebra and 
let $\alpha$ be an automorphism of $A$ with the Rohlin property. 
If a unitary $u\in U(A)_0$ satisfies 
\[
\lVert u-\alpha(u)\rVert<\ep, \quad \kappa(u,\alpha)=0, 
\]
then there exists a path of unitaries $x:[0,1]\to U(A)$ 
such that $x(0)=1$, $x(1)=u$ and 
\[
\lVert x(t)-\alpha(x(t))\rVert<\ep
\]
for all $t\in[0,1]$. 
\end{lem}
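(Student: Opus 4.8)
The plan is to start from an arbitrary path from $1$ to $u$ and correct it twice, first in $K$-theory and then in norm, always keeping the endpoints fixed. Set $\ep_0=\lVert u-\alpha(u)\rVert$, so $\ep_0<\ep$ by hypothesis. Note that the two endpoints already meet the target estimate, since $\lVert 1-\alpha(1)\rVert=0$ and $\lVert u\alpha(u)^*-1\rVert=\ep_0<\ep$, so only the interior must be controlled. I will use throughout that $\lVert a-b\rVert=\lVert ab^*-1\rVert$ for unitaries, so the goal is to achieve $\lVert x(t)\alpha(x(t))^*-1\rVert<\ep$.

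First I would remove the $K$-theoretic obstruction. Since $u\in U(A)_0$, pick any path $x_0:[0,1]\to U(A)$ with $x_0(0)=1$, $x_0(1)=u$, and put $w_0(t)=x_0(t)\alpha(x_0(t))^*$, a path from $1$ to $u\alpha(u)^*$. Closing it up with the standard logarithm, $\tilde w_0(t)=w_0(t)\exp(-t\log(u\alpha(u)^*))$ is a loop with $\tilde w_0(0)=\tilde w_0(1)=1$ and $\lVert w_0(t)-\tilde w_0(t)\rVert\leq\ep_0$; by the definition of $\kappa$, its class $[\tilde w_0]\in K_1(S\otimes A)\cong K_0(A)$ projects to $\kappa(u,\alpha)$ in $\Coker(\id-K_0(\alpha))$, so the hypothesis $\kappa(u,\alpha)=0$ says exactly that $[\tilde w_0]\in\Ima(\id-K_0(\alpha))$. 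As $\Ima(\id-K_0(\alpha))$ is a subgroup, I can then choose a loop of unitaries $y:[0,1]\to U(A)$, $y(0)=y(1)=1$, with $(\id-K_0(\alpha))[y]=-[\tilde w_0]$; here a class of $K_0(A)\cong K_1(S\otimes A)$ is realized by an honest loop in $A$ (not a matrix amplification) because $A$ is properly infinite. Setting $x_1=y x_0$ gives another path from $1$ to $u$, and the conjugated closure $\tilde w_1(t)=y(t)\tilde w_0(t)\alpha(y(t))^*$ is a loop with $[\tilde w_1]=(\id-K_0(\alpha))[y]+[\tilde w_0]=0$, while still $\lVert w_1(t)-\tilde w_1(t)\rVert\leq\ep_0$, where $w_1(t)=x_1(t)\alpha(x_1(t))^*$.

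The second correction is in norm and is where the Rohlin property enters. Since $\tilde w_1$ is a loop with vanishing class in $K_1(S\otimes A)$, Lemma \ref{stability} (2) applies: for any $\delta>0$ there is a path $e:[0,1]\to U(A)$ with $e(0)=e(1)=1$ and $\lVert\tilde w_1(t)-e(t)\alpha(e(t))^*\rVert<\delta$ for all $t$. Choosing $\delta<\ep-\ep_0$ and putting $x(t)=e(t)^*x_1(t)$, the endpoints are preserved ($x(0)=1$, $x(1)=u$) because $e(0)=e(1)=1$, and a direct computation yields $\lVert x(t)\alpha(x(t))^*-1\rVert=\lVert w_1(t)-e(t)\alpha(e(t))^*\rVert\leq\lVert w_1(t)-\tilde w_1(t)\rVert+\delta\leq\ep_0+\delta<\ep$, as required.

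The hard part is the quantitative bookkeeping in the final estimate rather than any single construction. The key observation is that closing up the defect path costs exactly $\ep_0=\lVert u-\alpha(u)\rVert$ in norm, which is strictly below $\ep$, leaving a positive margin $\ep-\ep_0$ into which the stability approximation $\delta$ can be fitted. I would also take care to justify the class computation for $[\tilde w_1]$ — using that $K_1(S\otimes A)$ is abelian so that the class of a pointwise product of loops is the sum of classes, and that $\alpha$ acts on $[y]$ through $K_0(\alpha)$ under the suspension identification — and to confirm the realization of $K_0(A)$-classes by loops of unitaries in $A$ itself, which is available for a Kirchberg algebra.
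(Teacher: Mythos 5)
Your proof is correct and follows essentially the same route as the paper's: take an arbitrary path from $1$ to $u$, close up its defect path via the logarithm at cost $\lVert u-\alpha(u)\rVert$, use $\kappa(u,\alpha)=0$ to kill the resulting class in $K_1(S\otimes A)$, and then absorb the corrected loop as an approximate coboundary by Lemma \ref{stability} (2) within the remaining margin $\ep-\lVert u-\alpha(u)\rVert$. The only difference is expository: you carry out explicitly, via the loop $y$ multiplying the path, the correction that the paper compresses into ``we may assume that the $K_1$-class of $v$ is zero.''
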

\begin{proof}
Choose a path of unitaries $u:[0,1]\to U(A)$ 
such that $u(0)=1$ and $u(1)=u$. 
Since $\lVert u-\alpha(u)\rVert<\ep<2$, 
there exists a path of unitaries $v:[0,1]\to U(A)$ 
such that $v(0)=v(1)=1$ and 
\[
\lVert u(t)\alpha(u(t))^*-v(t)\rVert
\leq\lVert u-\alpha(u)\rVert
\]
for all $t\in[0,1]$. 
We can regard $v$ as a unitary in the unitization of $S\otimes A$. 
From $\kappa(u,\alpha)=0$, we may assume that 
the $K_1$-class of $v$ in $K_1(S\otimes A)$ is zero. 
Hence, by Lemma \ref{stability} (2), 
we can find $w:[0,1]\to U(A)$ such that $w(0)=w(1)=1$ and 
\[
\lVert v(t)-w(t)\alpha(w(t))^*\rVert
<\ep-\lVert u-\alpha(u)\rVert
\]
for all $t\in[0,1]$. 
Therefore, one gets 
\[
\lVert u(t)\alpha(u(t))^*-w(t)\alpha(w(t))^*\rVert<\ep
\]
for all $t\in[0,1]$. 
Thus, $x(t)=w(t)^*u(t)$ meets the requirements. 
\end{proof}

\begin{lem}\label{kappaadjust}
Let $A$ be a unital Kirchberg algebra and 
let $\alpha$ be an automorphism of $A$ with the Rohlin property. 
For any $x\in K_0(A)$ and $\ep>0$, 
there exists $v\in U(A)_0$ such that 
$\lVert v-\alpha(v)\rVert<\ep$ and 
$\kappa(v,\alpha)=x+\Ima(\id-K_0(\alpha))$. 
\end{lem}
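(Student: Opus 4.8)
The plan is to realize the given class by an explicit almost $\alpha$-invariant unitary, obtained by distributing the winding of a loop over a Rohlin tower and concentrating it on a single projection. Throughout I use the identification $K_1(S\otimes A)\cong K_0(A)$ built into the definition of $\kappa$, together with the fact that in the purely infinite simple unital algebra $A$ every class in $K_0(A)$ is the class of a projection of $A$ and every nonzero projection is full. First I would fix a projection $p_0\in A$ whose $K_0$-class equals $-x$ (the sign being dictated by the orientation conventions in the definition of $\kappa$, and adjustable by reversing the direction of the winding below).

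Next I would invoke the Rohlin property of $\alpha$ to choose, for a large $N$ and a small tolerance $\delta>0$, a Rohlin tower consisting of two columns $e_0,\dots,e_{N-1}$ and $f_0,\dots,f_N$ with $\sum_i e_i+\sum_j f_j=1$, with $\lVert\alpha(e_i)-e_{i+1}\rVert<\delta$, and with the top of the $e$-column mapped into the $f$-column, so that $\alpha^N(e_0)$ is (approximately) orthogonal to $e_0,\dots,e_{N-1}$. Since $e_0Ae_0$ is a full corner with $K_0(e_0Ae_0)\cong K_0(A)$, I may take $p_0\le e_0$ without changing its class. Setting $p_i=\alpha^i(p_0)$ for $0\le i\le N$, and correcting by a perturbation of size $O(\delta)$, I may assume that $p_0,\dots,p_N$ are mutually orthogonal projections with $p_i\le e_i$ for $i<N$ and $p_N$ orthogonal to each $e_i$ (it will sit in the untouched column).

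Then I would write down the candidate unitary and the path witnessing its $\kappa$-value. Put
\[
x(t)=1+\sum_{i=0}^{N-1}\bigl(e^{2\pi\sqrt{-1}\,t\,i/N}-1\bigr)p_i,\qquad v=x(1).
\]
Because the $p_i$ are orthogonal, each $x(t)$ is a unitary, $x(0)=1$, and hence $v\in U(A)_0$. Using $\alpha(p_i)=p_{i+1}$ one checks that $v-\alpha(v)$ is supported on $p_1,\dots,p_N$ with every coefficient of modulus $2\sin(\pi/N)$, so that $\lVert v-\alpha(v)\rVert\le 2\sin(\pi/N)+O(\delta)<\ep$ once $N$ is large and $\delta$ small.

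Finally I would compute $\kappa(v,\alpha)$ from this very path. The loop $L(t)=x(t)\alpha(x(t))^*$ is scalar on each $p_k$: it equals $1$ on $p_0$, $e^{2\pi\sqrt{-1}\,t/N}$ on $p_k$ for $1\le k\le N-1$, and $e^{-2\pi\sqrt{-1}\,t(N-1)/N}$ on $p_N$, and $L(1)$ lies within $2\sin(\pi/N)$ of $1$. Closing $L$ up to a genuine loop, the small windings on $p_1,\dots,p_{N-1}$ cancel against the closing path while $p_N$ carries a net winding of $-1$; hence the $K_1(S\otimes A)$-class of the closed-up loop is $-[p_N]$. Since $[p_N]=K_0(\alpha)^N([p_0])\equiv[p_0]$ in $\Coker(\id-K_0(\alpha))$, this gives $\kappa(v,\alpha)=-[p_0]=x$ modulo $\Ima(\id-K_0(\alpha))$, as required. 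I expect the main obstacle to be the bookkeeping that makes $p_0,\dots,p_N$ genuinely orthogonal and keeps $\kappa$ unchanged under the $O(\delta)$ perturbations forced by the approximate tower; the key is to arrange the tower so that $\alpha^N(p_0)$ lands in the untouched $f$-column, whence all errors are $K$-theoretically invisible because the winding number depends only on homotopy classes.
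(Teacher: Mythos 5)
Your core idea---making an almost $\alpha$-invariant unitary by spreading phases over a Rohlin tower and concentrating the resulting winding on a single rung---is exactly the mechanism behind the paper's proof, but there is a genuine error in the tower bookkeeping on which all your orthogonality claims rest. You assume the Rohlin tower can be chosen ``with the top of the $e$-column mapped into the $f$-column, so that $\alpha^N(e_0)$ is (approximately) orthogonal to $e_0,\dots,e_{N-1}$,'' place $p_0\le e_0$, and assert that $p_N=\alpha^N(p_0)$ ``will sit in the untouched column.'' The Rohlin property gives no such control. In the formulation used in this paper (Theorem \ref{equivRohlin}, following \cite[Theorem 1]{N2}) the two columns are \emph{separately cyclic}: $\alpha(e_{N-1})=e_0$ and $\alpha(f_N)=f_0$, so that $\alpha^N(e_0)=e_0$ --- the opposite of being orthogonal to $e_0$; in the weaker formulation one only knows $\alpha(e_{N-1}+f_N)=e_0+f_0$, with no control over how the top of the $e$-column splits between the two bottoms. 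In either case $p_N$ can overlap $p_0,\dots,p_{N-1}$ nontrivially, and then $x(t)$ and $\alpha(x(t))$ no longer commute: the loop $L(t)=x(t)\alpha(x(t))^*$ is not diagonal over any orthogonal family, so neither the estimate $\lVert v-\alpha(v)\rVert\le 2\sin(\pi/N)+O(\delta)$ nor the winding count $-[p_N]$ can be read off as you do.

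The repair is small but essential: put $p_0$ under $f_0$, the bottom of the \emph{longer} column. Then $p_i=\alpha^i(p_0)$ sits (approximately) under $f_i$ for every $i=0,1,\dots,N$, so $p_0,\dots,p_N$ are genuinely mutually orthogonal after an $O(\delta)$ perturbation, and the rest of your computation --- $L(t)$ scalar on each $p_k$, net winding $-1$ on $p_N$, and $[p_N]=K_0(\alpha)^N([p_0])\equiv[p_0]$ in $\Coker(\id-K_0(\alpha))$ --- goes through as written. It is worth comparing this with the paper's route, which avoids approximate towers altogether: by \cite[Corollary 11]{N2} one replaces $(A,\alpha)$ by $(A\otimes\mathcal{O}_\infty,\alpha\otimes\gamma)$ with $\gamma$ aperiodic, chooses an \emph{exact} cyclic tower $e,\gamma(e),\dots,\gamma^{n-1}(e)$ in $\mathcal{O}_\infty$ with $[e]=[1]$, $\gamma^n(e)=e$ and $\gamma$-invariant remainder $e_0$, picks a loop $u:[0,1]\to U(A)$ with $u(0)=u(1)=1$, $\Lip(u)\le 2\pi$ and $[u]=x$ under $K_1(S\otimes A)\cong K_0(A)$, and sets
\[
v=\sum_{k=0}^{n-1}\alpha^k(u(k/n))\otimes\gamma^k(e)+1\otimes e_0 .
\]
This encodes $x$ by a Lipschitz loop rather than by a projection (so it does not even need the fact that every $K_0$-class of a Kirchberg algebra is realized by a projection, which your argument uses), and all errors vanish identically because the tower is exact. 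Your version, once the column placement is corrected, is a legitimate alternative, at the cost of carrying the $O(\delta)$ perturbation bookkeeping that the paper's tensoring trick eliminates.
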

\begin{proof}
Choose $n\in\N$ so that $2\pi/n<\ep$. 
By \cite[Corollary 11]{N2}, 
we may replace $(A,\alpha)$ 
with $(A\otimes\mathcal{O}_\infty,\alpha\otimes\gamma)$, 
where $\gamma$ is an aperiodic automorphism of $\mathcal{O}_\infty$.  
We may further assume that 
there exists a projection $e\in\mathcal{O}_\infty$ such that 
$[e]=[1]$, $e\gamma^k(e)=0$ for $k=1,2,\dots,n-1$ and $e=\gamma^n(e)$. 
Put $e_0=1-(e+\gamma(e)+\dots+\gamma^{n-1}(e))$. 
There exists a path of unitaries $u:[0,1]\to U(A)$ 
such that $u(0)=u(1)=1$, $\Lip(u)\leq2\pi$ and 
$[u]\in K_1(S\otimes A)\cong K_0(A)$ equals $x$. 
We define $v\in U(A)_0$ by 
\[
v=\sum_{k=0}^{n-1}\alpha^k(u(k/n))\otimes\gamma^k(e)
+1\otimes e_0. 
\]
One can verify 
$\lVert v-(\alpha\otimes\gamma)(v)\rVert<\ep$ and 
$\kappa(v,\alpha\otimes\gamma)=x+\Ima(\id-K_0(\alpha\otimes\gamma))$ easily. 
\end{proof}

\begin{lem}\label{ocneanu1}
Let $A$ be a unital Kirchberg algebra and 
let $\alpha$ be an automorphism of $A$ with the Rohlin property. 
For any unitary $w\in A$ with $\lVert w-1\rVert<2$ and $\ep>0$, 
there exists $u\in U(A)_0$ such that 
\[
\lVert uw\alpha(u)^*-1\rVert<\ep, \quad 
\lVert u-\alpha(u)\rVert<2 \ \text{ and } \ \kappa(u,\alpha)=0. 
\]
\end{lem}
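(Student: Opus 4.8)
The plan is to first realize $w$ as an approximate coboundary for $\alpha$, producing a unitary with the desired first property, and then to multiply it by a nearly $\alpha$-invariant unitary chosen to cancel the obstruction $\kappa$. Since a unitary witnessing a smaller tolerance in the first estimate also witnesses any larger one, I may assume throughout that $\ep$ is small; in particular $\ep < 2 - \lVert w - 1\rVert$.

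First I would apply Lemma \ref{stability}(1) to $w \in U(A)_0$: for a small $\ep_0 > 0$ it gives $v \in U(A)_0$ with $\lVert w - v\alpha(v)^*\rVert < \ep_0$. Setting $u_0 = v^*$, a direct computation yields
\[
\lVert u_0 w \alpha(u_0)^* - 1\rVert = \lVert w - v\alpha(v)^*\rVert < \ep_0,
\]
so the first requirement already holds for $u_0$ with tolerance $\ep_0$. The same inequality rewrites as $\lVert u_0^*\alpha(u_0) - w\rVert < \ep_0$, whence
\[
\lVert u_0 - \alpha(u_0)\rVert = \lVert 1 - u_0^*\alpha(u_0)\rVert \le \lVert 1 - w\rVert + \ep_0 < 2
\]
for $\ep_0$ small. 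This shows $\kappa(u_0,\alpha)$ is defined, and more generally that the second conclusion $\lVert u - \alpha(u)\rVert < 2$ is automatic for any $u$ satisfying the first conclusion with tolerance below $2 - \lVert w - 1\rVert$.

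The remaining task is to force $\kappa = 0$. I would set $c = \kappa(u_0,\alpha)$, lift $-c$ to some $x \in K_0(A)$, and invoke Lemma \ref{kappaadjust} to obtain $v' \in U(A)_0$ with $\lVert v' - \alpha(v')\rVert < \delta$ and $\kappa(v',\alpha) = x + \Ima(\id - K_0(\alpha)) = -c$, for a small $\delta > 0$. Then I would take $u = v' u_0$. Since $\lVert v'\alpha(v')^* - 1\rVert = \lVert v' - \alpha(v')\rVert < \delta$, one estimates
\[
\lVert u w \alpha(u)^* - 1\rVert \le \lVert v'(u_0 w\alpha(u_0)^* - 1)\alpha(v')^*\rVert + \lVert v'\alpha(v')^* - 1\rVert < \ep_0 + \delta,
\]
so with $\ep_0 + \delta < \ep$ the first conclusion holds, and the second follows from the automatic estimate above. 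Finally, provided $\delta + \lVert w-1\rVert + \ep_0 < 2$, the hypothesis of Lemma \ref{kappaadditive} is met for $v'$ and $u_0$, giving
\[
\kappa(u,\alpha) = \kappa(v',\alpha) + \kappa(u_0,\alpha) = -c + c = 0.
\]

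The step needing the most care is the simultaneous choice of the tolerances $\ep_0$ and $\delta$: they must be small enough to keep $\lVert u_0 - \alpha(u_0)\rVert$ and $\lVert u - \alpha(u)\rVert$ strictly below $2$ so that $\kappa$ is defined, to satisfy the additivity hypothesis $\delta + \lVert w-1\rVert + \ep_0 < 2$ of Lemma \ref{kappaadditive}, and to keep the final coboundary bound $\ep_0 + \delta$ below $\ep$. All of these can be achieved at once precisely because $\lVert w - 1\rVert < 2$ strictly, which is the only place the hypothesis on $w$ enters.
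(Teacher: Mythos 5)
Your proof is correct and follows essentially the same route as the paper's: apply Lemma \ref{stability}(1) to $w\in U(A)_0$ to produce an approximate coboundary, observe that $\lVert u-\alpha(u)\rVert<2$ is automatic when the first tolerance is below $2-\lVert w-1\rVert$, then cancel $\kappa$ using Lemma \ref{kappaadjust} together with the additivity Lemma \ref{kappaadditive}. The only difference is bookkeeping: the paper fixes the tolerance $\min\{2-\lVert w-1\rVert,\ \ep\}$ at the outset and is terse about the perturbation by $v$, whereas you track the constants $\ep_0,\delta$ explicitly, which makes the same argument airtight.
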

\begin{proof}
From Lemma \ref{stability} (1), 
it is easy to find a unitary $u\in U(A)_0$ such that 
\[
\lVert uw\alpha(u)^*-1\rVert<\min\{2{-}\lVert w{-}1\rVert, \ \ep\}. 
\]
Then one has $\lVert u-\alpha(u)\rVert<2$. 
By the lemma above, 
we can find a unitary $v$ in $A$ which is almost fixed by $\alpha$ 
and $\kappa(v,\alpha)$ equals $-\kappa(u,\alpha)$. 
Thanks to Lemma \ref{kappaadditive}, 
by replacing $u$ with $vu$, we get the desired unitary. 
\end{proof}

\begin{lem}\label{ocneanu2}
For any $\ep>0$, there exists $\delta>0$ such that the following holds. 
Let $\alpha$ and $\beta$ be automorphisms of 
a unital Kirchberg algebra $A$ such that 
$\alpha^m\circ\beta^n$ is outer for all $(m,n)\in\Z^2\setminus\{(0,0)\}$. 
Let $u$ and $w$ be unitaries in $A$ such that 
\[
\beta\circ\alpha=\Ad w\circ\alpha\circ\beta, 
\quad \lVert w-1\rVert<\delta
\]
\[
\lVert u-\alpha(u)\rVert<\delta, \quad u\in U(A)_0 \ 
\text{ and } \ \kappa(u,\alpha)=0. 
\]
Then, there exists a unitary $v\in A$ such that 
\[
\lVert v-\alpha(v)\rVert<\ep \ \text{ and } \ 
\lVert u-v\beta(v)^*\rVert<\ep. 
\]
\end{lem}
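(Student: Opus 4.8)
The plan is to treat this as an $\alpha$-equivariant refinement of Lemma \ref{stability}(1): applied to $\beta$, that lemma already produces \emph{some} $v\in U(A)_0$ with $\lVert u-v\beta(v)^*\rVert<\ep$, so the entire difficulty is to arrange that the trivializing unitary $v$ can in addition be taken almost fixed by $\alpha$, and the hypotheses on $u$ are exactly the $\alpha$-side input needed for this. First I would note that, since $\alpha^m\beta^n$ is outer for every $(m,n)\neq(0,0)$, both $\alpha$ and $\beta$ are aperiodic and hence have the Rohlin property by Theorem \ref{ZRohlin}. Next, using $u\in U(A)_0$, $\lVert u-\alpha(u)\rVert<\delta$ and $\kappa(u,\alpha)=0$, Lemma \ref{alphainv} furnishes a path $x\colon[0,1]\to U(A)$ with $x(0)=1$, $x(1)=u$ and $\lVert x(t)-\alpha(x(t))\rVert$ small for all $t$; after shrinking $\delta$, Lemma \ref{nakamura} applied to $\alpha$ lets me further assume $\Lip(x)\le 6\pi$ while keeping $\lVert x(t)-\alpha(x(t))\rVert$ small. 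This reduces the problem to spreading the almost-$\alpha$-fixed path $x$ along a Rohlin tower for $\beta$.

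The construction of $v$ then follows the proof of \cite[Lemma 8]{N2} underlying Lemma \ref{stability}(1), but carried out so that $\alpha$-invariance is preserved at every stage. Choosing a Rohlin system of projections $\{f_j\}$ for $\beta$ and forming the associated telescoping unitary out of the values $x(j/R)$, the relation $\beta\circ\alpha=\Ad w\circ\alpha\circ\beta$ with $\lVert w-1\rVert<\delta$ guarantees that $\beta$ sends almost-$\alpha$-fixed elements to almost-$\alpha$-fixed elements with an error controlled by $\delta$; since each $x(j/R)$ is almost $\alpha$-fixed, the resulting $v$ should satisfy $\lVert v-\alpha(v)\rVert<\ep$. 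The coboundary estimate $\lVert u-v\beta(v)^*\rVert<\ep$ comes from the same telescoping computation as in Lemma \ref{stability}(1), the per-level increments of the path being small because $\Lip(x)\le 6\pi$ and $R$ is large.

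The main obstacle is the coupling of the two requirements at the \emph{seam} of the Rohlin tower: the telescoping in the $\beta$-direction closes up only up to a defect concentrated on the lowest level, and this defect must simultaneously be made small in norm and be compatible with the almost-$\alpha$-fixedness of $v$. I expect to handle it by keeping the $\beta$-Rohlin tower (approximately) inside the $\alpha$-fixed point algebra of $A_\omega$ — which is legitimate because $\lVert w-1\rVert<\delta$ makes $\alpha$ and $\beta$ almost commute, so an $\alpha$-invariant $\beta$-Rohlin tower is available after a small perturbation (and, if needed, after tensoring with a model pair $(\mathcal{O}_\infty,\gamma)$ as in the proof of Lemma \ref{kappaadjust}) — and by using that the path $x$ produced from $\kappa(u,\alpha)=0$ is genuinely almost $\alpha$-fixed, so that the seam defect lands in $U(A)_0$ and can be absorbed by one further application of the stability Lemma \ref{stability} for $\beta$. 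The quantifier $\delta=\delta(\ep)$ is then extracted by tracking, through these steps, how the final errors depend on $\lVert w-1\rVert$, $\lVert u-\alpha(u)\rVert$ and the tower height $R$.
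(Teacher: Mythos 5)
Your overall skeleton---Rohlin property of $\alpha$ via Theorem \ref{ZRohlin}, almost-$\alpha$-fixed paths from Lemmas \ref{alphainv} and \ref{nakamura}, and a $\beta$-Rohlin tower sitting inside the $\alpha$-fixed points of $A_\omega$---is the paper's skeleton, but two of your steps have genuine gaps, and the first one is fatal as stated. You propose to handle the seam of the tower by absorbing the defect ``by one further application of the stability Lemma \ref{stability} for $\beta$''. This is circular: Lemma \ref{stability}~(1) produces a unitary $v'$ with $v'\beta(v')^*$ close to the prescribed defect, but with \emph{no} control on $\lVert v'-\alpha(v')\rVert$; correcting your $v$ by such a $v'$ destroys the estimate $\lVert v-\alpha(v)\rVert<\ep$, and producing a trivializing unitary that is simultaneously almost $\alpha$-fixed is exactly the statement being proved. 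The paper never repairs the seam post hoc. It forms the cocycle iterates $u_0=1$, $u_{k+1}=u\beta(u_k)$, checks $\lVert u_k-\alpha(u_k)\rVert<(3k-2)\delta$ and, via the additivity Lemma \ref{kappaadditive} (which you never invoke), $\kappa(u_k,\alpha)=0$; it then applies Lemmas \ref{alphainv} and \ref{nakamura} \emph{not to $u$ but to $u_N$ and $u_{N+1}$}, obtaining almost-$\alpha$-fixed paths $y,z$ of Lipschitz constant $<6\pi$ from $u_N$, resp.\ $u_{N+1}$, down to $1$, and sets
\[
V=\sum_{k=0}^{N-1}u_k\beta^k(y(k/N))E_k^{(0)}
+\sum_{k=0}^{N}u_k\beta^k(z(k/(N+1)))E_k^{(1)}
\]
over the two towers of lengths $N$ and $N+1$. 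Because the path endpoints cancel the accumulated products $u_N$, $u_{N+1}$ exactly, the seam contributes only a per-level increment of order $6\pi/N<\ep$, and $\lVert V-\alpha(V)\rVert<(86N+25)\delta<\ep$ because every ingredient ($u_k$, the path values, the projections) is almost $\alpha$-fixed. A path from $1$ to $u$, which is what you extract from Lemma \ref{alphainv}, cannot play this role: what must be joined to $1$ with controlled Lipschitz constant are the accumulated products, and that is precisely where $\kappa(u_N,\alpha)=0$ and hence the additivity of $\kappa$ is needed.

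The second gap is your justification of the $\alpha$-invariant $\beta$-Rohlin tower. Saying that $\lVert w-1\rVert<\delta$ makes $\alpha$ and $\beta$ almost commute, ``so an $\alpha$-invariant tower is available after a small perturbation'', is not an argument: an arbitrary $\beta$-Rohlin tower need not be norm-close to any $\alpha$-invariant one, and no perturbation lemma in the paper produces this. What the paper actually uses is that $\alpha$, $\beta$, $w$ constitute an outer cocycle action of $\Z^2$ (this needs no smallness of $w$ at all), so Corollary \ref{splitZN}~(2) (resting on Theorem \ref{equivRohlin}, Theorem \ref{uniqueO_infty} and Theorem \ref{splitG}) yields \emph{exact} Rohlin projections for $\beta$ lying in $(A_\omega)^\alpha$. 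Your instinct about where the tower must live is correct, but the equivariant Rohlin machinery, not an approximation argument, is what makes it legitimate.
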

\begin{proof}
Choose $N\in\N$ and $\delta>0$ so that 
$6\pi/N<\ep$ and $(86N+25)\delta<\ep$. 
Suppose that $\alpha,\beta,u,w$ are given. 

The automorphisms $\alpha,\beta$ give an outer cocycle action of $\Z^2$. 
By Corollary \ref{splitZN} (2) (see also Theorem \ref{equivRohlin}), 
there exists a family of mutually orthogonal projections 
\[
\{E_k^{(0)}\mid k=0,1,\dots,N-1\}
\cup\{E_k^{(1)}\mid k=0,1,\dots,N\}
\]
in $(A_\omega)^\alpha$ such that 
\[
\sum_{k=0}^{N-1}E_k^{(0)}+\sum_{k=0}^NE_k^{(1)}=1 \ 
\text{ and }\beta(E_k^{(i)})=E_{k+1}^{(i)}, 
\]
where $E_N^{(0)}$ and $E_{N+1}^{(1)}$ are understood 
as $E_0^{(0)}$ and $E_0^{(1)}$, respectively. 

Define $u_k\in U(A)$ for $k=0,1,\dots$ 
by $u_0=1$ and $u_{k+1}=u\beta(u_k)$. 
By an elementary estimate, we obtain 
\[
\lVert u_k-\alpha(u_k)\rVert<(3k-2)\delta
\]
for any $k\in\N$. 
Moreover, from Lemma \ref{kappaadditive}, 
we can see $\kappa(u_k,\alpha)=0$. 
By applying Lemma \ref{alphainv} and Lemma \ref{nakamura} 
to $u_N$ and $u_{N+1}$, 
we obtain a path of unitaries $y$ and $z$ such that the following hold. 
\begin{itemize}
\item $y(1)=z(1)=1$, $y(0)=u_N$ and $z(0)=u_{N+1}$. 
\item $\lVert y(t)-\alpha(y(t))\rVert<27(3N-2)\delta$ 
and $\lVert z(t)-\alpha(z(t))\rVert<27(3N+1)\delta$ for all $t\in[0,1]$. 
\item Both $\Lip(y)$ and $\Lip(z)$ are less than $6\pi$. 
\end{itemize}
Define a unitary $V\in A^\omega$ by 
\[
V=\sum_{k=0}^{N-1}u_k\beta^k(y(k/N))E_k^{(0)}
+\sum_{k=0}^Nu_k\beta^k(z(k/(N+1)))E_k^{(1)}. 
\]
We can easily see $\lVert u-V\beta(V)^*\rVert<6\pi/N<\ep$. 
Furthermore, 
\[
\lVert V-\alpha(V)\rVert
<(3N-2)\delta+2N\delta+27(3N+1)\delta=(86N+25)\delta<\ep, 
\]
which completes the proof. 
\end{proof}

\begin{prop}\label{ocneanu3}
For any $\ep>0$, there exists $\delta>0$ such that the following holds. 
Let $A$ be a unital Kirchberg algebra. 
Suppose that 
$\alpha,\beta\in\Aut(A)$ and $w\in U(A)$ satisfy 
\[
\beta\circ\alpha=\Ad w\circ\alpha\circ\beta \ 
\text{ and } \ \lVert w-1\rVert<\delta. 
\]
Suppose further that 
$\alpha^m\circ\beta^n$ is outer for all $(m,n)\in\Z^2\setminus\{(0,0)\}$. 
Then, there exist $a,b\in U(A)$ such that 
\[
\lVert a-1\rVert<\ep, \quad \lVert b-1\rVert<\ep
\]
and 
\[
b\beta(a)w\alpha(b)^*a^*=1. 
\]
In particular, 
\[
(\Ad a\circ\alpha)\circ(\Ad b\circ\beta)
=(\Ad b\circ\beta)\circ(\Ad a\circ\alpha). 
\]
\end{prop}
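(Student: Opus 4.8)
The plan is to drive the commutator defect $w$ to the identity by an iteration, each step of which replaces the pair by a cocycle-conjugate one with a much smaller defect while costing only a tiny inner perturbation; summing these perturbations produces the required $a,b$ close to $1$. Fix the target $\ep$ and take $\delta$ small. A single step runs as follows. First apply Lemma \ref{ocneanu1} to $w$ to get $u\in U(A)_0$ with $\|uw\alpha(u)^*-1\|$ as small as we please and $\kappa(u,\alpha)=0$; since $\|u-\alpha(u)\|\le\|w-1\|+\|uw\alpha(u)^*-1\|$ and $\|w-1\|<\delta$, the unitary $u$ is also almost $\alpha$-fixed. With this $u$ (almost $\alpha$-fixed, $\kappa(u,\alpha)=0$) apply Lemma \ref{ocneanu2} to produce $v\in U(A)$ with $\|v-\alpha(v)\|<\ep'$ and $\|u-v\beta(v)^*\|<\ep'$, and set
\[
a_0=v^*\alpha(v),\qquad b_0=v^*u\beta(v),
\]
so that $\|a_0-1\|=\|v-\alpha(v)\|<\ep'$ and $\|b_0-1\|=\|u-v\beta(v)^*\|<\ep'$, both freely controllable.

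The heart of the step is an algebraic collapse. Using $\beta\circ\alpha=\Ad w\circ\alpha\circ\beta$ in the form $\beta\alpha(v)=w\,\alpha\beta(v)\,w^*$, a direct multiplication gives
\[
b_0\beta(a_0)\,w\,\alpha(b_0)^*a_0^*
=v^*\,u\,\beta\alpha(v)\,w\,\alpha\beta(v)^*\,\alpha(u)^*\,v
=v^*\bigl(uw\alpha(u)^*\bigr)v,
\]
whence $\|b_0\beta(a_0)w\alpha(b_0)^*a_0^*-1\|=\|uw\alpha(u)^*-1\|$, which we have arranged to be arbitrarily small. Conceptually, Lemma \ref{ocneanu1} exists precisely to annihilate the $K$-theoretic obstruction $\kappa(u,\alpha)\in\Coker(\id-K_0(\alpha))$, so that Lemma \ref{ocneanu2} can integrate $u$ along the $\beta$-direction as $u\approx v\beta(v)^*$; this is where the Rohlin property of $\alpha$ and the outerness of every $\alpha^m\beta^n$ are consumed.

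To pass from this approximate identity to the exact equation I would iterate. Writing $F(a,b)=b\beta(a)\,w\,\alpha(b)^*a^*$ for the defect of $(\Ad a\circ\alpha,\Ad b\circ\beta)$, one verifies the composition rule
\[
F(a'a,b'b)=(b'b)\beta(a'a)\,w\,\alpha(b'b)^*(a'a)^*,
\]
which is exactly the defect produced by applying the perturbation $(a',b')$ to the pair $(\Ad a\circ\alpha,\Ad b\circ\beta)$ carrying its own defect $F(a,b)$. Since $\Ad a\circ\alpha$ and $\Ad b\circ\beta$ agree with $\alpha$ and $\beta$ in $\Out(A)$, they are again aperiodic — hence have the Rohlin property by Theorem \ref{ZRohlin} — and $(\Ad a\circ\alpha)^m(\Ad b\circ\beta)^n$ stays outer for $(m,n)\ne(0,0)$, so the single step applies verbatim to the perturbed pair. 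Iterating with the step's parameters chosen so that $\|a_n-1\|,\|b_n-1\|<\ep/2^{\,n+2}$ and the defects satisfy $\|c_{n+1}-1\|\le\tfrac12\|c_n-1\|$, the partial products $a_n\cdots a_0$ and $b_n\cdots b_0$ converge to unitaries $a,b$ with $\|a-1\|,\|b-1\|<\ep$; continuity of $F$ together with $c_{n+1}\to1$ yields $F(a,b)=1$ exactly. The final displayed assertion of the Proposition is then immediate from the computation recorded just before its statement.

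The genuinely hard analysis is already sealed inside Lemmas \ref{ocneanu1} and \ref{ocneanu2}, so the remaining obstacle is bookkeeping rather than invention: establishing the defect identity and its composition rule, checking that aperiodicity, the Rohlin property and the outerness of all $\alpha^m\beta^n$ persist under inner perturbation at every stage, and calibrating the parameters so the perturbation sizes are summable below $\ep$ while the defects contract geometrically. The one point demanding genuine care is that $\|u-\alpha(u)\|$ must be made small — Lemma \ref{ocneanu1} only guarantees it is below $2$ — which is exactly why I would isolate the estimate $\|u-\alpha(u)\|\le\|w-1\|+\|uw\alpha(u)^*-1\|$ and lean on the hypothesis $\|w-1\|<\delta$.
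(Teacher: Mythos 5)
Your proposal is correct and takes essentially the same route as the paper's proof: Lemma \ref{ocneanu1} combined with the triangle-inequality estimate $\lVert u-\alpha(u)\rVert\leq\lVert w-1\rVert+\lVert uw\alpha(u)^*-1\rVert$ to get an almost-$\alpha$-fixed $u$ with $\kappa(u,\alpha)=0$ transporting $w$ close to $1$, then Lemma \ref{ocneanu2} to write $u\approx v\beta(v)^*$, the perturbation $a=v^*\alpha(v)$, $b=v^*u\beta(v)$ whose new defect is exactly $v^*(uw\alpha(u)^*)v$, and iteration with summable perturbations and a telescoping defect formula --- this is precisely the paper's construction ($a_k=v_k^*\alpha_k(v_k)$, $b_k=v_k^*u_k\beta_k(v_k)$, $w_{k+1}=v_k^*u_kw_k\alpha_k(u_k)^*v_k$). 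The one calibration point to state precisely is that the defect at stage $k+1$ must be driven below the Lemma \ref{ocneanu2} threshold $\delta_{k+1}$ attached to the \emph{next} tolerance $\ep_{k+1}$ (geometric halving alone is not the binding constraint), which your step does permit since Lemma \ref{ocneanu1} makes $\lVert uw\alpha(u)^*-1\rVert$ arbitrarily small; this is exactly how the paper chooses $\lVert u_kw_k\alpha_k(u_k)^*-1\rVert<\min\{\delta_k-\lVert w_k-1\rVert,\ \delta_{k+1}\}$.
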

\begin{proof}
Choose a decreasing sequence of positive real numbers 
$\ep_1,\ep_2,\dots$ so that $\sum\ep_k<\ep$. 
By applying Lemma \ref{ocneanu2} to $\ep_k>0$, we obtain $\delta_k>0$. 
We show that $\delta=\delta_1$ meets the requirements. 

Suppose that 
$\alpha,\beta\in\Aut(A)$ and $w\in U(A)$ are given. 
From Lemma \ref{ocneanu1}, there exists $u_1\in U_0(A)$ satisfying 
\[
\lVert u_1w\alpha(u_1)^*-1\rVert
<\min\{\delta_1{-}\lVert w{-}1\rVert, \ \delta_2\}, 
\quad 
\lVert u_1-\alpha(u_1)\rVert<\delta_1
\]
and $\kappa(u_1,\alpha)=0$. 
From Lemma \ref{ocneanu2}, 
we can find a unitary $v_1$ in $A$ such that 
\[
\lVert v_1-\alpha(v_1)\rVert<\ep_1 \ \text{ and } \ 
\lVert u_1-v_1\beta(v_1)^*\rVert<\ep_1. 
\]
Define unitaries $a_1$ and $b_1$ 
by $a_1=v_1^*\alpha(v_1)$ and $b_1=v_1^*u_1\beta(v_1)$. 
Clearly $\lVert a_1-1\rVert$ and $\lVert b_1-1\rVert$ are less than $\ep_1$. 
Put $\alpha_2=\Ad a_1\circ\alpha$ and $\beta_2=\Ad b_1\circ\beta$. 
Then, it is straightforward to see 
\[
\beta_2\circ\alpha_2=\Ad w_2\circ\alpha_2\circ\beta_2, 
\]
where $w_2=v_1^*u_1w\alpha(u_1)^*v_1$ and $\lVert w_2-1\rVert<\delta_2$. 

By repeating this argument, 
one obtains unitaries $\{a_k\}_k$, $\{b_k\}_k$, $\{w_k\}_k$ and 
automorphisms $\{\alpha_k\}_k$, $\{\beta_k\}_k$ which satisfy the following. 
\[
\lVert a_k-1\rVert<\ep_k, \quad \lVert b_k-1\rVert<\ep_k, 
\quad \lVert w_k-1\rVert<\delta_k, 
\]
\[
\alpha_{k+1}=\Ad a_k\circ\alpha_k, \quad 
\beta_{k+1}=\Ad b_k\circ\beta_k, \quad 
w_{k+1}=v_k^*u_kw_k\alpha_k(u_k)^*v_k
\]
and 
\[
\beta_k\circ\alpha_k=\Ad w_k\circ\alpha_k\circ\beta_k. 
\]
Define $a,b\in U(A)$ by 
\[
a=\lim_{k\to\infty}a_ka_{k-1}\dots a_2a_1 \ \text{ and } \ 
b=\lim_{k\to\infty}b_kb_{k-1}\dots b_2b_1. 
\]
Since $\sum_k\ep_k<\ep$, 
we get $\lVert a-1\rVert<\ep$ and $\lVert b-1\rVert<\ep$. 
Besides, one can check 
\[
w_{k+1}=(b_k\dots b_2b_1)\beta(a_k\dots a_2a_1)w
\alpha(b_k\dots b_2b_1)^*(a_k\dots a_2a_1)^*
\]
inductively. 
Therefore, we have 
\[
b\beta(a)w\alpha(b)^*a^*=1
\]
and 
\[
(\Ad a\circ\alpha)\circ(\Ad b\circ\beta)
=(\Ad b\circ\beta)\circ(\Ad a\circ\alpha), 
\]
because $w_k$ converges to the identity. 
\end{proof}

\begin{thm}\label{ocneanu4}
Let $(\alpha,u)$ be an outer cocycle action of $\Z^2$ 
on a unital Kirchberg algebra $A$. 
If the cohomology class of $[u(\cdot,\cdot)]$ 
is trivial in $H^2(\Z^2,K_1(A))$, then 
$(\alpha,u)$ is equivalent to a genuine action. 
\end{thm}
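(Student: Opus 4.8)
The plan is to reduce the general statement to Proposition \ref{ocneanu3}, which already disposes of the case in which the ``commutator defect'' unitary is norm-close to $1$; the role of the cohomological hypothesis is precisely to let us arrange this smallness after a $K$-theoretic adjustment. First I would put the cocycle action into a normal form adapted to $\Z^2$. Writing $\alpha=\alpha_{(1,0)}$ and $\beta=\alpha_{(0,1)}$, one checks that $(\alpha,u)$ is equivalent to a cocycle action in which $\alpha_{(m,n)}=\alpha^m\beta^n$ and all of the cocycle data is concentrated in a single unitary $w\in U(A)$ governed by
\[
\beta\circ\alpha=\Ad w\circ\alpha\circ\beta.
\]
Perturbing $\alpha\mapsto\Ad a\circ\alpha$ and $\beta\mapsto\Ad b\circ\beta$ by unitaries $a,b\in U(A)$ --- which is exactly an equivalence of cocycle actions --- replaces $w$ by $b\,\beta(a)\,w\,\alpha(b)^*a^*$, so $(\alpha,u)$ is equivalent to a genuine action precisely when some such perturbation makes the defect equal to $1$. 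Since the original action is outer, $\alpha^m\beta^n$ is outer for every $(m,n)\neq(0,0)$; in particular $\alpha$ and $\beta$ are aperiodic and hence have the Rohlin property by Theorem \ref{ZRohlin}, a property that persists under inner perturbation.

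Next I would translate the hypothesis. Sending a unitary to its $K_1$-class turns $u(\cdot,\cdot)$ into a $2$-cocycle on $\Z^2$ with values in the $\Z^2$-module $K_1(A)$ (the module structure given by $K_1(\alpha),K_1(\beta)$), and for $\Z^2$ one has $H^2(\Z^2,M)\cong M/(\Ima(1-K_1(\alpha))+\Ima(1-K_1(\beta)))$, the class being the antisymmetrization of the cocycle. Under the normal form above this antisymmetrization is exactly $[w]$, so the vanishing of $[u(\cdot,\cdot)]$ in $H^2(\Z^2,K_1(A))$ reads
\[
[w]\in\Ima(1-K_1(\alpha))+\Ima(1-K_1(\beta)).
\]
Computing the $K_1$-class of the perturbed defect, $[b\beta(a)w\alpha(b)^*a^*]=[w]+(K_1(\beta)-1)[a]+(1-K_1(\alpha))[b]$, I can therefore choose $a_0,b_0\in U(A)$ realizing prescribed $K_1$-classes (possible since $K_1(A)\cong U(A)/U(A)_0$ for a Kirchberg algebra) so that the new defect $w_1=b_0\beta(a_0)w\alpha(b_0)^*a_0^*$ has $[w_1]=0$, i.e. $w_1\in U(A)_0$.

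The final step reduces $w_1\in U(A)_0$ to a norm-small defect. Fix $\delta>0$ from Proposition \ref{ocneanu3} (applied with, say, $\ep=1$), and write $\alpha_1,\beta_1$ for the perturbed automorphisms, which are still aperiodic and hence still have the Rohlin property. Applying Lemma \ref{stability}(1) to $\alpha_1$ and $w_1\in U(A)_0$ produces $v\in U(A)_0$ with $\lVert w_1-v\alpha_1(v)^*\rVert<\delta$. Setting $\beta_2=\Ad v^*\circ\beta_1$ replaces $w_1$ by $w_2=v^*w_1\alpha_1(v)$, and
\[
\lVert w_2-1\rVert=\lVert v^*w_1\alpha_1(v)-1\rVert=\lVert w_1-v\alpha_1(v)^*\rVert<\delta.
\]
Now Proposition \ref{ocneanu3} yields $a,b\in U(A)$ with $b\,\beta_2(a)\,w_2\,\alpha_1(b)^*a^*=1$, so that $\Ad a\circ\alpha_1$ and $\Ad b\circ\beta_2$ commute and generate a genuine $\Z^2$-action equivalent to $(\alpha,u)$.

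I expect the main obstacle, beyond the bookkeeping of the successive inner perturbations, to be the cohomological identification in the second step: matching the abstract class in $H^2(\Z^2,K_1(A))$ with the concrete defect $[w]\in K_1(A)/(\Ima(1-K_1(\alpha))+\Ima(1-K_1(\beta)))$, with the correct module structure and signs. Once that identification is in place, the purely $K$-theoretic adjustment that kills $[w]$ and the Rohlin stability furnished by Lemma \ref{stability}(1) combine cleanly to meet the hypothesis of Proposition \ref{ocneanu3}, and the theorem follows.
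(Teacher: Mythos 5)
Your proposal is correct and follows essentially the same route as the paper: reduce the cocycle action to two generators with a single commutation defect $w$, use the cohomological hypothesis to arrange $[w]=0$ in $K_1(A)$, invoke the Rohlin property via Theorem \ref{ZRohlin} and Lemma \ref{stability}~(1) to replace $w$ by a defect of norm less than $\delta$, and finish with Proposition \ref{ocneanu3}. The only (cosmetic) difference is in the $K$-theoretic step: the paper first normalizes the entire $K_1$-valued $2$-cocycle to zero, so that $[w]=0$ follows, whereas you identify $H^2(\Z^2,K_1(A))$ with $K_1(A)/(\Ima(\id-K_1(\alpha))+\Ima(\id-K_1(\beta)))$ and kill the class of $w$ directly by perturbing the two generators.
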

\begin{proof}
We may assume that $[u(g,h)]=0$ in $K_1(A)$ for all $g,h\in\Z^2$. 
Hence there exists $w\in U(A)$ such that 
\[
\alpha_{(1,0)}\circ\alpha_{(0,1)}
=\Ad w\circ\alpha_{(0,1)}\circ\alpha_{(1,0)}
\]
and $[w]=0$ in $K_1(A)$. 
By Theorem \ref{ZRohlin}, 
the automorphism $\alpha_{(1,0)}$ has the Rohlin property. 
It follows from Lemma \ref{stability} (1) that, for any $\delta>0$, 
we can find $v\in U(A)_0$ such that 
$\lVert w-\alpha_{(1,0)}(v^*)v\rVert<\delta$. 
Thus, 
by replacing $\alpha_{(0,1)}$ with $\Ad v\circ\alpha_{(0,1)}$, 
we may further assume $\lVert w-1\rVert<\delta$. 
Then, the conclusion follows from Proposition \ref{ocneanu3}. 
\end{proof}

\begin{cor}\label{Cuntzstd}
Any outer cocycle action of $\Z^2$ 
on a unital Kirchberg algebra $A$ in the Cuntz standard form 
is equivalent to a genuine action. 
\end{cor}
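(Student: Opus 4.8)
The plan is to deduce the corollary directly from Theorem \ref{ocneanu4} by checking that its $K$-theoretic hypothesis is automatic once $[1_A]=0$ in $K_0(A)$. Thus the whole task reduces to showing that the class of $[u(\cdot,\cdot)]$ vanishes in $H^2(\Z^2,K_1(A))$, where $\Z^2$ acts on $K_1(A)$ by $g\mapsto K_1(\alpha_g)$; this is a well-defined homomorphism, since $\alpha_g\alpha_h$ and $\alpha_{gh}$ differ by an inner automorphism and inner automorphisms act trivially on $K$-theory. Because $\Z^2$ is a $2$-dimensional Poincar\'e duality group, there is a natural isomorphism of $H^2(\Z^2,K_1(A))$ with the coinvariants $K_1(A)_{\Z^2}$, under which the class of $[u(\cdot,\cdot)]$ is represented by the single element
\[
[w]=[u((1,0),(0,1))]-[u((0,1),(1,0))]\in K_1(A),
\]
where $w=u((1,0),(0,1))u((0,1),(1,0))^*$ is the unitary implementing $\alpha_{(1,0)}\circ\alpha_{(0,1)}=\Ad w\circ\alpha_{(0,1)}\circ\alpha_{(1,0)}$. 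So I must prove that $[w]$ lies in $\Ima(1-K_1(\alpha_{(1,0)}))+\Ima(1-K_1(\alpha_{(0,1)}))$.

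I would detect this by two applications of the Pimsner--Voiculescu sequence. Put $\theta=\alpha_{(1,0)}$, $\sigma=\alpha_{(0,1)}$, and form $M=A\rtimes_\theta\Z$ with implementing unitary $\lambda$ and canonical embedding $\iota\colon A\to M$. The relation $\theta\sigma=\Ad w\circ\sigma\theta$ shows that $\sigma$ extends to $\tilde\sigma\in\Aut(M)$ by $\tilde\sigma|A=\sigma$ and $\tilde\sigma(\lambda)=w^*\lambda$; since $[w^*\lambda]=[\lambda]-K_1(\iota)[w]$ in $K_1(M)$, one obtains the crucial identity
\[
(1-K_1(\tilde\sigma))[\lambda]=K_1(\iota)[w].
\]
By the Pimsner--Voiculescu exact sequence $K_1(\iota)$ induces an isomorphism $\Coker(1-K_1(\theta))\xrightarrow{\ \cong\ }\Ker(\partial)$, where $\partial\colon K_1(M)\to K_0(A)$ is the index map; and by naturality $K_1(\tilde\sigma)$ preserves $\Ker(\partial)$ and corresponds there, via this isomorphism, to the map induced by $K_1(\sigma)$. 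Consequently the image of $[w]$ in the full coinvariants $K_1(A)_{\Z^2}$ is carried to the class of $K_1(\iota)[w]=(1-K_1(\tilde\sigma))[\lambda]$ in $\Ker(\partial)/(1-K_1(\tilde\sigma))\Ker(\partial)$.

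Here is where the Cuntz standard form enters: the index map satisfies $\partial[\lambda]=[1_A]$, so $[1_A]=0$ forces $[\lambda]\in\Ker(\partial)$. Then $(1-K_1(\tilde\sigma))[\lambda]$ lies in $(1-K_1(\tilde\sigma))\Ker(\partial)$ and hence vanishes in the coinvariants, giving $[w]=0$ in $K_1(A)_{\Z^2}\cong H^2(\Z^2,K_1(A))$. The obstruction of Theorem \ref{ocneanu4} thus vanishes and the corollary follows. I expect the main obstacle to be the bookkeeping of the middle paragraph, namely verifying that the Poincar\'e-duality identification and the two Pimsner--Voiculescu identifications are compatible with the $\tilde\sigma$- and $\sigma$-actions, so that $[w]$ really does track the obstruction class all the way into $\Ker(\partial)/(1-K_1(\tilde\sigma))\Ker(\partial)$; once that compatibility is pinned down, the role of $[1_A]=0$ is immediate.
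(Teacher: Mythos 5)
Your proposal is correct and takes essentially the same approach as the paper: both pass to the crossed product $B=A\rtimes_{\alpha_{(1,0)}}\Z$, extend $\alpha_{(0,1)}$ to an automorphism sending the implementing unitary $\lambda$ to $w^*\lambda$, and use the Pimsner--Voiculescu sequence together with $\partial[\lambda]=[1]=0$ (the standard form hypothesis) to conclude that $[w]$ vanishes in $K_1(A)/\bigl(\Ima(\id-K_1(\alpha_{(1,0)}))+\Ima(\id-K_1(\alpha_{(0,1)}))\bigr)$. The only difference is presentational: you verify the $H^2$-hypothesis of Theorem \ref{ocneanu4} abstractly via the identification $H^2(\Z^2,K_1(A))\cong K_1(A)_{\Z^2}$ and the equivariance of the Pimsner--Voiculescu isomorphism, whereas the paper realizes the same $K$-theoretic identity with explicit unitaries $x,y$ and perturbs the generators before invoking the theorem.
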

\begin{proof}
Let $(\alpha,u)$ be an outer cocycle action of $\Z^2$ 
on a unital Kirchberg algebra $A$ in the Cuntz standard form. 
There exists a unitary $w\in A$ such that 
\[
\alpha_{(1,0)}\circ\alpha_{(0,1)}
=\Ad w\circ\alpha_{(0,1)}\circ\alpha_{(1,0)}. 
\]
Let $B$ be the crossed product of $A$ by $\alpha_{(1,0)}$. 
We denote the implementing unitary in $B$ by $\lambda$. 
Consider the Pimsner-Voiculescu exact sequence for $\alpha_{(1,0)}$. 
Since $[\lambda]\in K_1(B)$ is sent to $[1]\in K_0(A)$ and 
$[1]=0$ in $K_0(A)$, 
there exists a unitary $x\in A$ such that $[x]=[\lambda]$ in $K_1(B)$. 
The automorphism $\alpha_{(0,1)}$ of $A$ extends to 
$\tilde\alpha_{(0,1)}\in\Aut(B)$ 
by $\tilde\alpha_{(0,1)}(\lambda)=w^*\lambda$. 
Then we have 
\[
[\alpha_{(0,1)}(x)]=[\tilde\alpha_{(0,1)}(\lambda)]
=[w^*\lambda]=[w^*x], 
\]
that is, $[w]=[x]-[\alpha_{(0,1)}(x)]$ in $K_1(B)$. 
It follows that there exists a unitary $y\in A$ such that 
\[
[w]=[x]-[\alpha_{(0,1)}(x)]+[y]-[\alpha_{(1,0)}(y)]
\]
in $K_1(A)$. 
This means that 
there exists a unitary $v\in A$ such that $[v]=0$ in $K_1(A)$ and 
$w=\alpha_{(1,0)}(y)^*xvy\alpha_{(0,1)}(x)^*$, which implies 
\[
\Ad x^*\circ\alpha_{(1,0)}\circ\Ad y\circ\alpha_{(0,1)}
=\Ad v\circ\Ad y\circ\alpha_{(0,1)}\circ\Ad x^*\circ\alpha_{(1,0)}. 
\]
By the theorem above, 
$(\alpha,u)$ is equivalent to a genuine action. 
\end{proof}

\section{Classification of locally $KK$-trivial $\Z^2$-actions}

In this section, 
we classify locally $KK$-trivial outer $\Z^2$-actions 
on a Kirchberg algebra 
up to $KK$-trivially cocycle conjugacy. 
We denote by $\K$ 
the $C^*$-algebra of all compact operators on $\ell^2(\Z)$. 
We put $S=C_0((0,1))$ and regard it as a subalgebra of $C(\T)$, 
where $\T=\R/\Z$. 

From Remark \ref{EKforcoaction}, Corollary \ref{oc>cc}, 
Proposition \ref{equivKK} (2) and Theorem \ref{ocneanu4}, 
we can see that 
outer locally $KK$-trivial $\Z^2$-actions 
on a unital Kirchberg algebra $A$ can be classified by 
\[
\{x\in KK^1(A,A)\mid K_0(x)([1])=0\}
\]
up to $KK$-trivially cocycle conjugacy. 
Meanwhile, 
the results of \cite{KM} and \cite{T} show that 
uniformly outer $\Z^2$-actions on a UHF algebra $A$ are classified 
by the fundamental group $\pi_1(\Aut(A))$ up to cocycle conjugacy. 
On the other hand, when $A$ is a unital Kirchberg algebra, 
the group $\pi_1(\Aut(A\otimes\K))$ is isomorphic to $KK^1(A,A)$ by \cite{D2}. 
This is not a coincidence, 
and we first show that our classification invariant 
also lives in the group $\pi_1(\Aut(A\otimes\K))$. 
This also allows us to see the topological origin of the invariant 
(see Remark \ref{sergey} below). 

For a locally $KK$-trivial $\Z^2$-action $\alpha$ 
on a unital Kirchberg algebra $A$, 
we introduce an invariant $\Phi(\alpha)\in KK(A,S\otimes A)$ 
as follows. 
By Theorem \ref{Phillips} (2), 
$\alpha_{(1,0)}$ is homotopic to an inner automorphism. 
Since the unitary group of the multiplier algebra of 
a stable $C^*$-algebra is path-connected, 
$\alpha_{(1,0)}\otimes\id_\K$ is homotopic to the identity 
in $\Aut(A\otimes\K)$. 
Let $\{\gamma_t\}_{t\in[0,1]}$ be a path 
from $\id_{A\otimes\K}$ to $\alpha_{(1,0)}\otimes\id_\K$. 
Define a homomorphism $\pi:A\otimes\K\to C(\T)\otimes(A\otimes\K)$ by 
\[
\pi(a)(t)=
(\gamma_t^{-1}\circ(\alpha_{(0,1)}\otimes\id_\K)\circ\gamma_t)(a)
\]
for $a\in A\otimes\K$ and $t\in[0,1]$. 
We can see that 
the homotopy class of $\pi$ does not depend 
on the choice of the path $\{\gamma_t\}_t$, 
because $\alpha_{(0,1)}\otimes\id_\K$ is homotopic to the identity. 
Let $j_A:A\otimes\K\to C(\T)\otimes(A\otimes\K)$ be the homomorphism 
defined by $j_A(a)=1_{C(\T)}\otimes a$. 
We put 
\[
\Phi(\alpha)=KK(\pi)-KK(j_A)
\]
and regard it as an element of $KK(A,S\otimes A)$. 

\begin{rem}\label{sergey}
Sergey Neshveyev kindly informed the authors of the following fact: 
to the $\Z^2$-action $\alpha$, one can associate 
a principal $\Aut(A\otimes\K)$-bundle over $\T^2$ in a standard way, 
and our invariant $\Phi(\alpha)$ is essentially 
the usual obstruction class in $H^2(\T^2,\pi_1(\Aut(A\otimes\K)))$ 
for the existence of a continuous section. 
We will come back to this point elsewhere. 
\end{rem}

\begin{lem}\label{unit}
The above $KK^1$-class $\Phi(\alpha)$ satisfies $K_0(\Phi(\alpha))([1])=0$. 
\end{lem}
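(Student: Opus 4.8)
The plan is to compute $K_0(\Phi(\alpha))([1])$ as the winding invariant of a loop of projections and to kill it by a homotopy (``square-lemma'') argument. First I would pass to the stabilization and set $p=1_A\otimes e$ for a minimal projection $e\in\K$, so that $[p]$ represents $[1]$ under the natural isomorphism $K_0(A)\cong K_0(A\otimes\K)$. Write $\beta=\alpha_{(0,1)}\otimes\id_\K$ and $\psi=\alpha_{(1,0)}\otimes\id_\K=\gamma_1$. Since every automorphism fixes the unit, $\beta(p)=\psi(p)=p$; consequently the projection-valued path $P(t):=\pi(p)(t)=\gamma_t^{-1}\beta\gamma_t(p)$ is a \emph{loop} based at $p$, because at $t=0,1$ it equals $\beta(p)=p$ (using that $\psi$ commutes with $\beta$).

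Next I would set up the bookkeeping that identifies $K_0(\Phi(\alpha))([1])$ with a winding number. The split exact sequence $0\to S\otimes A\to C(\T)\otimes A\to A\to 0$ coming from evaluation at the basepoint yields a natural decomposition $K_0(C(\T)\otimes A)\cong K_0(A)\oplus K_1(A)$; for a based loop $L$ of projections in $A\otimes\K$ let $w(L)\in K_1(A)$ denote the $K_1(A)$-component of $[L]\in K_0(C(\T)\otimes A)$. Then $w$ is homotopy invariant rel endpoints and additive under concatenation (so it is a homomorphism on $\pi_1(\mathrm{Proj}(A\otimes\K),p)$), and for any automorphism $\phi$ of $A\otimes\K$ with $\phi(p)=p$ one has $w(\phi\circ L)=K_1(\phi)(w(L))$. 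Since $[j_A(p)]$ is the constant loop, its $K_1(A)$-component vanishes, whence $K_0(\Phi(\alpha))([1])=w(P)$. Crucially, every automorphism $\phi$ appearing below is a composite of $\psi^{\pm1}$ and $\beta^{\pm1}$, hence locally $KK$-trivial, so $K_1(\phi)=\id$ and $w(\phi\circ L)=w(L)$.

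I would then decompose $P$ by two applications of the elementary fact that for a continuous map of the square $[0,1]^2$ into $\mathrm{Proj}(A\otimes\K)$ the diagonal path is homotopic rel endpoints to the concatenation of two adjacent edges. Applying this to $G(s,t)=\gamma_s^{-1}\beta\gamma_t(p)$, whose diagonal is $P$, the bottom edge is the loop $L_1(s)=\gamma_s^{-1}(p)$ and the right edge is $t\mapsto\psi^{-1}\beta\gamma_t(p)=(\psi^{-1}\beta)\circ\bigl(\gamma_\bullet(p)\bigr)$ (the loop $t\mapsto\gamma_t(p)$ composed with $\psi^{-1}\beta$); both are based at $p$, and by naturality the right edge has winding $w(\gamma_\bullet(p))$. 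Hence $w(P)=w(L_1)+w(\gamma_\bullet(p))$. Applying the same fact to $K(a,b)=\gamma_a\gamma_b^{-1}(p)$, whose diagonal is the \emph{constant} loop $p$, the bottom edge is $\gamma_\bullet(p)$ and the right edge is $\psi\circ\bigl(\gamma_\bullet^{-1}(p)\bigr)$, of winding $w(\gamma_\bullet^{-1}(p))=w(L_1)$. This gives $0=w(\gamma_\bullet(p))+w(L_1)$, i.e.\ $w(L_1)=-w(\gamma_\bullet(p))$. Combining the two identities yields $w(P)=0$, which is the assertion.

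The main obstacle is the careful construction of the winding invariant $w$ and the verification of its two properties (additivity on $\pi_1$ and the naturality $w(\phi\circ L)=K_1(\phi)(w(L))$), together with the identification $w(P)=K_0(\Phi(\alpha))([1])$; these are standard consequences of the isomorphism $K_0(S\otimes A)\cong K_1(A)$ and of functoriality, but they should be stated precisely. A minor technical point is that the square arguments require joint continuity of $(s,t)\mapsto\gamma_s^{-1}\beta\gamma_t(p)$; since $\Phi(\alpha)$ does not depend on the choice of the path $\{\gamma_t\}_t$, one is free to select $\gamma_t$ with $t\mapsto\gamma_t^{-1}$ point-norm continuous, which secures the needed continuity.
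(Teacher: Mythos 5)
Your proof is correct, but it takes a genuinely different route from the paper's. The paper exploits the freedom (already noted when $\Phi$ was defined) to choose the homotopy $\{\gamma_t\}$ in a special form: by Theorem \ref{Phillips} (2), $\alpha_{(1,0)}$ is homotopic to an inner automorphism $\Ad w_0$, so one may take $\gamma_t=\Ad w(t)$ on $[0,1/2]$ and $\gamma_t=\gamma'_t\otimes\id_\K$ on $[1/2,1]$. With that choice the projection loop $\pi(1\otimes e)$ is constant on $[1/2,1]$, and on $[0,1/2]$ it equals $\Ad w'(t)(1\otimes e)$ with $w'(t)=w(t)^*(\alpha_{(0,1)}\otimes\id_\K)(w(t))$; since $[w_0^*\alpha_{(0,1)}(w_0)]=0$ in $K_1(A)$ and $U(M(A\otimes\K))$ is contractible (\cite[Theorem 16.8]{W-O}), the unitary path $w'$ can be deformed rel endpoints to one of the form $w''(t)\otimes1$, which commutes with $1\otimes e$, so the projection loop is actually null-homotopic. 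You instead keep $\{\gamma_t\}$ arbitrary, introduce the winding homomorphism $w$ on $\pi_1(\mathrm{Proj}(A\otimes\K),p)$, and obtain the vanishing from two square-lemma decompositions plus naturality, using that both generators act trivially on $K_1$. Your argument proves slightly less (only $K$-theoretic triviality of the loop, not null-homotopy, which is all the lemma asserts), but it needs no Kuiper-type contractibility theorem and no special choice of path, and it makes the cancellation mechanism transparent: $w(P)=w(L_1)+w(\gamma_\bullet(p))$ and $0=w(\gamma_\bullet(p))+w(L_1)$. Its cost is the need to establish precisely the standard properties of $w$ (well-definedness, additivity under concatenation via the pinch map, and naturality), which the paper's explicit null-homotopy avoids.

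Two small corrections. First, your justification ``since every automorphism fixes the unit'' is misstated: $p=1_A\otimes e$ is not the unit of $A\otimes\K$, and a general automorphism of $A\otimes\K$ need not fix it (indeed $\gamma_t$ does not, which is the whole point); the correct reason is that $\beta$ and $\psi$ are of the form (unital automorphism of $A$)$\,\otimes\,\id_\K$. Second, the joint-continuity worry at the end is vacuous: inversion is automatically point-norm continuous, since $\lVert\gamma_s^{-1}(a)-\gamma_{s_0}^{-1}(a)\rVert=\lVert\gamma_{s_0}(b)-\gamma_s(b)\rVert$ with $b=\gamma_{s_0}^{-1}(a)$, and automorphisms are isometric, so maps such as $(s,t)\mapsto\gamma_s^{-1}\beta\gamma_t(p)$ are always jointly norm-continuous; no special choice of $\{\gamma_t\}$ is required.
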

\begin{proof}
Since $\alpha_{(1,0)}$ is homotopic to an inner automorphism, say $\Ad w_0$, 
we may assume that the homotopy $\gamma$ is of the following form: 
\[
\gamma_t=\begin{cases}
\Ad w(t), & 0\leq t\leq 1/2, \\
\gamma'_t\otimes \id_\K, & 1/2\leq t\leq1, \end{cases}
\]
where $\{w(t)\}_{0\leq t\leq1/2}$ is a continuous path 
from $1\otimes1$ to $w_0\otimes1$ in the unitary group of $M(A\otimes \K)$, 
and $\{\gamma'_t\}_{1/2\leq t\leq1}$ is a continuous path 
from $\Ad w_0$ to $\alpha_{(1,0)}$ in $\Aut(A)$. 
Let $e$ be a minimal projection of $\K$. 
Then all we have to show is that the projection loop $\pi(1\otimes e)(t)$ is 
homotopic to the constant loop $1\otimes e$. 
For $1/2\leq t\leq1$, there is nothing to show. 
For $0\leq t\leq1/2$, we have $\pi(1\otimes e)(t)=\Ad w'(t)(1\otimes e)$, 
where $w'(t)=w(t)^*(\alpha_{(0,1)}\otimes \id_\K)(w(t))$. 
Note that $w'(1/2)=w_0^*\alpha_{(0,1)}(w_0)\otimes1$, 
and $w_0^*\alpha_{(0,1)}(w_0)$ is trivial in $K_1(A)$. 
Since the unitary group of $M(A\otimes \K)$ is contractible 
\cite[Theorem 16.8]{W-O}, its fundamental group is trivial, 
and so the continuous path $\{w'(t)\}_{0\leq t\leq1/2}$ is 
homotopic to that of the form $\{w''(t)\otimes1\}_{0\leq t\leq1/2}$ 
within the set of continuous paths 
from $1\otimes1$ to $w_0^*\alpha_{(0,1)}(w_0)\otimes 1$. 
This finishes the proof. 
\end{proof}

\begin{lem}\label{invariant}
Let $\alpha,\beta:\Z^2\curvearrowright A$ be 
two locally $KK$-trivial $\Z^2$-actions on a unital Kirchberg algebra. 
If $\alpha$ and $\beta$ are $KK$-trivially cocycle conjugate, 
then $\Phi(\alpha)=\Phi(\beta)$. 
\end{lem}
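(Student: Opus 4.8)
The plan is to factor a $KK$-trivial cocycle conjugacy into two elementary moves, a genuine conjugacy by an automorphism $\mu$ with $KK(\mu)=1_A$ and a cocycle perturbation, and to check that $\Phi$ is preserved by each. Concretely, I would write the hypothesis as $\beta_g=\mu\circ\alpha^u_g\circ\mu^{-1}$ with $KK(\mu)=1_A$ and $\{u_g\}$ an $\alpha$-cocycle in $M(A)$, and introduce the intermediate action $\delta=\alpha^u$, so that $\delta_g=\Ad u_g\circ\alpha_g$. Since $KK(\delta_g)=KK(\Ad u_g)\cdot KK(\alpha_g)=1_A$, the action $\delta$ is again locally $KK$-trivial and $\Phi(\delta)$ makes sense; because $\beta_g=\mu\circ\delta_g\circ\mu^{-1}$, it then suffices to establish $\Phi(\alpha)=\Phi(\delta)$ and $\Phi(\delta)=\Phi(\beta)$ separately.

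For the conjugacy step $\Phi(\delta)=\Phi(\beta)$, I would start from a path $\{\eta_t\}$ from $\id_{A\otimes\K}$ to $\delta_{(1,0)}\otimes\id_\K$ and transport it by $M=\mu\otimes\id_\K$ to the path $\{M\circ\eta_t\circ M^{-1}\}$, which runs from $\id_{A\otimes\K}$ to $\beta_{(1,0)}\otimes\id_\K$. A direct substitution, using $\beta_{(0,1)}\otimes\id_\K=M\circ(\delta_{(0,1)}\otimes\id_\K)\circ M^{-1}$, then gives $\pi_\beta=(\id_{C(\T)}\otimes M)\circ\pi_\delta\circ M^{-1}$. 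Since $KK(\mu)=1_A$ and tensoring with $\K$ induces the stability isomorphism, both $M$ and $\id_{C(\T)}\otimes M$ are $KK$-trivial, so $KK(\pi_\beta)=KK(\pi_\delta)$; as $j_A$ does not involve the action at all, subtracting $KK(j_A)$ yields $\Phi(\beta)=\Phi(\delta)$.

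The heart of the argument is the cocycle step $\Phi(\alpha)=\Phi(\delta)$. Writing $\tilde\alpha_g=\alpha_g\otimes\id_\K$ and $\tilde u_g=u_g\otimes1\in U(M(A\otimes\K))$, I would fix a path $\{\gamma_t\}$ from $\id_{A\otimes\K}$ to $\tilde\alpha_{(1,0)}$ computing $\Phi(\alpha)$, choose a path of unitaries $\{w(t)\}$ in $M(A\otimes\K)$ with $w(0)=1$ and $w(1)=\tilde u_{(1,0)}$ (possible since this unitary group is path-connected), and use $\gamma'_t=\Ad w(t)\circ\gamma_t$ to compute $\Phi(\delta)$; this is legitimate by the path-independence of $\Phi$ noted in the definition. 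A short calculation using $\tilde\alpha_{(0,1)}\circ\Ad w(t)=\Ad(\tilde\alpha_{(0,1)}(w(t)))\circ\tilde\alpha_{(0,1)}$ then exhibits the two homomorphisms as an inner perturbation of one another over the circle, namely $\pi_\delta(x)(t)=\Ad W(t)(\pi_\alpha(x)(t))$ with $W(t)=\gamma_t^{-1}(w(t)^*\,\tilde u_{(0,1)}\,\tilde\alpha_{(0,1)}(w(t)))$.

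The main obstacle, and the one genuinely $\Z^2$-theoretic point, is to verify that $W$ closes up into a loop, i.e. $W(0)=W(1)$, so that $\Ad W$ is conjugation by an honest unitary of $M(C(\T)\otimes A\otimes\K)$. Evaluating the endpoints gives $W(0)=\tilde u_{(0,1)}$ and $W(1)=\tilde\alpha_{(1,0)}^{-1}(\tilde u_{(1,0)}^*\,\tilde u_{(0,1)}\,\tilde\alpha_{(0,1)}(\tilde u_{(1,0)}))$, and here I would invoke the cocycle identity $u_g\alpha_g(u_h)=u_{gh}$ at $g=(1,0),h=(0,1)$ and at $g=(0,1),h=(1,0)$, which by commutativity of $\Z^2$ yields $\tilde u_{(1,0)}\tilde\alpha_{(1,0)}(\tilde u_{(0,1)})=\tilde u_{(0,1)}\tilde\alpha_{(0,1)}(\tilde u_{(1,0)})$ and hence $W(1)=\tilde u_{(0,1)}=W(0)$. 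Once $W$ is a loop, $\pi_\delta$ and $\pi_\alpha$ are unitarily equivalent and so induce the same $KK$-class, giving $\Phi(\delta)=\Phi(\alpha)$ and, with the first step, the lemma. Routine points I would leave to the reader are the continuity of $W$ in $t$ and the checks that the transported paths have the stated endpoints.
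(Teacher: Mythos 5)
Your argument is correct, but it takes a genuinely different route from the paper's. You split the $KK$-trivial cocycle conjugacy into a conjugation by $\mu$ (with $KK(\mu)=1_A$) and a cocycle perturbation, handling each explicitly; the paper silently absorbs the conjugation into a ``we may assume'' and concentrates on the perturbation step. For that step the mechanisms diverge. The paper works in $A\otimes\K\otimes\K$ with bilateral shifts $\Ad w$ on each $\K$-leg and conjugates by the single diagonal multiplier unitary $v=\sum_{p,q}u_{(p,q)}\otimes e_{p,p}\otimes e_{q,q}$; the cocycle identity gives the exact intertwining relations $v(\alpha_{(1,0)}\otimes\Ad w\otimes\id_\K)(v^*)=u_{(1,0)}\otimes1\otimes1$ and $v(\alpha_{(0,1)}\otimes\id_\K\otimes\Ad w)(v^*)=u_{(0,1)}\otimes1\otimes1$, so that one and the same homomorphism $\pi$ computes both $\Phi(\alpha)$ and $\Phi(\beta)$. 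You instead stay on $A\otimes\K$, perturb the connecting path by a unitary path $w(t)$ from $1$ to $u_{(1,0)}\otimes1$, and show the two defining homomorphisms differ by conjugation by a unitary loop $W$, the loop closure $W(0)=W(1)$ being exactly the two-sided cocycle identity $u_{(1,0)}\alpha_{(1,0)}(u_{(0,1)})=u_{(1,1)}=u_{(0,1)}\alpha_{(0,1)}(u_{(1,0)})$; I checked the computation of $W$ and its endpoints, and they are right. What the paper's stabilization trick buys is that only a single fixed unitary $v$ appears (no paths of multiplier unitaries), so all continuity issues are trivial and no endpoint matching is needed; it is also the standard device showing exterior equivalent actions have conjugate stabilizations. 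What your route buys is a more elementary, definition-level argument on $A\otimes\K$ that makes visible precisely where commutativity of $\Z^2$ enters. One point you defer deserves a precise statement: since $\gamma_t^{-1}$ extended to $M(A\otimes\K)$ is only strictly continuous in $t$, the ``continuity of $W$'' you need is strict continuity, which is exactly what makes $W$ a unitary of $M(C(\T)\otimes A\otimes\K)$ and lets you conclude $KK(\pi_\delta)=KK(\pi_\alpha)$ from innerness; this is routine but should be said, as norm continuity of $t\mapsto W(t)$ may fail.
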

\begin{proof}
We may assume that 
there exists an $\alpha$-cocycle $\{u_n\}_{n\in\Z^2}$ in $A$ 
such that $\beta_n=\Ad u_n\circ\alpha_n$ for all $n\in\Z^2$. 
Let $w$ be the unitary operator on $\ell^2(\Z)$ 
defined by $w=\sum_pe_{p+1,p}$, 
where $\{e_{p,q}\}_{p,q}$ is a family of matrix units. 
Take a path of automorphisms 
$\{\gamma_t\}_{t\in[0,1]}$ of $A\otimes\K\otimes\K$
from the identity to $\alpha_{(1,0)}\otimes\Ad w\otimes\id_\K$. 
Define a homomorphism $\pi$ 
from $A\otimes\K\otimes\K$ to $C(\T)\otimes(A\otimes\K\otimes\K)$ by 
\[
\pi(a)(t)=(\gamma_t^{-1}\circ
(\alpha_{(0,1)}\otimes\id_\K\otimes\Ad w)\circ\gamma_t)(a). 
\]
Since there exists a path of unitaries from $w$ to $1$, 
it is easily seen that $\Phi(\alpha)$ is equal to $KK(\pi)-KK(j_A)$. 
Put 
\[
v=\sum_{p,q\in\Z}u_{(p,q)}\otimes e_{p,p}\otimes e_{q,q}. 
\]
Then one has 
\[
v(\alpha_{(1,0)}\otimes\Ad w\otimes\id_\K)(v^*)=u_{(1,0)}\otimes1\otimes1
\]
and 
\[
v(\alpha_{(0,1)}\otimes\id_\K\otimes\Ad w)(v^*)=u_{(0,1)}\otimes1\otimes1, 
\]
which imply 
\[
\pi(a)(t)=(\gamma_t^{-1}\circ\Ad v^*\circ
(\beta_{(0,1)}\otimes\id_\K\otimes\Ad w)\circ\Ad v\circ\gamma_t)(a), 
\]
and $\{\Ad v\circ\gamma_t\circ\Ad v^*\}_t$ is a path 
from the identity to $\beta_{(1,0)}\otimes\Ad w\otimes\id_\K$. 
Consequently $\Phi(\beta)=KK(\pi)-KK(j_A)$, 
which completes the proof. 
\end{proof}

\begin{prop}\label{existZ2}
Let $A$ be a unital Kirchberg algebra. 
For any $x\in KK(A,S\otimes A)$ with $K_0(x)([1])=0$, 
there exists a locally $KK$-trivial outer action 
$\alpha:\Z^2\curvearrowright A$ such that $\Phi(\alpha)=x$. 
\end{prop}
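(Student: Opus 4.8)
The plan is to realize the class $x$ inside the crossed-product picture developed in Sections 5 and 6: I first produce a \emph{cocycle} action whose commutation unitary carries exactly the $KK$-data of $x$, and then untwist it into a genuine action using the cohomology-vanishing Theorem \ref{ocneanu4}. To set up, fix $\sigma:=\alpha_{(1,0)}$ to be an aperiodic automorphism of $A$ with $KK(\sigma)=1_A$; by Lemma \ref{Zasymp} it is asymptotically representable, and by Theorem \ref{KKFields} the crossed product $B=A\rtimes_\sigma\Z$ is again a unital Kirchberg algebra, carrying the dual $\T$-action with implementing unitary $\lambda$. Using the identification $KK(A\otimes S,A)\cong KK^1(A,A)$ I translate $x$ into target data $y\in H''$ (with $N=1$, so $H''=\{y\in KK(A,A)\oplus KK(A\otimes S,A)\mid K_0(y_\emptyset)([1])=[1]\}$) by setting $y_\emptyset=1_A$ and letting $y_{\{1\}}$ be the image of $x$. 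Since $y_\emptyset$ is invertible, Proposition \ref{equivKK} (2) yields $\mu=:\tilde\tau\in\Aut_{\T}(B)$ with $\Theta(\tilde\tau)=y$. As $\tilde\tau$ commutes with the dual coaction it preserves the grading, so $\tau:=\tilde\tau|A\in\Aut(A)$ and $\tilde\tau(\lambda)=u\lambda$ for a unitary $u\in U(A)$; the computation $\tau(\sigma(a))=\tilde\tau(\lambda a\lambda^*)=u\sigma(\tau(a))u^*$ gives $\tau\circ\sigma=\Ad u\circ\sigma\circ\tau$, so that $\sigma,\tau$ and $u$ constitute a cocycle action $(\alpha,u)$ of $\Z^2$ on $A$.

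The key step is to identify $[u]\in K_1(A)$ with $K_0(x)([1])$. The essential observation is the intertwining relation $\tilde\tau\circ\iota=\iota_u$, where $\iota,\iota_u:C^*(\Z)\to B$ are the canonical embedding and its $u$-twist. Tracking $\iota$ and $\iota_u$ through Lemma \ref{w}, Lemma \ref{H4} and Lemma \ref{H5}, in exactly the bookkeeping of Lemma \ref{app1}, the class $[u]$ emerges as the $KK(S,A)\cong K_1(A)$ component ($b_{\{1\}}$ in the notation of Lemma \ref{H5}) of the data of $\iota_u$ relative to $\iota$; under the Bott identification relating $KK(A\otimes S,A)$ and $KK(A,S\otimes A)$, this component is precisely $K_0(x)([1])$. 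Hence the hypothesis $K_0(x)([1])=0$ forces $[u]=0$ in $K_1(A)$. Because $\alpha$ is locally $KK$-trivial the coefficient action on $K_1(A)$ is trivial, so $H^2(\Z^2,K_1(A))\cong K_1(A)$ and the cohomology class of the cocycle $u(\cdot,\cdot)$ is represented by $[u]=0$. Theorem \ref{ocneanu4} then converts $(\alpha,u)$ into an equivalent genuine $\Z^2$-action $\alpha'$, with $\alpha'_g=\Ad w_g\circ\alpha_g$ for unitaries $w_g\in U(A)$.

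Two points must be addressed before and after the untwisting. First, Theorem \ref{ocneanu4} requires $(\alpha,u)$ to be outer; I arrange this by carrying out the entire construction tensored with a fixed outer $\Z^2$-action $\gamma$ on $\mathcal{O}_\infty$ (unique, asymptotically representable, and of vanishing invariant $\Phi(\gamma)=0$ by Theorem \ref{uniqueO_infty}), using $A\cong A\otimes\mathcal{O}_\infty$. This guarantees $\sigma^m\tau^n$ outer for all $(m,n)\neq(0,0)$ while, by an additivity argument for $\Phi$ under tensoring with a $\Phi$-trivial outer action (in the spirit of Lemma \ref{invariant}), leaving the target invariant unchanged. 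Second, the resulting $\alpha'$ is locally $KK$-trivial, since $\alpha'_{(1,0)}=\Ad w_{(1,0)}\circ\sigma$ has $KK(\alpha'_{(1,0)})=KK(\sigma)=1_A$ and likewise for $\alpha'_{(0,1)}$, so $\Phi(\alpha')$ is defined; its value is consistent with the normalization $K_0(\Phi(\alpha'))([1])=0$ of Lemma \ref{unit}.

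It remains to verify $\Phi(\alpha')=x$. Since $\alpha'$ differs from the generators $\sigma,\tau$ only by inner automorphisms, it is $KK$-trivially cocycle conjugate to any action built from the same data, so by Lemma \ref{invariant} its invariant depends only on the $KK$-class of $\tilde\tau$, that is, on $y_{\{1\}}=x$. Concretely, one computes $\Phi(\alpha')$ from the topological definition by choosing the path $\{\gamma_t\}$ deforming $\alpha'_{(1,0)}\otimes\id_\K$ to $\id$ and conjugating $\alpha'_{(0,1)}\otimes\id_\K$ along it, and then matches the resulting loop of automorphisms with $y_{\{1\}}$ under the isomorphism $\pi_1(\Aut(A\otimes\K))\cong KK^1(A,A)$ of Remark \ref{sergey}. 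This final matching is the main obstacle: one must reconcile the concretely defined topological invariant $\Phi$ with the crossed-product invariant $y_{\{1\}}$, and simultaneously confirm that the $K_0$-normalization of $x$ is the very condition $[u]=0$ that permitted the Ocneanu untwisting. It is precisely this coincidence that makes $K_0(x)([1])=0$ the exact realizability criterion.
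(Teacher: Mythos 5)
Your construction half is sound, and it is essentially the alternative route the paper itself flags immediately after its own proof (``Using Proposition \ref{uniqueZ2} below and working on $\Hom_\T(A\rtimes\Z,A\rtimes\Z)$, we can prove Proposition \ref{existZ2} without using \cite[Proposition 5.7]{D2}''): producing $\tilde\tau\in\Aut_{\T}(B)$ with prescribed invariant via Proposition \ref{equivKK} (2), reading off the cocycle pair $(\sigma,\tau,u)$, identifying $[u]$ with $K_0(x)([1])=0$ by the bookkeeping of Lemmas \ref{w}, \ref{H4}, \ref{H5} and \ref{app1}, and untwisting by Theorem \ref{ocneanu4} are all legitimate steps. (Two details you gloss over: $KK(\tau)=1_A$ does not follow formally from $y_\emptyset=1_A$, but requires the asymptotic-representability argument in the last paragraph of the proof of Lemma \ref{app1}; and your outerness fix by tensoring relies on the unproven additivity $\Phi(\alpha\otimes\gamma)=\Phi(\alpha)$.)

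The genuine gap is exactly where you write ``this final matching is the main obstacle'': you never prove $\Phi(\alpha')=x$, and that equality is the whole content of the proposition. Lemma \ref{invariant} only says that $\Phi$ is constant on $KK$-trivial cocycle conjugacy classes; combined with Theorem \ref{equivNakamura} and Corollary \ref{oc>cc} it shows at best that $\Phi(\alpha')$ is a well-defined function of $KK(\tilde\tau)$ --- it does not compute that function, and surjectivity onto $\{x\mid K_0(x)([1])=0\}$ is precisely what is being claimed. The missing bridge between the crossed-product datum $y_{\{1\}}$ and the topologically defined $\Phi$ is supplied in the paper by the proof of Proposition \ref{uniqueZ2}: Takai duality, the mapping-torus isomorphism $A\rtimes_\rho\Z\rtimes_{\hat\rho}\R\cong M_{\rho\otimes\Ad w}$, and the Thom element yield the identity relating $z^{-1}\cdot KK(\id_S\otimes\tilde\tau)\cdot z$ to the $KK$-class of the loop $\bigl(\gamma_t^{-1}\circ(\alpha'_{(0,1)}\otimes\id_\K)\circ\gamma_t\bigr)_t$, which is $\Phi(\alpha')+KK(j_A)$; without carrying out that computation your argument produces \emph{some} locally $KK$-trivial outer action, with no control on its invariant. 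The paper's own proof avoids this issue by running the construction in the opposite direction: it starts from a loop $(\sigma_t)_t$ of automorphisms realizing $x+KK(j_A)$ (via \cite[Proposition 5.7]{D2}), trivializes the loop by an intertwining argument over $C([0,1])\otimes A$ (Theorem \ref{Phillips} (2) together with the argument of \cite[Theorem 5]{N2}), and then untwists by Proposition \ref{ocneanu3}; with that ordering $\Phi(\alpha')=x$ comes for free, because the final loop $(\mu_t^{-1}\circ\rho\circ\mu_t)_t=(\Ad\mu_t^{-1}(u_t^*)\circ\sigma_t)_t$ differs from $(\sigma_t)_t$ by a unitary in $C(\T)\otimes A$ and hence has the same $KK$-class. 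In short: your route shifts the real work into evaluating the invariant, which is the content of Proposition \ref{uniqueZ2}, and that work is absent from your proposal.
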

\begin{proof}
Choose an aperiodic automorphism $\gamma\in\Aut(\mathcal{O}_\infty)$ 
which is homotopic to the identity 
and let $\{\gamma_t\}_{t\in[0,1]}$ be the homotopy 
from the identity to $\gamma$. 
By \cite[Proposition 5.7]{D2}, we can find 
a path of automorphisms $\{\sigma_t\}_{t\in[0,1]}$ of $A$ 
such that $\sigma_0=\sigma_1$ and 
the $KK$-class of the homomorphism 
\[
A\ni a\mapsto(\sigma_t(a))_{t\in[0,1]}\in C(\T)\otimes A
\]
is equal to $x+KK(j_A)$. 
We regard $(\sigma_t)_t$ as an automorphism of $C([0,1])\otimes A$. 
By replacing $A$ and $(\sigma_t)_t$ 
with $A\otimes\mathcal{O}_\infty$ and $(\sigma_t\otimes\gamma)_t$, 
we may assume that $(\sigma_t)_t$ has the Rohlin property. 
Let $\rho$ be an aperiodic automorphism of $A$ 
which is homotopic to the identity. 
Then the automorphism $\id_{C([0,1])}\otimes\rho$ 
of $C([0,1])\otimes A$ also has the Rohlin property. 
Moreover, the two homomorphisms 
\[
A\ni a\mapsto (\sigma_t(a))_t\in C([0,1])\otimes A
\]
and 
\[
A\ni a\mapsto 1\otimes\rho(a)\in C([0,1])\otimes A
\]
clearly have the same $KK$-class. 
By Theorem \ref{Phillips} (2), 
they are asymptotically unitarily equivalent. 
It follows that 
$(\sigma_t)_t$ and $\id_{C([0,1])}\otimes\rho$ are 
asymptotically unitarily equivalent 
as automorphisms of $C([0,1])\otimes A$. 
Then we can apply the argument of \cite[Theorem 5]{N2} and 
conclude that $(\sigma_t)_t$ and $\id_{C([0,1])}\otimes\rho$ are 
$KK$-trivially cocycle conjugate 
(note that 
$(C([0,1])\otimes A)_\omega$ contains a unital copy of $\mathcal{O}_\infty$ 
and \cite[Theorem 7]{N2} works for $C([0,1])\otimes A$). 
Hence there exist a path of automorphisms $\{\mu_t\}_{t\in[0,1]}$ of $A$ 
and a path of unitaries $\{u_t\}_{t\in[0,1]}$ such that $KK(\mu_t)=1$ and 
\[
\Ad u_t\circ\rho=\mu_t\circ\sigma_t\circ\mu_t^{-1}
\]
for all $t\in[0,1]$. 
By replacing $\mu_t$ and $\sigma_t$ 
with $\mu_t\circ\mu_0^{-1}$ and $\mu_0\circ\sigma_t\circ\mu_0^{-1}$, 
we may assume that $\mu_0=\id_A$. 
Furthermore, by replacing $u_t$ and $\rho$ 
with $u_tu_0^*$ and $\Ad u_0\circ\rho$, 
we may also assume that $u_0=1$. 
Thus, $\rho=\sigma_0=\sigma_1$ and 
\[
\Ad u_1\circ\rho=\mu_1\circ\sigma_1\circ\mu_1^{-1}
=\mu_1\circ\rho\circ\mu_1^{-1}, 
\]
which means that $\rho$ and $\mu_1$ give a cocycle action of $\Z^2$. 
By replacing $A$, $\rho$ and $\mu_t$ with 
$A\otimes\mathcal{O}_\infty\otimes\mathcal{O}_\infty$, 
$\rho\otimes\gamma\otimes\id_{\mathcal{O}_\infty}$ and 
$\mu_t\otimes\id_{\mathcal{O}_\infty}\otimes\gamma_t$, 
we may further assume that 
$\rho$ and $\mu_1$ give an outer cocycle action. 
Since $[u_1]=0$ in $K_1(A)$, 
Proposition \ref{ocneanu3} implies that 
there exist unitaries $a,b\in A$ such that 
\[
b\mu_1(a)u_1\rho(b)^*a^*=1. 
\]
In addition, from the construction, we can see that $[b]=0$ in $K_1(A)$. 
Let $\{b_t\}_{t\in[0,1]}$ be a path of unitaries from $1$ to $b$. 
By replacing $\rho$, $\sigma_t$, $\mu_t$ and $u_t$ 
with $\Ad a\circ\rho$, $\Ad a\circ\sigma_t$, 
$\Ad b_t\circ\mu_t$ and $b_t\mu_t(a)u_t\rho(b_t)^*a^*$, 
we get $\rho\circ\mu_1=\mu_1\circ\rho$ and $u_1=1$. 

Consequently, one has 
\[
\mu_t^{-1}\circ\rho\circ\mu_t=\Ad\mu_t^{-1}(u_t^*)\circ\sigma_t
\]
for all $t\in[0,1]$ and 
$(\mu_t^{-1}(u_t^*))_t$ is a unitary in $C(\T)\otimes A$. 
Therefore, the $KK$-class of 
\[
A\ni a\mapsto((\mu_t^{-1}\circ\rho\circ\mu_t)(a))_t\in C(\T)\otimes A
\]
is equal to $x+KK(j_A)$. 
The $\Z^2$-action generated by $\rho$ and $\mu_1$ is the desired one. 
\end{proof}

Using Proposition \ref{uniqueZ2} below and 
working on $\Hom_\T(A\rtimes \Z,A\rtimes \Z)$,  
we can prove Proposition \ref{existZ2} 
without using \cite[Proposition 5.7]{D2}. 

Next we will prove Proposition \ref{uniqueZ2}. 
We must recall a few basic facts about crossed products. 
Let $\rho$ be an automorphism of a unital $C^*$-algebra $A$. 
We let $A\rtimes_\rho\Z\rtimes_{\hat\rho}\R$ denote the crossed product 
of $A\rtimes_\rho\Z$ by $\hat\rho$ which is regarded as an $\R$-action, 
and let $A\rtimes_\rho\Z\rtimes_{\hat\rho}\T$ be the usual crossed product 
by the dual action $\hat{\rho}:\T\curvearrowright A\rtimes_\rho\Z$. 
As described in \cite[Proposition 10.3.2]{B}, 
there exists an isomorphism 
between $A\rtimes_\rho\Z\rtimes_{\hat\rho}\R$ and 
the mapping torus 
\[
M_{\hat{\hat\rho}}=\{f:[0,1]\to A\rtimes_\rho\Z\rtimes_{\hat\rho}\T\mid
f(1)=\hat{\hat\rho}(f(0))\}. 
\]
Let $w$ be the unitary operator on $\ell^2(\Z)$ 
defined by $w=\sum_pe_{p+1,p}$, 
where $\{e_{p,q}\}_{p,q}$ is a family of matrix units. 
Then, by Takesaki-Takai duality, 
$(A\rtimes_\rho\Z\rtimes_{\hat\rho}\T,\hat{\hat\rho})$ is conjugate to 
$(A\otimes\K,\rho\otimes\Ad w)$, 
and so there exists 
an isomorphism $\phi$ from $A\rtimes_\rho\Z\rtimes_{\hat\rho}\R$ 
to the mapping torus 
\[
M_{\rho\otimes\Ad w}=\{f:[0,1]\to A\otimes\K\mid
f(1)=(\rho\otimes\Ad w)(f(0))\}. 
\]

Suppose $\sigma$ is in $\Hom_\T(A\rtimes_\rho\Z,A\rtimes_\rho\Z)$. 
We denote its canonical extension to $A\rtimes_\rho\Z\rtimes_{\hat\rho}\R$ 
by the same symbol $\sigma$. 
Define a unitary $v\in M(A\otimes\K)$ by 
\[
v=\sum_{p\in\Z}\sigma(\lambda^\rho_p)\lambda^\rho_{-p}\otimes e_{p,p}. 
\]
It is not so hard to see 
\[
(\phi\circ\sigma\circ\phi^{-1})(f)(t)
=(\Ad v^*\circ(\sigma\otimes\id_\K))(f(t)). 
\]

\begin{prop}\label{uniqueZ2}
Let $\alpha,\beta:\Z^2\curvearrowright A$ be 
two locally $KK$-trivial outer actions of $\Z^2$ 
on a unital Kirchberg algebra. 
If $\Phi(\alpha)=\Phi(\beta)$, then 
$\alpha$ and $\beta$ are $KK$-trivially cocycle conjugate. 
\end{prop}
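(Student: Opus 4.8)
The plan is to follow the strategy announced in the introduction: pass to the crossed product by the first generator, classify the commuting second automorphism by the equivariant machinery of Sections~4--5, and match the resulting abstract $KK$-invariant with $\Phi$. First I would normalize the first generators. Since $\alpha$ and $\beta$ are outer and locally $KK$-trivial, $\alpha_{(1,0)}$ and $\beta_{(1,0)}$ are aperiodic with $KK(\alpha_{(1,0)})=KK(\beta_{(1,0)})=1_A$, hence asymptotically representable by Lemma~\ref{Zasymp} and $KK$-trivially cocycle conjugate to each other by Theorem~\ref{Z}. Conjugating the whole action $\beta$ by an automorphism with trivial $KK$-class (which preserves $\Phi(\beta)$ by Lemma~\ref{invariant}), I may assume $\beta_{(1,0)}=\Ad u\circ\alpha_{(1,0)}$ for a single unitary $u\in U(A)$. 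Put $\rho=\alpha_{(1,0)}$ and $B=A\rtimes_\rho\Z$, a Kirchberg algebra by Theorem~\ref{KKFields}, and let $\psi\colon A\rtimes_{\beta_{(1,0)}}\Z\to B$ be the canonical dual-equivariant isomorphism with $\psi|A=\id$ and $\psi(\lambda^{\beta_{(1,0)}})=u\lambda^\rho$. As $\alpha_{(0,1)}$ and $\beta_{(0,1)}$ commute with the respective first generators, they extend to $\tilde\alpha_{(0,1)}\in\Aut_\T(B)$ and, after transport, to $\tilde\beta_2:=\psi\tilde\beta_{(0,1)}\psi^{-1}\in\Aut_\T(B)$; both restrict to $KK$-trivial automorphisms of $A$ (namely $\alpha_{(0,1)}$ and $\beta_{(0,1)}$), and the injectivity of $(n,m)\mapsto\alpha_{(0,1)}^n\rho^m$ and $(n,m)\mapsto\beta_{(0,1)}^n\rho^m$ into $\Out(A)$ is exactly outerness of $\alpha$ and $\beta$.

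The crux is to prove $KK(\tilde\alpha_{(0,1)})=KK(\tilde\beta_2)$ in $KK(B,B)$. By Lemma~\ref{H1} combined with Proposition~\ref{equivKK}, applied with $N=1$ to the asymptotically representable action $\rho$, this equality is equivalent to $\Theta(\tilde\alpha_{(0,1)})=\Theta(\tilde\beta_2)$ inside $H''\subset KK(A\otimes\mathcal{T},A)=KK(A,A)\oplus KK(A\otimes S,A)$. The diagonal components $y_\emptyset$ of both are $1_A$ because the restrictions to $A$ are $KK$-trivial, so it remains to match the secondary components $y_{\{1\}}\in KK(A\otimes S,A)\cong KK^1(A,A)$. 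Here I would invoke the mapping-torus dictionary set up just before the statement: under $B\rtimes_{\hat\rho}\R\cong M_{\rho\otimes\Ad w}$ the automorphism $\tilde\alpha_{(0,1)}$ induces the pointwise map $f(t)\mapsto(\alpha_{(0,1)}\otimes\id_\K)(f(t))$, since the associated unitary $v=\sum_p\tilde\alpha_{(0,1)}(\lambda^\rho_p)\lambda^\rho_{-p}\otimes e_{p,p}$ equals $1$. Because $KK(\rho)=1_A$, the mapping-torus exact sequence $0\to S\otimes A\otimes\K\to M_{\rho\otimes\Ad w}\to A\otimes\K\to0$ splits in $KK$, so this automorphism, lying over $\id$ on $A\otimes\K$, has a well-defined secondary class in $KK^1(A,A)$. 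Tracing through the definition of $\Phi(\alpha)$ via the trivializing path $\{\gamma_t\}$ and the clutched homomorphism $a\mapsto(\gamma_t^{-1}(\alpha_{(0,1)}\otimes\id_\K)\gamma_t(a))_t$, one sees this secondary class is precisely $\Phi(\alpha)$, i.e. $y_{\{1\}}(\tilde\alpha_{(0,1)})=\Phi(\alpha)$. The same computation, transported by $\psi$ (which is built from the cocycle $u$ relating $\beta_{(1,0)}$ to $\rho$, and under which $\Phi$ is invariant by Lemma~\ref{invariant}), gives $y_{\{1\}}(\tilde\beta_2)=\Phi(\beta)$. The hypothesis $\Phi(\alpha)=\Phi(\beta)$ then yields $KK(\tilde\alpha_{(0,1)})=KK(\tilde\beta_2)$.

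With the $KK$-classes matched, I would apply Theorem~\ref{equivNakamura} to $\rho$, $\tilde\alpha_{(0,1)}$ and $\tilde\beta_2$, obtaining $\nu\in\Aut_\T(B)$ with $KK(\nu)=1_B$ and $KK(\nu|A)=1_A$, together with a unitary $v\in A$, such that $\nu\circ\tilde\alpha_{(0,1)}\circ\nu^{-1}=\Ad v\circ\tilde\beta_2$. Writing $\mu_0=\nu|A$, so $KK(\mu_0)=1_A$, and $\nu(\lambda^\rho)=u'\lambda^\rho$, restricting the intertwining identity to $A$ gives $\mu_0\alpha_{(0,1)}\mu_0^{-1}=\Ad v\circ\beta_{(0,1)}$, while reading off the action on $\lambda^\rho$ gives $\mu_0\alpha_{(1,0)}\mu_0^{-1}=\Ad(u'u^*)\circ\beta_{(1,0)}$ (using $\alpha_{(1,0)}=\Ad u^*\circ\beta_{(1,0)}$). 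Thus $\mu_0$ implements an outer conjugacy of the $\Z^2$-actions $\alpha$ and $\beta$, with $KK(\mu_0)=1_A$.

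Finally I would upgrade outer conjugacy to $KK$-trivial cocycle conjugacy exactly as at the end of Theorem~\ref{uniqueasymprepre}: by Corollary~\ref{oc>cc}, through Corollary~\ref{splitZN}(1), Lemma~\ref{omegaabsorb1}, Lemma~\ref{omegaabsorb2} and Theorem~\ref{uniqueO_infty}, the two actions are cocycle conjugate, and since $KK(\mu_0)=1_A$ the proof of that corollary in fact produces a $KK$-trivial cocycle conjugacy. I expect the main obstacle to be the middle step, namely the precise identification $y_{\{1\}}(\tilde\alpha_{(0,1)})=\Phi(\alpha)$ of the secondary $KK^1$-component of the crossed-product automorphism with $\Phi$. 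This requires threading the Connes--Thom/mapping-torus isomorphism and the Bott element $z\in KK(C^*(\Z),\mathcal{T})$ of Lemma~\ref{z_N} through the definition of $\Phi$, and verifying that the transport by $\psi$ respects all these identifications; the remaining steps are then routine applications of the results already established.
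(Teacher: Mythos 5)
Your overall architecture coincides with the paper's: normalize the first generators via Theorem \ref{Z} (using Lemma \ref{invariant} to preserve the hypothesis), pass to $B=A\rtimes_\rho\Z$ with $\rho=\alpha_{(1,0)}$, reduce everything to the single equality $KK(\tilde\alpha_{(0,1)})=KK(\tilde\beta_2)$ in $KK(B,B)$, then apply Theorem \ref{equivNakamura} and finish with Corollary \ref{oc>cc}; those outer steps are correct, and your restriction computation extracting the outer conjugacy from the intertwining identity is fine. The gap is that your central step is asserted rather than proved. After reducing, via Lemma \ref{H1} and Proposition \ref{equivKK} with $N=1$, to the claim $y_{\{1\}}(\tilde\alpha_{(0,1)})=\Phi(\alpha)$ (and its transport to $\beta$), you write that ``one sees'' this identity while yourself flagging it as the main obstacle. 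But this identification is precisely the mathematical content of the proposition --- everything else is routine given the earlier results --- and it is not bookkeeping: the invariant $\Theta$ is defined through the $KK$-equivalence $w$ of Lemma \ref{w}, whose existence comes from the abstract uniqueness theorem (Theorem \ref{uniqueasymprepre}) and which is not explicit in any sense, whereas $\Phi(\alpha)$ is defined through a trivializing path in $\Aut(A\otimes\K)$. Your sketch conflates three different ``secondary classes'' --- the component $y_{\{1\}}$ built from $w$ and the Bott element $z_1$, a class built from the splitting of the mapping-torus extension, and $\Phi$ itself --- and the compatibilities between them (in particular, that some Thom/Takai-duality equivalence can serve as the $w$ of Lemma \ref{w}, coaction compatibility included) are exactly what would have to be proved.

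The paper's own proof shows that no such identification is needed, and this is the real point of divergence. It conjugates $KK(\id_S\otimes\tilde\alpha_{(0,1)})$ by the explicit $KK$-equivalence $z$ (Thom element composed with $KK(\psi\circ\phi)$) and identifies the result with the $KK$-class of the clutched automorphism $(\gamma_t^{-1}\circ(\alpha_{(0,1)}\otimes\id_\K)\circ\gamma_t)_t$ of $C(\T)\otimes A\otimes\K$; for $\beta$ the same computation goes through after absorbing the cocycle into $u=\sum_{p}u_p\otimes e_{p,p}$ and replacing $\gamma_t$ by $\sigma_t=\Ad u\circ\gamma_t\circ\Ad u^*$. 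The hypothesis $\Phi(\alpha)=\Phi(\beta)$ then says only that the two clutching homomorphisms $A\otimes\K\to C(\T)\otimes A\otimes\K$ have equal $KK$-classes; Theorem \ref{Phillips} (2) upgrades this to asymptotic unitary equivalence, which, applied fiberwise, makes the two clutched automorphisms asymptotically unitarily equivalent and hence $KK$-equal. So only an \emph{equality} of classes, never a computation of what $\Phi$ corresponds to in the crossed-product picture, is required. The efficient repair of your argument is to keep your first and last steps and replace the $\Theta$-machinery in the middle by this Thom/mapping-torus comparison.
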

\begin{proof}
Put $\rho=\alpha_{(1,0)}$. 
By Lemma \ref{Zasymp}, 
the $\Z$-action induced by $\rho$ is asymptotically representable. 
Let $\phi$ be the isomorphism between 
$A\rtimes_\rho\Z\rtimes_{\hat\rho}\R$ and 
the mapping torus
\[
M_{\rho\otimes\Ad w}=\{f:[0,1]\to A\otimes\K\mid
f(1)=(\rho\otimes\Ad w)(f(0))\}. 
\]
as described above. 
Take a path of automorphisms $\{\gamma_t\}_{t\in[0,1]}$ 
such that 
\[
\gamma_0=\id_{A\otimes\K}
\text{ and }\gamma_1=\rho\otimes\Ad w=\alpha_{(1,0)}\otimes\Ad w. 
\]
Then, $\psi(f)(t)=\gamma_t^{-1}(f(t))$ gives 
an isomorphism from $M_{\rho\otimes\Ad w}$ to $C(\T)\otimes A\otimes\K$. 
Letting 
\[
x\in KK(S\otimes(A\rtimes_\rho\Z),
A\rtimes_\rho\Z\rtimes_{\hat\rho}\R)
\]
be the Thom element, 
we have a $KK$-equivalence 
\[
z=x\cdot KK(\psi\circ\phi)
\in KK(S\otimes(A\rtimes_\rho\Z),C(\T)\otimes A). 
\]

We let $\tilde\alpha_{(0,1)}$ denote 
the canonical extension of $\alpha_{(0,1)}$ 
to $A\rtimes_\rho\Z=A\rtimes_{\alpha_{(1,0)}}\Z$. 
The automorphism $\tilde\alpha_{(0,1)}$ further extends 
to $A\rtimes_\rho\Z\rtimes_{\hat\rho}\R$ and 
we use the same symbol for it. 
It is easy to see that 
\[
x^{-1}\cdot KK(\id_S\otimes\tilde\alpha_{(0,1)})\cdot x
=KK(\tilde\alpha_{(0,1)})
\]
and 
\[
(\psi\circ\phi\circ\tilde\alpha_{(0,1)}\circ\phi^{-1}\circ\psi^{-1})(f)(t)
=(\gamma_t^{-1}\circ(\alpha_{(0,1)}\otimes\id_\K)\circ\gamma_t)(f(t))
\]
for all $f\in C(\T)\otimes A\otimes\K$ and $t\in[0,1]$. 
It follows that 
$z^{-1}\cdot KK(\id_S\otimes\tilde\alpha_{(0,1)})\cdot z$ 
equals the $KK$-class of the automorphism 
$(\gamma_t^{-1}\circ(\alpha_{(0,1)}\otimes\id_\K)\circ\gamma_t)_t$ 
of $C(\T)\otimes A\otimes\K$. 

We now turn to the action $\beta$. 
By Theorem \ref{Z}, we may assume that 
there exists an $\alpha_{(1,0)}$-cocycle $\{u_n\}_{n\in\Z}$ 
such that $\beta_{(n,0)}=\Ad u_n\circ\alpha_{(n,0)}$ for all $n\in\Z$. 
One can extend $\beta_{(0,1)}$ to 
$\tilde\beta_{(0,1)}\in\Hom_\T(A\rtimes_\rho\Z,A\rtimes_\rho\Z)$ 
by setting 
\[
\tilde\beta_{(0,1)}(\lambda^\rho_n)
=\beta_{(0,1)}(u_n^*)u_n\lambda^\rho_n. 
\]
Define unitaries $u,v\in M(A\otimes\K)$ by 
\[
u=\sum_{p\in\Z}u_p\otimes e_{p,p} \quad
\text{ and } \quad v=(\beta_{(0,1)}\otimes\id_\K)(u^*)u. 
\]
The automorphism $\tilde\beta_{(0,1)}$ further extends 
to $A\rtimes_\rho\Z\rtimes_{\hat\rho}\R$ and 
we use the same symbol for it. 
It is easy to see that 
\[
x^{-1}\cdot KK(\id_S\otimes\tilde\beta_{(0,1)})\cdot x
=KK(\tilde\beta_{(0,1)}). 
\]
By the observation before this proposition, we also have 
\begin{align*}
& (\psi\circ\phi\circ
\tilde\beta_{(0,1)}\circ\phi^{-1}\circ\psi^{-1})(f)(t) \\
&=(\gamma_t^{-1}\circ\Ad v^*\circ
(\beta_{(0,1)}\otimes\id_\K)\circ\gamma_t)(f(t)) \\
&=(\gamma_t^{-1}\circ\Ad u^*\circ
(\beta_{(0,1)}\otimes\id_\K)\circ\Ad u\circ\gamma_t)(f(t)) 
\end{align*}
for all $f\in C(\T)\otimes A\otimes\K$ and $t\in[0,1]$. 
Put $\sigma_t=\Ad u\circ\gamma_t\circ\Ad u^*$. 
Then $\{\sigma_t\}_{t\in[0,1]}$ is a path 
from the identity to $\beta_{(1,0)}\otimes\Ad w$, and 
$z^{-1}\cdot KK(\id_S\otimes\tilde\beta_{(0,1)})\cdot z$ 
equals the $KK$-class of the automorphism 
$(\sigma_t^{-1}\circ(\beta_{(0,1)}\otimes\id_\K)\circ\sigma_t)_t$ 
of $C(\T)\otimes A\otimes\K$. 

From $\Phi(\alpha)=\Phi(\beta)$, 
the two homomorphisms 
\[
A\otimes\K\ni a\mapsto
((\gamma_t^{-1}\circ(\alpha_{(0,1)}\otimes\id_\K)\circ\gamma_t)(a))_t
\in C(\T)\otimes A\otimes\K
\]
and 
\[
A\otimes\K\ni a\mapsto
((\sigma_t^{-1}\circ(\beta_{(0,1)}\otimes\id_\K)\circ\sigma_t)(a))_t
\in C(\T)\otimes A\otimes\K
\]
have the same $KK$-class in $KK(A,C(\T)\otimes A)$, 
and so they are asymptotically unitarily equivalent 
by Theorem \ref{Phillips} (2). 
Therefore the two automorphisms 
$(\gamma_t^{-1}\circ(\alpha_{(0,1)}\otimes\id_\K)\circ\gamma_t)_t$ 
and $(\sigma_t^{-1}\circ(\beta_{(0,1)}\otimes\id_\K)\circ\sigma_t)_t$ 
of $C(\T)\otimes A\otimes\K$ have the same $KK$-class. 
Thus, 
\[
z^{-1}\cdot KK(\id_S\otimes\tilde\alpha_{(0,1)})\cdot z
=z^{-1}\cdot KK(\id_S\otimes\tilde\beta_{(0,1)})\cdot z, 
\]
and so $KK(\tilde\alpha_{(0,1)})=KK(\tilde\beta_{(0,1)})$ 
in $KK(A\rtimes_\rho\Z,A\rtimes_\rho\Z)$. 
Applying Theorem \ref{equivNakamura}, 
we can conclude that $\alpha$ and $\beta$ are outer conjugate 
by an automorphism $\mu$ such that $KK(\mu)=1_A$. 
Corollary \ref{oc>cc} (and its proof) tells us that 
$\alpha$ and $\beta$ are $KK$-trivially cocycle conjugate. 
\end{proof}

By Proposition \ref{existZ2} and \ref{uniqueZ2}, we get the following. 

\begin{thm}\label{Z2unital}
Let $A$ be a unital Kirchberg algebra. 
There exists a bijective correspondence 
between the following two sets. 
\begin{enumerate}
\item $KK$-trivially cocycle conjugacy classes of 
locally $KK$-trivial outer $\Z^2$-actions on $A$. 
\item $\{x\in KK(A,S\otimes A)\mid K_0(x)([1])=0\}$. 
\end{enumerate}
\end{thm}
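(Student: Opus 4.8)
The plan is to promote the invariant $\Phi$ to the claimed bijection. Concretely, I would show that the assignment
\[
\alpha\longmapsto\Phi(\alpha)\in KK(A,S\otimes A)
\]
descends to a well-defined map from the set in (1) to the set in (2), and that this map is both injective and surjective.

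First I would check well-definedness in two steps. By Lemma \ref{unit}, every $\Phi(\alpha)$ satisfies $K_0(\Phi(\alpha))([1])=0$, so the invariant indeed lands in the target set (2). By Lemma \ref{invariant}, two $KK$-trivially cocycle conjugate locally $KK$-trivial actions have equal $\Phi$, so $\Phi$ factors through the collection of $KK$-trivially cocycle conjugacy classes. Together these yield a well-defined map from (1) to (2).

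Injectivity of this map is precisely Proposition \ref{uniqueZ2}: if $\Phi(\alpha)=\Phi(\beta)$ for two locally $KK$-trivial outer $\Z^2$-actions, then $\alpha$ and $\beta$ are $KK$-trivially cocycle conjugate, hence define the same element of (1). Surjectivity is Proposition \ref{existZ2}: given any $x\in KK(A,S\otimes A)$ with $K_0(x)([1])=0$, there is a locally $KK$-trivial outer action $\alpha$ with $\Phi(\alpha)=x$. Combining these four facts, $\Phi$ induces a bijection between (1) and (2), as asserted.

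The combination at this level is purely formal; the substantive content sits entirely inside the two cited propositions. I expect the main obstacle to be the uniqueness direction (Proposition \ref{uniqueZ2}): one must pass from the homotopy-theoretic equality $\Phi(\alpha)=\Phi(\beta)$ to an honest equality $KK(\tilde\alpha_{(0,1)})=KK(\tilde\beta_{(0,1)})$ on the crossed product $A\rtimes_{\alpha_{(1,0)}}\Z$, via the Thom element and the mapping-torus description, and then invoke the equivariant Nakamura theorem (Theorem \ref{equivNakamura}) together with Corollary \ref{oc>cc} to upgrade outer conjugacy to $KK$-trivial cocycle conjugacy. On the existence side (Proposition \ref{existZ2}), the delicate points are realizing a prescribed $KK$-class by a loop of automorphisms and then applying the second cohomology vanishing (Theorem \ref{ocneanu4}, through Proposition \ref{ocneanu3}) to convert the resulting cocycle action of $\Z^2$ into a genuine action with the prescribed invariant.
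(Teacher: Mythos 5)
Your proposal is correct and matches the paper's own proof, which likewise defines the map via $\Phi$ (with Lemma \ref{unit} and Lemma \ref{invariant} giving well-definedness) and then cites Proposition \ref{existZ2} for surjectivity and Proposition \ref{uniqueZ2} for injectivity. You also correctly locate all the substantive work inside those two propositions, so there is nothing to add.
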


\begin{ex}
When $A$ is the Cuntz algebra $\mathcal{O}_n$, 
\[
\{x\in KK(A,S\otimes A)\mid K_0(x)([1])=0\}
\]
is isomorphic to 
$\Ext(\Z/(n{-}1)\Z,\Z/(n{-}1)\Z)\cong\Z/(n{-}1)\Z$. 
It follows that 
the cocycle conjugacy classes of outer $\Z^2$-actions 
on $\mathcal{O}_n$ correspond to $\Z/(n{-}1)\Z$. 
\end{ex}

\bigskip

Next we consider the non-unital case. 
For a locally $KK$-trivial $\Z^2$-action $\alpha:\Z^2\curvearrowright A$ 
on a non-unital Kirchberg algebra $A$, 
one can define $\Phi(\alpha)\in KK(A,S\otimes A)$ 
in a similar fashion to the unital case. 
In the same way as Lemma \ref{invariant}, one can show that 
$\Phi(\alpha)$ is an invariant of $KK$-trivially cocycle conjugacy. 

\begin{thm}\label{Z2nonunital}
Let $A$ be a non-unital Kirchberg algebra. 
There exists a bijective correspondence 
between the following two sets. 
\begin{enumerate}
\item $KK$-trivially cocycle conjugacy classes of 
locally $KK$-trivial outer $\Z^2$-actions on $A$. 
\item $KK(A,S\otimes A)$. 
\end{enumerate}
\end{thm}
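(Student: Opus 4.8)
The plan is to mirror the unital argument of Theorem \ref{Z2unital}: the invariant $\Phi$ and its invariance under $KK$-trivially cocycle conjugacy are already in place, so it remains to prove non-unital analogues of the surjectivity statement (Proposition \ref{existZ2}) and the injectivity statement (Proposition \ref{uniqueZ2}). The one structural change is that the constraint $K_0(x)([1])=0$, which in the unital case was forced by Lemma \ref{unit}, now disappears, simply because a non-unital algebra carries no unit to track. The first move is to invoke Zhang's dichotomy: a non-unital separable purely infinite simple $C^*$-algebra is stable, so $A\cong A\otimes\K$ (see \cite{R}). This lets me work throughout with the stable algebra and identify $\Aut(A)=\Aut(A\otimes\K)$, so that Dadarlat's computation $\pi_1(\Aut(A\otimes\K))\cong KK^1(A,A)\cong KK(A,S\otimes A)$ from \cite{D2} applies to $A$ directly, without any unital normalization.

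For surjectivity I would rerun the construction of Proposition \ref{existZ2}. The only input there that enforced the unit constraint was the appeal to \cite[Proposition 5.7]{D2}, used to produce a loop $\{\sigma_t\}_{t\in[0,1]}$ of automorphisms realizing a prescribed class in $KK(A,C(\T)\otimes A)$: for unital $A$ such loops fix the unit and hence realize only classes $x$ with $K_0(x)([1])=0$. Since $A\cong A\otimes\K$, the same result now realizes every class in $\pi_1(\Aut(A))=KK(A,S\otimes A)$ with no constraint. The remaining steps — arranging the Rohlin property by tensoring with an aperiodic model, and converting the resulting cocycle $\Z^2$-action into a genuine action — go through once the Section 6 machinery (Proposition \ref{ocneanu3} and Theorem \ref{ocneanu4}) is available in the stable setting.

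For injectivity I would adapt Proposition \ref{uniqueZ2} essentially unchanged. Setting $\rho=\alpha_{(1,0)}$, the hypotheses $KK(\alpha_{(1,0)})=1_A$ and asymptotic representability (the non-unital analogue of Lemma \ref{Zasymp}) let me form the crossed product $A\rtimes_\rho\Z$, again a stable Kirchberg algebra, and use the Thom isomorphism together with the mapping-torus description of $A\rtimes_\rho\Z\rtimes_{\hat\rho}\R$ exactly as before; these constructions are indifferent to unitality. The equality $\Phi(\alpha)=\Phi(\beta)$ then yields $KK(\tilde\alpha_{(0,1)})=KK(\tilde\beta_{(0,1)})$ in $KK(A\rtimes_\rho\Z,A\rtimes_\rho\Z)$, and an equivariant Nakamura theorem (Theorem \ref{equivNakamura}) together with Corollary \ref{oc>cc} gives $KK$-trivially cocycle conjugacy.

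The hard part will be establishing the non-unital versions of the unital-only inputs, namely Theorem \ref{equivNakamura} and the Ocneanu-type vanishing Theorem \ref{ocneanu4}, for the stable algebra $A$. I expect to handle these by the corner reduction $A\cong B\otimes\K$ with $B$ a unital Kirchberg algebra: the Rohlin-tower and $\kappa$-invariant constructions of Section 6 and the intertwining argument of Nakamura can be performed in $B$ and then stabilized, the essential point being that every object entering the obstruction analysis is a $K_1$-class and is insensitive to tensoring with $\K$. Granting these, combining surjectivity and injectivity with the already-noted invariance of $\Phi$ produces the asserted bijection onto all of $KK(A,S\otimes A)$.
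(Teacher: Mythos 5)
Your overall plan (reduce to the unital theory via stability) is reasonable, and your treatment of $\Phi$ and its invariance is fine, but there is a genuine gap exactly where the paper's proof does its real work. The paper does not rerun Propositions \ref{existZ2} and \ref{uniqueZ2} on the stable algebra; it writes $A=A_0\otimes\K$ with $A_0$ unital \emph{and in the Cuntz standard form}, and reduces every given action on $A$ to one of the form $\beta\otimes\id_\K$ with $\beta$ a genuine action on $A_0$. That reduction has two steps your proposal never supplies. First, a given locally $KK$-trivial outer action $\alpha$ on $A$ need not preserve the corner $A_0\otimes e_{0,0}$, so your plan that the Section 6 constructions ``can be performed in $B$ and then stabilized'' is not available as stated: one must first straighten $\alpha$, by choosing partial isometries $u_0,v_0$ with $u_0u_0^*=1\otimes e_{0,0}$, $u_0^*u_0=\alpha_{(1,0)}(1\otimes e_{0,0})$ (similarly for $\alpha_{(0,1)}$), assembling from them unitaries $u,v\in M(A)$ such that $\Ad u\circ\alpha_{(1,0)}$ and $\Ad v\circ\alpha_{(0,1)}$ fix all matrix units $1\otimes e_{p,q}$; only after this cocycle perturbation does $\alpha$ come from a \emph{cocycle} action $(\rho,\sigma,w)$ on the corner $A_0$. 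Second, untwisting that cocycle action is not free: Theorem \ref{ocneanu4} needs the class of the $2$-cocycle to vanish in $H^2(\Z^2,K_1(A_0))$, and it is Corollary \ref{Cuntzstd} --- which requires $[1_{A_0}]=0$, i.e.\ the Cuntz standard form, via the Pimsner--Voiculescu argument --- that guarantees this unconditionally. Your proposal never mentions the Cuntz standard form, and your remark that every obstruction is ``a $K_1$-class insensitive to tensoring with $\K$'' does not address this: the obstruction is killed by the choice of corner, not by stabilization.

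The same choice of corner also disposes of surjectivity far more cheaply than your plan: since $[1_{A_0}]=0$, the constraint $K_0(x)([1])=0$ in Proposition \ref{existZ2} is vacuous on $A_0$, so the unital existence result applies verbatim to every $x\in KK(A_0,S\otimes A_0)\cong KK(A,S\otimes A)$, and one takes $\alpha\otimes\id_\K$. By contrast, your route --- re-proving Propositions \ref{existZ2} and \ref{uniqueZ2}, Theorem \ref{equivNakamura}, Corollary \ref{oc>cc} and the Section 6 machinery for the stable algebra --- would require non-unital versions of results whose statements and proofs in the paper are genuinely unital (Rohlin towers in $(A_\omega)^\alpha$, the $\kappa$-invariant built from paths in $U(A)_0$, Theorem \ref{Phillips}, the standing hypotheses of Sections 3--5), and you defer all of this to the very corner reduction you leave unproved. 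Once the straightening argument and the Cuntz-standard-form untwisting are in place, nothing non-unital needs to be re-proved at all.
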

\begin{proof}
We may assume that $A=A_0\otimes\K$ and 
$A_0$ is a unital Kirchberg algebra in the Cuntz standard form. 
We let $\{e_{p,q}\}_{p,q\in\Z}$ denote a family of matrix units of $\K$. 

For any $x\in KK(A,S\otimes A)=KK(A_0,S\otimes A_0)$, 
by Proposition \ref{existZ2}, 
there exists a locally $KK$-trivial outer $\Z^2$-action $\alpha$ on $A_0$ 
such that $\Phi(\alpha)=x$. 
Then clearly $\Phi(\alpha\otimes\id_\K)=x$. 

Let $\alpha:\Z^2\curvearrowright A$ be a locally $KK$-trivial outer action. 
Choose partial isometries $u_0,v_0$ in A so that 
\[
u_0u_0^*=1\otimes e_{0,0}, \qquad u_0^*u_0=\alpha_{(1,0)}(1\otimes e_{0,0})
\]
and 
\[
v_0v_0^*=1\otimes e_{0,0}, \qquad v_0^*v_0=\alpha_{(0,1)}(1\otimes e_{0,0}). 
\]
Define unitaries $u,v\in M(A)$ by 
\[
u=\sum_{p\in\Z}(1\otimes e_{p,0})u_0\alpha_{(1,0)}(1\otimes e_{0,p})
\quad \text{ and } \quad
v=\sum_{p\in\Z}(1\otimes e_{p,0})v_0\alpha_{(0,1)}(1\otimes e_{0,p}). 
\]
Then $(\Ad u\circ\alpha_{(1,0)})(1\otimes e_{p,q})=1\otimes e_{p,q}$ 
and $(\Ad v\circ\alpha_{(0,1)})(1\otimes e_{p,q})=1\otimes e_{p,q}$ 
for all $p,q\in\Z$. 
In addition, 
$u\alpha_{(1,0)}(v)\alpha_{(0,1)}(u^*)v^*$ commutes with $1\otimes e_{p,q}$. 
It follows that 
there exist $\rho,\sigma\in\Aut(A_0)$ and $w\in U(A)$ such that 
\[
\Ad u\circ\alpha_{(1,0)}=\rho\otimes\id_\K, 
\qquad \Ad v\circ\alpha_{(0,1)}=\sigma\otimes\id_\K
\]
and 
\[
u\alpha_{(1,0)}(v)\alpha_{(0,1)}(u^*)v^*=w\otimes1, 
\qquad 
\rho\circ\sigma=\Ad w\circ\sigma\circ\rho. 
\]
By Corollary \ref{Cuntzstd}, 
there exists $a,b\in U(A_0)$ such that 
\[
(\Ad a\circ\rho)\circ(\Ad b\circ\sigma)
=(\Ad b\circ\sigma)\circ(\Ad a\circ\rho)
\]
and 
\[
a\rho(b)w\sigma(a^*)b^*=1. 
\]
Let $\beta$ be a $\Z^2$-action on $A_0$ 
induced by $\Ad a\circ\rho$ and $\Ad b\circ\sigma$. 
The two unitaries $(a\otimes1)u$ and $(b\otimes1)v$ of $M(A)$ 
give rise to an $\alpha$-cocycle 
and $\beta\otimes\id_\K$ is the cocycle perturbation of $\alpha$. 
Since $\Phi(\alpha)=\Phi(\beta\otimes\id_\K)=\Phi(\beta)$, 
we can complete the proof by Proposition \ref{uniqueZ2}. 
\end{proof}

\end{document}